\providecommand{\tabularnewline}{\\}
\numberwithin{equation}{section}
\numberwithin{figure}{section}
\theoremstyle{plain}
\newtheorem{thm}{\protect\theoremname}[section]
\theoremstyle{plain}
\newtheorem{prop}[thm]{\protect\propositionname}
\theoremstyle{definition}
\newtheorem{defn}[thm]{\protect\definitionname}
\theoremstyle{plain}
\newtheorem{cor}[thm]{\protect\corollaryname}
\theoremstyle{remark}
\newtheorem{rem}[thm]{\protect\remarkname}
\newenvironment{lyxcode}
	{\par\begin{list}{}{
		\setlength{\rightmargin}{\leftmargin}
		\setlength{\listparindent}{0pt}
		\raggedright
		\setlength{\itemsep}{0pt}
		\setlength{\parsep}{0pt}
		\normalfont\ttfamily}%
	 \item[]}
	{\end{list}}
\theoremstyle{plain}
\newtheorem{conjecture}[thm]{\protect\conjecturename}
\theoremstyle{remark}
\newtheorem{claim}[thm]{\protect\claimname}
\theoremstyle{plain}
\newtheorem{lem}[thm]{\protect\lemmaname}
\theoremstyle{plain}
\newtheorem{fact}[thm]{\protect\factname}
\theoremstyle{plain}
\newtheorem*{thm*}{\protect\theoremname}
\providecommand{\claimname}{Claim}
\providecommand{\conjecturename}{Conjecture}
\providecommand{\corollaryname}{Corollary}
\providecommand{\definitionname}{Definition}
\providecommand{\factname}{Fact}
\providecommand{\lemmaname}{Lemma}
\providecommand{\propositionname}{Proposition}
\providecommand{\remarkname}{Remark}
\providecommand{\theoremname}{Theorem}
\begin{document}
\global\long\def\Hom{\mathrm{Hom}}
 \global\long\def\wedger{{\textstyle \bigvee^{r}S^{1}} }
 \global\long\def\MCG{\mathrm{MCG}}
\global\long\def\mcg{\MCG}
 \global\long\def\trw{{\cal T}r_{w} }
 \global\long\def\tr{{\cal T}r }
\global\long\def\trwl{{\cal T}r_{w_{1},\ldots,w_{\ell}} }
\global\long\def\wl{w_{1},\ldots,w_{\ell}}
 \global\long\def\cl{{\cal \mathrm{cl}} }
\global\long\def\ch{\chi_{\max}}
\global\long\def\wg{{\cal \mathrm{Wg}} }
\global\long\def\moeb{\mathrm{M\ddot{o}b} }
\global\long\def\F{\mathrm{\mathbf{F}} }
\global\long\def\id{\mathrm{id}}
\global\long\def\e{\varepsilon}
\global\long\def\U{\mathcal{U}}
\global\long\def\Aut{\mathrm{Aut}}
\global\long\def\E{\mathbb{E}}
 \global\long\def\T{{\cal T}}
\global\long\def\tps{\left|{\cal T}\right|_{\ps}}
\global\long\def\tips{\left|{\cal T}_{\infty}\right|_{\ps}}
\global\long\def\ps{\mathrm{poly}}

\global\long\def\match{\mathrm{MATCH}}
\global\long\def\matchr{\mathrm{\overline{MATCH}}}
\global\long\def\matchmap{\mathbf{match}}
\global\long\def\bijs{\mathrm{bijs}}
\global\long\def\g{\gamma}
\global\long\def\sur{\surfaces}
\global\long\def\surfaces{{\cal S}\mathrm{urfaces}}
\global\long\def\sx{\overline{\sigma_{x}}}
\global\long\def\os{\overrightarrow{S^{1}}}

\global\long\def\rank{\mathrm{rank}}
\global\long\def\chain{\mathbf{Ch}}
\global\long\def\mod{\mathbf{mod}}
\global\long\def\L{{\cal L}}
\global\long\def\res{\mathrm{res}}
\global\long\def\eq{\mathrm{eq}}
 \global\long\def\Stab{\mathrm{Stab}}
\global\long\def\Cells{\mathrm{Cells}}
\global\long\def\C{\mathcal{C}}
\global\long\def\BIJS{\mathcal{BIJS}}
\global\long\def\cd{\mathrm{cd}}
\global\long\def\vcd{\mathrm{vcd}}

\global\long\def\Z{\mathbf{\mathbf{Z}}}
\global\long\def\Q{\mathbb{\mathbb{\mathbf{Q}}}}
\global\long\def\d{\delta}
\global\long\def\G{\Gamma}
\global\long\def\bij{\mathrm{BIJ}}
\global\long\def\g{\gamma}
\global\long\def\surfaces{\mathsf{Surfaces}}
\global\long\def\sx{\overline{\sigma_{x}}}
\global\long\def\os{\overrightarrow{S^{1}}}
\global\long\def\B{\mathcal{B}}
\global\long\def\sing{\mathrm{sing}}

\global\long\def\R{\mathbf{R}}
\global\long\def\Q{\mathbb{\mathbb{\mathbf{Q}}}}
\global\long\def\image{\mathrm{image}}
\global\long\def\sing{\mathrm{sing}}
\global\long\def\Rep{\mathrm{Rep}}
\global\long\def\N{\mathcal{N}}
\global\long\def\thick{\mathbb{D}}
\global\long\def\P{{\cal P}}

\title{Matrix Group Integrals, Surfaces, and Mapping Class Groups I: $\U\left(n\right)$}

\author{Michael Magee\thanks{M. Magee was partially supported by the N.S.F. via grants DMS-1128155
and DMS-1701357.} ~~and~~ Doron Puder\thanks{D. Puder was partially supported by the Rothschild Fellowship, N.S.F.
via grant DMS-1128155, and I.S.F. via grant 1071/16.}}
\maketitle
\begin{abstract}
Since the 1970's, physicists and mathematicians who study random matrices
in the GUE or GOE models are aware of intriguing connections between
integrals of such random matrices and enumeration of graphs on surfaces.
We establish a new aspect of this theory: for random matrices sampled
from the group $\U\left(n\right)$ of unitary matrices. 

More concretely, we study measures induced by free words on $\U\left(n\right)$.
Let $\F_{r}$ be the free group on $r$ generators. To sample a random
element from $\U\left(n\right)$ according to the measure induced
by $w\in\F_{r}$, one substitutes the $r$ letters in $w$ by $r$
independent, Haar-random elements from $\U\left(n\right)$. The main
theme of this paper is that every moment of this measure is determined
by families of pairs $\left(\Sigma,f\right)$, where $\Sigma$ is
an orientable surface with boundary, and $f$ is a map from $\Sigma$
to the bouquet of $r$ circles, which sends the boundary components
of $\Sigma$ to powers of $w$. A crucial role is then played by Euler
characteristics of subgroups of the mapping class group of $\Sigma$.

As corollaries, we obtain asymptotic bounds on the moments, we show
that the measure on $\U\left(n\right)$ bears information about the
number of solutions to the equation $\left[u_{1},v_{1}\right]\cdots\left[u_{g},v_{g}\right]=w$
in the free group, and deduce that one can ``hear'' the stable commutator
length of a word through its unitary word measures.
\end{abstract}
\tableofcontents{}

\section{Introduction\label{sec:Introduction}}

Let $\U\left(n\right)$\marginpar{$\protect\U\left(n\right)$} denote
the group of $n\times n$ unitary complex matrices, and let $\F_{r}$\marginpar{$\protect\F_{r}$}
denote the free group on $r$ generators with fixed basis (free generating
set) \marginpar{$B$}$B=\left\{ x_{1},\ldots,x_{r}\right\} $. For
a word $w\in\F_{r}$, we define the $w$-measure on $\U\left(n\right)$
as the push-forward of the Haar measure on $\U\left(n\right)^{r}$
through the word map $w\colon\U\left(n\right)^{r}\to\U\left(n\right)$.
In plain terms, assume that $w=x_{i_{1}}^{\varepsilon_{1}}\cdots x_{i_{m}}^{\varepsilon_{m}}$.
To sample a random element from $\U\left(n\right)$ by the $w$-measure,
sample $r$ independent Haar-random elements $A_{1},\ldots,A_{r}\in\U\left(n\right)$
and evaluate $w\left(A_{1},\ldots,A_{r}\right)=A_{i_{1}}^{\varepsilon_{1}}\cdots A_{i_{m}}^{\varepsilon_{m}}\in\U\left(n\right)$. 

The motivation to study $w$-measures on unitary groups or on compact
groups in general originates in questions revolving around random
walks on these groups, in the study of representation varieties, in
problems in the theory of Free Probability, and in challenges in the
study of free groups. However, as the current paper shows, the study
of $w$-measures is interesting for its own sake and reveals deep
and surprising connections with other mathematical concepts. See also
\cite{PP15}. 

\subsubsection*{Expected trace}

We study word measures by considering their moments, and more particularly
the expected product of traces. For every $\ell\in\mathbb{N}_{\ge1}$
and $\wl\in\F_{r}$, consider the quantity
\begin{equation}
\trwl\left(n\right)\stackrel{\mathrm{def}}{=}\int_{A_{1},\ldots,A_{r}\in\U\left(n\right)}\mathrm{tr}\left(w_{1}\left(A_{1},\ldots,A_{r}\right)\right)\cdot\ldots\cdot\mathrm{tr}\left(w_{\ell}\left(A_{1},\ldots,A_{r}\right)\right)d\mu\label{eq:def of Tr_w}
\end{equation}
where $A_{1},\ldots,A_{r}\in\U\left(n\right)$ are independent Haar-random
unitary matrices\footnote{Let us mention that the $w$-measure on $\U\left(n\right)$ is completely
determined by moments of this type where the words are taken to be
powers of $w$: $\tr_{w^{\alpha_{1}},\ldots,w^{\alpha_{\ell}}}\left(n\right)$
with $\alpha_{1},\ldots,\alpha_{\ell}\in\mathbb{Z}$. See, for example,
\cite[Section 2.2]{MP}. (We comment about the pre-print \cite{MP}
in Remark \ref{remark:MP16}.)}. The development of ``Weingarten calculus'' for computing integrals
on $\U\left(n\right)$ \cite{weingarten1978asymptotic,xu1997random,collins2003moments,CS}
leads readily to the following result:
\begin{prop}
\label{prop:rational expression}Let $\ell\in\mathbb{N}_{\ge1}$ and
$\wl\in\F_{r}$. Then for large enough $n$, the quantity $\trwl\left(n\right)$
is given by a rational expression in $n$ with rational coefficients,
namely, by an element of $\mathbb{Q}\left(n\right)$.
\end{prop}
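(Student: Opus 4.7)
The plan is to reduce Proposition \ref{prop:rational expression} to a direct application of the Weingarten calculus. First, for each $i$, write $w_{i}=x_{k_{i,1}}^{\e_{i,1}}\cdots x_{k_{i,m_{i}}}^{\e_{i,m_{i}}}$ and expand the trace in terms of matrix entries,
\[
\mathrm{tr}\bigl(w_{i}(A_{1},\ldots,A_{r})\bigr)\;=\;\sum_{j_{1},\ldots,j_{m_{i}}}\bigl(A_{k_{i,1}}^{\e_{i,1}}\bigr)_{j_{1}j_{2}}\cdots\bigl(A_{k_{i,m_{i}}}^{\e_{i,m_{i}}}\bigr)_{j_{m_{i}}j_{1}},
\]
using the convention $(A^{-1})_{ab}=\overline{A_{ba}}$. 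Multiplying all $\ell$ traces and applying $\E$, the independence of $A_{1},\ldots,A_{r}$ makes the integral factor across generators into a product of Haar moments, one per $A_{k}$, of the shape $\E\prod_{s}(A_{k})_{a_{s}b_{s}}\prod_{t}\overline{(A_{k})_{c_{t}d_{t}}}$.

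Second, invariance of the Haar measure under $A_{k}\mapsto e^{\mathrm{i}\theta}A_{k}$ forces such a moment to vanish unless the number $p_{k}$ of unconjugated entries equals the number of conjugated entries. When they agree, the Weingarten formula gives
\[
\E\prod_{s=1}^{p_{k}}(A_{k})_{a_{s}b_{s}}\overline{(A_{k})_{c_{s}d_{s}}}\;=\;\sum_{\sigma,\tau\in S_{p_{k}}}\Biggl(\prod_{s=1}^{p_{k}}\d_{a_{s},c_{\sigma(s)}}\d_{b_{s},d_{\tau(s)}}\Biggr)\,\mathrm{Wg}(\sigma\tau^{-1},n),
\]
valid for $n\ge p_{k}$. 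Via Schur--Weyl duality, the Weingarten function admits the formula
\[
\mathrm{Wg}(\pi,n)\;=\;\frac{1}{(p_{k}!)^{2}}\sum_{\lambda\vdash p_{k}}\frac{(\dim\lambda)^{2}\,\chi^{\lambda}(\pi)}{s_{\lambda}(1^{n})},
\]
where $s_{\lambda}(1^{n})$, the Weyl dimension of the corresponding $\U(n)$-irrep, is a polynomial in $n$ of degree $|\lambda|$ with rational coefficients, non-vanishing whenever $n\ge p_{k}$. Hence $\mathrm{Wg}(\pi,n)\in\Q(n)$ and is regular at every integer $n\ge p_{k}$.

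Substituting the Weingarten expansions back and summing over the free matrix indices $j_{\bullet}$, each pair of Kronecker $\d$'s identifies indices and the number of genuinely free indices that remain is a nonnegative integer determined by the chosen permutations alone. That sum contributes a factor $n^{c(\sigma_{1},\tau_{1},\ldots,\sigma_{r},\tau_{r})}$, so we arrive at
\[
\trwl(n)\;=\;\sum_{\sigma_{1},\tau_{1},\ldots,\sigma_{r},\tau_{r}}n^{c(\sigma_{1},\tau_{1},\ldots,\sigma_{r},\tau_{r})}\prod_{k=1}^{r}\mathrm{Wg}(\sigma_{k}\tau_{k}^{-1},n),
\]
a finite sum of elements of $\Q(n)$, valid for every $n\ge\max_{k}p_{k}$. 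The main obstacle here is purely bookkeeping rather than analysis: one must verify that the rationality of $\mathrm{Wg}(\pi,n)$ over $\Q$ in $n$ for large $n$ is indeed standard (it is, by the character formula above) and that the factorization across generators is legitimate (it is, by independence). Assembling these ingredients yields the proposition.
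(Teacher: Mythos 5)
Your proof is correct and follows essentially the same route as the paper: expand the traces in matrix entries, factor the expectation across generators by independence, apply the Collins--\'Sniady Weingarten formula to each factor, and observe that the result is a finite sum of terms each of the form $n^{c(\sigma,\tau)}\prod_k\mathrm{Wg}(\pi_k,n)$ with $\mathrm{Wg}\in\Q(n)$. The paper (Theorem \ref{thm:trwl as finite sum}) packages the same computation via the surfaces $\Sigma_{\sigma}$, where your count $c(\sigma_1,\tau_1,\ldots,\sigma_r,\tau_r)$ of free indices becomes the number of $o$-discs; that reformulation matters for the later results but is not needed for rationality, and your direct index-counting version is entirely sound.
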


Here ``large enough $n$'' means that $n\ge\max_{x\in B}L_{x}$,
where $L_{x}$ is the total number of instances of $x^{+1}$ in the
words $\wl$. 

In Section \ref{sec:A-Formula-for trwl} we give explicit combinatorial
formulas for $\trwl\left(n\right)$, and the main innovation here
is the emergence of surfaces in these formulas. In Table \ref{tab:examples}
we list some examples\footnote{Every example in Table \ref{tab:examples} satisfies that for every
generator $x_{i}$, the total number of occurrences in $\wl$ of $x_{i}^{+1}$
is equal to the number of occurrences of $x_{i}^{-1}$. The reason
is the simple fact that otherwise $\trwl\left(n\right)$ is constantly
zero -- see Claim \ref{claim: tr=00003D0 for non-balanced words}
below.} for these rational expressions for concrete words. 

\renewcommand{\arraystretch}{2}
\begin{center}
\begin{table*}
\begin{centering}
\begin{tabular}{|c|c|c|c|}
\hline 
\noalign{\vskip\doublerulesep}
$\ell$ & $\wl$ & $\trwl\left(n\right)$ & Laurent Series\tabularnewline[\doublerulesep]
\hline 
\hline 
\noalign{\vskip\doublerulesep}
\multirow{6}{*}{1} & $\left[x,y\right]$ & {\Large{}$\frac{1}{n}$} & {\Large{}$\frac{1}{n}$}\tabularnewline[\doublerulesep]
\cline{2-4} 
\noalign{\vskip\doublerulesep}
 & $\left[x^{3},y\right]$ & {\Large{}$\frac{3}{n}$} & {\Large{}$\frac{3}{n}$}\tabularnewline[\doublerulesep]
\cline{2-4} 
\noalign{\vskip\doublerulesep}
 & $\left[x,y\right]^{2}$ & {\Large{}$\frac{-4}{n^{3}-n}$} & {\Large{}$\frac{-4}{n^{3}}+\frac{-4}{n^{5}}+\frac{-4}{n^{7}}+\cdots$}\tabularnewline[\doublerulesep]
\cline{2-4} 
\noalign{\vskip\doublerulesep}
 & $\left[x,y\right]^{3}$ & {\Large{}$\frac{9\left(n^{2}+4\right)}{n^{5}-5n^{3}+4n}$} & {\Large{}$\frac{9}{n^{3}}+\frac{81}{n^{5}}+\frac{369}{n^{7}}+\cdots$}\tabularnewline[\doublerulesep]
\cline{2-4} 
\noalign{\vskip\doublerulesep}
 & $\left[x,y\right]\left[x,z\right]$ & {\Large{}$0$} & {\Large{}$0$}\tabularnewline[\doublerulesep]
\cline{2-4} 
\noalign{\vskip\doublerulesep}
 & $\left[x,y\right]\left[x,z\right]\left[x,t\right]$ & {\Large{}$0$} & {\Large{}$0$}\tabularnewline[\doublerulesep]
\hline 
\noalign{\vskip\doublerulesep}
\multirow{3}{*}{2} & $x^{2}y^{2},xy^{-3}x^{-3}y$ & \multirow{1}{*}{{\Large{}$\frac{4\left(n^{2}-5\right)}{n^{4}-5n^{2}+4}$}} & {\Large{}$\frac{4}{n^{2}}+\frac{0}{n^{4}}+\frac{-16}{n^{6}}+\frac{-80}{n^{8}}+\cdots$}\tabularnewline[\doublerulesep]
\cline{2-4} 
\noalign{\vskip\doublerulesep}
 & $w,w^{-1}$ for $w=x^{2}yxy^{-1}$ & {\Large{}$1$} & {\Large{}$1$}\tabularnewline[\doublerulesep]
\cline{2-4} 
\noalign{\vskip\doublerulesep}
 & $w,w^{-1}$ for $w=x^{2}y^{2}xy^{-1}$ & {\Large{}$\frac{n^{4}-5n^{2}}{n^{4}-5n^{2}+4}$} & {\Large{}$1+\frac{0}{n^{2}}+\frac{-4}{n^{4}}+\frac{-20}{n^{6}}+\cdots$}\tabularnewline[\doublerulesep]
\hline 
\end{tabular}
\par\end{centering}
\caption{Some examples for the rational expression for $\protect\trwl\left(n\right)$
and (the beginning of) its Laurent series expansion. All these examples
contain words in $\protect\F_{4}$ with generators $\left\{ x,y,z,t\right\} $.
The notation $\left[x,y\right]$ is for the commutator $xyx^{-1}y^{-1}$
.\label{tab:examples}}
\end{table*}
\par\end{center}

\vspace{-30bp}
The main theme of the current paper is the interpretation of these
expressions for $\trwl\left(n\right)$ in terms of properties of $w$.
We explain their degree and their leading coefficient. More generally,
we show how the entire Laurent series for $\trwl\left(n\right)$ is
determined by natural objects related to $\wl$. 

\medskip{}

\subsubsection*{Extending maps from circles to surfaces}

Our main result, Theorem \ref{thm:main} below, states that the expressions
for $\trwl\left(n\right)$ can be described in terms of certain surfaces
and maps. Roughly, consider a bouquet of $r$ circles $\wedger$\marginpar{$\protect\wedger$}
with fundamental group identified with $\F_{r}$. Now consider $\ell$
disjoint circles (one-spheres) $C_{1},\ldots,C_{\ell}$ and a map
\[
f_{\wl}\colon C_{1}\sqcup\ldots\sqcup C_{\ell}\to\wedger
\]
sending $C_{i}$ to a loop at the bouquet representing $w_{i}$. We
now construct pairs $\left(\Sigma,f\right)$ of an orientable surface
$\Sigma$ with $\ell$ boundary components together with a map $f\colon\Sigma\to\wedger$,
so that the restriction of $f$ to the boundary $\partial\Sigma$
is equal to $f_{\wl}$. From this set one can fully recover the expressions
for $\trwl\left(n\right)$. See Figure \ref{fig:Two-surfaces-extending}. 
\begin{center}
\begin{figure}
\includegraphics{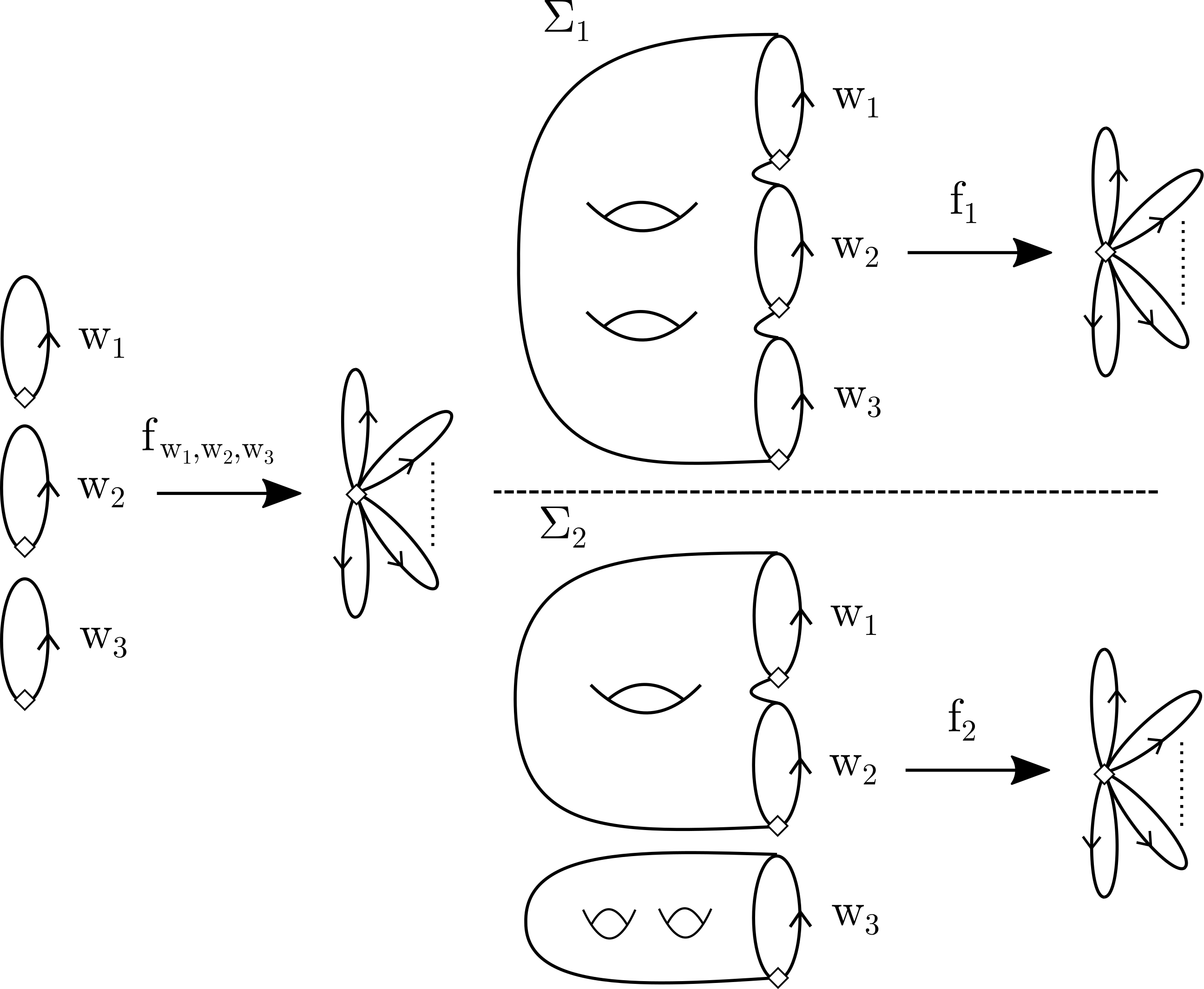}
\centering{}\caption{\label{fig:Two-surfaces-extending}Two surfaces extending the map
from three circles to $\protect\wedger$ corresponding to the triple
of words $w_{1},w_{2},w_{3}$. }
\end{figure}
\par\end{center}

\vspace{-30bp}
More formally, identify the free group $\F_{r}$ with the fundamental
group of $\wedger$, by orienting every circle in the bouquet and
determining a bijection between the circles and the generators $x_{1},\ldots,x_{r}$
of $\F_{r}$. Mark the wedge point by $o$\marginpar{$o$}. We have
\[
\F_{r}\cong\pi_{1}\left(\wedger,o\right).
\]
Let $C_{1}\sqcup\ldots\sqcup C_{\ell}$ be a disjoint union of $\ell$
oriented 1-spheres with a marked point $v_{i}\in C_{i}$\marginpar{$v$}
for every $i=1,\ldots,\ell$. The map $f_{\wl}\colon C_{1}\sqcup\ldots\sqcup C_{\ell}\to\wedger$
sends $v_{1},\ldots,v_{\ell}$ to $o$, and the induced map on fundamental
groups sends the loop at $v_{i}$ around the oriented $C_{i}$ to
$\left[w_{i}\right]$. 
\begin{defn}
\label{def:surface of w1,...,wl}Let $\Sigma$ be a surface with $\ell$
boundary components $\partial\Sigma_{1},\ldots,\partial\Sigma_{\ell}$
and a marked point \marginpar{$v_{i}$}$v_{i}\in\partial\Sigma_{i}$
in each boundary component. Let $f\colon\Sigma\to\wedger$ be a map
to the bouquet. We say that $\left(\Sigma,f\right)$ is \textbf{admissible}
for $\wl\in\F_{r}$ if the following two conditions hold:
\begin{enumerate}
\item $\Sigma$ is oriented and compact, with no closed connected components. 
\item The restriction of $f$ to the boundary of $\Sigma$ is homotopic
to $f_{\wl}$ relative to the marked points $v_{1},\ldots,v_{\ell}$.
Namely, for every $i=1,\ldots,\ell$, 
\[
f_{*}\left(\left[\overrightarrow{\partial_{i}\Sigma}\right]\right)=w_{i}\in\pi_{1}\left(\wedger,o\right),
\]
where $\overrightarrow{\partial_{i}\Sigma}$ is the closed loop at
$v_{i}$ around $\partial_{i}\Sigma$ with orientation induced from
the orientation of $\Sigma$.
\end{enumerate}
\end{defn}

In particular, we assume in the above definition that $f\left(v_{i}\right)=o$
for every $i=1,\ldots,\ell$. 

There is a natural equivalence relation between different admissible
pairs: first, if $f_{1},f_{2}\colon\Sigma\to\wedger$ are homotopic
relative to the marked points $v_{1},\ldots,v_{\ell}$, then we think
of $\left(\Sigma,f_{1}\right)$ and $\left(\Sigma,f_{2}\right)$ as
equivalent. We denote by $\left[f\right]$\marginpar{$\left[f\right]$}
the homotopy class of $f$ relative to $v_{1},\ldots,v_{\ell}$. Second,
there is a natural action of $\mcg\left(\Sigma\right)$\marginpar{$\protect\mcg\left(\Sigma\right)$},
the mapping class group of $\Sigma$, on homotopy classes of maps
$\Sigma\to\wedger$, and we define different maps in the same orbit
to be equivalent (see Definition \ref{def:surfaces(w1,...,wl)}).
Here, $\MCG\left(\Sigma\right)$ is defined as the group of homeomorphisms
of $\Sigma$ which fix the boundary $\partial\Sigma$ pointwise, modulo
such homeomorphisms which are isotopic to the identity. The action
of $\mcg\left(\Sigma\right)$ on homotopy classes of maps 
\[
\left\{ \left[f\right]\,\middle|\,\left(\Sigma,f\right)~\mathrm{admissible~for}~\wl\right\} 
\]
is by precomposition: the action of $\left[\rho\right]\in\mcg\left(\Sigma\right)$
on $\left[f\right]$ results in $\left[f\circ\rho^{-1}\right]$. We
gather these considerations in the following definition: 
\begin{defn}
\label{def:surfaces(w1,...,wl)}Let $\left(\Sigma,f\right)$ and $\left(\Sigma',f'\right)$
be admissible for $\wl$. They are \textbf{equivalent}, denoted $\left(\Sigma,f\right)\sim\left(\Sigma',f'\right)$\marginpar{${\scriptstyle \left(\Sigma,f\right)\sim\left(\Sigma',f'\right)}$},
if there is an orientation preserving homeomorphism $\rho\colon\Sigma\to\Sigma'$,
such that for every $i=1,\ldots,\ell$, $\rho\left(v_{i}\right)=v_{i}'$
and $f\simeq f'\circ\rho$ are homotopic relative to the marked points
$v_{1},\ldots v_{\ell}$. We denote by $\left[\left(\Sigma,f\right)\right]$\marginpar{$\left[\left(\Sigma,f\right)\right]$}
the equivalence class of $\left(\Sigma,f\right)$. We denote the set
of equivalence classes by $\surfaces\left(\wl\right)$\marginpar{${\scriptstyle \protect\surfaces\left(\protect\wl\right)}$}:
\[
\surfaces\left(\wl\right)\stackrel{\mathrm{def}}{=}\left\{ \left[\left(\Sigma,f\right)\right]\,\middle|\,\left(\Sigma,f\right)~\mathrm{is~admissible~for}~\wl\right\} .
\]
\end{defn}

The main goal of this paper is to show how one can read the terms
of the Laurent series of $\trwl\left(n\right)$ from this set $\surfaces\left(\wl\right)$
of equivalence classes of pairs of surfaces and maps.

\subsubsection*{The $L^{2}$-Euler characteristic of stabilizers}

The Laurent series of $\trwl\left(n\right)$ gets some contribution
from every\linebreak{}
$\left[\left(\Sigma,f\right)\right]\in\surfaces\left(\wl\right)$.
As stated in Theorem \ref{thm:main} below, this contribution is of
the form $c\cdot n^{\alpha}$, where $c$ and $\alpha$ are integers.
The order of magnitude of the contribution is controlled by the Euler
characteristic of the surface: $\alpha=\chi\left(\Sigma\right)$.
However, to determine the integer coefficient $c$, an important role
is played by the stabilizer of $\left[f\right]$ under the action
of $\mcg\left(\Sigma\right)$, which we denote by \marginpar{$\protect\MCG\left(f\right)$}$\MCG\left(f\right)$:
\[
\MCG\left(f\right)\stackrel{\mathrm{def}}{=}\MCG\left(\Sigma\right)_{\left[f\right]}.
\]
Note that by definition, the elements of $\mcg\left(\Sigma\right)$
permute homotopy classes of maps inside the same equivalence class
$\left[\left(\Sigma,f\right)\right]$. Yet, occasionally, they may
stabilize $\left[f\right]$, in the sense that $f\circ\rho$ and $f$
are homotopic relative to $v_{1},\ldots,v_{\ell}$. Given the class
$\left[\left(\Sigma,f\right)\right]$, the stabilizer $\mcg\left(f\right)$
is defined up to conjugation.

The actual invariant of the stabilizer that appears in the contribution
of $\left[\left(\Sigma,f\right)\right]$ to $\trwl\left(n\right)$
is its \textbf{$L^{2}$-Euler characteristic}. The $L^{2}$-Euler
characteristic of a group is defined for groups with nice enough properties
and can take any real value. It is the alternating sum of the von
Neumann dimensions of the homology groups of a natural chain complex
of modules over the group von Neumann algebra, as we explain in more
detail in Section \ref{subsec:L2-Betti-numbers-and-L2-EC} below,
and see \cite{L}. Thus, to state our main result, we first need the
following auxiliary theorem which is interesting for its own sake. 
\begin{thm}
\label{thm:stabilizers have L2-EC}Let $\Sigma$ be a compact orientable
surface with no closed connected components. Let $f\colon\Sigma\to\wedger$
be a map. Then the stabilizer $\mcg\left(f\right)=\mcg\left(\Sigma\right)_{\left[f\right]}$
has a well-defined $L^{2}$-Euler characteristic. Moreover, this $L^{2}$-Euler
Characteristic is an integer.
\end{thm}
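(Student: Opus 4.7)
The plan is to show that $\mcg(f)$ is torsion-free and of type $F$ (admits a finite $K(\pi,1)$). Granted these two properties, Atiyah's $L^{2}$-index theorem identifies $\chi^{(2)}(\mcg(f))$ with the ordinary Euler characteristic of a finite classifying space, which is manifestly a well-defined integer. I would first reduce to the case where $\Sigma$ is connected: for disjoint unions, $\mcg(\Sigma)$ is an extension of the product of the $\mcg$'s of the components by a finite group permuting homeomorphic components, and both well-definedness and integrality pass through direct products and finite-index torsion-free subgroups via Lück's multiplicativity and the standard identity $\chi^{(2)}(G)=[G:H]^{-1}\chi^{(2)}(H)$. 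When $\Sigma$ is connected with nonempty boundary, $\pi_1(\Sigma)$ is free of some rank $k$, and $\mcg(\Sigma)$ is torsion-free --- a finite-order homeomorphism fixing the boundary pointwise must be the identity --- so $\mcg(f)\le\mcg(\Sigma)$ is torsion-free automatically.

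The core step is building a finite classifying space for $\mcg(f)$. Via Dehn-Nielsen-Baer, $\mcg(\Sigma)$ embeds into $\Aut(\F_{k})$ as the subgroup preserving peripheral conjugacy classes; under this embedding $\mcg(f)$ corresponds to those $\alpha\in\Aut(\F_{k})$ satisfying the rigid condition $f_{*}\circ\alpha=f_{*}$ on the nose, together with the boundary constraint. I would realize $\mcg(f)$ as acting freely and cocompactly on a contractible CW complex in one of two equivalent ways: either (i) as a relative McCool-type stabilizer acting on the subspace of Culler-Vogtmann Outer Space consisting of marked graphs compatible with the homomorphism $f_{*}$; or (ii) via the regular covering $\hat{\Sigma}\to\Sigma$ corresponding to $N=\ker f_{*}$, on which elements of $\mcg(f)$ lift to $H$-equivariant mapping classes (with $H=f_{*}(\pi_{1}\Sigma)$ acting as deck group), using an equivariant model of Harer's contractible arc complex for $\hat{\Sigma}$.

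The main obstacle is cocompactness in this last step: one must verify that only finitely many $\mcg(f)$-orbits of marked graph (respectively arc) structures are compatible with a fixed $f_{*}$. The non-equivariant analogue for $\mcg(\Sigma)$ is classical (Harer), but the relative/equivariant version requires the theory of deformation spaces and McCool-type subgroups inside $\Aut(\F_{k})$, plus a finiteness input bounding the combinatorial complexity of spines compatible with $f_{*}$. Once cocompactness is established, $\mcg(f)$ is of type $F$, and combined with its torsion-freeness this gives the desired well-definedness and integrality of $\chi^{(2)}(\mcg(f))$ via Atiyah's theorem.
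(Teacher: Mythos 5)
Your proposal has a genuine gap at exactly the point you flag as "the main obstacle." The plan is to show that $\mcg(f)$ is of type $F$, i.e.\ acts freely and cocompactly on a contractible complex, and then appeal to Atiyah's theorem. But cocompactness \emph{fails} for general $f$: the paper shows this explicitly. The relevant contractible complex in the paper is $\tps=\left|\T(\Sigma,f)\right|_{\ps}$ (the complex of transverse maps, the surface-side analogue of the arc/Outer-Space picture you describe), and Section \ref{subsec:Nonfiniteness} (the ``abelian neck phenomenon,'' Lemma \ref{lem:neck-extension-same-homotopy-type} and Corollary \ref{cor:X doesn't have finite quotient}) demonstrates that $\mcg(f)\backslash\T$ is \emph{infinite} whenever $(\Sigma,f)$ is compressible. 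Concretely, one can insert arbitrarily many parallel codirected null-curves in a neck that $f$ collapses to a single generator, producing infinitely many $\mcg(f)$-orbits of transverse map structures. Moreover, the action on $\tps$ is not free in the compressible case: cells in $\T_\infty$ (transverse maps with non-disc zones) have \emph{infinite} stabilizers, since a Dehn twist in a null-curve of $(\Sigma,f)$ lies in $\mcg(f)$ and fixes such a cell. So the two hypotheses of your reduction --- free action and finite quotient --- both break.

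What you have sketched is, modulo details, a proof of the \emph{stronger} Theorem \ref{thm:K(g,1) for incompressible}, which asserts a finite $K(\Gamma,1)$ precisely in the incompressible case (where $\T_\infty=\emptyset$, so the action is free by Lemma \ref{lem:-acts-freely}, and the quotient is finite by Corollary \ref{cor:finitely many orbits of filling cells}). For the general statement you must replace "finite $K(\pi,1)$" with a weaker mechanism. The paper's mechanism is $L^2$-specific and cannot be recovered from your approach: cells with infinite isotropy contribute $\left|G_c\right|^{-1}=0$ to Theorem \ref{thm:ec-as-alt-sum}, and one checks (Lemma \ref{lem:isoptropy-of-X-infinity}, via Cheeger--Gromov) that those isotropy groups contain normal infinite abelian subgroups (generated by Dehn twists in bounding curves of non-disc zones) and hence lie in $\B_\infty$. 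This is what makes $\chi^{(2)}$ well-defined and allows Lemma \ref{lem:contractible-with-good-isotropy} to identify $\chi^{(2)}\left(\tps,\Gamma\right)$ with $\chi^{(2)}(\Gamma)$; integrality then comes from the free part having finitely many orbits, each contributing $\pm1$. In short, the theorem is true not because $\mcg(f)$ is of type $F$ --- which need not hold and which the paper does not claim --- but because the $L^2$-Euler characteristic is blind to the infinite amenable stabilizers that obstruct cocompactness. You would need to import exactly this $\B_\infty$-isotropy argument; the Outer Space/McCool or equivariant-arc-complex routes you suggest would run into the same non-cocompactness and would also need the same $L^2$-repair.

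Two minor correct points worth keeping from your sketch: the reduction to connected $\Sigma$, and the observation that $\mcg(f)$ is torsion-free because $\mcg(\Sigma)$ is torsion-free for surfaces with boundary --- the paper uses this in Proposition \ref{prop:finitevcd}.
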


Note that in the statement of the theorem it does not matter whether
$\left[f\right]$ is the homotopy class of $f$ relative to $\partial\Sigma$
or relative to some marked points in every boundary component - this
nuance does not modify the action of $\mcg\left(\Sigma\right)$ on
the homotopy classes of maps.

Theorem \ref{thm:stabilizers have L2-EC} can be strengthened in an
important special case we now introduce:
\begin{defn}
\label{def:null-curve-incompressible}A \emph{null-curve}\textbf{}\marginpar{null-curve}
of $\left(\Sigma,f\right)$ is a non-nullhomotopic simple closed curve
$\gamma$ in $\Sigma$ with $f\left(\gamma\right)$ nullhomotopic
in $\wedger$. A pair $\left(\Sigma,f\right)$ is called \emph{incompressible}\marginpar{incompressible}\emph{
}if it admits no null-curves. It is called \emph{compressible} otherwise.\emph{ }
\end{defn}

If $\left(\Sigma,f\right)$ is admissible for $\wl$ and is compressible,
then one can cut $\Sigma$ along a null-curve, fill the two new boundary
components with discs to obtain $\Sigma'$ and extend $f$ to a map
$f'\colon\Sigma'\to\wedger$. If $\Sigma'$ contains a closed component,
remove it to obtain $\Sigma''$ and let $f''$ denote the restriction
of $f'$ to $\Sigma''$. The new pair $\left(\Sigma'',f''\right)$
is admissible for $\wl$ and satisfies $\chi\left(\Sigma''\right)\ge\chi\left(\Sigma'\right)=\chi\left(\Sigma\right)+2$,
as the possibly closed component of $\Sigma'$ cannot be a sphere.
Thus, pairs $\left(\Sigma,f\right)$ with $\Sigma$ having maximal
Euler characteristic are necessarily incompressible. When $f$ is
incompressible we have the following stronger version of Theorem \ref{thm:stabilizers have L2-EC}:
\begin{thm}
\label{thm:K(g,1) for incompressible}Let $\Sigma$ be a compact orientable
surface with boundary in every connected component, and let $f\colon\Sigma\to\wedger$
be incompressible. Then the stabilizer 
\[
\Gamma=\mcg\left(f\right)=\mcg\left(\Sigma\right)_{\left[f\right]}
\]
admits a finite simplicial complex as a $K\left(\Gamma,1\right)$-space.
In particular, $\Gamma$ has a well-defined Euler characteristic in
the ordinary sense\footnote{The ``ordinary'' Euler characteristic of a group is defined for
a large class of groups of certain finiteness conditions -- see \cite[Chapter IX]{BROWN}.
The simplest case is when a group $\Gamma$ admits a finite $CW$-complex
as Eilenberg-MacLane space of type $K\left(\Gamma,1\right)$, namely,
a path-connected complex with fundamental group isomorphic to $\Gamma$
and a contractible universal cover. In this case, the Euler characteristic
of $\Gamma$ coincides with the Euler characteristic of the $K\left(\Gamma,1\right)$-space. }, which coincides with its $L^{2}$-Euler characteristic. 
\end{thm}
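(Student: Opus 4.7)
My plan is to use incompressibility to place $f$ in a canonical combinatorial normal form, which will exhibit the stabilizer $\Gamma = \mcg(f)$ as a subgroup of $\mcg(\Sigma)$ preserving an honest decomposition of $\Sigma$. First I would choose a representative $f_0 \in [f]$ transverse to an interior point in each circle of $\wedger$, so that the preimage $\Lambda \subset \Sigma$ of this set of points is a properly embedded $1$-submanifold, and I would take $\Lambda$ of minimal total component count among such representatives. Incompressibility forces every closed component of $\Lambda$ to be essential in $\Sigma$, and a standard minimality argument forces every arc component to be non-boundary-parallel. Cutting $\Sigma$ along $\Lambda$ produces pieces $\Sigma_1, \ldots, \Sigma_k$ on each of which $f_0$ restricts to an unbranched cover of a half-edge of $\wedger$; the data $(\Lambda, f_0|_{\Sigma_j})$ thus gives $\Sigma$ the structure of a graph-of-surfaces whose connecting annuli wrap around the loops of $\wedger$.

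The key step is a canonicity statement: any two such minimal normal forms for maps in $[f]$ differ by an isotopy of $\Sigma$. This is a surface-theoretic exchange argument in the spirit of Waldhausen and of Hatcher's bigon criterion, where innermost-disc and outermost-arc reductions are driven by the incompressibility hypothesis. Granting this canonicity, every $[\rho] \in \mcg(f)$ admits a representative $\rho$ with $\rho(\Lambda) = \Lambda$ setwise, so $\Gamma$ fits into a short exact sequence
\[
1 \longrightarrow K \longrightarrow \Gamma \longrightarrow Q \longrightarrow 1,
\]
where $Q$ is the finite group of admissible permutations of the components of $\Lambda$ and of the pieces $\Sigma_j$, and $K$ is the subgroup of $\prod_j \mcg(\Sigma_j)$ consisting of tuples of mapping classes that individually stabilize the covering $f_0|_{\Sigma_j} \colon \Sigma_j \to S^1$. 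Each such local stabilizer is commensurable with an iterated extension of (i) mapping class groups of subsurfaces of $\Sigma_j$ and (ii) free abelian Dehn-twist groups along the fibers of the cover; by Harer's theorem combined with the Birman exact sequence, each admits a finite simplicial $K(\pi,1)$.

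Assembling these via the exact sequence, and passing to a torsion-free finite-index subgroup in the standard way (a Borel-construction union of local classifying spaces fibered over a finite model for $Q$), yields the desired finite simplicial $K(\Gamma,1)$. Once this is in hand, the equality of the ordinary and $L^2$-Euler characteristics is automatic: for any group $\Gamma$ with a finite $K(\Gamma,1)$ complex $X$, Atiyah's $L^2$-index formula gives $\chi(\Gamma) = \chi(X) = \sum_i (-1)^i \dim_{\N(\Gamma)} H_i^{(2)}(\widetilde{X}) = \chi^{(2)}(\Gamma)$, matching the $L^2$-Euler characteristic furnished by Theorem~\ref{thm:stabilizers have L2-EC}. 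The main obstacle I foresee is the canonicity step: promoting ``minimal $\Lambda$'' to a genuine invariant of the homotopy class $[f]$ preserved by the full stabilizer $\Gamma$, rather than of just one particular representative. This is precisely where incompressibility must do real work, and it is the feature absent in the general setting of Theorem~\ref{thm:stabilizers have L2-EC}, explaining the discrepancy between the two statements.
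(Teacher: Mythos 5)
Your proposal takes a genuinely different route from the paper, but it has a gap at precisely the place you flag, and that gap is not closable: the ``canonicity'' step fails. The paper does \emph{not} isolate a single preferred transverse representative; instead it builds the contractible complex $\tps$ of \emph{all} strict transverse maps realizing $(\Sigma,f)$ (Definition \ref{def:the complex of transverse maps}, Theorem \ref{thm:X contractible}), shows $\mcg(f)$ acts on it as a $\Gamma$-CW-complex (Lemma \ref{lem:|T|-is-a-G-CW-complex}), and observes that in the incompressible case every element of $\T$ is filling (Lemma \ref{lem:incompressible iff T_infty empty}), hence the action is free (Lemma \ref{lem:-acts-freely}) with finite quotient (Corollary \ref{cor:finitely many orbits of filling cells}); the quotient is the finite $K(\Gamma,1)$. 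Your single-normal-form plan would, if it worked, force $\Gamma$ to be finite: any filling transverse $\Lambda$ cuts $\Sigma$ into discs, and by the Alexander lemma argument of Lemma \ref{lem:-acts-freely} the stabilizer of a filling system in $\mcg(f)$ is trivial, so if $\Gamma$ preserved a fixed $\Lambda$ you would get $\Gamma$ acting freely on a finite set of components, i.e.\ $\Gamma$ finite. But the paper's own examples show $\Gamma$ is often infinite in the incompressible case --- e.g.\ $\mcg(f)\cong\mathbb{Z}$ for the unique genus-$2$ solution to $[u_1,v_1][u_2,v_2]=[x,y][x,z]$, and $\mcg(f)\cong\mathbb{Z}\times\F_2$ for $[x,y][x,z][x,t]$ --- precisely because there are infinitely many inequivalent filling transverse maps permuted freely by $\Gamma$. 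The contractible complex $\tps$ is the right replacement for a canonical $\Lambda$.

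Two further points. First, you have the implication of incompressibility backwards: a closed component $\gamma$ of $\Lambda = f^{-1}(p)$ has $f(\gamma)$ a point, hence nullhomotopic, so by the definition of incompressible $\gamma$ must itself be nullhomotopic; these inessential curves are removed, so the minimal $\Lambda$ has \emph{no} closed components (this is exactly Proposition \ref{prop:incompressible as transverse maps}). Your claim that the closed components are essential, and the resulting graph-of-surfaces picture with Dehn-twist groups along annular necks, is what happens in the \emph{compressible} case and is what obstructs finiteness there (cf.\ the discussion in Section \ref{subsec:Nonfiniteness}). Second, once a finite free $K(\Gamma,1)$ is in hand, identifying the ordinary and $L^2$-Euler characteristics does not require Atiyah's index theorem per se: it follows directly from the cellular formula of Theorem \ref{thm:ec-as-alt-sum} applied to the freely-acted-upon contractible $\tps$, which is how the paper does it.
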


\subsubsection*{Main result}

Our main theorem shows that the Laurent expansion of $\trwl\left(n\right)$
is given by Euler characteristics of both the stabilizers of maps
in $\surfaces\left(\wl\right)$ and also the Euler characteristics
of the surfaces. When the $L^{2}$-Euler characteristic of a group
$\Gamma$ is defined, we denote it by $\chi^{\left(2\right)}\left(\Gamma\right)$\marginpar{$\chi^{\left(2\right)}\left(\Gamma\right)$}.
\begin{thm}[Main Theorem]
\label{thm:main}Let $\wl\in\F_{r}$. For large enough\footnote{As in Proposition \ref{prop:rational expression}, the equality (\ref{eq:main thm})
holds for every $n\ge\max_{x\in B}L_{x}$, where $L_{x}$ is the total
number of appearance of $x^{+1}$ in the words $\wl$. See also Section
\ref{sec:A-Formula-for trwl}.} $n$, 
\begin{equation}
\trwl\left(n\right)=\sum_{\left[\left(\Sigma,f\right)\right]\in\surfaces\left(\wl\right)}\chi^{\left(2\right)}\left(\mcg\left(f\right)\right)\cdot n^{\chi\left(\Sigma\right)}.\label{eq:main thm}
\end{equation}
Indeed, for any given exponent $\chi_{0}$, there are only finitely
many non-zero terms of order $n^{\chi_{0}}$, namely, the set 
\[
\left\{ \left[\left(\Sigma,f\right)\right]\in\surfaces\left(\wl\right)~\,\middle|~\,\chi\left(\Sigma\right)=\chi_{0}~\mathrm{and}~\chi^{\left(2\right)}\left(\mcg\left(f\right)\right)\ne0\right\} 
\]
is finite.
\end{thm}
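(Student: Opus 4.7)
\emph{Plan.} My approach starts from the explicit combinatorial formula for $\trwl(n)$ furnished by the Weingarten calculus for $\U(n)$. For each generator $x_i$, the occurrences of $x_i^{+1}$ and $x_i^{-1}$ across $\wl$ give two equinumerous index-sets, and integrating out each Haar-random $A_i$ expresses $\trwl(n)$ as a sum over tuples $(\sigma_i,\tau_i)_{i=1}^r$ of pairs of permutations/matchings between these sets, weighted by a product of Weingarten function values and a power of $n$ equal to the number of closed index-cycles obtained when the matchings are used to glue the boundaries of the $\ell$ polygonal discs determined by $\wl$.

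Next I would interpret each combinatorial object geometrically. A tuple of matchings produces a ribbon graph whose regular neighborhood is an oriented surface $\Sigma$ with $\ell$ labeled boundary components; collapsing each $x_i$-edge onto the $i$-th circle of $\wedger$ and each polygonal face to the wedge point $o$ yields a map $f\colon\Sigma\to\wedger$ such that $(\Sigma,f)$ is admissible for $\wl$ in the sense of Definition \ref{def:surface of w1,...,wl}. Conversely, every admissible pair is obtained (up to equivalence) this way. The index-cycle count for such a realization equals the number of faces of the ribbon decomposition, so once edges and vertices are accounted for, the attached power of $n$ is exactly $n^{\chi(\Sigma)}$. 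Rearranging the Weingarten sum first by the equivalence class $[(\Sigma,f)]\in\surfaces(\wl)$ of the produced pair, and then by the labeled realization within that class, presents $\trwl(n)$ in the form $\sum_{[(\Sigma,f)]} c(\Sigma,f)\cdot n^{\chi(\Sigma)}$, for some integer coefficient $c(\Sigma,f)$ that depends only on the equivalence class.

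The heart of the proof is the identification $c(\Sigma,f)=\chi^{(2)}(\mcg(f))$. My plan is to build a $\mcg(\Sigma)$-invariant CW-complex $X$ whose cells parametrize the ribbon-graph realizations contributing to the class $[(\Sigma,f)]$, with cell-stabilizers commensurable with $\mcg(f)$. The M\"obius-type cancellations inherent in the Weingarten expansion then line up, cell by cell, with the alternating sum defining an Euler characteristic of $X/\mcg(f)$. In the incompressible case, Theorem \ref{thm:K(g,1) for incompressible} provides a finite $K(\mcg(f),1)$ and the identification reduces to an ordinary Euler-characteristic computation; in general the bookkeeping must be done at the level of $L^2$-homology, where Theorem \ref{thm:stabilizers have L2-EC} guarantees the answer is a well-defined integer. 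Finally, the finiteness claim follows because for fixed $\chi_0$ there are only finitely many homeomorphism types of surfaces $\Sigma$ with $\chi(\Sigma)=\chi_0$ whose boundary maps to $\wedger$ represent the prescribed words $\wl$, and one checks that only finitely many of the corresponding classes have $\chi^{(2)}(\mcg(f))\neq 0$ (consistent with the rationality statement of Proposition \ref{prop:rational expression}).

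The main obstacle is unquestionably the step matching the Weingarten sum for an orbit of labeled realizations with the $L^2$-Euler characteristic of its $\mcg$-stabilizer. The difficulty is twofold: one has to produce the right $\mcg(\Sigma)$-CW model for the set of realizations, and then translate the purely algebraic identities among Weingarten values (which come from the character theory of the symmetric group and the M\"obius function of the partition lattice) into the alternating-sum-of-von-Neumann-dimensions language used to define $\chi^{(2)}$. All other steps -- the derivation of the Weingarten formula, the ribbon-graph dictionary, the computation of the $n$-exponent as $\chi(\Sigma)$, and the finiteness argument -- are expected to be relatively routine by comparison.
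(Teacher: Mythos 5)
Your plan tracks the paper's architecture quite closely: start from the Weingarten formula, reinterpret matchings of letters as ribbon surfaces with maps to the bouquet, group the sum by equivalence class $[(\Sigma,f)]\in\surfaces(\wl)$, and then prove that the per-class coefficient equals $\chi^{(2)}(\mcg(f))$ by building a cell complex acted on by (a form of) the mapping class group. So at the outline level this is the right strategy. However, there are a few points where the plan as written would not go through, and two of them are the actual crux of the paper.

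First, your claim that the "cell-stabilizers are commensurable with $\mcg(f)$" does not match what actually happens, and I do not see how to make it work. In the paper's complex of transverse maps $\T(\Sigma,f)$, the group that acts is $\mcg(f)$ itself, and the stabilizer of a cell is either trivial (if the transverse map is filling, i.e.~cuts $\Sigma$ into discs) or \emph{infinite} (if it is not filling). The filling cells correspond exactly to the elements of $\matchr^*(\wl;\Sigma,f)$ and carry the sign $(-1)^{|\kappa(\sigma)|}$; the non-filling cells are the ones that would seemingly ruin the count, but they are handled by Cheeger--Gromov: the infinite stabilizers contain a normal infinite abelian (hence amenable) subgroup generated by Dehn twists, so they lie in $\B_\infty$ and contribute zero to the $L^2$-Euler characteristic. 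Without this dichotomy and the Cheeger--Gromov input, the "Möbius cancellations lining up cell by cell with an alternating sum" does not happen; the alternating sum over orbits of filling cells is what equals $\chi^{(2)}(\mcg(f))$, and you need the non-filling cells to drop out precisely because their stabilizers are in $\B_\infty$.

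Second, and more seriously, your plan omits the contractibility of the relevant complex. The formula $\chi^{(2)}(\mcg(f)) = \chi^{(2)}(X,\mcg(f))$ for a $\mcg(f)$-CW-complex $X$ only holds when $X$ is contractible (plus the isotropy conditions above). The paper devotes all of Section \ref{sec:A-Complex-of} to this (Theorem \ref{thm:X contractible}), via a subtle deformation-retraction argument involving $H$-moves, maximal systems of null-arcs, a filtration by depth, and an auxiliary poset $\L$ of loose bi-transverse maps. This is the hardest piece of the entire argument, and your plan treats it as a bookkeeping step. Finally, a smaller point: you say the power of $n$ attached to a ribbon realization "is exactly $n^{\chi(\Sigma)}$"; in the raw Weingarten formula (Theorem \ref{thm:trwl as finite sum}) the exponent is the number of $o$-discs, and turning the rational-function Weingarten weights into integer coefficients $(-1)^{|\kappa(\sigma)|}$ times $n^{\chi(\sigma)}$ requires the Collins asymptotic expansion of $\wg_L$ (Theorem \ref{thm:properties of Wg}\eqref{enu:Asymptotic-expansion of Wg}), which is what produces the infinite Laurent sum of Theorem \ref{thm:combinatorial Laurent expansion of trwl}. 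This intermediate theorem is what makes the cell-by-cell correspondence even possible to state.
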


The last statement of the theorem explains why the theorem yields
a well-defined coefficient for every term in the Laurent series of
$\trwl\left(n\right)$. However, we do not know yet how to derive
from this theorem the rationality of $\trwl\left(n\right)$, which
we prove directly using Weingarten calculus - see Proposition \ref{prop:rational expression}
and Section \ref{sec:A-Formula-for trwl}. This rationality means
that in a way we do not yet fully understand, the $L^{2}$-Euler characteristics
of different pairs $\left[\left(\Sigma,f\right)\right]\in\surfaces\left(\wl\right)$
``know about each other'' -- see Question \ref{enu:rationality from Main}
in Section \ref{sec:Open-Questions}.\medskip{}

As an immediate corollary of Proposition \ref{prop:rational expression}
and Theorem \ref{thm:main}, we get an asymptotic upper bound on $\trwl\left(n\right)$.
Denote\marginpar{${\scriptstyle \protect\ch\left(\protect\wl\right)}$}
\[
\ch\left(\wl\right)\stackrel{\mathrm{def}}{=}\max\left\{ \chi\left(\Sigma\right)\,\middle|\,\left[\left(\Sigma,f\right)\right]\in\surfaces\left(\wl\right)\right\} ,
\]
where $\ch\left(\wl\right)=-\infty$ if $\surfaces\left(\wl\right)$
is empty, which is equivalent to $w_{1}\cdots w_{\ell}\notin\left[\F_{r},\F_{r}\right]$
- see Claims \ref{claim: tr=00003D0 for non-balanced words} and \ref{cor:surfaces not empty}.
A well-known fact going back at least to Culler \cite[Paragraph 1.1]{CULLER}
is that $\mathrm{chi}\left(w\right)=1-2\cdot\mathrm{cl}\left(w\right)$,
where $\mathrm{cl}\left(w\right)$ is the commutator length of $w$,
defined as\footnote{A more general concept of the commutator length was introduced by
Calegari (e.g.~\cite[Definition 2.71]{calegari2009scl}), and applies
to finite sets of words $\wl$. This number can be related, under
certain restrictions, to $\ch\left(\wl\right)$, in a similar fashion
to the $\ell=1$ case.}\marginpar{$\protect\cl\left(w\right)$}
\[
\mathrm{cl}\left(w\right)\stackrel{\mathrm{def}}{=}\min\left\{ g\,\middle|\,w=\left[u_{1},v_{1}\right]\cdots\left[u_{g},v_{g}\right]~\mathrm{with}~u_{i},v_{i}\in\F_{r}\right\} .
\]
Thus,
\begin{cor}
\label{cor:chi and cl}Let $\wl\in\F_{r}$. Then 
\begin{equation}
\trwl\left(n\right)=O\left(n^{\ch\left(\wl\right)}\right).\label{eq:upper bound on trwl}
\end{equation}
In particular, for $w\in\F_{r}$, 
\begin{equation}
\trw\left(n\right)=O\left(n^{\ch\left(w\right)}\right)=O\left(\frac{1}{n^{2\cdot\cl\left(w\right)-1}}\right).\label{eq:upper bound on trw}
\end{equation}
\end{cor}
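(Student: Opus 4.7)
The plan is to combine Theorem~\ref{thm:main} with Proposition~\ref{prop:rational expression} in a nearly mechanical way. The main theorem expresses $\trwl(n)$, for sufficiently large $n$, as a sum over $\surfaces(\wl)$ of terms of the shape $c\cdot n^{\chi(\Sigma)}$ with integer coefficient $c=\chi^{(2)}(\mcg(f))$. Grouping contributions by their common exponent $\chi_{0}=\chi(\Sigma)$ and using the finiteness clause at the end of Theorem~\ref{thm:main}, I obtain a formal series $\sum_{\chi_{0}\le \ch(\wl)} a_{\chi_{0}}\, n^{\chi_{0}}$ with integer coefficients $a_{\chi_{0}}\in\mathbb{Z}$, supported on exponents at most $\ch(\wl)$ by the very definition of $\ch(\wl)$.

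Since Proposition~\ref{prop:rational expression} identifies $\trwl(n)$ with an element of $\mathbb{Q}(n)$, it has a genuine Laurent expansion at infinity, and by uniqueness this expansion must coincide with the one extracted from the right-hand side of (\ref{eq:main thm}). Consequently the largest nonzero power of $n$ in the Laurent series of $\trwl(n)$ is at most $\ch(\wl)$, and (\ref{eq:upper bound on trwl}) follows. The degenerate case $\surfaces(\wl)=\emptyset$ is handled separately: by Claims~\ref{claim: tr=00003D0 for non-balanced words} and~\ref{cor:surfaces not empty} this corresponds to $w_{1}\cdots w_{\ell}\notin[\F_{r},\F_{r}]$, in which case $\trwl(n)\equiv 0$ and the convention $\ch(\wl)=-\infty$ makes the bound trivially valid.

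For the second assertion (\ref{eq:upper bound on trw}), I would specialize to $\ell=1$ and invoke the cited classical identity $\ch(w)=1-2\cl(w)$ of Culler to rewrite $n^{\ch(w)}$ as $n^{1-2\cl(w)}=1/n^{2\cl(w)-1}$; this also covers the case $w\notin[\F_{r},\F_{r}]$, where $\cl(w)=\infty$ and both sides vanish. The corollary is carried entirely by Theorem~\ref{thm:main}: since all structural content, namely finiteness at each power of $n$, integrality of coefficients, and rationality of $\trwl(n)$, has already been established, there is no real obstacle here. The only substantive step is recognizing that the formal ``series with finite contribution at each power'' coming out of (\ref{eq:main thm}) is in fact \emph{the} Laurent expansion of the rational function $\trwl(n)$, so that an upper bound on the exponents appearing in the former gives a genuine asymptotic bound on the latter.
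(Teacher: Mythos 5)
Your proof is correct, and it follows the route that the introduction advertises (``As an immediate corollary of Proposition~\ref{prop:rational expression} and Theorem~\ref{thm:main}\ldots''). The paper's actual proof, given in Section~\ref{subsec:Maps-from-the}, is more economical: it invokes neither the main theorem nor the Laurent-series interpretation, but only Theorem~\ref{thm:trwl as finite sum}, the purely Weingarten-calculus result already established at that point. That theorem writes $\trwl(n)$ as a \emph{finite} sum over $\sigma\in\match^{\kappa\equiv1}(\wl)$, with the $\sigma$-summand equal to $c\cdot n^{\chi(\sigma)}+O\left(n^{\chi(\sigma)-2}\right)$ for some nonzero integer $c$, and then simply observes that $(\Sigma_\sigma,f_\sigma)$ is admissible for $\wl$, so $\chi(\sigma)\le\ch(\wl)$ by definition of $\ch$. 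Since the sum is finite, the bound $O\left(n^{\ch(\wl)}\right)$ follows with no discussion of infinite series, grouping by exponent, or uniqueness of Laurent expansions. The two approaches buy different things: yours emphasizes the conceptual picture --- read the degree directly off the sum over $\surfaces(\wl)$ --- and explicitly treats the degenerate $\surfaces(\wl)=\emptyset$ case, which the paper leaves implicit; the paper's proof is logically leaner, since Theorem~\ref{thm:main} is itself built on Theorems~\ref{thm:trwl as finite sum} and~\ref{thm:combinatorial Laurent expansion of trwl} plus all the topological machinery of Sections~\ref{sec:A-Complex-of}--\ref{sec:The-Action-of MCG(f) on T}, so going through the main theorem is pulling in a much heavier result than the corollary actually requires. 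Both are sound, but be aware that if you cite Theorem~\ref{thm:main} for this you should verify (as indeed holds) that the proof of Theorem~\ref{thm:main} nowhere uses Corollary~\ref{cor:chi and cl}, to avoid circularity; the paper sidesteps this concern entirely by using only Section~\ref{sec:A-Formula-for trwl} material.
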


\begin{rem}
Recall that the Euler characteristic of an orientable compact surface
of genus-$g$ and $\ell$ boundary components is $2-2g-\ell$. Thus,
Theorem \ref{thm:main} yields that the Laurent series of $\trwl\left(n\right)$
is supported on odd (respectively even) powers of $n$ if $\ell$
is odd (respectively even). This is a nice interpretation of a fact
that can also be derived directly from analysis involving Weingarten
calculus.
\end{rem}

\subsubsection*{Algebraic interpretation}

The connection between the commutator length of a word $w$ and $\ch\left(w\right)$
led to the algebraic interpretation (\ref{eq:upper bound on trw})
in Corollary \ref{cor:chi and cl}. This algebraic perspective also
gives an interesting interpretation to our main theorem. Because a
connected surface $\Sigma$ is a $K\left(\pi_{1}\left(\Sigma\right),1\right)$-space,
the Dehn-Nielsen-Baer \label{Dehn-Nielsen-Baer}Theorem states there
is a natural isomorphism between $\mcg\left(\Sigma\right)$ and a
certain subgroup of $\mathrm{Aut}\left(\pi_{1}\left(\Sigma\right)\right)$
(see, for example, \cite[Chapter 8]{FM} for its version for closed
surfaces, and \cite[Thm 2.4]{MP}). For example, if $\Sigma_{g,1}$
is a connected genus $g$ surface with one boundary component, then
$\pi_{1}\left(\Sigma\right)\cong\F_{2g}=\F\left(a_{1},b_{1},\ldots,a_{g},b_{g}\right)$,
and $\mcg\left(\Sigma\right)$ is isomorphic to the stabilizer of
$\left[a_{1},b_{1}\right]\cdots\left[a_{g},b_{g}\right]$ in $\mathrm{Aut}\left(\F_{2g}\right)$
-- stabilizing this element reflects the fact that mapping classes
in $\mcg\left(\Sigma\right)$ fix the boundary of $\Sigma_{g,1}$. 

Along these lines, the set $\surfaces\left(w\right)$ can be interpreted
as equivalence classes of solutions to the equations 
\begin{equation}
\left[u_{1},v_{1}\right]\cdots\left[u_{g},v_{g}\right]=w\label{eq:commutator-equation}
\end{equation}
with $u_{i},v_{i}\in\F_{r}$ and varying $g$, where the equivalence
relation is given by the action of the stabilizer $\mathrm{Aut}\left(\F_{2g}\right)_{\left[a_{1},b_{1}\right]\cdots\left[a_{g},b_{g}\right]}$.
In particular, the pairs $\left[\left(\Sigma,f\right)\right]\in\surfaces\left(w\right)$
with maximal $\chi\left(\Sigma\right)$ correspond to equivalence
classes of solutions to (\ref{eq:commutator-equation}) with $g=\cl\left(w\right)$
minimal. Often, these solutions have trivial stabilizers, in which
case $\chi^{\left(2\right)}\left(\mcg\left(f\right)\right)=1$. For
example, the stabilizer is trivial if the solutions consist of $2g$
free words, or, equivalently, if $f\colon\Sigma\to\wedger$ is $\pi_{1}$-injective
-- see Lemma \ref{lem:pi1-injective means trivial stab}. Thus, one
could say 
\begin{lyxcode}
\textrm{\emph{``The~leading~coefficient~of~$\trw(n)$~counts~the~number~of~equivalence~classes~of~solutions~to~(\ref{eq:commutator-equation})~with~~$g=\cl\left(w\right)$,~up~to~corrections~for~the~existence~of~non-trivial~stabilizers.''}}
\end{lyxcode}

\subsubsection*{Examples\label{subsec:Examples}}

Let us now illustrate Theorem \ref{thm:main} and Corollary \ref{cor:chi and cl}
on some of the examples from Table \ref{tab:examples}. The techniques
by which we obtain some of the details in the following cases are
explained throughout the paper, especially in Section \ref{subsec:Classifying-all-incompressible}. 
\begin{itemize}
\item The commutator length of $w=\left[x,y\right]$ is obviously one, and
there is a single equivalence class of solutions to the equation $\left[u,v\right]=w$,
or, equivalently, a single element $\left[\left(\Sigma,f\right)\right]\in\surfaces\left(w\right)$
with $\chi\left(\Sigma\right)=\ch\left(w\right)=-1$. The stabilizer
$\mcg\left(f\right)$ is trivial and so the first term in the Laurent
expansion of $\tr_{\left[x,y\right]}\left(n\right)$ is $\frac{1}{n}$.
Every other element $\left[\left(\Sigma,f\right)\right]\in\surfaces\left(w\right)$
has $\chi\left(\Sigma\right)\le-3$ and $\chi^{\left(2\right)}\left(\mcg\left(f\right)\right)=0$.
\item We have $\cl\left(\left[x^{3},y\right]\right)=1$. There are exactly
three in-equivalent solutions to $\left[u,v\right]=w$: $\left[x^{3},y\right]$,
$\left[x^{3},yx\right]$ and $\left[x^{3},yx^{2}\right]$. In contrast,
the solution $\left[x^{3},yx^{3}\right]$ is equivalent to $\left[x^{3},y\right]$
because the automorphism of $\F_{2}=\F\left(a,b\right)$ fixing $a$
and mapping $b\to ba$, stabilizes $\left[a,b\right]$. In this case,
all three solutions have trivial stabilizers, hence the leading term
of $\tr_{\left[x^{3},y\right]}$ is $\frac{3}{n}$. It seems like
there are no other elements of $\surfaces\left(\left[x^{3},y\right]\right)$
with non-vanishing $\chi^{\left(2\right)}\left(\mcg\left(f\right)\right)$
(at least there are none with $\chi\left(\Sigma\right)\ge-7$).
\item In general, if $\cl\left(w\right)=1$, then every solution to $\left[u,v\right]=w$
has trivial stabilizer, because $u$ and $v$ are necessarily free
words inside $\F_{r}$ (namely, $\left\langle u,v\right\rangle \cong\F_{2}$).
Thus, for words of commutator length one we have $\trw\left(n\right)=\frac{K}{n}+O\left(\frac{1}{n^{3}}\right)$,
where $K$ is the number of equivalence classes of ways to write $w$
as a commutator. Likewise, every solution to (\ref{eq:commutator-equation})
with $\left\langle u_{1},v_{1},\ldots,u_{g},v_{g}\right\rangle \cong\F_{2g}$
has a trivial stabilizer -- see Lemma \ref{lem:pi1-injective means trivial stab}.
\item For $w=\left[x,y\right]^{2}$ we have $\cl\left(w\right)=2$. There
is a single equivalence class of solutions to (\ref{eq:commutator-equation})
with $g=2$, and $\mcg\left(f\right)\cong\F_{5}$. As a bouquet of
five circles is a $K\left(\F_{5},1\right)$-space, we have $\chi\left(\F_{5}\right)=-4$.
This explains the leading term of $\tr_{\left[x,y\right]^{2}}$.
\item The somewhat surprising fact that $\cl\left(\left[x,y\right]^{3}\right)=2$
was pointed out in \cite{CULLER}. (Interestingly, Culler shows in
that paper that $\cl\left(\left[x,y\right]^{n}\right)=\left\lfloor \frac{n}{2}\right\rfloor +1$.)
For example, $\left[x,y\right]^{3}=\left[xyx^{-1},y^{-1}xyx^{-2}\right]\left[y^{-1}xy,y^{2}\right]$.
There are nine in-equivalent solutions in this case, each with a trivial
stabilizer. This explain the leading term $\frac{9}{n^{3}}$. There
is a single pair $\left[\left(\Sigma,f\right)\right]\in\surfaces\left(\left[x,y\right]^{3}\right)$
with $\chi\left(\Sigma\right)=-5$ and $\chi^{\left(2\right)}\left(\mcg\left(f\right)\right)\ne0$.
The stabilizer in this single pair satisfies $\chi^{\left(2\right)}\left(\mcg\left(f\right)\right)=81$.
This explain the term $\frac{81}{n^{5}}$.
\item The word $w=\left[x,y\right]\left[x,z\right]$ has $\cl\left(w\right)=2$
and admits a single solution to (\ref{eq:commutator-equation}) with
$g=2$. The stabilizer of this solution is isomorphic to $\mathbb{Z}$
and $\chi^{\left(2\right)}\left(\mathbb{Z}\right)=\chi\left(\mathbb{Z}\right)=0$.
Note that this explains why the coefficient of $n^{-3}$ vanishes,
but not why $\tr_{w}\left(n\right)=0$. 
\item The word $w=\left[x,y\right]\left[x,z\right]\left[x,t\right]$ has
$\cl\left(w\right)=3$ and admits a single solution to (\ref{eq:commutator-equation})
with $g=3$. The stabilizer of this solution is isomorphic to $\mathbb{Z}\times\F_{2}$
and $\chi^{\left(2\right)}\left(\mathbb{Z}\times\F_{2}\right)=\chi\left(\mathbb{Z}\times\F_{2}\right)=0$
(consult \cite[Page 59]{MP} for more details).
\item For $w_{1}=x^{2}y^{2}$ and $w_{2}=xy^{-3}x^{-3}y$ we have $\ch\left(w_{1},w_{2}\right)=-2$.
There are four $\left[\left(\Sigma,f\right)\right]\in\surfaces\left(w_{1},w_{2}\right)$
with $\chi\left(\Sigma\right)=-2$, each with a trivial stabilizer,
hence the leading term $\frac{4}{n^{2}}$. All $\chi\left(\Sigma\right)=-4$
solutions have $\chi^{\left(2\right)}\left(\mcg\left(f\right)\right)=0$,
while there is a single solution with non-vanishing contribution and
$\chi\left(\Sigma\right)=-6$, for which $\chi^{\left(2\right)}\left(\mcg\left(f\right)\right)=-16$.
\item For every $w\ne1$, $\ch\left(w,w^{-1}\right)=0$ because there is
an obvious annulus in\linebreak{}
$\surfaces\left(w,w^{-1}\right)$. In both examples of this sort in
Table \ref{tab:examples}, there is a single such annulus, and with
a trivial stabilizer, hence the leading term $1$. In both cases there
are no other incompressible pairs in $\surfaces\left(w,w^{-1}\right)$,
but while for $w=x^{2}yxy^{-1}$, it seems that every compressible
pair has vanishing contribution to $\tr_{w,w^{-1}}\left(n\right)$,
for $w=x^{2}y^{2}xy^{-1}$ there is a compressible pair $\left[\left(\Sigma,f\right)\right]$
with $\chi\left(\Sigma\right)=-4$ and $\chi^{\left(2\right)}\left(\mcg\left(f\right)\right)=-4$.
\end{itemize}

\subsubsection*{Compressible vs.~incompressible pairs $\left[\left(\Sigma,f\right)\right]\in\protect\surfaces\left(\protect\wl\right)$}

The difference between compressible and incompressible pairs $\left[\left(\Sigma,f\right)\right]\in\surfaces\left(\wl\right)$
is already apparent from the fact that Theorem \ref{thm:K(g,1) for incompressible},
or at least its proof, apply only to the incompressible case. The
crucial property of incompressible pairs will be pointed out in Section
\ref{subsec:Incompressible-maps} in the sequel of the paper. But
there are some further differences we point out here.

First, there are finitely many incompressible elements in $\surfaces\left(\wl\right)$
-- see Corollary \ref{cor:finitely many incompressible}. Because
highest-Euler-characteristic elements are always incompressible, we
deduce there are finitely many elements $\left[\left(\Sigma,f\right)\right]\in\surfaces\left(\wl\right)$
with $\chi\left(\Sigma\right)=\ch\left(\wl\right)$. In addition,
as the examples above illustrate, the stabilizer $\mcg\left(f\right)$
of an incompressible solution is often trivial.

In contrast, there are infinitely many compressible elements in $\sur\left(\wl\right)$.
In fact, there are often even infinitely many compressible elements
$\left[\left(\Sigma,f\right)\right]$ with $\chi\left(\Sigma\right)=\chi_{0}$
for a given non-maximal $\chi_{0}$, namely, for $\chi_{0}=\ch\left(\wl\right)-2k$
with $k\in\mathbb{Z}_{\ge1}$, although, as stated in Theorem \ref{thm:main},
almost all of them have zero contribution to $\trwl\left(n\right)$.
Moreover, the stabilizer of a compressible pair is never trivial:
a Dehn twist along a null-curve is a non-trivial element in the stabilizer.

\subsection{Related lines of work}

The evaluation of the integrals in (\ref{eq:def of Tr_w}) is a fundamental
issue relating to several different areas of mathematics.

\paragraph{I. Matrix integrals in Gaussian models}

The connection between the enumeration of graphs on surfaces and matrix
integrals in the classical GUE, GOE and GSE models was first established
by 't Hooft \cite{Hooft1974} and later rediscovered by Harer and
Zagier \cite{HARERZAGIER}. For example, let $\mathrm{GUE}\left(n\right)$
denote the probability space of $n\times n$ Hermitian complex matrices
endowed with complex Gaussian measure on each entry, where the $\left(i,j\right)$
entry is independent of all other entries except for $\left(j,i\right)$.
The following equation \cite[Proposition 3.3.1]{Lando2004} illustrates
this connection: 
\begin{equation}
\mathbb{E}_{H\in\mathrm{GUE}\left(n\right)}\left[\left(\mathrm{tr}H\right)^{\alpha_{1}}\left(\mathrm{tr}H^{2}\right)^{\alpha_{2}}\cdots\left(\mathrm{tr}H^{k}\right)^{\alpha_{k}}\right]=\sum_{\sigma}n^{F\left(\sigma\right)}.\label{eq:GUE expectation}
\end{equation}
The summation on the right hand side is over ribbon graphs (also known
as fat-graphs) with $\alpha_{i}$ vertices of degree $i$ for $i=1,\ldots,k$,
where $\sigma$ is a perfect matching of the half-edges emanating
from these vertices. The exponent $F\left(\sigma\right)$ is the number
of faces in the embedding of the resulting ribbon graph on the surface
of smallest possible genus. We stress that (\ref{eq:GUE expectation})
is only an illustration of the theory, and there are many generalizations
(e.g., for integrals over tuples of independent Hermitian matrices)
and deep applications. For an excellent presentation of this theory,
we refer the reader to \cite[Chapter 3]{Lando2004}.

There are many similarities between this by-now classical theory and
the theory we develop in the current paper. For example, apart from
the natural emergence of surfaces, the combinatorial formulas for
$\trwl\left(n\right)$ we develop in Section \ref{sec:A-Formula-for trwl}
also involve a summation over perfect matchings. In addition, these
matrix integrals over $\mathrm{GUE}$ were used, inter alia, to compute
the Euler characteristic of the mapping class group of closed surfaces
with punctures \cite{HARERZAGIER,Penner1988}. In fact, these Euler
characteristics appear as coefficients in certain generating functions
for integrals as in (\ref{eq:GUE expectation}) (e.g., \cite[Theorem 1.1 and Corollary 3.1]{Penner1988}).

There are also substantial differences. Among others, $\U\left(n\right)$
being a group endowed with Haar measure means that integrals as in
(\ref{eq:GUE expectation}) over a single Haar-random element can
be completely computed using theoretical properties of the Haar measure,
as was done in \cite{Diaconis1994}, and the computation becomes more
interesting when multiple random elements are involved. It also means
that word-measure on $\U\left(n\right)$ have nice properties, such
as being $\Aut\left(\F_{r}\right)$-invariant, as explained in the
following paragraph. In addition, the crucial role played here by
maps from the surfaces to the bouquet of circles is completely absent
in the classical theory. Another difference is that the summation
in the right hand side of (\ref{eq:GUE expectation}) is finite, with
the exponents increasing as the Euler characteristic of the surface
decreases. The best analogue in the current paper (\ref{eq:Laurent-formula})
involves an infinite summation with exponents decreasing together
with the Euler characteristic of the surfaces. Finally, there is also
a big difference in the role played by Euler characteristics of (subgroups
of) the mapping class groups of surfaces.

\paragraph{II. Word measures on groups}

The same way $w\in\F_{r}$ induces a measure on $\U\left(n\right)$,
it also induces a probability measure on any compact group (consult
\cite{hui2015waring} for recent results and references concerning
the image of the word map $w\colon G^{r}\to G$ on compact Lie groups
including $\U\left(n\right)$). By showing that Nielsen moves on $w$
do not affect the resulting word measure, it is easy to see that two
words in the same $\mathrm{Aut}\left(\F_{r}\right)$-orbit in $\F_{r}$
induce the same measure on every compact group (see \cite[Section 2.2]{MP}
for a proof). But is this the only reason for two words to have such
a strong connection? A version of the following conjecture appears,
for example, in \cite[Question 2.2]{Amit2011} and in \cite[Conjecture 4.2]{Shalev2013}.
\begin{conjecture}
\label{conj:aner}If two words $w_{1},w_{2}\in\F_{r}$ induce the
same measure on every compact group, then there exists $\phi\in\mathrm{Aut}\left(\F_{r}\right)$
with $w_{2}=\phi\left(w_{1}\right)$.
\end{conjecture}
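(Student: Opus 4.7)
The plan is to translate the hypothesis, via Theorem \ref{thm:main}, into a large package of topological invariants attached to $w_1$ and $w_2$, and then to argue that this package separates $\Aut(\F_r)$-orbits. Since the $w$-measure on $\U(n)$ is completely determined by $\trwl$-type integrals whose entries are powers of $w$, equality of word-measures on every $\U(n)$ is equivalent to $\tr_{w_1^{\alpha_1},\ldots,w_1^{\alpha_\ell}}(n) = \tr_{w_2^{\alpha_1},\ldots,w_2^{\alpha_\ell}}(n)$ for every $\ell$ and every tuple $(\alpha_1,\ldots,\alpha_\ell) \in \Z^\ell$. Theorem \ref{thm:main} then yields a term-by-term Laurent identity: for each $\chi_0 \in \Z$ and each such tuple,
\[
\sum_{\substack{[(\Sigma,f)]\in\surfaces(w_1^{\alpha_1},\ldots,w_1^{\alpha_\ell}) \\ \chi(\Sigma)=\chi_0}} \chi^{(2)}(\mcg(f)) \;=\; \sum_{\substack{[(\Sigma,f)]\in\surfaces(w_2^{\alpha_1},\ldots,w_2^{\alpha_\ell}) \\ \chi(\Sigma)=\chi_0}} \chi^{(2)}(\mcg(f)).
\]

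The second step is to extract concrete algebraic content. The leading term of $\trw(n)$ forces $\cl(w_1) = \cl(w_2) = g$, and its coefficient records the stabilizer-weighted count of $\Aut(\F_{2g})_{[a_1,b_1]\cdots[a_g,b_g]}$-equivalence classes of commutator factorizations $[u_1,v_1]\cdots[u_g,v_g] = w$; when the stabilizers are trivial -- the typical case, for instance when the $(u_i,v_i)$ generate a free subgroup -- this is simply the number of such factorizations modulo change-of-basis. Iterating over tuples $\wl$, by taking powers of a single $w$ or by concatenating with fixed auxiliary words, amplifies this into a large family of matching equivalence-class counts and stabilizer invariants. Because the hypothesis ranges over \emph{all} compact groups, one also obtains the finite-group statistic $|w_1^{-1}(g)| = |w_2^{-1}(g)|$ for every $g$ in every finite group $G$, and analogous orthogonal and symplectic trace identities (patterned after the present paper) should supply yet more data.

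The main obstacle -- and the reason the statement is presently a conjecture -- is the final rigidity step: showing that this complete collection of stabilizer-weighted surface classes and finite-group statistics actually determines $w$ up to $\Aut(\F_r)$. A natural line of attack is to begin with $\cl(w)=1$, where the set of minimal commutator factorizations is finite and combinatorially tractable (as the examples in Section \ref{subsec:Examples} illustrate), and to try to reconstruct $w$ via a Whitehead-type argument from these factorizations together with the lower-order surface data. One could then proceed by induction on $\cl(w)$, using the higher-genus contributions to the Laurent series to peel off an innermost commutator. A more modest intermediate target would be to show that the first few Laurent coefficients already recover a coarse invariant of $w$ -- say its primitivity rank or its Stallings core graph up to isomorphism -- which would at least settle the conjecture in many special cases. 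A full resolution, however, appears to require a genuinely new combinatorial rigidity input beyond what Theorem \ref{thm:main} itself provides.
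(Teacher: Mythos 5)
Conjecture \ref{conj:aner} is stated in the paper as an \emph{open problem} and is not proved there, so there is no author proof against which to compare your write-up. You recognize this: what you present is a programme, not a proof, and your closing sentence -- that a full resolution ``appears to require a genuinely new combinatorial rigidity input beyond what Theorem \ref{thm:main} itself provides'' -- agrees with the paper, which also leaves the question open and lists related problems in Section \ref{sec:Open-Questions}.

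One concrete obstruction that you should fold into the first step of your outline is the remark immediately following Corollary \ref{cor:can hear scl}: for every $w\in\F_{r}$ and every $n$, the $w$-measure and the $w^{-1}$-measure on $\U\left(n\right)$ coincide, whereas $w$ and $w^{-1}$ in general lie in \emph{distinct} $\mathrm{Aut}\left(\F_{r}\right)$-orbits. Consequently the entire ``package of topological invariants'' you extract from Theorem \ref{thm:main} is automatically symmetric under $w\mapsto w^{-1}$ and can never, by itself, separate those orbits; this shows that the hypothesis ranging over \emph{all} compact groups is genuinely needed and that the crucial rigidity step must exploit non-unitary data, which the paper's machinery does not touch. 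Your proposed induction on $\cl\left(w\right)$ is a sensible entry point -- the sequel \cite{mp2019surface}, cited right after the conjecture, settles precisely the special case where $w$ lies in the $\mathrm{Aut}\left(\F_{r}\right)$-orbit of $\left[x_{1},y_{1}\right]\cdots\left[x_{g},y_{g}\right]$ using Corollary \ref{cor:chi and cl} -- but as you correctly conclude, the general statement remains a conjecture and no step of Theorem \ref{thm:main} or its corollaries supplies the missing separating argument.
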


\label{primitivity conjecture in S_n}A special case of this conjecture
deals with the $\mathrm{Aut}\left(\F_{r}\right)$-orbit of the single-letter
word $x_{1}$, namely, with the set of primitive words. Several researchers
have asked whether words inducing the Haar measure on every compact
group are necessarily primitive. This was settled to the affirmative
in \cite[Theorem 1.1]{PP15} using word measures on symmetric groups.
In subsequent work \cite{mp2019surface}, we use the results in this
paper and, mainly, Corollary \ref{cor:chi and cl}, to prove that
if a word $w$ induces the same measure as $u_{g}=\left[x_{1},y_{1}\right]\cdots\left[x_{g},y_{g}\right]$
on every compact group then $w=\phi\left(u_{g}\right)$ for some $\phi\in\mathrm{Aut}\left(\F_{r}\right)$.

Short of proving Conjecture \ref{conj:aner}, one could hope to collect
as many invariants of words as possible that can be determined by
word measures induced on groups. For example, $\cl\left(w\right)$,
the commutator length of a word, and more generally, $\ch\left(\wl\right)$,
the highest possible Euler characteristic of a surface in $\surfaces\left(\wl\right)$,
play an important role in our results. However, because the coefficient
of $n^{\ch\left(\wl\right)}$ in $\trwl\left(n\right)$ occasionally
vanishes, it is not clear whether $\cl\left(w\right)$ or $\ch\left(\wl\right)$
are determined by word measures on $\U\left(n\right)$. 

In contrast, the measures do determine a related number, the \emph{stable
commutator length }of $w$. This algebraic quantity is defined by
\begin{equation}
\mathrm{scl}(w)\equiv\lim_{m\to\infty}\frac{\cl(w^{m})}{m}.\label{eq:scl}
\end{equation}
(There is an analogous definition for finite sets of words.) There
is a deep theory behind this invariant, and for background we refer
to the short survey \cite{CALWHATIS} and long one \cite{calegari2009scl}
by Calegari. Relying on the rationality result of Calegari \cite{CALRATIONAL}
that shows, in particular, that $\mathrm{scl}$ takes on rational
values in $\F_{r}$, we are able to show the following:
\begin{cor}
\label{cor:can hear scl}The stable commutator length of a word $w\in\left[\F_{r},\F_{r}\right]$
can be determined by the measures it induces on unitary groups in
the following way:

\begin{equation}
\mathrm{scl}\left(w\right)=\inf_{\ell>0;\,j_{1},\ldots,j_{\ell}>0}\frac{-\lim_{n\to\infty}\log_{n}\left|\tr_{w^{j_{1}},\ldots,w^{j_{\ell}}}\left(n\right)\right|}{2\left(j_{1}+\ldots+j_{\ell}\right)}.\label{eq:read scl}
\end{equation}
\end{cor}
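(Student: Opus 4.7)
The plan is to relate both sides of (\ref{eq:read scl}) to the topological quantity $\ch(w^{j_{1}},\ldots,w^{j_{\ell}})$ and then to apply Calegari's topological characterization of stable commutator length in free groups. The key observation is that $\surfaces(w^{j_{1}},\ldots,w^{j_{\ell}})$ coincides with the set of Calegari's admissible surfaces for $w\in[\F_{r},\F_{r}]$ whose $\ell$ boundary components wrap $w$ with positive degrees $j_{1},\ldots,j_{\ell}$ and total degree $j_{1}+\ldots+j_{\ell}$. Calegari's standard topological formula $\mathrm{scl}(\gamma)=\inf_{S}\frac{-\chi(S)}{2n(S)}$ therefore translates, in the notation of this paper, into the purely topological identity
\[
\mathrm{scl}(w)=\inf_{\ell>0;\,j_{1},\ldots,j_{\ell}>0}\frac{-\ch(w^{j_{1}},\ldots,w^{j_{\ell}})}{2(j_{1}+\ldots+j_{\ell})}. \tag{$\ast$}
\]

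The inequality between the right-hand side of (\ref{eq:read scl}) and $\mathrm{scl}(w)$ in the direction ``$\ge$'' is immediate from Corollary \ref{cor:chi and cl}: the estimate $|\tr_{w^{j_{1}},\ldots,w^{j_{\ell}}}(n)|=O(n^{\ch(w^{j_{1}},\ldots,w^{j_{\ell}})})$ yields $\lim_{n\to\infty}\log_{n}|\tr_{w^{j_{1}},\ldots,w^{j_{\ell}}}(n)|\le\ch(w^{j_{1}},\ldots,w^{j_{\ell}})$, so each term of the right-hand side infimum is at least $\frac{-\ch(w^{j_{1}},\ldots,w^{j_{\ell}})}{2(j_{1}+\ldots+j_{\ell})}$; taking infima and applying $(\ast)$ produces the bound.

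The reverse direction requires producing, for every $\epsilon>0$, a tuple on which the leading Laurent exponent $\chi_{*}$ of the rational function $\tr_{w^{j_{1}},\ldots,w^{j_{\ell}}}(n)$ satisfies $\frac{-\chi_{*}}{2(j_{1}+\ldots+j_{\ell})}<\mathrm{scl}(w)+\epsilon$. By Calegari's rationality theorem, $\mathrm{scl}(w)\in\mathbb{Q}$ and the infimum in $(\ast)$ is attained, so one may select an extremal tuple $(j_{1},\ldots,j_{\ell})$ together with an extremal pair $[(S,f)]\in\surfaces(w^{j_{1}},\ldots,w^{j_{\ell}})$ having $\chi(S)=\ch(w^{j_{1}},\ldots,w^{j_{\ell}})$. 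Moreover, by Calegari's polyhedral analysis of extremal surfaces, $f$ may be arranged to be $\pi_{1}$-injective; Lemma \ref{lem:pi1-injective means trivial stab} then gives $\mcg(f)=\{1\}$, so $[(S,f)]$ contributes $\chi^{(2)}(\mcg(f))=+1$ to the coefficient of $n^{\ch}$ in the Laurent expansion of Theorem \ref{thm:main}.

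The principal obstacle is that other extremal pairs in $\surfaces(w^{j_{1}},\ldots,w^{j_{\ell}})$ may cancel this $+1$ contribution, as illustrated by the vanishing $\tr_{[x,y][x,z]}(n)\equiv0$ despite $\ch([x,y][x,z])=-3$. To sidestep such cancellations, my plan is to pass to the scaled tuples $(Nj_{1},\ldots,Nj_{\ell})$ and let $N\to\infty$: admissible surfaces for the scaled tuple can be constructed as unbranched $N$-fold cyclic covers of $(S,f)$ up to a bounded combinatorial correction, and this structure should force $\chi_{*}(w^{Nj_{1}},\ldots,w^{Nj_{\ell}})\ge N\chi(S)-O(1)$ uniformly in $N$. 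Consequently $\frac{-\chi_{*}}{2N(j_{1}+\ldots+j_{\ell})}\to\mathrm{scl}(w)$ as $N\to\infty$, and controlling the $O(1)$ correction uniformly --- essentially, showing that whatever cancellations occur cannot drift arbitrarily far from the extremal Euler characteristic when we scale the boundary degrees --- is the technical heart of the proof.
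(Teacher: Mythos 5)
Your argument tracks the paper's up to the very last step, and the ``principal obstacle'' you identify does not actually arise. Once the tuple $(j_1,\ldots,j_\ell)$ is chosen to attain the infimum in $(\ast)$, \emph{every} pair $[(\Sigma,f)]\in\surfaces(w^{j_1},\ldots,w^{j_\ell})$ with $\chi(\Sigma)=\ch(w^{j_1},\ldots,w^{j_\ell})$ is automatically an extremal surface for $w$ in Calegari's sense, not merely the one you selected. Calegari's result \cite[Lemma 2.9]{CALRATIONAL} asserts $\pi_1$-injectivity for all extremal surfaces, and Lemma~\ref{lem:pi1-injective means trivial stab} then gives $\mcg(f)=1$ for each of them. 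Hence every top-$\chi$ class contributes $+1$, the leading coefficient $K$ in Theorem~\ref{thm:main} is a strictly positive count, and there is nothing to cancel. The example $[x,y][x,z]$ is not a counterexample because the singleton tuple with $\ell=1$, $j_1=1$ need not be extremal for that word; the cancellation you are worried about can only happen at non-extremal tuples, where the infimum bound of Corollary~\ref{cor:chi and cl} suffices.

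Your fallback via $N$-fold cyclic covers is therefore unnecessary, and as written it does not close the gap it is meant to close. The bound $\chi_*(w^{Nj_1},\ldots,w^{Nj_\ell})\geq N\chi(S)-O(1)$ concerns the actual leading Laurent exponent, i.e.\ the exponent surviving after any cancellations, while the construction of covers of $(S,f)$ only produces admissible surfaces, hence only controls $\ch$ from below, which you already knew. You flag the missing uniform coefficient estimate yourself as ``the technical heart,'' but give no mechanism for it. The correct resolution is the observation above: no estimate on drifting cancellation is needed because, at an extremal tuple, no cancellation occurs at the top order.
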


A similar result is true for the stable commutator length of several
words. We explain how Corollary \ref{cor:can hear scl} follows from
Theorem \ref{thm:main} and Calegari's rationality theorem in Section
\ref{subsec:Stable-commutator-length}.
\begin{rem}
In fact, with regards to Conjecture \ref{conj:aner}, word-measures
on $\U\left(n\right)$ alone do not suffice and Conjecture \ref{conj:aner}
is not true if ``every compact group'' is replaced by ``$\U\left(n\right)$
for all $n$''. Indeed, for every $w\in\F_{r}$ and every $n$, the
$w$-measure on $\U\left(n\right)$ is identical to the $w^{-1}$-measure.
However, in general, $w$ and $w^{-1}$ belong to two different $\mathrm{Aut}\left(\F_{r}\right)$-orbits.
See also Question \ref{enu:primitivity conjecture} in Section \ref{sec:Open-Questions}.
\end{rem}

\paragraph{III. Harmonic analysis on representation varieties.}

The integral in (\ref{eq:def of Tr_w}) can be viewed as an integral
over the space of representations $\Hom\left(\F_{r},\U(n)\right)$
and in fact, as an integral over the representation variety
\[
\Rep\left(\F_{r},\U(n)\right)=\Hom\left(\F_{r},\U(n)\right)/\U(n)
\]
since the functions $\mathrm{tr}\circ w_{i}$ are invariant under
$\U(n)$-conjugation, and so is the Haar measure. More generally,
if $\Sigma_{g}$ is the closed genus $g$ surface, then the spaces
$\Rep\left(\pi_{1}\left(\Sigma_{g}\right),\U(n)\right)$ are of interest
in geometry\emph{, }via\emph{ `Higher Teichmüller theory}', dynamics
as pioneered by Goldman \cite{Goldman1997}, and mathematical physics
\cite{Witten1991}. For an overview see \cite{Labourie2013}. For
any closed curve on the surface, there is a natural function (\emph{Wilson
loop})\emph{ }on the representation variety, given by the trace of
the image of that curve in a given representation. It is natural to
ask what is the integral of this function with respect to the volume
form given by the Atiyah-Bott-Goldman symplectic structure on $\Rep\left(\pi_{1}\left(\Sigma_{g}\right),\U(n)\right)$
\cite{AB,GOLDMANSYMPLECTIC}. Our work answers this question for representations
of free groups.

\paragraph{IV. Free probability theory.}

Voiculescu proved in \cite[Theorem 3.8]{Voiculescu1991} that for
$w\neq1$, 
\begin{equation}
\trw(n)=o(1),\quad n\to\infty.\label{eq:firstorder}
\end{equation}
This is referred to the \emph{asymptotic {*}-freeness }of the non-commutative
independent random variables $\left(u_{1},\ldots,u_{r}\right)\in\U\left(n\right)^{r}$,
meaning that in the limit they can be modeled by the ``Free Probability
Theory'' developed by Voiculescu (see, for example, the monograph
\cite{Voiculescu1992}). Such asymptotic freeness results are known
for broad families of ensembles, including general Gaussian random
matrices (due to Voiculescu in the same paper \cite[Theorem 2.2]{Voiculescu1991}).
In later works (\ref{eq:firstorder}) is strengthened to $\trw\left(n\right)=O\left(\frac{1}{n}\right)$
whenever $w\ne1$ \cite{MSS07,Radulescu06}. Our work gives quantitative
bounds on the decay rate of $\trw(n)$ (in many cases, from above
and below) - see Corollary \ref{cor:chi and cl}.

More generally, free probabilists are interested in the limit of $\trwl\left(n\right)$
as $n\to\infty$. This is given by the following corollary of our
main result, which is essentially \cite[Theorem 2]{MSS07} and \cite[Theorem 4.1]{Radulescu06}:
\begin{cor}
Let $\wl\in\F_{r}$, each not equal to $1$, and write $w_{i}=u_{i}^{d_{i}}$
where $u_{i}\in\F_{r}$ is a non-power and $d_{i}\ge1$. Then the
limit 
\begin{equation}
\lim_{n\to\infty}\trwl\left(n\right)\label{eq:limit of trwl}
\end{equation}
exists, and is equal to the number of ways to match $\wl$ in pairs
so that each word is conjugate to the inverse of its mate, times $\sqrt{\prod_{i=1}^{\ell}d_{i}}$.
\end{cor}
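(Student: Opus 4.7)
\medskip
\noindent\textbf{Proof plan.} The plan is to apply Theorem \ref{thm:main} and read off the constant term of the resulting Laurent expansion. Since each $w_i \neq 1$, the boundary loop $f_*[\overrightarrow{\partial_i \Sigma}] = w_i$ is not null-homotopic in $\wedger$, so no component of an admissible $\Sigma$ can be a disc; combined with the exclusion of closed components this forces $\chi(\Sigma_0) \leq 0$ for every connected component $\Sigma_0$, with equality exactly when $\Sigma_0$ is an annulus. Thus $\chi(\Sigma) \leq 0$ for every term in (\ref{eq:main thm}), none of them grows, and
\[
\lim_{n\to\infty} \trwl(n) = \sum_{\substack{[(\Sigma,f)] \in \surfaces(\wl)\\ \chi(\Sigma) = 0}} \chi^{\left(2\right)}\left(\mcg(f)\right),
\]
the sum ranging over classes whose $\Sigma$ is a disjoint union of annuli (and vacuous when $\ell$ is odd).

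\medskip
\noindent Next, I would enumerate the admissible annuli. Fix a pair $\{i,j\} \subseteq \{1,\ldots,\ell\}$ and an annulus $A$ with marked boundary points $v_i, v_j$, together with a reference arc $\gamma$ from $v_i$ to $v_j$. A homotopy class of admissible $f$ is determined by the element $g := f_*(\gamma) \in \F_r$: admissibility fixes $f_*[\overrightarrow{\partial_i A}] = w_i$, and the identity $[\overrightarrow{\partial_j A}] = \gamma^{-1}[\overrightarrow{\partial_i A}]^{-1}\gamma$ in $\pi_1(A, v_j)$ (the two induced boundary orientations being opposite) then forces $w_j = g^{-1} w_i^{-1} g$. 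Since the centralizer of a non-trivial element of $\F_r$ is the cyclic group generated by its root, this equation is solvable iff $w_j$ is conjugate to $w_i^{-1}$, which in particular forces $d_i = d_j$; and the set of solutions is then the right coset $\langle u_i \rangle g_0$ for any fixed $g_0$. The Dehn twist generating $\mcg(A) \cong \Z$ acts on $g$ by $g \mapsto g \cdot w_j$; writing $g = u_i^k g_0$ and using $w_j = g_0^{-1} u_i^{-d_i} g_0$, this becomes $k \mapsto k - d_i$ on $\Z$, a free action with exactly $d_i$ infinite orbits. Hence each pair $\{i,j\}$ with $w_j$ conjugate to $w_i^{-1}$ contributes precisely $d_i = d_j$ equivalence classes of annuli, each with trivial stabilizer, so $\chi^{\left(2\right)}\left(\mcg(f)\right) = 1$ for each.

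\medskip
\noindent Finally, a $\Sigma$ that is a disjoint union of annuli decomposes along a perfect matching $\pi$ of $\{1, \ldots, \ell\}$; the labeling of boundary components precludes any component-swapping mapping class, so $\mcg(\Sigma)$ is the direct product of the individual annular $\mcg$'s and equivalence classes factorize across components. A \emph{valid} matching $\pi$ --- one in which $w_j$ is conjugate to $w_i^{-1}$ for every matched pair $\{i,j\}$ --- contributes $\prod_{\{i,j\}\in\pi} d_i = \sqrt{\prod_{i=1}^{\ell} d_i}$ classes (using $d_i = d_j$ on matched pairs), each weighted by $1$. Summing over valid matchings yields the stated formula. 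The main technical obstacle is the Dehn-twist step: one must carry out the isotopy argument with care to confirm the action $g \mapsto g \cdot w_j$ on the coset $\langle u_i \rangle g_0 \cong \Z$ and the orbit count of exactly $d_i$, which is precisely what produces the characteristic factor $\sqrt{\prod d_i}$ in the answer.
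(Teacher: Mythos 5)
Your proof is correct and follows essentially the same route as the paper's: identify the $\chi(\Sigma)=0$ contributions of Theorem \ref{thm:main} as disjoint unions of annuli with trivial stabilizers and count them via a perfect matching of boundary words. The paper simply asserts that there are exactly $d$ inequivalent annuli in $\surfaces(w,w^{-1})$ for $w=u^d$, whereas you supply the explicit Dehn-twist orbit computation that justifies this count.
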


\begin{proof}
As $\wl\ne1$, there are no surfaces of positive Euler characteristic
in\linebreak{}
$\surfaces\left(\wl\right)$. The only possible surface $\Sigma$
in this collection with $\chi\left(\Sigma\right)=0$ is a disjoint
union of annuli. The stabilizer $\MCG\left(f\right)$ is always trivial
in this case, so the limit in (\ref{eq:limit of trwl}) is equal to
the number of such surfaces in $\surfaces\left(\wl\right)$. If $w$
and $w'$ are the words at the boundary of an annulus, then necessarily
$w'$ is conjugate to $w^{-1}$. Moreover, if $w=u^{d}$ with $u\in\F_{r}$
a non-power and $d\ge1$, then the number of non-equivalent annuli
in $\surfaces\left(w,w^{-1}\right)$ is exactly $d$. This yields
the answer above.
\end{proof}

\subsection{Paper organization}

In Section \ref{sec:A-Formula-for trwl} we show how surfaces emerge
in the computation of $\trwl\left(n\right)$, present a formula for
$\trwl\left(n\right)$ as a finite sum (Theorem \ref{thm:trwl as finite sum})
which yields Proposition \ref{prop:rational expression}, and then
a second formula for $\trwl\left(n\right)$, this time as an infinite
sum, but where the contribution of every surface is $\pm n^{\alpha}$
(Theorem \ref{thm:combinatorial Laurent expansion of trwl}). Section
\ref{subsec:Maps-from-the} then explains how every surface we constructed
admits a natural map to the bouquet which makes it (a representative
of) an element in $\surfaces\left(\wl\right)$. Thus, one can group
together all the surfaces we constructed in the second formula (from
Theorem \ref{thm:combinatorial Laurent expansion of trwl}) that belong
to the same class $\left[\left(\Sigma,f\right)\right]\in\surfaces\left(\wl\right)$.
Our main result then reduces to showing that the total contribution
of this set of surfaces is equal to $\chi^{\left(2\right)}\left(\mcg\left(f\right)\right)\cdot n^{\chi\left(\Sigma\right)}$,
as stated in Theorem \ref{thm:main} -{}- this reduction is the content
of Theorem \ref{thm:EC of a single (S,f)}.

In Sections \ref{sec:A-Complex-of} and \ref{sec:The-Action-of MCG(f) on T}
we fix $\left[\left(\Sigma,f\right)\right]\in\surfaces\left(\wl\right)$
and prove Theorem \ref{thm:EC of a single (S,f)}: in Section \ref{sec:A-Complex-of}
we define the complex of transverse maps realizing $\left(\Sigma,f\right)$,
and prove it is a finite-dimensional contractible complex. In Section
\ref{sec:The-Action-of MCG(f) on T} we analyze the action of $\mcg\left(f\right)$
on this complex, show that the finite orbits of cells in this action
are in one-to-one correspondence with the surfaces we constructed
in Section \ref{sec:A-Formula-for trwl}, and finish the proof of
Theorems \ref{thm:EC of a single (S,f)}, \ref{thm:stabilizers have L2-EC}
and \ref{thm:main}. In Sections \ref{subsec:Incompressible-maps}
and \ref{subsec:Nonfiniteness} we discuss the difference between
the compressible case and the incompressible one, and prove Theorem
\ref{thm:K(g,1) for incompressible}. 

Section \ref{sec:Further-Applications} contains three applications:
in Section \ref{subsec:Stable-commutator-length} we discuss stable
commutator length and how it is determined by the $w$-measures on
$\U\left(n\right)$, thus proving Corollary \ref{cor:can hear scl};
in Section \ref{subsec:Classifying-all-incompressible} we explain
how our analysis yields a simple straight-forward algorithm to classify
all incompressible solutions in $\surfaces\left(\wl\right)$, and,
in particular, all solutions to the commutator equation (\ref{eq:commutator-equation})
with $g=\cl\left(w\right)$; and in Section \ref{subsec:Cohomological-dimension-of}
we explain why $\mcg\left(f\right)$ has finite cohomological dimension.
Section \ref{sec:Open-Questions} contains some open questions.
\begin{rem}
\label{rem:w_i ne 1}The case where some of the words among $\wl$
are trivial is not interesting in the point of view of estimating
the integrals $\trwl\left(n\right)$: $\tr_{w_{1},\ldots,w_{\ell-1},1}\left(n\right)=n\cdot\tr_{w_{1},\ldots,w_{\ell-1}}\left(n\right)$.
Yet, some of the results, such as Theorem \ref{thm:stabilizers have L2-EC},
are interesting in this case too. Despite that, for the sake of simplicity,
we assume throughout the rest of the paper that $w_{i}\ne1$ for $i=1,\ldots,\ell$:
this allows us to avoid extra case analysis at some points and shorten
the arguments a bit. We stress, though, that all the results hold
in the general case, and the proofs hold after, possibly, minor adaptations
(with the one exception of Lemma \ref{lem:X is finite dimensional}
where, if one allows trivial words, the bound should be modified).
\end{rem}

\begin{rem}
\label{remark:MP16}The unpublished manuscript \cite{MP} is based
on an earlier stage of the current research. It contains some of the
results of the current paper -- mainly the results for incompressible
maps -- although with quite a different presentation of the proofs.
Since writing \cite{MP}, we have extended our results a great deal,
and decided to rewrite everything in a whole new paper. To keep the
current paper in manageable size, we include only ingredients that
are necessary for proving and clarifying our results. Occasionally,
we refer here to the more elaborated \cite{MP} for some background
material, which is not used in the proofs.
\end{rem}

\begin{rem}
A sequel paper \cite{mp2019orthogonal} shows how the ideas in the
current paper can be extended and twisted to also deal with integrals
over the orthogonal and compact symplectic groups.
\end{rem}

\subsection*{Acknowledgments}

We would like to thank Danny Calegari, Alexei Entin, Mark Feighn,
Alex Gamburd, Yair Minsky, Mark Powell, Peter Sarnak, Zlil Sela, Karen
Vogtmann, Alden Walker, Avi Wigderson, Qi You, and Ofer Zeitouni for
valuable discussions about this work.

Part of this research was carried out during research visits of the
first named author (Magee) to the Institute for Advanced Study in
Princeton, and we would like to thank the I.A.S. for making these
visits possible. 

This is a pre-print of an article published in Inventiones Mathematicae.
The final authenticated version is available online at: https://doi.org/10.1007/s00222-019-00891-4.

\section{Combinatorial formulas for $\protect\tr_{w_{1},\ldots,w_{\ell}}\left(n\right)$
using surfaces\label{sec:A-Formula-for trwl}}

In this section we recall basic results about the Weingarten calculus
for integrals over $\U\left(n\right)$, and derive formulas for $\trwl\left(n\right)$
which involve surfaces. But first, we explain why $\trwl\left(n\right)$
vanishes in the ``non-balanced'' case, where the total exponent
of some letter is not zero.:
\begin{claim}
\label{claim: tr=00003D0 for non-balanced words}Let $\wl\in\F_{r}$.
If $w_{1}w_{2}\cdots w_{\ell}\notin\left[\F_{r},\F_{r}\right]$ then 
\begin{enumerate}
\item $\trwl\left(n\right)\equiv0$.
\item The set $\surfaces\left(\wl\right)$ is empty.
\end{enumerate}
\end{claim}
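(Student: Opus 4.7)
The plan is to deduce both parts from a single observation: both $\trwl(n)$ and the existence of admissible pairs are controlled by the abelianization of $w_1\cdots w_\ell$, and the hypothesis says this abelianization is nontrivial.

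For part (1), I will use the invariance of Haar measure on $\U(n)$ under multiplication by the center $U(1)\cdot I$. For any $(\zeta_1,\ldots,\zeta_r)\in U(1)^r$, the tuple $(\zeta_1 A_1,\ldots,\zeta_r A_r)$ is Haar-distributed whenever $(A_1,\ldots,A_r)$ is, so
\[
\trwl(n)\;=\;\int \mathrm{tr}\bigl(w_1(\zeta_1 A_1,\ldots,\zeta_r A_r)\bigr)\cdots\mathrm{tr}\bigl(w_\ell(\zeta_1 A_1,\ldots,\zeta_r A_r)\bigr)\,d\mu.
\]
Substituting $\zeta_j A_j$ for $A_j$ in $w_k$ scales by $\prod_{j=1}^r \zeta_j^{e_j(w_k)}$, where $e_j(w_k)\in\Z$ is the exponent sum of $x_j$ in $w_k$; taking the product over $k$, the integrand acquires a factor $\prod_{j=1}^r \zeta_j^{E_j}$ with $E_j:=\sum_k e_j(w_k)=e_j(w_1\cdots w_\ell)$. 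Hence $\trwl(n)=\bigl(\prod_j \zeta_j^{E_j}\bigr)\cdot\trwl(n)$ for every $\zeta\in U(1)^r$. By hypothesis some $E_j\neq 0$, so choosing $\zeta_j$ with $\zeta_j^{E_j}\neq 1$ (or equivalently, averaging over $\zeta_j$ against the Haar measure on $U(1)$) forces $\trwl(n)=0$.

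For part (2), I will argue by contradiction: suppose $(\Sigma,f)$ is admissible for $\wl$. The topological input I need is that $\partial\Sigma$ bounds $\Sigma$: the (relative) fundamental class in $H_2(\Sigma,\partial\Sigma;\Z)$ maps under the connecting homomorphism of the long exact sequence of $(\Sigma,\partial\Sigma)$ to $\sum_i [\overrightarrow{\partial_i\Sigma}]\in H_1(\partial\Sigma;\Z)$, so by exactness this sum vanishes in $H_1(\Sigma;\Z)$. Applying $f_*\colon H_1(\Sigma;\Z)\to H_1(\wedger;\Z)\cong\Z^r$ and using the admissibility identity $f_*\bigl([\overrightarrow{\partial_i\Sigma}]\bigr)=[w_i]$ from Definition~\ref{def:surface of w1,...,wl}, I obtain $\sum_i \mathrm{ab}(w_i)=\mathrm{ab}(w_1\cdots w_\ell)=0$ in $\Z^r$, i.e.\ $w_1\cdots w_\ell\in[\F_r,\F_r]$, a contradiction.

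There is no real obstacle to either step; the content of the claim is simply that the analytic invariance under the central $U(1)^r$-action and the topological fact that $\partial\Sigma$ bounds in $\Sigma$ are two manifestations of the same $H_1$-level obstruction.
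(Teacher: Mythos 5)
Your proof is correct and matches the paper's own argument in both parts: part (1) uses invariance of Haar measure under central scalar multiplication (the paper scales a single letter $A_j$ by $e^{i\theta}$, you scale all $r$ letters simultaneously by a tuple in $U(1)^r$, which is the same idea), and part (2) is exactly the paper's observation that the oriented boundary is nullhomologous in $\Sigma$, which you spell out via the long exact sequence of $(\Sigma,\partial\Sigma)$. No gaps.
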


\begin{proof}
$\left(1\right)$ The assumption $\wl\notin\left[\F_{r},\F_{r}\right]$
is equivalent to that there is some $j\in\left[r\right]$ so that
$\alpha_{j}$, the sum of exponents of the letter $x_{j}$ in $\wl$,
satisfies $\alpha_{j}\ne0$. As the Haar measure of a compact group
is invariant under left multiplication by any element, and the diagonal
central matrix $e^{i\theta}I_{n}$ is in ${\cal U}\left(n\right)$
for $\theta\in\left[0,2\pi\right]$, we obtain
\begin{eqnarray*}
 &  & \trwl\left(n\right)=\\
 &  & =\mathbb{E}_{A_{1},\ldots,A_{r}\in{\cal U}\left(n\right)}\left[\mathrm{tr}\left(w_{1}\left(A_{1},\ldots,A_{j},\ldots,A_{r}\right)\right)\cdots\mathrm{tr}\left(w_{\ell}\left(A_{1},\ldots,A_{j},\ldots,A_{r}\right)\right)\right]\\
 &  & =\mathbb{E}_{A_{1},\ldots,A_{r}\in{\cal U}\left(n\right)}\left[\mathrm{tr}\left(w_{1}\left(A_{1},\ldots,e^{i\theta}A_{j},\ldots,A_{r}\right)\right)\cdots\mathrm{tr}\left(w_{\ell}\left(A_{1},\ldots,e^{i\theta}A_{j},\ldots,A_{r}\right)\right)\right]\\
 &  & =e^{i\theta\alpha_{j}}\cdot\trwl\left(n\right).
\end{eqnarray*}
The first statement follows as this equality holds for every $\theta\in\left[0,2\pi\right]$.

$\left(2\right)$ The second statement follows from the fact that
in every connected, orientable, compact surface $\Sigma$ with boundary,
the product in $\pi_{1}\left(\Sigma\right)$ of loops around the boundary
components belongs to $\left[\pi_{1}\left(\Sigma\right),\pi_{1}\left(\Sigma\right)\right]$.
\end{proof}

\subsection{Weingarten function and integrals over ${\cal U}\left(n\right)$\label{subsec:Weingarten-function-and-Collins-Sniady}}

The ``Weingarten calculus'' for computing integrals over unitary
groups with respect to the Haar measure was developed in a series
of papers, most notably \cite{weingarten1978asymptotic,xu1997random,collins2003moments,CS}.
It is based on the Schur-Weyl duality (see Remark \ref{rem:schur-weyl}
below), and allows the computation of integrals over the entries of
unitary matrices and their complex conjugates, as depicted in Theorem
\ref{thm:collins-sniady} below. This computation is given in terms
of the Weingarten function, which we now describe.

Let $\mathbb{Q}\left(n\right)$ denote the field of rational functions
with rational coefficients in the variable $n$. Let $S_{L}$\marginpar{$S_{L}$}
denote the symmetric group on $L$ elements. For every $L\in\mathbb{Z}_{\ge1}$,
the Weingarten function $\wg_{L}$ maps $S_{L}$ to $\mathbb{Q}\left(n\right)$.
We think of such functions as elements of the group ring $\mathbb{Q}\left(n\right)\left[S_{L}\right]$.
\begin{defn}
\label{def:weingarten}The \textbf{Weingarten function} \marginpar{$\protect\wg_{L}$}$\wg_{L}:S_{L}\to\mathbb{Q}\left(n\right)$
is the inverse, in the group ring $\mathbb{Q}\left(n\right)\left[S_{L}\right]$,
of the function $\sigma\mapsto n^{\#\mathrm{cycles\left(\sigma\right)}}$. 
\end{defn}

That the function $\sigma\mapsto n^{\#\mathrm{cycles\left(\sigma\right)}}$
is invertible for every $L$ follows from \cite[Proposition 2.3]{CS}
and the discussion following it. In particular, $\wg_{L}\left(\sigma\right)$
is in $\mathbb{Q}\left(n\right)$ for every $\sigma\in S_{L}$. Clearly,
$\wg_{L}$ is constant on conjugacy classes. For example, for $L=2$,
the inverse of $\left(n^{2}\cdot\mathrm{id}+n\cdot\left(12\right)\right)\in\mathbb{Q}\left(n\right)\left[S_{2}\right]$
is $\left(\frac{1}{n^{2}-1}\cdot\mathrm{id}-\frac{1}{n\left(n^{2}-1\right)}\cdot\left(12\right)\right)$,
so $\wg_{2}\left(\mathrm{id}\right)=\frac{1}{n^{2}-1}$ while $\wg_{2}\left(\left(12\right)\right)=\frac{-1}{n\left(n^{2}-1\right)}$.
The values of $\wg_{3}$ are
\[
\mathrm{id}\mapsto\frac{n^{2}-2}{n\left(n^{2}-1\right)\left(n^{2}-4\right)}\,\,\,\,\,\,\left(12\right)\mapsto\frac{-1}{\left(n^{2}-1\right)\left(n^{2}-4\right)}\,\,\,\,\,\,\left(123\right)\mapsto\frac{2}{n\left(n^{2}-1\right)\left(n^{2}-4\right)}.
\]
Collins and \'{S}niady provide an explicit formula for $\wg_{L}$
in terms of the irreducible characters of $S_{L}$ and Schur polynomials
\cite[Equation (13)]{CS}: 
\[
\wg_{L}\left(\sigma\right)=\frac{1}{\left(L!\right)^{2}}\sum_{\lambda\vdash L}\frac{\chi_{\lambda}\left(e\right)^{2}}{d_{\lambda}\left(n\right)}\chi_{\lambda}\left(\sigma\right),
\]
where $\lambda$ runs over all partitions of $L$, $\chi_{\lambda}$
is the character of $S_{L}$ corresponding to $\lambda$, and $d_{\lambda}\left(n\right)$
is the number of semistandard Young tableaux with shape $\lambda$,
filled with numbers from $\left[n\right]$. A well known formula for
$d_{\lambda}\left(n\right)$ is $d_{\lambda}\left(n\right)=\frac{\chi_{\lambda}\left(e\right)}{L!}\prod_{\left(i,j\right)\in\lambda}\left(n+j-i\right)$,
where $\left(i,j\right)$ are the coordinates of cells in the Young
diagram with shape $\lambda$ (e.g.~\cite[Section 4.3, Equation (9)]{Fulton1997}).
Thus,
\begin{cor}
\label{cor:poles-of-Wg}For $\sigma\in S_{L}$, $\wg_{L}\left(\sigma\right)$
may have poles only at integers $n$ with $-L<n<L$.
\end{cor}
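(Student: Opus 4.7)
The plan is to read off the potential poles directly from the Collins--\'Sniady formula
\[
\wg_L(\sigma) \;=\; \frac{1}{(L!)^2} \sum_{\lambda \vdash L} \frac{\chi_\lambda(e)^2}{d_\lambda(n)}\chi_\lambda(\sigma)
\]
stated just above the corollary. Since $\chi_\lambda(e)^2 \chi_\lambda(\sigma)/(L!)^2$ is an integer constant (independent of $n$), the only possible poles of each summand arise from zeros of $d_\lambda(n)$. Hence the set of poles of $\wg_L(\sigma)$ is contained in the union over $\lambda \vdash L$ of the zero sets of $d_\lambda(n)$, and it suffices to locate these zero sets.

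Next, I would invoke the product formula
\[
d_\lambda(n) \;=\; \frac{\chi_\lambda(e)}{L!}\prod_{(i,j)\in\lambda} (n + j - i),
\]
already recorded in the excerpt. The prefactor $\chi_\lambda(e)/L!$ is a nonzero rational constant, so the zeros of $d_\lambda(n)$ are precisely the integers $n = i - j$ as $(i,j)$ ranges over the cells of the Young diagram of $\lambda$.

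The final step is the elementary bound on the content $i - j$. For any partition $\lambda \vdash L$, each cell $(i,j)$ has $1 \le i$ and $1 \le j$; moreover, the number of rows and the number of columns of $\lambda$ are both at most $L$, so $i \le L$ and $j \le L$. Consequently $i - j$ is an integer satisfying $-(L-1) \le i - j \le L - 1$, i.e.\ $-L < i - j < L$. Taking the union over $\lambda \vdash L$ gives the claimed bound on poles.

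There is no real obstacle here: the result is essentially a direct consequence of the explicit formula for $d_\lambda(n)$ combined with the trivial observation that the content of a cell in a Young diagram of size $L$ lies strictly between $-L$ and $L$. The only subtlety worth noting is that potential cancellations in the sum over $\lambda$ could only \emph{reduce} the set of actual poles, so the inclusion statement ``poles are contained in $\{-L < n < L\} \cap \mathbb{Z}$'' is automatic from the per-summand analysis.
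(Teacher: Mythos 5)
Your proof is correct and follows essentially the same route the paper implies: the corollary is introduced by the word ``Thus,'' immediately after the Collins--\'Sniady formula for $\wg_L$ and the product formula for $d_\lambda(n)$, so the intended argument is exactly the one you spell out (poles can only come from zeros of $d_\lambda(n)$, which are at integers $n=i-j$ with $(i,j)\in\lambda$, and these contents satisfy $-L<i-j<L$). Your writeup just makes the implicit steps explicit.
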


Below we use the following properties of the Weingarten function.
The standard norm \foreignlanguage{american}{of $\rho\in S_{L}$,
denoted $\left\Vert \rho\right\Vert $, is the shortest length of
a product of transpositions giving $\rho$, and is equal to $L-\#\mathrm{cycles\left(\rho\right)}$.}
\selectlanguage{american}%
\begin{thm}
\label{thm:properties of Wg}Let $\pi\in S_{L}$ be a permutation. 
\selectlanguage{english}%
\begin{enumerate}
\item \label{enu:leading term of Wg}\cite[Corollary 2.7]{CS} Leading term:
\begin{equation}
\wg_{L}\left(\pi\right)=\frac{\moeb\left(\pi\right)}{n^{L+\left\Vert \pi\right\Vert }}+O\left(\frac{1}{n^{L+\left\Vert \pi\right\Vert +2}}\right),\label{eq:leading term of Wg}
\end{equation}
where\marginpar{$\protect\moeb\left(\sigma\right)$} 
\begin{equation}
\moeb\left(\pi\right)=\mathrm{sgn}\left(\pi\right)\prod_{i=1}^{k}c_{|C_{i}|-1},\label{eq:mobius}
\end{equation}
with\footnote{The function $\moeb$ is the Möbius function on a natural poset structure
on $S_{L}$ -- see, for instance, \cite[Lectures 10 and 23]{Nica2006}.} $C_{1},\ldots,C_{k}$ the cycles composing $\pi$, and $c_{m}=\frac{(2m)!}{m!(m+1)!}$
being the $m$-th Catalan number. 
\selectlanguage{american}%
\item \label{enu:Asymptotic-expansion of Wg}\cite[Theorem 2.2]{collins2003moments}
Asymptotic expansion: 
\begin{equation}
\wg_{L}\left(\pi\right)=\frac{1}{n^{L}}\sum_{k\in\mathbb{Z}_{\ge0}}\sum_{\begin{gathered}{\scriptstyle \rho_{1},\ldots,\rho_{k}\in S_{L}\setminus\left\{ \mathrm{id}\right\} }\\
{\scriptstyle \mathrm{s.t.}~\,\rho_{1}\cdots\rho_{k}=\pi}
\end{gathered}
}\frac{\left(-1\right)^{k}}{n^{\left\Vert \rho_{1}\right\Vert +\ldots+\left\Vert \rho_{k}\right\Vert }}.\label{eq:asympt expansion of Wg}
\end{equation}
\end{enumerate}
\end{thm}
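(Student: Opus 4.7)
The plan is to derive both items simultaneously from a single Neumann-series expansion of the definition $\wg_{L}=G^{-1}$ in $\mathbb{Q}(n)[S_{L}]$, where $G(\sigma)=n^{\#\mathrm{cycles}(\sigma)}=n^{L-\|\sigma\|}$. First I would factor out the identity part: $G = n^{L}(\mathrm{id}+H)$ with $H=\sum_{\sigma\neq e}n^{-\|\sigma\|}\sigma$, and invert formally as
$$\wg_{L}=\frac{1}{n^{L}}\sum_{k\ge 0}(-1)^{k}H^{k}=\frac{1}{n^{L}}\sum_{k\ge 0}(-1)^{k}\sum_{\rho_{1},\ldots,\rho_{k}\in S_{L}\setminus\{e\}}n^{-(\|\rho_{1}\|+\cdots+\|\rho_{k}\|)}\rho_{1}\cdots\rho_{k}.$$
The series is well-defined coefficient-wise: since $\|\rho_{i}\|\geq 1$ for each non-identity factor, only finitely many $k$ contribute to any fixed negative power of $n$ in the coefficient of a fixed $\pi\in S_{L}$. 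Reading off the coefficient of $\pi$ produces exactly formula \eqref{eq:asympt expansion of Wg}, proving item (2).

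For item (1), subadditivity of the Cayley norm, $\|\rho_{1}\cdots\rho_{k}\|\le\|\rho_{1}\|+\cdots+\|\rho_{k}\|$, forces each summand contributing to the coefficient of $\pi$ to be of order $n^{-L-m}$ with $m\ge\|\pi\|$. Hence the leading contribution has order $n^{-L-\|\pi\|}$, with coefficient equal to the signed count
$$\moeb(\pi)=\sum_{k\ge 1}(-1)^{k}\,\#\bigl\{(\rho_{1},\ldots,\rho_{k})\,\big|\,\rho_{i}\neq e,\;\rho_{1}\cdots\rho_{k}=\pi,\;\|\rho_{1}\|+\cdots+\|\rho_{k}\|=\|\pi\|\bigr\}$$
of what I will call \emph{geodesic factorizations} of $\pi$. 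To evaluate this I would prove a multiplicativity statement: every geodesic factorization of $\pi$ has each $\rho_{i}$ supported inside a single cycle of $\pi$, and restricts to geodesic factorizations of each cycle independently. Granted this reduction, $\moeb(\pi)=\prod_{i}\moeb(C_{i})$, and it remains to evaluate $\moeb$ on a single $m$-cycle. A direct enumeration — or equivalently the classical Möbius computation on the non-crossing partition lattice $NC(m)$ — yields $(-1)^{m-1}c_{m-1}$; the signs collate to $\mathrm{sgn}(\pi)=\prod_{i}(-1)^{|C_{i}|-1}$, giving \eqref{eq:mobius}.

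The main obstacle is the cycle-wise multiplicativity claim: one must rule out geodesic factorizations in which some $\rho_{i}$ has support straddling two distinct cycles of $\pi$. The intuition is that any such ``crossing'' factor must be compensated by another factor whose support reconnects the two cycles, which strictly exceeds the budget $\sum\|\rho_{i}\|=\|\pi\|$. This is most naturally formalised via Biane's identification of the interval $[e,\pi]$ in the Cayley graph on all transpositions with the non-crossing partition lattice $NC(\pi)$: geodesic factorizations into transpositions correspond to maximal chains in $NC(\pi)$ refining the cycle partition of $\pi$, and geodesic factorizations with larger factors correspond to grouping consecutive transpositions along such a chain — both structures factor transparently across the cycles of $\pi$. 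Once this reduction is in place, the final Catalan-number identity on a single cycle is a routine combinatorial check.
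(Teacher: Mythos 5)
The paper does not prove Theorem \ref{thm:properties of Wg} at all: both items are stated as citations to Collins--\'{S}niady \cite[Cor.~2.7]{CS} and Collins \cite[Thm.~2.2]{collins2003moments} respectively, and are used as black boxes. Your self-contained derivation is correct and essentially reconstructs the arguments in those references: item~(\ref{enu:Asymptotic-expansion of Wg}) is exactly Collins' Neumann-series expansion of the inverse of $\sigma\mapsto n^{L-\|\sigma\|}$ in $\mathbb{Q}((1/n))[S_L]$, and item~(\ref{enu:leading term of Wg}) is obtained by identifying the minimal-degree coefficient of $\pi$ with the alternating chain count for the Kreweras/Biane non-crossing partition lattice $NC(\pi)$, which factors over cycles and whose M\"obius function on a single $m$-cycle is $(-1)^{m-1}c_{m-1}$. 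The cycle-wise multiplicativity claim you flag as the main obstacle is genuine but standard: if $\rho\tau=\pi$ with $\|\rho\|+\|\tau\|=\|\pi\|$, the genus bound $\|\rho\|+\|\tau\|+\|\pi^{-1}\|\ge 2(L-c)$, where $c$ is the number of orbits of $\langle\rho,\pi\rangle$, forces $c=\#\mathrm{cycles}(\pi)$, so $\rho$ and $\tau$ refine the cycle partition of $\pi$; this propagates by induction to longer geodesic factorizations.

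One small gap: you conclude the leading contribution has order $n^{-L-\|\pi\|}$, but the statement asserts the error is $O(n^{-L-\|\pi\|-2})$, not merely $O(n^{-L-\|\pi\|-1})$. This requires the parity observation that $\mathrm{sgn}(\pi)=\prod_i\mathrm{sgn}(\rho_i)=(-1)^{\sum\|\rho_i\|}$, so $\sum\|\rho_i\|\equiv\|\pi\|\pmod 2$ for every tuple in the expansion, killing all odd-offset terms. Also, to make the formal inversion rigorous one should work in the completion $\mathbb{Q}((1/n))[S_L]$ and invoke uniqueness of inverses to identify the Neumann series with the Laurent expansion of the rational $\wg_L$; you gesture at this with ``coefficient-wise well-defined'' and it is fine as stated.
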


\selectlanguage{english}%
(In (\ref{eq:asympt expansion of Wg}), when $\pi=\mathrm{id}$, there
is a term $\frac{1}{n^{L}}$ coming from $k=0$.) 

The Weingarten function is used in the following formula of Collins
and \'{S}niady, which evaluates integrals of monomials in the entries
$A_{i,j}$ and their conjugates $\overline{A_{i,j}}$ of a Haar-random
unitary matrix $A\in{\cal U}\left(n\right)$. As in the proof of Claim
\ref{claim: tr=00003D0 for non-balanced words}, this integral vanishes
whenever the monomial is not balanced, namely whenever the number
of $A_{i,j}$'s is different from the number of $\overline{A_{i,j}}$'s. 
\begin{thm}
\label{thm:collins-sniady}\cite[Proposition 2.5]{CS} Let $L$ be
a positive integer and $\left(i_{1},\ldots,i_{L}\right)$, $\left(j_{1},\ldots,j_{L}\right)$,
$\left(i'_{1},\ldots,i'_{L}\right)$ and $\left(j'_{1},\ldots,j'_{L}\right)$
be $L$-tuples of positive integers. Then for every $n$ for which
the expression
\begin{equation}
\mathbb{E}{}_{A\in{\cal U}(n)}\left[A_{i_{1},j_{1}}A_{i_{2},j_{2}}\ldots A_{i_{L},j_{L}}\overline{A_{i'_{1},j'_{1}}}\overline{A_{i'_{2},j'_{2}}}\ldots\overline{A_{i'_{L},j'_{L}}}\right]\label{eq:collins-sniady}
\end{equation}
makes sense, namely, for $n\ge\max\left\{ i_{1},\ldots,i_{L},j_{1},\ldots,j_{L},i'_{1},\ldots,i'_{L},j'_{1},\ldots,j'_{L}\right\} $,
(\ref{eq:collins-sniady}) is equal to the evaluation of $n$ in a
rational function, which is given by 
\begin{equation}
\sum_{\sigma,\tau\in S_{L}}\delta_{i_{1}i'_{\sigma(1)}}\ldots\delta_{i_{L}i'_{\sigma(L)}}\delta_{j_{1}j'_{\tau(1)}}\ldots\delta_{j_{L}j'_{\tau(L)}}\wg_{L}\left(\sigma^{-1}\tau\right).\label{eq:moment}
\end{equation}
\end{thm}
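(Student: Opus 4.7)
The plan is to prove Theorem \ref{thm:collins-sniady} by means of Schur--Weyl duality, packaging the integrand as a matrix coefficient of an averaging operator and computing that operator as the orthogonal projection onto the space of $\U\left(n\right)$-intertwiners.

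First, set $V = \mathbb{C}^{n}$ with standard basis $\{e_{k}\}$. Writing $A_{i j} = \langle e_{i}, A e_{j}\rangle$ and $\overline{A_{i' j'}} = \langle A e_{j'}, e_{i'}\rangle$, the integrand on the left of (\ref{eq:collins-sniady}) equals
\[
\bigl\langle e_{I},\, A^{\otimes L} e_{J}\bigr\rangle \cdot \bigl\langle A^{\otimes L} e_{J'},\, e_{I'}\bigr\rangle,
\]
where $I = (i_{1},\ldots,i_{L})$, $e_{I} = e_{i_{1}} \otimes \cdots \otimes e_{i_{L}}$, and similarly for $J, I', J'$. After integration, this can be viewed as a matrix coefficient of the averaging operator $T_{L}(n)\colon \mathrm{End}(V^{\otimes L}) \to \mathrm{End}(V^{\otimes L})$ defined by $T_{L}(n)(X) = \int_{\U\left(n\right)} A^{\otimes L}\, X \,(A^{\otimes L})^{*}\, dA$: namely, apply $T_{L}(n)$ to the rank-one operator $P = e_{J}\, e_{J'}^{*}$ and take the matrix entry $\langle e_{I'},\, T_{L}(n)(P)\, e_{I}\rangle$.

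By bi-invariance of the Haar measure, $T_{L}(n)$ is the orthogonal projection, with respect to the Hilbert--Schmidt inner product, onto $\mathrm{End}_{\U\left(n\right)}(V^{\otimes L})$. By Schur--Weyl duality, this commutant is spanned by the place-permutation operators $\{P_{\sigma} : \sigma \in S_{L}\}$, with $P_{\sigma}(v_{1} \otimes \cdots \otimes v_{L}) = v_{\sigma^{-1}(1)} \otimes \cdots \otimes v_{\sigma^{-1}(L)}$, and these are linearly independent for $n \geq L$. Writing $T_{L}(n)(P) = \sum_{\sigma} c_{\sigma}(P)\, P_{\sigma}$, the projection property gives the linear system
\[
\sum_{\sigma \in S_{L}} c_{\sigma}(P) \cdot \bigl\langle P_{\sigma},\, P_{\tau}\bigr\rangle_{\mathrm{HS}} \;=\; \bigl\langle P,\, P_{\tau}\bigr\rangle_{\mathrm{HS}}, \qquad \tau \in S_{L}.
\]
The Gram matrix entries are $\langle P_{\sigma}, P_{\tau}\rangle_{\mathrm{HS}} = \mathrm{tr}(P_{\sigma} P_{\tau^{-1}}) = n^{\#\mathrm{cycles}(\sigma\tau^{-1})}$, which is precisely the function whose inverse in $\mathbb{Q}(n)[S_{L}]$ is $\mathrm{Wg}_{L}$ (Definition \ref{def:weingarten}). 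Hence $c_{\sigma}(P) = \sum_{\tau} \mathrm{Wg}_{L}(\sigma^{-1}\tau)\,\langle P, P_{\tau}\rangle_{\mathrm{HS}}$. For $P = e_{J} e_{J'}^{*}$ one computes $\langle P, P_{\tau}\rangle_{\mathrm{HS}} = \prod_{k} \delta_{j_{k},\, j'_{\tau(k)}}$, and pairing $T_{L}(n)(P)$ against $e_{I'} e_{I}^{*}$ produces a second factor $\prod_{k} \delta_{i_{k},\, i'_{\sigma(k)}}$ from the $(I, I')$-entry of $P_{\sigma}$. Combining these gives (\ref{eq:moment}), and for $n$ below $L$ but still at least the maximum index, both sides are rational functions of $n$ by Corollary \ref{cor:poles-of-Wg}, so the identity extends by analytic continuation.

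The main technical hurdle is index bookkeeping: two independent permutations $\sigma$ and $\tau$ appear in (\ref{eq:moment}) --- one pairing the row indices $i, i'$, the other pairing the column indices $j, j'$ --- while the Weingarten function is evaluated at the single element $\sigma^{-1}\tau$. This structure emerges naturally from the above framework because $P_{\sigma}$ controls the matching of row indices while the $\tau$-summation encodes the expansion of the operator $P = e_{J} e_{J'}^{*}$ in the Schur--Weyl basis, and the product $P_{\sigma} \cdot P_{\tau^{-1}}$ appearing in the Gram pairing shifts the Weingarten argument to $\sigma^{-1}\tau$.
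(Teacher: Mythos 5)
Your approach --- realizing the integrand as a matrix entry of the conditional expectation $T_{L}(n)(X)=\int A^{\otimes L}X(A^{\otimes L})^{*}\,dA$ applied to the rank-one operator $e_{J}e_{J'}^{*}$, recognizing $T_{L}(n)$ as the orthogonal Hilbert--Schmidt projection onto the $\U(n)$-commutant, and solving the normal equations via Schur--Weyl duality and inversion of the Gram matrix $\langle P_{\sigma},P_{\tau}\rangle_{\mathrm{HS}}$ --- is exactly the Collins--\'{S}niady argument. The paper itself supplies no proof of this theorem (it cites \cite{CS}); the only hint it gives is the Schur--Weyl perspective of Remark~\ref{rem:schur-weyl}, which your proof makes concrete. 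So your route is the intended one.

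Two remarks, one cosmetic and one substantive. The cosmetic one: several of your intermediate formulas are off by an inverse. With your convention $P_{\sigma}(v_{1}\otimes\cdots\otimes v_{L})=v_{\sigma^{-1}(1)}\otimes\cdots\otimes v_{\sigma^{-1}(L)}$, the entry $(P_{\sigma})_{II'}$ equals $\prod_{k}\delta_{i_{k},i'_{\sigma^{-1}(k)}}$, not $\prod_{k}\delta_{i_{k},i'_{\sigma(k)}}$; likewise $\langle P,P_{\tau}\rangle_{\mathrm{HS}}=\prod_{k}\delta_{j_{k},j'_{\tau^{-1}(k)}}$; and the matrix entry corresponding to the integrand is $\langle e_{I},T_{L}(n)(P)e_{I'}\rangle$ rather than $\langle e_{I'},T_{L}(n)(P)e_{I}\rangle$. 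All these slips cancel after relabelling $\sigma\mapsto\sigma^{-1}$, $\tau\mapsto\tau^{-1}$ in the double sum, using that $\wg_{L}$ is a central, inversion-invariant element so that $\wg_{L}(\sigma^{-1}\tau)=\wg_{L}(\sigma\tau^{-1})$; thus your final formula (\ref{eq:moment}) comes out correct, but the intermediate identities should be cleaned up.

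The substantive gap is the final sentence. Your derivation proves the identity only for $n\ge L$, where the $P_{\sigma}$ are linearly independent and the Gram matrix is invertible. For $\max\{i_{1},\ldots,j'_{L}\}\le n<L$, \emph{analytic continuation has no traction}: the left-hand side of (\ref{eq:collins-sniady}) is merely a sequence of numbers indexed by the integer $n$; it is not a priori the evaluation of a rational function of $n$, so agreement with (\ref{eq:moment}) at the infinitely many integers $n\ge L$ implies nothing about smaller $n$. One must show directly that the poles of $\wg_{L}$ at such integers cancel in the sum (\ref{eq:moment}) and that the resulting value equals the integral --- precisely the point the paper defers to \cite{CS} (the example following their Proposition 2.5). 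This does not affect the paper's downstream use, since Theorems \ref{thm:trwl as finite sum} and \ref{thm:combinatorial Laurent expansion of trwl} only invoke the formula with $n\ge L_{x}$, but as a proof of the theorem as stated the small-$n$ case remains unestablished.
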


Put differently, the rational function is given by $\sum_{\sigma,\tau}\wg_{L}\left(\sigma^{-1}\tau\right)$,
where $\sigma$ runs over all rearrangements of $\left(i'_{1},\ldots,i'_{L}\right)$
which make it identical to $\left(i_{1},\ldots,i_{L}\right)$, and
$\tau$ runs over all rearrangements of $\left(j'_{1},\ldots,j'_{L}\right)$
which make it identical to $\left(j_{1},\ldots,j_{L}\right)$. In
particular, the possible poles of the Weingarten function at $n$,
for every $n\ge\max\left\{ i_{1},\ldots,j'_{L}\right\} $, are guaranteed
to cancel out in this summation (see the example following Proposition
2.5 in \cite{CS}). 
\begin{rem}
\label{rem:schur-weyl}The basis for the Weingarten calculus is the
Schur-Weyl duality for $\U\left(n\right)$. One version of this duality
is the following: let $V=\mathbb{C}^{n}$. A unitary matrix $A\in\U\left(n\right)$
acts on the space of functionals $W=\left(V^{\otimes L}\otimes\left(V^{*}\right)^{\otimes L}\right)^{*}$
by 
\[
\left(A\theta\right)\left(v_{1}\otimes\ldots\otimes v_{L}\otimes\varphi_{1}\otimes\ldots\otimes\varphi_{L}\right)=\theta\left(A^{-1}v_{1}\otimes\ldots\otimes A^{-1}v_{L}\otimes A^{-1}\varphi_{1}\otimes\ldots\otimes A^{-1}\varphi_{L}\right),
\]
where we think of $\varphi\in V^{*}$ as a column vector in $\mathbb{C}^{n}$
whose value on $v\in V$ is $\varphi^{*}v\in\mathbb{C}$. Every permutation
$\sigma\in S_{L}$ yields a functional in $W$ defined by:
\[
\theta_{\sigma}\left(v_{1}\otimes\ldots\otimes v_{q}\otimes\varphi_{1}\otimes\ldots\otimes\varphi_{q}\right)=\left(\varphi_{1}^{*}v_{\sigma^{-1}\left(1\right)}\right)\cdots\left(\varphi_{q}^{*}v_{\sigma^{-1}\left(q\right)}\right).
\]
The Schur-Weyl duality says that this embedding of $\mathbb{C}\left[S_{L}\right]$
in $W$ is precisely the set of $\U\left(n\right)$-invariant functionals
in $W$. The family of integrals in (\ref{eq:collins-sniady}) can
be presented as a single functional on $V^{\otimes L}\otimes\left(V^{*}\right)^{\otimes L}\otimes V^{\otimes L}\otimes\left(V^{*}\right)^{\otimes L}$,
which is $\U\left(n\right)\times\U\left(n\right)$-invariant because
the Haar measure is both left- and right-invariant. This roughly explains
why one can expect a result of the type of Theorem \ref{thm:collins-sniady}.
\end{rem}

\subsection{Surfaces from matchings of letters}

Based on Theorem \ref{thm:collins-sniady} we show that $\trwl\left(n\right)$
is a rational expression in $n$, and give concrete formulas which
involve surfaces. These surfaces are constructed from matchings of
the letters in $\wl$, and we begin by describing this construction. 

Recall that $B$ denotes a fixed basis for $\F_{r}$. Following Claim
\ref{claim: tr=00003D0 for non-balanced words}, we assume that $w_{1}\cdots w_{\ell}\in\left[\F_{r},\F_{r}\right]$,
namely, that for every letter $x\in B$, the total number of instances
of $x^{+1}$ in $\wl$ is equal to that of $x^{-1}$, and we denote
this number by $L_{x}\in\mathbb{Z}_{\ge0}$\marginpar{$L_{x}$}. In
particular, $\left|w_{1}\right|+\ldots+\left|w_{\ell}\right|=2\sum_{x\in B}L_{x}$.
We also denote by $\match_{x}\left(\wl\right)$\marginpar{$\protect\match_{x}$}
the set of bijections from the instances of $x^{+1}$ to the instances
of $x^{-1}$, so that $\left|\match_{x}\left(\wl\right)\right|=L_{x}!$. 

Let \marginpar{$\kappa$}$\kappa=\left\{ \kappa_{x}\right\} _{x\in B}\in\left(\mathbb{Z}_{\ge0}\right)^{B}$
be an assignment of a non-negative integer to every basis element.
We denote by \marginpar{$\protect\match^{\kappa}$}$\match^{\kappa}\left(\wl\right)$
the Cartesian product of sets of matchings, with $\kappa_{x}+1$ copies
of $\match_{x}\left(\wl\right)$ for every $x\in B$, namely,
\[
\match^{\kappa}\left(\wl\right)\stackrel{\mathrm{def}}{=}\prod_{x\in B}\match_{x}\left(\wl\right)^{\kappa_{x}+1}.
\]
The following definition presents the construction of a surface from
an element of\linebreak{}
$\match^{\kappa}\left(\wl\right)$. We use the notation $\left[k\right]\stackrel{\mathrm{def}}{=}\left\{ 0,1,\ldots,k\right\} $\marginpar{$\left[k\right]$}
for a non-negative integer $k$.
\begin{defn}
\label{def:surface from matchings}Let $\wl\in\F_{r}\setminus\left\{ 1\right\} $
be a balanced set of words, let $\kappa\in\left(\mathbb{Z}_{\ge0}\right)^{B}$
and let $\sigma\in\match^{\kappa}\left(\wl\right)$ be a tuple of
matchings. We denote by $\sigma_{x,0},\ldots,\sigma_{x,\kappa_{x}}$
the $\kappa_{x}+1$ matchings from $\match_{x}\left(\wl\right)$ in
$\sigma$. From this data we construct a surface \marginpar{$\Sigma_{\sigma}$}$\Sigma_{\sigma}$
as a CW-complex as follows:
\begin{itemize}
\item For $1\ne w\in\F_{r}$ define $S^{1}\left(w\right)$\marginpar{$S^{1}\left(w\right)$}
to be an oriented $1$-sphere $S^{1}$ with additional marked points
as follows: there are\footnote{We use $\left|w\right|$ to denote the number of letters in $w$.}
$\left|w\right|$ points marked $o$, which we call $o$-points\marginpar{$o$-point}.
These points cut the $1$-sphere into $\left|w\right|$ intervals,
which are in bijection with the letters of $w$, in the suitable cyclic
order. For every letter of $w$, if the letter is $x^{\pm1}$, we
mark additional $\kappa_{x}+1$ points on the interval corresponding
to that letter. These marked points are labeled $\left(x,0\right),\ldots,\left(x,\kappa_{x}\right)$
and are ordered according to the orientation of $S^{1}\left(w\right)$
if the letter is $x^{+1}$, or in reverse orientation if the letter
is $x^{-1}$. We call a point labeled $\left(x,j\right)$ for some
$x\in B$ and $j\in\left[\kappa_{x}\right]$ an $\left(x,j\right)$-point\marginpar{$\left(x,j\right)$-point}
or a $z$-point\marginpar{$z$-point} if the exact $x$ and $j$ do
not matter.
\item The one-dimensional skeleton of $\Sigma_{\sigma}$ consists of $S^{1}\left(w_{1}\right),\ldots,S^{1}\left(w_{\ell}\right)$,
together with additional $\sum_{x\in B}L_{x}\left(\kappa_{x}+1\right)$
edges (1-cells), referred to as \emph{matching-edges}: for every $x\in B$
and $j\in\left[\kappa_{x}\right]$, introduce $L_{x}$ edges describing
the matching $\sigma_{x,j}$. Namely, for every $x^{+1}$-letter $\lambda$
of $\wl$, introduce an edge between the $\left(x,j\right)$-point
on the interval corresponding to $\lambda$ and the $\left(x,j\right)$-point
on the interval corresponding to the $x^{-1}$-letter $\sigma_{x,j}\left(\lambda\right)$.
This is illustrated in the left part of Figure \ref{fig:surface from matchings}.
\item Finally, $2$-cells are attached as follows: consider cycles in the
$1$-skeleton which are obtained by starting at some marked point
in $S^{1}\left(w_{i}\right)$ for some $i=1,\ldots,\ell$, moving
orientably along $S^{1}\left(w_{i}\right)$ until the next $z$-point,
then following the matching-edge emanating from this $z$-point and
arriving at some $z$-point in $S^{1}\left(w_{i'}\right)$ for some
$i'$, then moving orientably along $S^{1}\left(w_{i'}\right)$ until
the next $z$-point, continuing along the matching-edge and so on
until a cycle has been completed. A 2-cell (a disc) is glued along
every such cycle.
\item From the construction of $\Sigma_{\sigma}$, it is clear it is a surface,
with boundary $S^{1}\left(w_{1}\right)\sqcup\ldots\sqcup S^{1}\left(w_{\ell}\right)$
and with orientation prescribed from the boundary. Moreover, every
$2$-cell $D$ belongs to exactly one of the following categories: 
\begin{itemize}
\item Either there is an $o$-point at every component of $\partial D\cap\partial\Sigma_{\sigma}$,
in which case we call $D$ an $o$-disc\marginpar{$o$-disc}, 
\item or, $\partial D$ contains no $o$-points, in which case we call $D$
a $z$-disc\marginpar{$z$-disc}. In this case, there are some $x\in B$
with $\kappa_{x}\ge1$ and $j\in\left[\kappa_{x}-1\right]$ such that
the marked points in $\partial D$ are exactly of two types: $\left(x,j\right)$-points
and $\left(x,j+1\right)$-points. In this case we call the $z$-disc
$D$ also an $\left(x,j\right)$-disc\marginpar{$\left(x,j\right)$-disc}.
See Figure \ref{fig:surface from matchings}.
\end{itemize}
\item Let $\chi\left(\sigma\right)$\marginpar{$\chi\left(\sigma\right)$}
denote the Euler characteristic of this surface, namely $\chi\left(\sigma\right)\stackrel{\mathrm{def}}{=}\chi\left(\Sigma_{\sigma}\right)$.
\end{itemize}
\begin{figure}[t]
\centering{}\includegraphics[viewport=0bp 0bp 448bp 200bp,scale=0.85]{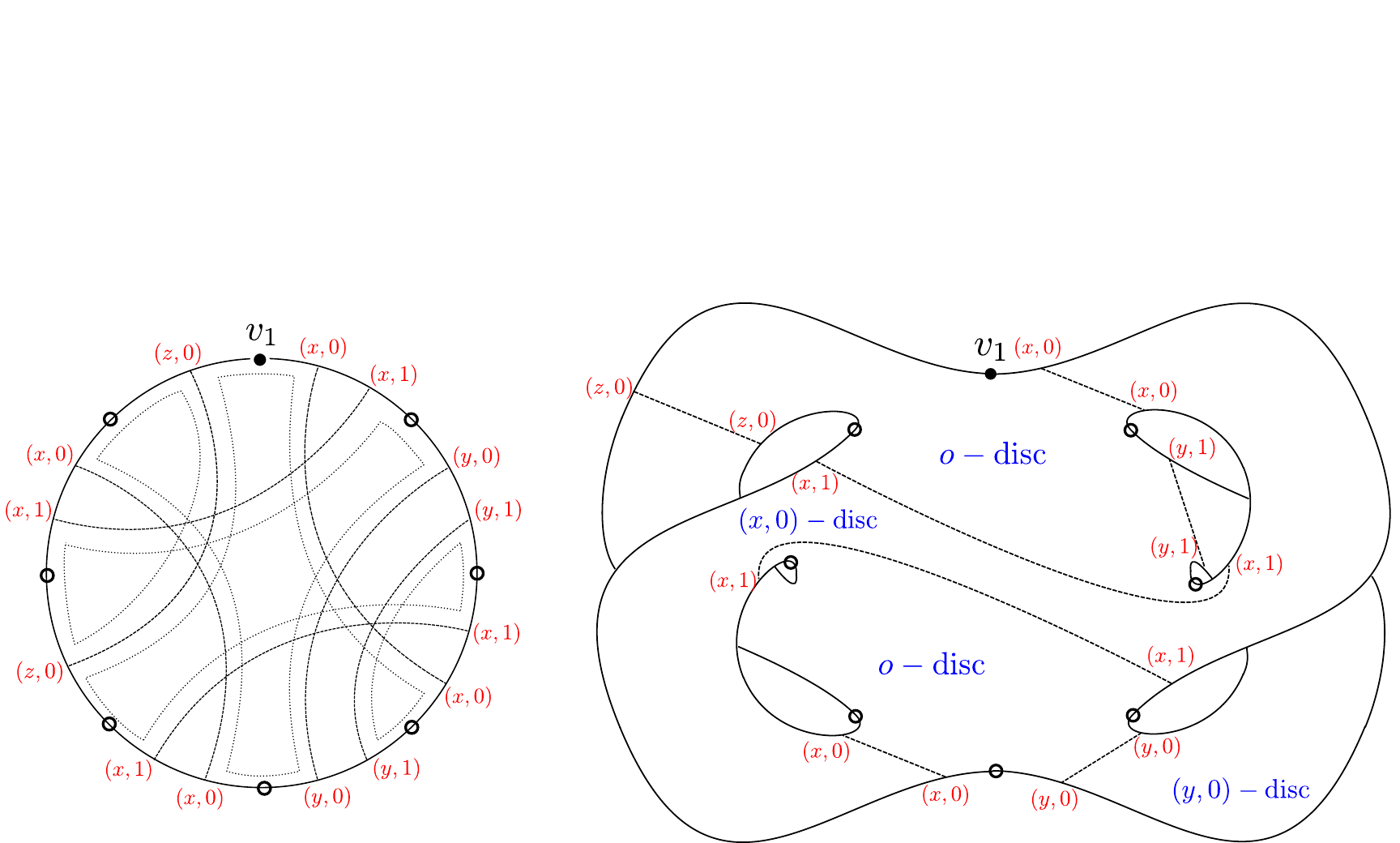}\caption{\label{fig:surface from matchings}On the left is the 1-skeleton of
$\Sigma_{\sigma}$ for $w=\left[x,y\right]\left[x,z\right]=x_{1}y_{2}X_{3}Y_{4}x_{5}z_{6}X_{7}Z_{8}$,
with $\kappa_{x}=\kappa_{y}=1$ and $\kappa_{z}=0$ and with the matchings
$\sigma_{x,0}=\left(x_{1}\protect\mapsto X_{3};~x_{5}\protect\mapsto X_{7}\right)$,
$\sigma_{x,1}=\left(x_{1}\protect\mapsto X_{7};~x_{5}\protect\mapsto X_{3}\right)$,
$\sigma_{y,0}=\sigma_{y,1}=\left(y_{2}\protect\mapsto Y_{4}\right)$
and $\sigma_{z,0}=\left(z_{6}\protect\mapsto Z_{8}\right)$. Dashed
lines are matching-edges. The dotted lines trace the boundaries of
the two $o$-discs to be glued in (see Definition \ref{def:surface from matchings}).
Two additional discs, a $\left(x,0\right)$-disc and a $\left(y,0\right)$-disc
are glued along the other types of cycles one can follow (unmarked).
The eight $o$-points are marked by $V_{1}$ and black circles. The
resulting surface $\Sigma_{\sigma}$ is on the right and is a genus-$2$
surface with one boundary component. In this case, $\chi\left(\sigma\right)=\chi\left(\Sigma_{\sigma}\right)=-3$.}
\end{figure}
\end{defn}

\subsection{A formula for $\protect\trwl\left(n\right)$ as a rational expression}

Our first formula for $\trwl\left(n\right)$ is a finite sum over
pairs of matchings for every letters, namely over elements in $\match^{\kappa}\left(\wl\right)$
with $\kappa_{x}=1$ for every $x\in B$. We denote this $\kappa$
by $\kappa\equiv1$. In particular, this formula proves Proposition
\ref{prop:rational expression}.
\begin{thm}[$\trwl\left(n\right)$ as finite sum]
\label{thm:trwl as finite sum} Let $\wl\in\F_{r}$ be a balanced
set of words. 
\begin{enumerate}
\item If\footnote{Interestingly, very similar constraints on $n$ appear in a formula
giving the expected trace of $w$ in $r$ uniform $n\times n$ \emph{permutation}
matrices as a rational expression in $n$ -- see \cite[Section 5]{Puder2014}.} $n\ge L_{x}$ for every $x\in B$, then
\begin{equation}
\trwl\left(n\right)=\sum_{\sigma\in\match^{\kappa\equiv1}}\left(\prod_{x\in B}\mathrm{Wg}_{L_{x}}\left(\sigma_{x,0}^{-1}\sigma_{x,1}\right)\right)\cdot n^{\#\left\{ o\mathrm{-discs~in~}\Sigma_{\sigma}\right\} }\label{eq:trwl as finite sum}
\end{equation}
(here $\sigma_{x,0}^{-1}\sigma_{x,1}$ is a permutation of the $x^{+1}$-letters
of $\wl$).
\item For $n\ge\max_{x\in B}L_{x}$, the function $\trwl\left(n\right)$
is a computable rational function in $n$.
\item For $\sigma\in\match^{\kappa\equiv1}\left(\wl\right)$, let $\sigma_{0}=\left(\sigma_{x,0}\right)_{x\in B}$
and $\sigma_{1}=\left(\sigma_{x,1}\right)_{x\in B}$ denote two matchings
of the ``positive'' letters of $\wl$ to the ``negative'' ones.
Then the summand in (\ref{eq:trwl as finite sum}) corresponding to
$\sigma$ is 
\begin{equation}
\moeb\left(\sigma_{0}^{-1}\sigma_{1}\right)\cdot n^{\chi\left(\sigma\right)}+O\left(n^{\chi\left(\sigma\right)-2}\right).\label{eq:leading term of contribution of sigma}
\end{equation}
\end{enumerate}
\end{thm}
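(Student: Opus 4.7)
The overall strategy is to expand each trace into matrix entries, apply the Collins--\'Sniady formula (Theorem~\ref{thm:collins-sniady}) letter-by-letter, and then reinterpret the resulting sum using the CW-structure of $\Sigma_\sigma$ constructed in Definition~\ref{def:surface from matchings}.

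I would first write each trace as $\mathrm{tr}(w_i(A_1,\ldots,A_r))=\sum(A_{j_1}^{\varepsilon_1})_{a_0,a_1}\cdots(A_{j_{|w_i|}}^{\varepsilon_{|w_i|}})_{a_{|w_i|-1},a_0}$, summing cyclically over row/column indices at each letter. Using $A^{-1}=A^{*}$, every factor coming from an inverse letter is rewritten as a conjugate entry with swapped row and column indices. Taking the product over $i$ and grouping by $x\in B$, the product $\prod_i\mathrm{tr}(w_i)$ becomes a sum, over $2\sum_x L_x$ indices, of a monomial containing, for each $x$, exactly $L_x$ entries of $A_x$ and $L_x$ entries of $\overline{A_x}$. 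By independence of the $A_x$, the expectation factors across $x\in B$, and Theorem~\ref{thm:collins-sniady} applies to each factor (valid because $n\ge L_x$): it introduces two permutations $\sigma_{x,0},\sigma_{x,1}\in S_{L_x}$, interpreted as matchings of the positive $x$-letters with the negative $x$-letters at the level of row and column indices respectively, contributing a factor $\wg_{L_x}(\sigma_{x,0}^{-1}\sigma_{x,1})$ together with Kronecker deltas on the remaining index summation. The collected data $\sigma=(\sigma_{x,j})_{x,j}$ is precisely an element of $\match^{\kappa\equiv 1}(\wl)$.

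Next I would swap the $\sigma$-sum with the index-sum, so that the index-sum contributes $n$ raised to the number of equivalence classes of indices cut out by the Kronecker deltas. The core combinatorial claim is that these equivalence classes are in bijection with the $o$-discs of $\Sigma_\sigma$: trace cyclicity identifies the column index of letter $k$ with the row index of letter $k+1$ (reflecting the short $S^1(w_i)$ arcs that cross an $o$-point), while $\sigma_{x,0}$ and $\sigma_{x,1}$ encode the Weingarten row/column identifications across the two families of matching-edges. Following these identifications around a closed loop in the $1$-skeleton traces out exactly the boundary of an $o$-disc as constructed in Definition~\ref{def:surface from matchings}. This establishes formula~(\ref{eq:trwl as finite sum}). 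Part~(2) follows immediately, since every Weingarten value lies in $\mathbb{Q}(n)$ by Definition~\ref{def:weingarten}, so $\trwl(n)$ is a finite sum of rational functions in $n$.

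For part~(3), I would substitute the leading-order expansion from item~\ref{enu:leading term of Wg} of Theorem~\ref{thm:properties of Wg} into each Weingarten factor $\wg_{L_x}(\sigma_{x,0}^{-1}\sigma_{x,1})$. Since $\moeb$ is multiplicative on disjoint cycle decompositions (formula~(\ref{eq:mobius})), this produces leading coefficient $\moeb(\sigma_0^{-1}\sigma_1)$ and leading exponent $\#\{o\text{-discs}\}-\sum_x(L_x+\|\sigma_{x,0}^{-1}\sigma_{x,1}\|)$. It then remains to show this exponent equals $\chi(\Sigma_\sigma)$. A direct count in the CW-complex with $\kappa\equiv 1$ gives $V=6\sum_x L_x$, $E=8\sum_x L_x$, and $F=\#\{o\text{-discs}\}+\#\{z\text{-discs}\}$; tracing a $z$-disc (necessarily an $(x,0)$-disc since $\kappa_x=1$) shows that its boundary alternates $(x,0)\to(x,1)$ within an $x^{+1}$-letter, then $\sigma_{x,1}$-matching to the paired $x^{-1}$-letter, then $(x,1)\to(x,0)$ within it, then $\sigma_{x,0}^{-1}$-matching back, so that $(x,0)$-discs are in bijection with cycles of $\sigma_{x,0}^{-1}\sigma_{x,1}$ and $\#\{z\text{-discs}\}=\sum_x(L_x-\|\sigma_{x,0}^{-1}\sigma_{x,1}\|)$. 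Substituting closes the identity. The main obstacle is precisely this bookkeeping: correctly tracking how the row- and column-index identifications coming from Weingarten propagate around the $1$-skeleton to yield $o$-disc boundaries, and the parallel verification at the $z$-points identifying $(x,0)$-discs with cycles of $\sigma_{x,0}^{-1}\sigma_{x,1}$.
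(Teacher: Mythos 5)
Your proposal is correct and takes essentially the same route as the paper: expand the traces into entries, factor the expectation over the independent $A_x$, apply Collins--\'{S}niady generator by generator so that the pair $(\sigma,\tau)$ per generator becomes an element of $\match^{\kappa\equiv1}$, identify the Kronecker-delta equivalence classes of indices with $o$-discs of $\Sigma_\sigma$, and then derive part~(3) by substituting the leading-order Weingarten asymptotics and matching the exponent with $\chi(\Sigma_\sigma)$ via the CW-count. The only cosmetic difference is that the paper carries out the bookkeeping of part~(1) on a fully worked example rather than in the abstract notation you use.
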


\begin{proof}
Part $2$ follows from (\ref{eq:trwl as finite sum}) as every value
of the Weingarten function is computable and in $\mathbb{Q}\left(n\right)$.
We now prove part $\left(1\right)$, which we do by way of an example.
Let $w_{1}=xyx^{-2}y$ and $w_{2}=xy^{-2}$. Then,
\begin{align}
 & \tr_{w_{1},w_{2}}\left(n\right)=\mathbb{E}_{\left(A,B\right)\in{\cal U}(n)\times{\cal U}\left(n\right)}\left[\mathrm{tr}\left(ABA^{-2}B\right)\cdot\mathrm{tr}\left(AB^{-2}\right)\right]\nonumber \\
 & =\mathbb{E}_{\left(A,B\right)\in{\cal U}(n)\times{\cal U}\left(n\right)}\left[\left(\sum_{i,j,k,\ell,m\in\left[n\right]}A_{i,j}\cdot B_{j,k}\cdot A_{\,\,k,\ell}^{-1}\cdot A_{~\ell,m}^{-1}\cdot B_{m,i}\right)\left(\sum_{I,J,K\in\left[n\right]}A_{I,J}\cdot B_{~J,K}^{-1}\cdot B_{\,\,K,I}^{-1}\right)\right]\nonumber \\
 & =\sum_{i,j,k,\ell,m,I,J,K\in\left[n\right]}\mathbb{E}_{\left(A,B\right)\in{\cal U}(n)\times{\cal U}\left(n\right)}\left[A_{i,j}\cdot B_{j,k}\cdot\overline{A_{\ell,k}}\cdot\overline{A_{m,\ell}}\cdot B_{m,i}\cdot A_{I,J}\cdot\overline{B_{K,J}}\cdot\overline{B_{I,K}}\right]\nonumber \\
 & =\sum_{i,j,k,\ell,m,I,J,K\in\left[n\right]}\left(\mathbb{E}_{A\in{\cal U}(n)}\left[A_{i,j}\cdot A_{I,J}\cdot\overline{A_{\ell,k}}\cdot\overline{A_{m,\ell}}\right]\right)\cdot\left(\mathbb{E}_{B\in{\cal U}(n)}\left[B_{j,k}\cdot B_{m,i}\cdot\overline{B_{K,J}}\cdot\overline{B_{I,K}}\right]\right).\label{eq:step1}
\end{align}
Note that there is a clear correspondence between the $o$-points
in $S^{1}\left(w_{1}\right)$ and the indices $i,j,k,\ell,m$ and
between the $o$-points in $S^{1}\left(w_{2}\right)$ and the indices
$I,J,K$ (see Figure \ref{fig:surface for formula}). 
\begin{figure}
\centering{}\includegraphics[viewport=0bp 20bp 448bp 250bp,scale=0.85]{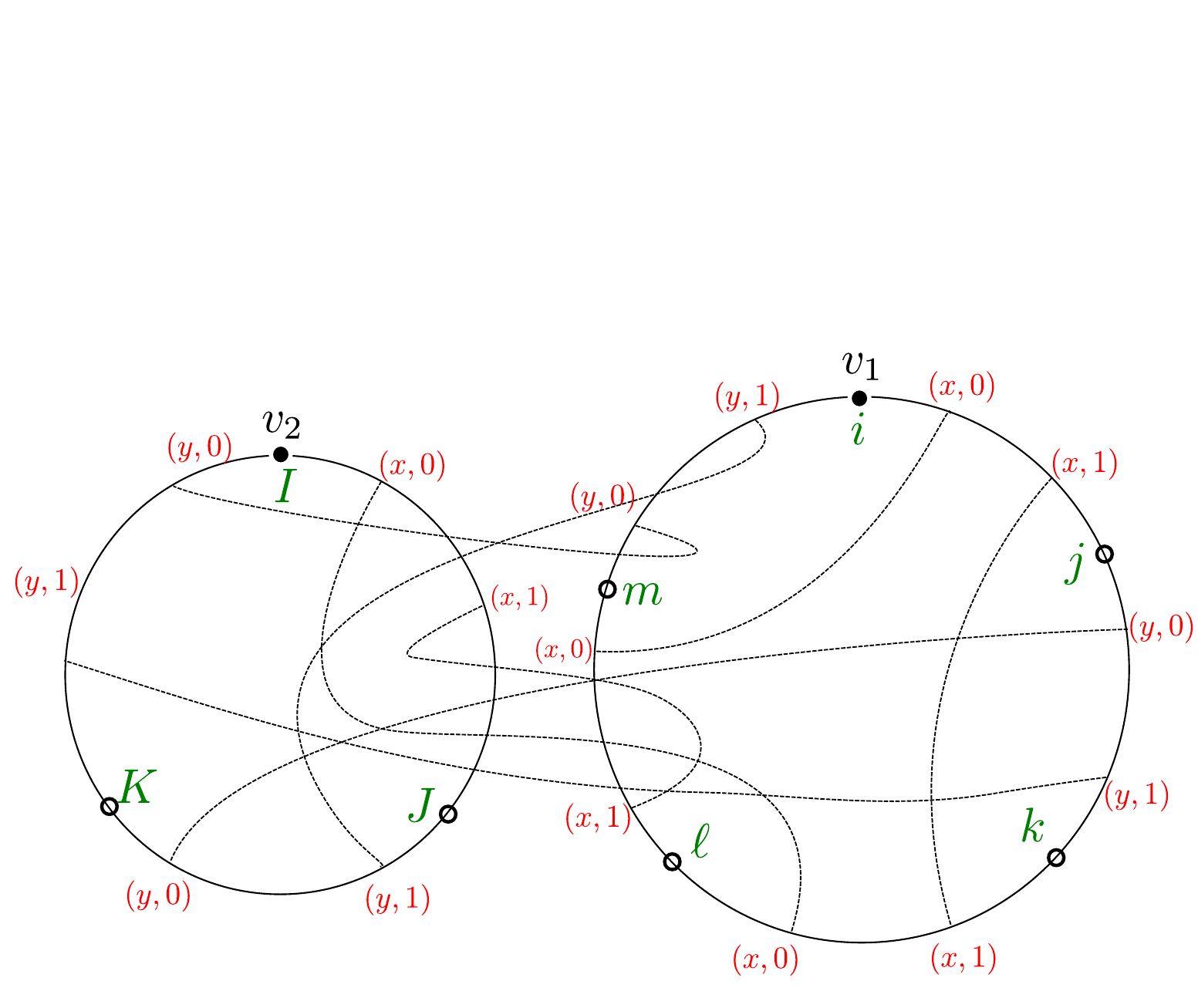}\caption{\label{fig:surface for formula}The 1-skeleton of the surface $\Sigma_{\sigma}$
for the tuple of matchings $\sigma\in\protect\match^{\kappa\equiv1}\left(xyx^{-2}y,xy^{-2}\right)$
specified in the proof of Theorem \ref{thm:trwl as finite sum}. The
$o$-points are identified with the indices $i,j,k,\ell,m$ and $I,J,K$
that appear in the computation of $\protect\tr_{xyx^{-2}y,xy^{-2}}\left(n\right)$.
The $o$-discs of $\Sigma_{\sigma}$ (two of these in the current
case) are in one-to-one correspondence with the blocks of indices
determined by $\sigma$, and for every $x\in B$, the $\left(x,0\right)$-discs
(one for each letter in the current case) are in one-to-one correspondence
with the cycles of the permutation $\sigma_{x,0}^{-1}\sigma_{x,1}$.}
\end{figure}

Now we use Theorem \ref{thm:collins-sniady} to replace each of the
two integrals inside the sum by a summation over pairs of matchings.
For the first integral we go over all bijections $\left\{ i,I\right\} \overset{\sim}{\to}\left\{ \ell,m\right\} $
and $\left\{ j,J\right\} \overset{\sim}{\to}\left\{ k,\ell\right\} $,
and we think of them as elements $\sigma_{x,0},\sigma_{x,1}\in\match_{x}\left(w_{1},w_{2}\right)$
by thinking of a matching between two $z$-points as a matching of
the adjacent $o$-points. For example, the $\left(x,0\right)$-point
in the first letter of $w_{1}$ is adjacent to the $o$-point $i$,
and the $\left(x,1\right)$-point in the same letter is adjacent to
the $o$-point $j$. Similarly, we go over all bijections $\sigma_{y,0}$
and $\sigma_{y,1}$ for the second integral. Changing the order of
summation, we sum first over $\sigma_{x,0}$, $\sigma_{x,1}$, $\sigma_{y,0}$
and $\sigma_{y,1}$, and only then over the indices $i,j,\ldots,K$.
This turns (\ref{eq:step1}) into a sum over $\match^{\kappa\equiv1}\left(w_{1},w_{2}\right)$. 

For every set of $\sigma\in\match^{\kappa\equiv1}\left(w_{1},w_{2}\right)$,
we only need to count the number of evaluations of $i,j,\ldots,L$
which ``agree'' with the permutations. For example, consider the
case where

\begin{equation}
\begin{gathered}\underline{\sigma_{x,0}}\\
i\mapsto m\\
I\mapsto\ell
\end{gathered}
\,\,\,\,\begin{gathered}\underline{\sigma_{x,1}}\\
j\mapsto k\\
J\mapsto\ell
\end{gathered}
\,\,\,\,\begin{gathered}\underline{\sigma_{y,0}}\\
j\mapsto K\\
m\mapsto I
\end{gathered}
\,\,\,\,\begin{gathered}\underline{\sigma_{y,1}}\\
k\mapsto K\\
i\mapsto J
\end{gathered}
\label{eq:example - sigma}
\end{equation}
(these are the matchings described in Figure \ref{fig:surface for formula}).
Note that in this case, both the permutation $\sigma_{x,0}^{-1}\sigma_{x,1}$
and the permutation $\sigma_{y,0}^{-1}\sigma_{y,1}$ are a $2$-cycle.
Hence, by Theorem \ref{thm:collins-sniady}, the summand corresponding
to these matchings is
\[
\wg_{2}\left(\left(1~2\right)\right)\cdot\wg_{2}\left(\left(1~2\right)\right)\cdot\sum_{i,j,k,\ell,m,I,J,K\in\left[n\right]}\delta_{im}\delta_{I\ell}\delta_{jk}\delta_{J\ell}\delta_{jK}\delta_{mI}\delta_{kK}\delta_{iJ},
\]
and the product inside the last sum is 1 (and not 0) if and only if
$i=m=I=\ell=J$ and $j=k=K$. Here, two indices must have the same
value if and only if they belong to the same $o$-disc in $\Sigma_{\sigma}$,
hence there are exactly $n^{\#o\mathrm{-discs~in~}\Sigma_{\sigma}}$
contributing values of the indices, each contributing 1 to the summation.
For $\sigma$ we defined in (\ref{eq:example - sigma}) this number
is $n^{2}$, and the total contribution of this $\sigma$ is, thus,
$\wg_{2}\left(\left(1~2\right)\right)^{2}\cdot n^{2}=\frac{1}{\left(n^{2}-1\right)^{2}}$.
The total summation over all the $16$ elements of $\match^{\kappa\equiv1}\left(w_{1},w_{2}\right)$
is $\frac{1}{n^{2}-1}$. Since the same argument works for every $\wl\in\F_{r}$,
this proves part $\left(1\right)$.

Recall that for $\pi\in S_{L}$, we have $\left\Vert \pi\right\Vert =L-\#\mathrm{cycles}\left(\pi\right)$.
The number of cycles in the permutation $\sigma_{x,0}^{-1}\sigma_{x,1}\in S_{L_{x}}$
is equal to the number of $\left(x,0\right)$-discs in $\Sigma_{\sigma}$.
Hence, by Theorem \ref{thm:properties of Wg}(\ref{enu:leading term of Wg}),
\begin{align*}
\prod_{x\in B}\mathrm{Wg}_{L_{x}}\left(\sigma_{x,0}^{-1}\sigma_{x,1}\right) & =\prod_{x\in B}\left[\frac{\moeb\left(\sigma_{x,0}^{-1}\sigma_{x,1}\right)}{n^{L_{x}+\left\Vert \sigma_{x,0}^{-1}\sigma_{x,1}\right\Vert }}+O\left(\frac{1}{n^{L_{x}+\left\Vert \sigma_{x,0}^{-1}\sigma_{x,1}\right\Vert +2}}\right)\right]\\
 & =\prod_{x\in B}\left[\frac{\moeb\left(\sigma_{x,0}^{-1}\sigma_{x,1}\right)}{n^{2L_{x}-\#\left\{ \left(x,0\right)\mathrm{-discs~in~\Sigma_{\sigma}}\right\} }}+O\left(\frac{1}{n^{2L_{x}-\#\left\{ \left(x,0\right)\mathrm{-discs~in~\Sigma_{\sigma}}\right\} +2}}\right)\right]\\
 & =\frac{\moeb\left(\sigma_{0}^{-1}\sigma_{1}\right)}{n^{2L-\#\left\{ z\mathrm{-discs~in~\Sigma_{\sigma}}\right\} }}+O\left(\frac{1}{n^{2L-\#\left\{ z\mathrm{-discs~in~\Sigma_{\sigma}}\right\} -2}}\right),
\end{align*}
where $L=\sum_{x\in B}L_{x}$ is the total number of positive letters
in $\wl$. We are done proving part $\left(3\right)$ as 
\[
\chi\left(\sigma\right)=\chi\left(\Sigma_{\sigma}\right)=-2L+\#\left\{ \mathrm{discs~in~\!}\Sigma_{\sigma}\right\} .
\]
\end{proof}

\subsection{A formula for $\protect\trwl\left(n\right)$ as Laurent expansion\label{subsec:Laurent expansion of trwl}}

We now give an alternative formula for $\trwl\left(n\right)$, which
also uses surfaces constructed from matchings of the letters of $\wl$.
The sum in (\ref{eq:trwl as finite sum}) is finite, it proves the
rationality of $\trwl\left(n\right)$, and allows a finite algorithm
to compute it. The alternative formula we introduce next has the disadvantage
that it is an infinite sum (unless $L_{x}\le1$ for every $x\in B$).
However, it has the advantage of simplifying greatly the contribution
of every surface involved in the computation, as well as being an
important step towards establishing Theorem \ref{thm:main}. This
formula is derived from (\ref{eq:trwl as finite sum}) together the
asymptotic expansion of the Weingarten function developed in \cite{collins2003moments}
and depicted in Theorem \ref{thm:properties of Wg}(\ref{enu:Asymptotic-expansion of Wg})
above\footnote{Novaes \cite{Novaes} has recently obtained a combinatorial formula
for the Weingarten function in terms of maps on surfaces; our approach
here is different and incorporates that we are integrating over independent
unitary matrices, which naturally leads to considerations about infinite
groups.}.

The formula uses a restricted set of tuples of matchings which, for
a given $\kappa\in\left(\mathbb{Z}_{\ge0}\right)^{B}$, we denote
by $\matchr^{\kappa}\left(\wl\right)$\marginpar{$\protect\matchr^{\kappa}$}:
this is the subset of $\match^{\kappa}\left(\wl\right)$ with the
restriction that no two adjacent matchings are identical, namely,
that $\sigma_{x,j}\ne\sigma_{x,j+1}$ for every $x\in B$ and $0\le j\le\kappa_{x}-1$.
We also denote by $\matchr^{*}\left(\wl\right)$\marginpar{$\protect\matchr^{*}$}
the union of restricted sets of matchings over all possible $\kappa$:
\[
\matchr^{*}\left(\wl\right)\stackrel{\mathrm{def}}{=}\coprod_{\kappa\in\left(\mathbb{Z}_{\ge0}\right)^{B}}\matchr^{\kappa}\left(\wl\right),
\]
and for $\sigma\in\matchr^{*}\left(\wl\right)$ denote by $\kappa\left(\sigma\right)$
and $\kappa_{x}\left(\sigma\right)$ the corresponding values of $\kappa$
and $\kappa_{x}$. Also, for $\kappa\in\left(\mathbb{Z}_{\ge0}\right)^{B}$
let $\left|\kappa\right|\stackrel{\mathrm{def}}{=}\sum_{x\in B}\kappa_{x}$\marginpar{$\left|\kappa\right|$}.
\begin{thm}[Laurent Combinatorial Formula for $\trwl\left(n\right)$]
\label{thm:combinatorial Laurent expansion of trwl}Let $\wl\in\F_{r}$
be a balanced set of words. If $n\ge L_{x}$ for every $x\in B$,
then
\begin{equation}
\trwl\left(n\right)=\sum_{\sigma\in\matchr^{*}\left(\wl\right)}\left(-1\right)^{\left|\kappa\left(\sigma\right)\right|}n^{\chi\left(\sigma\right)}.\label{eq:Laurent-formula}
\end{equation}
\end{thm}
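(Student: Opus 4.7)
The plan is to deduce formula (\ref{eq:Laurent-formula}) directly from Theorem \ref{thm:trwl as finite sum} by substituting the asymptotic expansion of Theorem \ref{thm:properties of Wg}(\ref{enu:Asymptotic-expansion of Wg}) into each factor $\wg_{L_{x}}\left(\sigma_{x,0}^{-1}\sigma_{x,1}\right)$ appearing in (\ref{eq:trwl as finite sum}). For every $x\in B$, this replaces the Weingarten value by a series indexed by integers $k_{x}\geq0$ together with sequences $\left(\rho_{x,1},\ldots,\rho_{x,k_{x}}\right)$ of non-identity permutations in $S_{L_{x}}$ whose product is $\sigma_{x,0}^{-1}\sigma_{x,1}$, each summand carrying the sign $\left(-1\right)^{k_{x}}$ and the weight $n^{-L_{x}-\sum_{j}\left\Vert \rho_{x,j}\right\Vert }$. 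Absolute convergence of this expansion for $n\geq L_{x}$, inherited from \cite{collins2003moments}, permits the summations to be freely rearranged and consolidated into a single sum indexed by the combined data.

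The key step is then to identify this combined data with $\matchr^{*}\left(\wl\right)$ via a natural bijection. Given a pair $\sigma_{x,0},\sigma_{x,1}\in\match_{x}\left(\wl\right)$ and a valid sequence $\left(\rho_{x,j}\right)_{j=1}^{k_{x}}$, set $\tau_{x,0}:=\sigma_{x,0}$ and inductively $\tau_{x,j}:=\tau_{x,j-1}\rho_{x,j}$, so that $\tau_{x,k_{x}}=\sigma_{x,1}$. Since each $\rho_{x,j}$ is non-identity, consecutive $\tau$'s are distinct, so the resulting tuple $\left(\tau_{x,0},\ldots,\tau_{x,k_{x}}\right)_{x\in B}$ is an element of $\matchr^{\kappa}\left(\wl\right)$ with $\kappa_{x}=k_{x}$; the inverse map recovers $\rho_{x,j}=\tau_{x,j-1}^{-1}\tau_{x,j}$. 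Under this bijection the overall sign becomes $\prod_{x}\left(-1\right)^{k_{x}}=\left(-1\right)^{\left|\kappa\right|}$, and the Laurent formula reduces to verifying that the resulting exponent of $n$ attached to each $\sigma\in\matchr^{\kappa}$ equals $\chi\left(\sigma\right)$.

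That exponent, after consolidation, is
\[
\#\left\{ o\text{-discs in }\Sigma_{\sigma}\right\} -L-\sum_{x\in B}\sum_{j=1}^{\kappa_{x}}\left\Vert \rho_{x,j}\right\Vert .
\]
Using $\left\Vert \rho\right\Vert =L_{x}-\#\mathrm{cycles}\left(\rho\right)$ the last sum rewrites as $\sum_{x}L_{x}\kappa_{x}-\sum_{x,j}\#\mathrm{cycles}\left(\rho_{x,j}\right)$. The crucial combinatorial identity is that cycles of $\rho_{x,j}=\tau_{x,j-1}^{-1}\tau_{x,j}$ are in bijection with the $\left(x,j-1\right)$-discs of $\Sigma_{\sigma}$: tracing the boundary of an $\left(x,j'\right)$-disc one alternates $\left(x,j'\right)$- and $\left(x,j'+1\right)$-matching edges separated by short arcs between consecutive $z$-points, and the sequence of $x^{+1}$-letters encountered is exactly one cycle of $\tau_{x,j'}^{-1}\tau_{x,j'+1}$. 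Consequently $\sum_{x,j}\#\mathrm{cycles}\left(\rho_{x,j}\right)$ is the total number of $z$-discs, and the exponent simplifies to $F-L-\sum_{x}L_{x}\kappa_{x}$, where $F$ is the total face count of $\Sigma_{\sigma}$. A direct count using Definition \ref{def:surface from matchings} gives $V=4L+2\sum_{x}L_{x}\kappa_{x}$ and $E=5L+3\sum_{x}L_{x}\kappa_{x}$, so $\chi\left(\sigma\right)=V-E+F=F-L-\sum_{x}L_{x}\kappa_{x}$, matching the exponent exactly. The main obstacle is this cycles-versus-$z$-discs identity, which must be extracted cleanly from the gluing recipe of Definition \ref{def:surface from matchings}; everything else is a careful bookkeeping rearrangement of the Weingarten expansion.
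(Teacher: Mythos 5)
Your proposal is correct and follows essentially the same path as the paper's proof: substitute the Collins--\'{S}niady asymptotic expansion of $\wg_{L_{x}}$ into the finite formula of Theorem \ref{thm:trwl as finite sum}, reindex the sequence of non-identity increments $\rho_{x,j}$ as a nested sequence of matchings $\tau_{x,j}=\tau_{x,j-1}\rho_{x,j}$ to recover $\matchr^{*}\left(\wl\right)$, and then match the exponent to $\chi\left(\sigma\right)$ by counting cells of $\Sigma_{\sigma}$ and identifying cycles of $\tau_{x,j-1}^{-1}\tau_{x,j}$ with $z$-discs. The only point you leave implicit, which the paper flags explicitly, is that the $o$-disc count $\#\{o\mathrm{-discs~in~}\Sigma_{\sigma}\}$ depends only on the extreme matchings $\left\{ \sigma_{x,0},\sigma_{x,\kappa_{x}}\right\} _{x\in B}$, which is needed to replace $\Sigma_{\tau}$ by $\Sigma_{\sigma}$ in the exponent after the substitution.
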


\begin{proof}
This proof relies on grouping the summands in (\ref{eq:Laurent-formula})
according to the ``extreme'' bijections $\left\{ \sigma_{x,0},\sigma_{x,\kappa_{x}}\right\} _{x\in B}$
and show that the total contribution of the summands with extreme
bijections $\tau=\left\{ \tau_{x,0},\tau_{x,1}\right\} _{x}\in\match^{\kappa\equiv1}\left(\wl\right)$
is equal to $\left(\prod_{x\in B}\mathrm{Wg}_{L_{x}}\left(\tau_{x,0}^{-1}\circ\tau_{x,1}\right)\right)\cdot n^{\#\left\{ o\mathrm{-discs~in~}\Sigma_{\tau}\right\} }$.
This is enough by Theorem \ref{thm:trwl as finite sum}.

\selectlanguage{american}%
In (\ref{eq:asympt expansion of Wg}) above, substitute $\theta_{i}=\rho_{1}\cdots\rho_{i}$
to obtain
\[
\wg_{L}\left(\pi\right)=\frac{1}{n^{L}}\sum_{k\in\mathbb{Z}_{\ge0}}\sum_{\mathrm{id}=\theta_{0}\ne\theta_{1}\ne\ldots\ne\theta_{k-1}\ne\theta_{k}=\pi}\frac{\left(-1\right)^{k}}{n^{\left\Vert \theta_{0}^{-1}\theta{}_{1}\right\Vert +\left\Vert \theta_{1}^{-1}\theta_{2}\right\Vert +\ldots+\left\Vert \theta_{k-1}^{-1}\theta_{k}\right\Vert }}.
\]
Substituting $L=L_{x}$ and $\pi=\tau_{x,0}^{-1}\cdot\tau_{x,1}$,
we get 
\[
\wg_{L_{x}}\left(\tau_{x,0}^{-1}\cdot\tau_{x,1}\right)=\frac{1}{n^{L_{x}}}\sum_{k\in\mathbb{Z}_{\ge0}}\sum_{\begin{gathered}{\scriptstyle \theta_{0},\ldots,\theta_{k}\in\mathrm{Sym}\left(x^{+1}\mathrm{-letters~of}~\wl\right)}\\
{\scriptstyle \mathrm{s.t.~}\mathrm{id}=\theta_{0}\ne\theta_{1}\ne\ldots\ne\theta_{k-1}\ne\theta_{k}=\tau_{x,0}^{-1}\tau_{x,1}}
\end{gathered}
}\frac{\left(-1\right)^{k}}{n^{\left\Vert \theta_{0}^{-1}\theta{}_{1}\right\Vert +\left\Vert \theta_{1}^{-1}\theta_{2}\right\Vert +\ldots+\left\Vert \theta_{k-1}^{-1}\theta_{k}\right\Vert }}.
\]
Multiplying all permutations from the left by $\tau_{x,0}$ and substituting
$\sigma_{i}=\tau_{x,0}\theta_{i}$, one obtains:
\begin{equation}
\wg_{L_{x}}\left(\tau_{x,0}^{-1}\cdot\tau_{x,1}\right)=\frac{1}{n^{L_{x}}}\sum_{k\in\mathbb{Z}_{\ge0}}\sum_{\begin{gathered}{\scriptstyle \sigma_{0},\ldots,\sigma_{k}\in\match_{x}\left(\wl\right)}\\
{\scriptstyle \mathrm{s.t.~}\tau_{x,0}=\sigma_{0}\ne\sigma_{1}\ne\ldots\ne\sigma_{k-1}\ne\sigma_{k}=\tau_{x,1}}
\end{gathered}
}\frac{\left(-1\right)^{k}}{n^{\left\Vert \sigma_{0}^{-1}\sigma{}_{1}\right\Vert +\left\Vert \sigma_{1}^{-1}\sigma_{2}\right\Vert +\ldots+\left\Vert \sigma_{k-1}^{-1}\sigma_{k}\right\Vert }}.\label{eq:wg as infinite sum of bijections}
\end{equation}
Note that, by construction, the number of $o$-discs in $\Sigma_{\sigma}$
depends solely on the extreme bijections in $\sigma$, namely on $\left\{ \sigma_{x,0},\sigma_{x,\kappa_{x}}\right\} _{x\in B}$.
Thus, together with (\ref{eq:trwl as finite sum}) we obtain
\begin{align*}
 & \trwl\left(n\right)=\sum_{\tau\in\match^{\kappa\equiv1}\left(\wl\right)}\left(\prod_{x\in B}\mathrm{Wg}_{L_{x}}\left(\tau_{x,0}^{-1}\circ\tau_{x,1}\right)\right)\cdot n^{\#\left\{ o\mathrm{-discs~in~}\Sigma_{\tau}\right\} }\\
\\
 & =\sum_{\tau\in\match^{\kappa\equiv1}}\frac{n^{\#\left\{ o\mathrm{-discs~in}~\Sigma_{\tau}\right\} }}{n^{\sum_{x}L_{x}}}\sum_{\kappa\in\left(\mathbb{Z}_{\ge0}\right)^{B}}\sum_{\begin{gathered}{\scriptstyle \sigma\in\matchr^{\kappa}}~s.t.\\
{\scriptstyle \sigma_{x,0}=\tau_{x,0}~\&~\sigma_{x,\kappa_{x}}=\tau_{x,1}}
\end{gathered}
}\frac{\left(-1\right)^{\left|\kappa\right|}}{n^{\sum_{x}\sum_{j=0}^{\kappa_{x}-1}\left\Vert \sigma_{x,j}^{-1}\cdot\sigma_{x,j+1}\right\Vert }}\\
 & =\sum_{{\scriptscriptstyle \sigma\in\matchr^{*}\left(\wl\right)}}\left(-1\right)^{\left|\kappa\left(\sigma\right)\right|}n^{\#\left\{ o\mathrm{-discs~in}~\Sigma_{\sigma}\right\} -\sum_{x}\left(L_{x}+\sum_{j=0}^{\kappa_{x}\left(\sigma\right)-1}\left\Vert \sigma_{x,j}^{-1}\cdot\sigma_{x,j+1}\right\Vert \right)}.
\end{align*}
It is thus enough to explain why 
\[
\chi\left(\sigma\right)=\#\left\{ o\mathrm{-discs~in}~\Sigma_{\sigma}\right\} -\sum_{x}\left(L_{x}+\sum_{j=0}^{\kappa_{x}\left(\sigma\right)-1}\left\Vert \sigma_{x,j}^{-1}\cdot\sigma_{x,j+1}\right\Vert \right).
\]
\begin{itemize}
\item The number of vertices in $\Sigma_{\sigma}$ is $\sum_{x\in B}\left(2\kappa_{x}\left(\sigma\right)+4\right)L_{x}$
(consisting of $\kappa_{x}\left(\sigma\right)+1$ $z$-points and
a single $o$-point for each of the $2L_{x}$ $x^{\pm1}$-letters
in $\wl$). 
\item The number of $1$-cells in $\Sigma_{\sigma}$ is $\Sigma_{x\in B}\left(3\kappa_{x}\left(\sigma\right)+5\right)L_{x}$
(there are $\sum_{x\in B}\left(2\kappa_{x}\left(\sigma\right)+4\right)L_{x}$
$1$-cells along the boundary components, and additional $\sum_{x\in B}\left(\kappa_{x}\left(\sigma\right)+1\right)L_{x}$
bijection-edges).
\item Finally, it is easy to see from Definition \ref{def:surface from matchings}
that the number of cycles in the permutation $\sigma_{x,j}^{-1}\cdot\sigma_{x,j+1}$
is identical to the number of $\left(x,j\right)$-discs in $\Sigma_{\sigma}$,
so that 
\[
\left\Vert \sigma_{x,j}^{-1}\cdot\sigma_{x,j+1}\right\Vert =L_{x}-\#\left\{ \left(x,j\right)\mathrm{-discs~in}~\Sigma_{\sigma}\right\} .
\]
\end{itemize}
Therefore,
\begin{eqnarray}
\chi\left(\sigma\right) & = & \left[\sum_{x\in B}\left(2\kappa_{x}\left(\sigma\right)+4\right)L_{x}\right]-\left[\sum_{x\in B}\left(3\kappa_{x}\left(\sigma\right)+5\right)L_{x}\right]+\nonumber \\
 &  & +\left[\sum_{x\in B}\sum_{j=0}^{\kappa_{x}\left(\sigma\right)-1}\#\left\{ \left(x,j\right)\mathrm{-discs~in}~\Sigma_{\sigma}\right\} \right]+\#\left\{ o\mathrm{-discs~in}~\Sigma_{\sigma}\right\} \nonumber \\
 & = & \#\left\{ o\mathrm{-discs~in}~\Sigma_{\sigma}\right\} +\sum_{x\in B}\left[\left(-\kappa_{x}\left(\sigma\right)-1\right)L_{x}+\sum_{j=0}^{\kappa_{x}\left(\sigma\right)-1}\left(L_{x}-\left\Vert \sigma_{x,j}^{-1}\cdot\sigma_{x,j+1}\right\Vert \right)\right]\nonumber \\
 & = & \#\left\{ o\mathrm{-discs~in}~\Sigma_{\tilde{\sigma}}\right\} -\sum_{x\in B}\left[L_{x}+\sum_{j=0}^{\kappa_{x}\left(\sigma\right)-1}\left\Vert \sigma_{x,j}^{-1}\cdot\sigma_{x,j+1}\right\Vert \right].\label{eq:formula for chi(sigma)}
\end{eqnarray}

\end{proof}
It is implicit in Theorem \ref{thm:combinatorial Laurent expansion of trwl}
and its proof that there are only finitely many sets of sequences
of bijections $\sigma$ with contribution of a given order. Namely,
for every integer $\chi_{0}$ there are finitely many $\sigma$ in
the summation (\ref{eq:Laurent-formula}) with $\chi\left(\sigma\right)=\chi_{0}$.
This is true because the same property holds for the asymptotic expansion
of the Weingarten function in (\ref{eq:asympt expansion of Wg}).
However, for completeness, we give a direct proof for this fact:
\begin{claim}
\label{claim:new formula finite sum for every exponent}For every
$\chi_{0}\in\mathbb{Z}$ there are finitely many sets $\sigma$ in
the sum (\ref{eq:Laurent-formula}) with $\chi\left(\sigma\right)=\chi_{0}$.
\end{claim}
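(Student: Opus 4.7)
The plan is to use the explicit formula for $\chi(\sigma)$ derived inside the proof of Theorem~\ref{thm:combinatorial Laurent expansion of trwl}, namely
\[
\chi(\sigma) \;=\; \#\{o\text{-discs in }\Sigma_\sigma\} \;-\; \sum_{x\in B}\Big[L_x + \sum_{j=0}^{\kappa_x(\sigma)-1} \big\Vert \sigma_{x,j}^{-1}\sigma_{x,j+1}\big\Vert\Big],
\]
and to bound each of the three ingredients in a way that forces $|\kappa(\sigma)|$ to be bounded once $\chi(\sigma)$ is fixed. The number of $o$-discs is trivially at most the total number of $o$-points in $\partial\Sigma_\sigma$, i.e.\ at most $\sum_{i=1}^{\ell} |w_i| = 2\sum_{x\in B} L_x$, a constant depending only on $\wl$. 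The term $\sum_x L_x$ is a constant as well.

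The key observation is in the last sum. For $\sigma\in\matchr^\kappa(\wl)$, by definition $\sigma_{x,j}\neq\sigma_{x,j+1}$ for every admissible $x$ and $j$, so $\sigma_{x,j}^{-1}\sigma_{x,j+1}\neq\mathrm{id}$ and therefore
\[
\big\Vert \sigma_{x,j}^{-1}\sigma_{x,j+1}\big\Vert \;\geq\; 1.
\]
Summing over $x\in B$ and $0\leq j\leq \kappa_x(\sigma)-1$, the total contribution is at least $\sum_x \kappa_x(\sigma) = |\kappa(\sigma)|$. Combining these bounds gives
\[
\chi(\sigma)\;\leq\; 2\sum_{x\in B} L_x - \sum_{x\in B} L_x - |\kappa(\sigma)| \;=\; \sum_{x\in B} L_x - |\kappa(\sigma)|.
\]

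The conclusion is immediate: if $\chi(\sigma)=\chi_0$, then $|\kappa(\sigma)| \leq \sum_{x\in B} L_x - \chi_0$, so there are only finitely many possibilities for the tuple $\kappa(\sigma)\in(\mathbb{Z}_{\geq 0})^B$. For each such fixed $\kappa$, the set $\matchr^\kappa(\wl)\subseteq \match^\kappa(\wl) = \prod_{x\in B}\match_x(\wl)^{\kappa_x+1}$ is finite (as a subset of a finite product of finite sets). Finiteness of the union over the bounded range of $\kappa$ finishes the proof. I do not anticipate a genuine obstacle here; the main content is simply the inequality $\Vert\rho\Vert\geq 1$ for $\rho\neq \mathrm{id}$, which is precisely what the restriction defining $\matchr^\kappa$ is designed to exploit.
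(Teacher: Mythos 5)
Your proof is correct and follows exactly the paper's argument: both use the identity (\ref{eq:formula for chi(sigma)}) for $\chi(\sigma)$, bound the number of $o$-discs by the number of $o$-points, and use $\|\sigma_{x,j}^{-1}\sigma_{x,j+1}\|\ge 1$ (guaranteed by the restriction $\sigma_{x,j}\ne\sigma_{x,j+1}$) to bound $|\kappa(\sigma)|$ once $\chi_0$ is fixed. The only cosmetic difference is that you spell out the final step that finitely many $\kappa$ and the finiteness of each $\matchr^{\kappa}(\wl)$ gives finiteness, which the paper leaves implicit.
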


\begin{proof}
The number of $o$-discs in $\Sigma_{\sigma}$ is bounded by the number
of $o$-points in $S^{1}\left(w_{1}\right)\cup\ldots\cup S^{1}\left(w_{\ell}\right)$.
All sets $\sigma$ in the sum (\ref{eq:Laurent-formula}) satisfy
$\sigma_{x,j}\ne\sigma_{x,j+1}$ for all $x\in B$ and $0\le j\le\kappa_{x}-1$,
and so $\left\Vert \sigma_{x,j}^{-1}\cdot\sigma_{x,j+1}\right\Vert \ge1$.
Thus, from (\ref{eq:formula for chi(sigma)}) we obtain that if $\chi\left(\sigma\right)=\chi_{0}$
then
\[
\chi_{0}=\chi\left(\sigma\right)\le\#\left\{ o\mathrm{-points~in}~S^{1}\left(w_{1}\right)\cup\ldots\cup S^{1}\left(w_{\ell}\right)\right\} -\sum_{x\in B}\left[L_{x}+\kappa_{x}\left(\sigma\right)\right].
\]
Hence
\[
\sum_{x\in B}\kappa_{x}\left(\sigma\right)\le\#\left\{ o\mathrm{-points~in}~S^{1}\left(w_{1}\right)\cup\ldots\cup S^{1}\left(w_{\ell}\right)\right\} -\chi_{0}-\sum_{x\in B}L_{x}.
\]
Since the right hand side is independent of $\sigma$, the proof is
completed.
\end{proof}
The duality between the two types of formulas in Theorems \ref{thm:trwl as finite sum}
and \ref{thm:combinatorial Laurent expansion of trwl} will be manifested
also in the next section. Our main object of study will be the complex
$\T\left(\Sigma,f\right)$ of transverse maps which, similarly to
the sets in the infinite formula (\ref{eq:Laurent-formula}), consists
of sequences of arcs and curves of arbitrary lengths, but with the
single constraint that two consecutive objects in every sequence must
be different from each other (strict transverse maps -- see Definition
\ref{def:restrictions}). However, for one of the main results about
$\T\left(\Sigma,f\right)$, namely, its being contractible, we return
to the model of sequences of length two without the constraint of
two consecutive objects being different -- see Definition \ref{def:poset and complex of loose transverse maps}.

\subsection{Maps from the surfaces to the bouquet\label{subsec:Maps-from-the}}

In Definition \ref{def:surface from matchings} and in Theorems \ref{thm:trwl as finite sum}
and \ref{thm:combinatorial Laurent expansion of trwl} we introduced
surfaces associated with $\wl$ and tuples of matchings. The following
definition introduces a natural map from these surfaces to the bouquet
so that each surface and its associated map turns into an admissible
pair for $\wl$.
\selectlanguage{english}%
\begin{defn}
\label{def:f_sigma}Let $\wl\in\F_{r}$, let $\sigma\in\match^{\kappa}\left(\wl\right)$
for some $\kappa\in\left(\mathbb{Z}_{\ge0}\right)^{B}$ and let $\Sigma_{\sigma}$
be the surface constructed in Definition \ref{def:surface from matchings}.
Define \marginpar{$f_{\sigma}$}$f_{\sigma}\colon\Sigma_{\sigma}\to\wedger$
as follows: 
\begin{itemize}
\item For every $x\in B$, mark $\kappa_{x}+1$ distinct points on the circle
of the bouquet $\wedger$ corresponding to $x$, away from the wedge
point $o$, and label them $\left(x,0\right),\ldots,\left(x,\kappa_{x}\right)$
in the order of the orientation of the circle. See Figure \ref{fig:wedge-with-transverse-points}.
\item The preimage through $f_{\sigma}^{-1}$ of $\left(x,j\right)\in\wedger$
is exactly the bijection-edges corresponding to $\sigma_{x,j}$, which
contain the $\left(x,j\right)$-points of $\Sigma_{\sigma}$ as their
endpoints.
\item The $o$-points in $\Sigma_{\sigma}$ are mapped to $o\in\wedger$.
\item On $S^{1}\left(w_{i}\right)$, on each of the $\left|w\right|$ intervals,
if the interval $I$ corresponds to the letter $x^{\varepsilon}$,
$\varepsilon\in\left\{ \pm1\right\} $, $f_{\sigma}\Big|_{I}$ traces
the $x$-circle in $\wedger$ monotonically, with orientation prescribed\footnote{We mention this specifically because when $\kappa_{x}=0$ this does
not follow from the previous bullet points.} by $\varepsilon$.
\item Finally, for every open disc $D$ in the CW-complex $\Sigma_{\sigma}$,
the image of $f_{\sigma}$ along $\partial D$ is nullhomotopic, so
there is a unique way to extend it to $D$, up to homotopy, and we
extend it so that the image of $f$ on the interior of $D$ avoids
the marked points $\left\{ \left(x,j\right)\right\} _{x\in B,j\in\left[\kappa_{x}\right]}\subset\wedger$.
\end{itemize}
\end{defn}

\begin{figure}
\centering{}\includegraphics[viewport=230bp 300bp 300bp 460bp,scale=0.5]{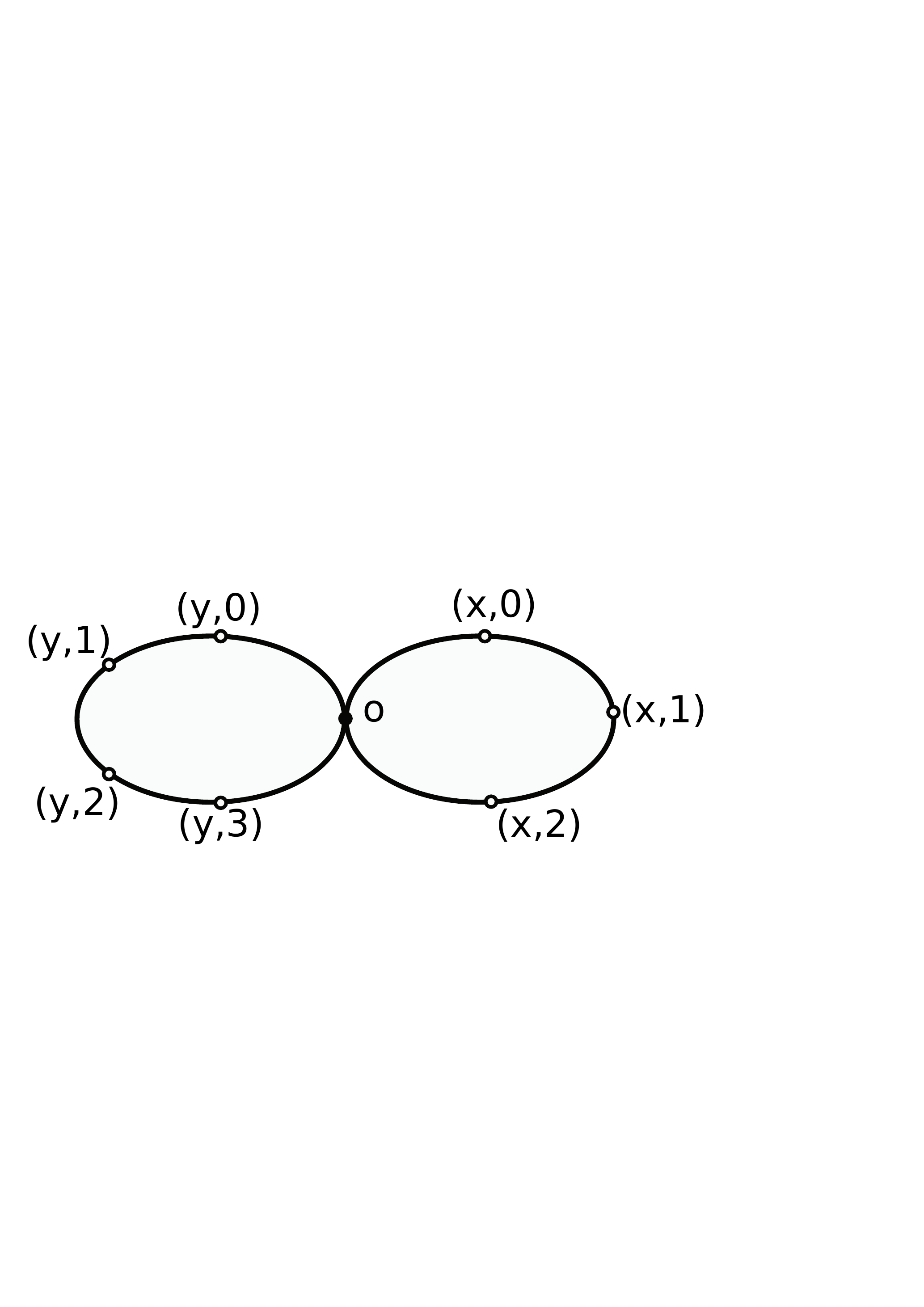}\caption{The wedge $\protect\wedger$ with transversion points. Here $r=2$,
$B=\left\{ x,y\right\} $, $\kappa_{x}=2$ and $\kappa_{y}=3$.\label{fig:wedge-with-transverse-points} }
\end{figure}

It is evident that $\left(\Sigma_{\sigma},f_{\sigma}\right)$ is admissible
for $\wl$ (with the appropriate $o$-point in $S^{1}\left(w_{i}\right)$
labeled also as $v_{i}$, for every $i=1,\ldots,\ell$). In particular,
\begin{cor}
\label{cor:surfaces not empty}If $w_{1}\cdots w_{\ell}\in\left[\F_{r},\F_{r}\right]$
then $\surfaces\left(\wl\right)\ne\emptyset$.
\end{cor}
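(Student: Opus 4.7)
The plan is to produce an explicit element of $\surfaces(\wl)$ by specializing the construction of Definitions~\ref{def:surface from matchings} and~\ref{def:f_sigma} to the simplest possible matching data. The only ingredient required beyond those constructions is the existence of some $\sigma \in \match^\kappa(\wl)$ for some $\kappa$, which is precisely where the balanced hypothesis will enter.

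First I would verify that $\match^{\kappa \equiv 0}(\wl) = \prod_{x \in B} \match_x(\wl)$ is non-empty. The hypothesis $w_1 \cdots w_\ell \in [\F_r, \F_r]$ means that for every $x \in B$ the total exponent of $x$ in $\wl$ is zero, hence the number of $x^{+1}$-instances equals the number of $x^{-1}$-instances, namely $L_x$. Thus $|\match_x(\wl)| = L_x! \geq 1$, so we may pick any tuple $\sigma = (\sigma_x)_{x \in B}$ in this product.

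Next I would feed $\sigma$ into Definition~\ref{def:surface from matchings} to obtain an oriented compact surface $\Sigma_\sigma$ with boundary $S^1(w_1) \sqcup \cdots \sqcup S^1(w_\ell)$, and into Definition~\ref{def:f_sigma} to obtain a map $f_\sigma : \Sigma_\sigma \to \wedger$. Under the standing assumption (Remark~\ref{rem:w_i ne 1}) that each $w_i \neq 1$, every boundary circle $S^1(w_i)$ is a genuine (non-empty) circle, so every connected component of $\Sigma_\sigma$ meets the boundary; in particular $\Sigma_\sigma$ has no closed connected components. By construction, $f_\sigma$ restricted to $S^1(w_i)$ traces the loop representing $w_i$ in $\wedger$, so $(f_\sigma)_*\bigl([\overrightarrow{\partial_i \Sigma_\sigma}]\bigr) = w_i$ for every $i$. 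Both conditions of Definition~\ref{def:surface of w1,...,wl} are therefore satisfied, so $(\Sigma_\sigma, f_\sigma)$ is admissible for $\wl$, and $[(\Sigma_\sigma, f_\sigma)] \in \surfaces(\wl)$.

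There is no real obstacle: the content of the corollary is essentially already packaged in the balanced condition together with the explicit constructions of Section~\ref{sec:A-Formula-for trwl}. The only point worth double-checking carefully is the no-closed-components clause of admissibility, which is where the convention $w_i \neq 1$ plays its (minor) role.
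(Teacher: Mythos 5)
Your proof is correct and follows the same route as the paper: the authors simply note, immediately before stating the corollary, that $(\Sigma_\sigma, f_\sigma)$ from Definitions~\ref{def:surface from matchings} and~\ref{def:f_sigma} is admissible, and that the construction applies as soon as $\match^{\kappa\equiv 0}(\wl)$ is non-empty, which is exactly the balanced condition. Your proposal just spells out the details the paper leaves as ``evident,'' including the correct observation that the standing assumption $w_i \neq 1$ (Remark~\ref{rem:w_i ne 1}) is what rules out closed components.
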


Another important observation is that every incompressible pair $\left[\left(\Sigma,f\right)\right]\in\surfaces\left(\wl\right)$
has a representative in the form of $\left(\Sigma_{\sigma},f_{\sigma}\right)$
with only one matching per letter:
\begin{prop}
\label{prop:incompressible as transverse maps}Denote by $\match^{\kappa\equiv0}\left(\wl\right)$
the set of matchings corresponding to $\kappa_{x}=0$ for every $x\in B$.
Every incompressible pair $\left(\Sigma,f\right)$ which is admissible
for $\wl$ is equivalent to $\left(\Sigma_{\sigma},f_{\sigma}\right)$
for some $\sigma\in\match^{\kappa\equiv0}\left(\wl\right)$.
\end{prop}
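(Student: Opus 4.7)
The plan is to start with an incompressible admissible pair $(\Sigma, f)$ and normalize it, via two applications of the incompressibility hypothesis, until it matches $(\Sigma_\sigma, f_\sigma)$ for some $\sigma \in \match^{\kappa \equiv 0}(\wl)$.

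First, using a homotopy within the admissible class of $(\Sigma,f)$, I would make $f$ smooth and transverse to each marked point $(x,0) \in \wedger$, $x \in B$. Then $\Lambda := f^{-1}(\{(x,0)\}_{x \in B})$ is a properly embedded $1$-submanifold of $\Sigma$; the boundary condition $f_*[\overrightarrow{\partial_i\Sigma}] = w_i$ guarantees exactly one endpoint of $\Lambda$ per letter of each $w_i$. Any closed component $\gamma \subset \Lambda$ maps to the single point $(x,0)$, so $f(\gamma)$ is nullhomotopic in $\wedger$, and by incompressibility $\gamma$ bounds an embedded disc $D \subset \Sigma$. Taking $D$ innermost (so $\mathrm{int}(D)\cap\Lambda=\emptyset$) and using $\pi_2(\wedger) = 0$ (as $\wedger$ is a $K(\F_r,1)$), I would modify $f$ in a small neighborhood of $D$ to push it off $(x,0)$, eliminating $\gamma$ from $\Lambda$ without changing $f$ elsewhere. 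Iterating leaves $\Lambda$ a disjoint union of arcs only. Orientation consistency along each arc forces one endpoint to be an $x^{+1}$-letter and the other an $x^{-1}$-letter, so these arcs define a matching $\sigma_{x,0} \in \match_x(\wl)$; set $\sigma = (\sigma_{x,0})_{x \in B}$.

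Next I would show that the components $R_1, \ldots, R_m$ of $\Sigma$ cut along $\Lambda$ are all discs. Suppose some $R_{j_0}$ is not a disc; then it contains a simple closed curve $\gamma$ in its interior which is non-nullhomotopic in $R_{j_0}$. Since $\gamma \cap \Lambda = \emptyset$, the image $f(\gamma)$ lies in $\wedger \setminus \{(x,0)\}_{x \in B}$, which deformation retracts to the wedge point $o$ and is therefore contractible; hence $f(\gamma)$ is nullhomotopic in $\wedger$. I would also argue that $\gamma$ is non-nullhomotopic in $\Sigma$: any disc $D' \subset \Sigma$ that $\gamma$ bounded must miss $\partial\Sigma$ (as $\gamma$ is interior to $R_{j_0}$) and must miss $\Lambda$ (arcs cannot cross $\partial D' = \gamma$ and cannot be confined inside $D'$ since their endpoints lie on $\partial\Sigma$), so $D' \subset R_{j_0}$, contradicting non-nullhomotopy of $\gamma$ in $R_{j_0}$. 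Thus $\gamma$ is a null-curve of $(\Sigma, f)$, contradicting incompressibility.

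With each $R_j$ a disc, $\Sigma$ carries a CW decomposition combinatorially identical to that of $\Sigma_\sigma$ from Definition \ref{def:surface from matchings} (boundary circles $S^1(w_i)$ subdivided by $o$- and $z$-points, matching arcs, and disc $o$-faces). An orientation-preserving homeomorphism $\rho \colon \Sigma_\sigma \to \Sigma$ respecting this structure and sending marked points to marked points then exists; both $f_\sigma$ and $f \circ \rho$ agree on the $1$-skeleton up to homotopy rel $0$-cells, and since $\pi_2(\wedger) = 0$ the extension of each disc boundary into $\wedger$ is unique up to homotopy rel boundary, yielding $f \circ \rho \simeq f_\sigma$ rel marked points and hence $(\Sigma, f) \sim (\Sigma_\sigma, f_\sigma)$. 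The main technical obstacle is the disc-surgery step in paragraph two: while standard in lamination theory, one must carefully modify $f$ on a neighborhood of the innermost disc so that the modified map still realizes the same element $[f]$ of the admissible class while strictly reducing the number of components of $\Lambda$; everything downstream is a direct consequence of this normalization together with the two uses of incompressibility.
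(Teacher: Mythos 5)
Your plan retraces the paper's own proof (which follows Culler \cite{CULLER}): put $f$ transverse to the points $(x,0)$, eliminate closed curve components of $\Lambda=f^{-1}\left(\{(x,0)\}_{x\in B}\right)$ using incompressibility, read off a matching $\sigma$ from the remaining arcs, argue that $\Sigma$ cut along $\Lambda$ consists of discs, and identify CW structures. In one spot you are actually more careful than the paper's terse phrasing: you correctly note that incompressibility only directly excludes \emph{essential} closed curves in $\Lambda$, and that nullhomotopic ones must still be pushed off the transversion point by an innermost-disc surgery, using that $\wedger$ is aspherical and $\wedger\setminus\{(x,0)\}_{x\in B}$ is contractible.

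The genuine gap is in the sentence asserting that admissibility ``guarantees exactly one endpoint of $\Lambda$ per letter of each $w_i$.'' Admissibility only fixes the \emph{homotopy class} of $f|_{\partial_i\Sigma}$ rel $v_i$; after a generic transverse perturbation, $f|_{\partial_i\Sigma}$ may backtrack, in which case $\partial_i\Sigma$ meets $\Lambda$ in more points than $w_i$ has $x^{\pm1}$-letters, the arcs of $\Lambda$ do not define an element of $\match^{\kappa\equiv0}\left(\wl\right)$, and the comparison with $\Sigma_\sigma$ cannot even be set up. The paper's proof inserts precisely the normalization you omit: before (or alongside) the transversality perturbation, one homotopes $f$ rel $v_1,\ldots,v_\ell$ so that each $f|_{\partial_i\Sigma}$ is \emph{monotone}, i.e.\ traces the reduced word $w_i$ with no backtracking; this is legitimate because $\wedger$ is a graph and each based loop tightens to the unique reduced edge-path representing its class. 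Once that step is added, your endpoint count is justified and the rest of your argument goes through.
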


\begin{proof}
The argument here imitates the one in \cite[Theorem 1.4]{CULLER}.
Assume $\left(\Sigma,f\right)$ is incompressible. Mark a point $\left(x,0\right)$
on the middle of the circle corresponding to $x$ in $\wedger$, and
perturb $f$ (relative to the points $v_{1},\ldots,v_{\ell}\in\Sigma$)
so that $f\left(\partial_{i}\Sigma\right)$ is monotone, namely, never
backtracking, for $i=1,\ldots,\ell$, and so that it becomes transverse
to $\left\{ \left(x,0\right)\right\} _{x\in B}\subset\wedger$ (see
Definition \ref{def:transverse map} below). As $f$ is transverse
to $\left(x,0\right)\in\wedger$, the preimage of $\left(x,0\right)$
consists of a collection of disjoint arcs and curves (in this paper
we use the notion ``curve'' as synonym for ``simple closed curve'').
Because $f\left(\partial_{i}\Sigma\right)$ is monotone, there are
exactly $L_{x}$ arcs, which determine an element $\sigma\in\match^{\kappa\equiv0}\left(\wl\right)$.
There are no curves in $f^{-1}\left(\left(x,0\right)\right)$ because
such curves would be null-curves, which is impossible with $f$ being
incompressible. Finally, $f$ being incompressible also guarantees
that the collection of arcs $\coprod_{x\in B}f^{-1}\left(\left(x,0\right)\right)$
cuts $\Sigma$ into discs. Thus $\left(\Sigma,f\right)\sim\left(\Sigma_{\sigma},f_{\sigma}\right)$.
\end{proof}
Since the set $\match^{\kappa\equiv0}\left(\wl\right)$ is finite,
we obtain: 
\begin{cor}
\label{cor:finitely many incompressible}There are finitely many classes
of incompressible $\left(\Sigma,f\right)$ in $\surfaces\left(\wl\right)$.
\end{cor}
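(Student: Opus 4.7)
The plan is to derive the corollary as an essentially immediate consequence of Proposition \ref{prop:incompressible as transverse maps}. That proposition already supplies, for every incompressible admissible pair $(\Sigma,f)$, an equivalent representative of the form $(\Sigma_{\sigma}, f_{\sigma})$ with $\sigma$ drawn from the set $\match^{\kappa\equiv 0}(\wl)$. Recall the mechanism: after perturbing $f$ so that $f|_{\partial_i \Sigma}$ is monotone and $f$ is transverse to a single marked midpoint on each $x$-circle of $\wedger$, the preimage of those midpoints consists of disjoint arcs and simple closed curves; incompressibility rules out the curves (which would be null-curves) and forces the arc system to cut $\Sigma$ into discs, while monotonicity on the boundary forces the arcs labeled $x$ to realize a bijection between the $x^{+1}$- and $x^{-1}$-letters of $\wl$.

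Thus the assignment $\sigma \mapsto [(\Sigma_{\sigma}, f_{\sigma})]$ is a surjection from $\match^{\kappa\equiv 0}(\wl)$ onto the subset of $\surfaces(\wl)$ consisting of incompressible classes. Since
\[
\bigl|\match^{\kappa\equiv 0}(\wl)\bigr| \;=\; \prod_{x \in B}\bigl|\match_{x}(\wl)\bigr| \;=\; \prod_{x \in B} L_{x}!
\]
is finite (where $L_x \in \mathbb{Z}_{\ge 0}$ is the number of instances of $x^{+1}$ in $\wl$), the set of equivalence classes of incompressible pairs is finite, and in fact bounded in cardinality by $\prod_{x\in B} L_{x}!$.

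There is no substantive obstacle to overcome at this stage, since Proposition \ref{prop:incompressible as transverse maps} has already done the work. The only point worth a sanity check is that the surjection lands in the correct quotient: by construction $(\Sigma_\sigma, f_\sigma)$ is admissible for $\wl$ and the target is the equivalence class under the relation $\sim$ of Definition \ref{def:surfaces(w1,...,wl)}, so distinct $\sigma$'s may collapse to the same class but never produce extra ones, which is all that is needed for the finiteness bound.
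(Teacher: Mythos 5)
Your argument is correct and is essentially identical to the paper's: the paper derives the corollary immediately from Proposition \ref{prop:incompressible as transverse maps} by observing that $\match^{\kappa\equiv 0}(\wl)$ is finite. The only addition you make is the explicit cardinality bound $\prod_{x\in B} L_x!$, which is a harmless refinement.
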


In Section \ref{subsec:Classifying-all-incompressible} we address
the issue of how one can distinguish the different incompressible
classes in $\surfaces\left(\wl\right)$.

At this point we can also derive the asymptotic bounds we have for
$\trwl\left(n\right)$:
\begin{proof}[Proof of Corollary \ref{cor:chi and cl}]
Recall that we need to prove that $\trwl\left(n\right)=O\left(n^{\ch\left(\wl\right)}\right)$
where $\ch\left(\wl\right)$ is the maximal Euler characteristic of
a surface in $\surfaces\left(\wl\right)$. Theorem \ref{thm:trwl as finite sum}
says that $\trwl\left(n\right)$ is equal to a sum over $\sigma\in\match^{\kappa\equiv1}\left(\wl\right)$,
and the contribution of each $\sigma$ is $c\cdot n^{\chi\left(\sigma\right)}+O\left(n^{\chi\left(\sigma\right)-2}\right)$
where $c\in\mathbb{Z}\setminus\left\{ 0\right\} $. As $\left(\Sigma_{\sigma},f_{\sigma}\right)\in\surfaces\left(\wl\right)$,
then by definition $\chi\left(\sigma\right)\le\ch\left(\wl\right)$.
\end{proof}
\begin{rem}
\label{rem:lower bound}In fact, we get even more: every $\left[\left(\Sigma,f\right)\right]\in\surfaces\left(\wl\right)$
attaining\linebreak{}
$\ch\left(\wl\right)$ is incompressible, and therefore, by Proposition
\ref{prop:incompressible as transverse maps}, equivalent to $\left(\Sigma_{\sigma},f_{\sigma}\right)$
for some\linebreak{}
$\sigma\in\match^{\kappa\equiv0}\left(\wl\right)$. This $\sigma$
takes part in the expression for $\trwl\left(n\right)$ in Theorem
\ref{thm:combinatorial Laurent expansion of trwl} and thus one can
expect that $\trwl\left(n\right)=\Theta\left(n^{\ch\left(\wl\right)}\right)$.
As some of the examples from Table \ref{tab:examples} indicate, the
coefficient of $n^{\ch\left(\wl\right)}$ may vanish, but this only
happens if the different non-zero contributions cancel out. (One can
get to the same conclusion from the finite formula for $\trwl\left(n\right)$
in Theorem \ref{thm:trwl as finite sum}, by duplicating every matching
in $\sigma\in\match^{\kappa\equiv0}$ to obtain $\sigma'\in\match^{\kappa\equiv1}\left(\wl\right)$,
which then satisfies $\left(\Sigma_{\sigma},f_{\sigma}\right)\sim\left(\Sigma_{\sigma'},f_{\sigma'}\right)$.)
\end{rem}

\subsubsection*{Reduction of the main theorem }

Recall that Theorem \ref{thm:combinatorial Laurent expansion of trwl}
expresses $\trwl\left(n\right)$ as a sum over the (generally infinite)
set\linebreak{}
$\matchr^{*}\left(\wl\right)$. To prove our main result, Theorem
\ref{thm:main}, we group together all\linebreak{}
$\sigma\in\matchr^{*}\left(\wl\right)$ for which $\left(\Sigma_{\sigma},f_{\sigma}\right)$
belong to the same equivalence class, and show the total contribution
of these values of $\sigma$ to (\ref{eq:Laurent-formula}) is exactly
the one specified in Theorem \ref{thm:main}. Accordingly, for $\left[\left(\Sigma,f\right)\right]\in\surfaces$
we let $\matchr^{*}\left(\wl;\Sigma,f\right)$\marginpar{${\scriptstyle \protect\matchr^{*}\left(\ldots;\Sigma,f\right)}$}
be the $\sigma$'s yielding elements in the class of $\left(\Sigma,f\right)$
. So, recalling the notation ``$\sim$'' from Definition \ref{def:surfaces(w1,...,wl)},
\[
\matchr^{*}\left(\wl;\Sigma,f\right)\stackrel{\mathrm{def}}{=}\left\{ \sigma\in\matchr^{*}\left(\wl\right)\,\middle|\,\left(\Sigma_{\sigma},f_{\sigma}\right)\sim\left(\Sigma,f\right)\right\} .
\]
From Claim \ref{claim:new formula finite sum for every exponent}
it follows that $\matchr^{*}\left(\wl;\Sigma,f\right)$ is finite
for every\linebreak{}
$\left(\Sigma,f\right)\in\surfaces\left(\wl\right)$. Using Theorem
\ref{thm:combinatorial Laurent expansion of trwl}, Theorems \ref{thm:stabilizers have L2-EC}
and \ref{thm:main} now reduce to:
\begin{thm}
\label{thm:EC of a single (S,f)}Let $\left[\left(\Sigma,f\right)\right]\in\surfaces\left(\wl\right)$.
Then $\chi^{\left(2\right)}\left(\MCG\left(f\right)\right)$ is well
defined and given by
\begin{equation}
\chi^{\left(2\right)}\left(\MCG\left(f\right)\right)=\sum_{\sigma\in\matchr^{*}\left(\wl;\Sigma,f\right)}\left(-1\right)^{\left|\kappa\left(\sigma\right)\right|}.\label{eq:EC of a single (Sigma,f)}
\end{equation}
\end{thm}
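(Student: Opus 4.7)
The strategy is to realize $\chi^{(2)}(\MCG(f))$ as an equivariant Euler characteristic of $\MCG(f)$ acting on a suitable contractible CW--complex $\T(\Sigma,f)$ --- the complex of transverse maps in the class $[f]$ --- and then to match the nonzero contributions cell by cell with elements of $\matchr^{*}(\wl;\Sigma,f)$.

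\medskip

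\noindent\textbf{Step 1 (Construction and contractibility of $\T$).} Define $\T(\Sigma,f)$ to have $k$-cells indexed by isotopy classes of \emph{strict} transverse maps $g\simeq f$ relative to the boundary, where the transverse preimages of the marked points of $\wedger$ stratify into $k+|B|$ levels (each level giving a disjoint collection of arcs and simple closed curves in $\Sigma$), subject to the strictness condition that two consecutive levels for the same letter are not parallel. Prove that $\T(\Sigma,f)$ is finite-dimensional by a uniform bound on the number of pairwise non-parallel essential arcs in $\Sigma$ compatible with $[f]$. For contractibility, compare $\T$ with the ``loose'' variant (no strictness constraint) hinted at in the introduction: the loose complex is the nerve of a filtered poset with an initial object and is plainly contractible, and the strict complex is obtained from it by a deformation retraction collapsing degenerate strata. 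Cells with no curves and with level-arcs defining matchings in $\matchr^{\kappa}(\wl)$ correspond exactly to the $\sigma$'s with $|\kappa(\sigma)|=k$.

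\medskip

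\noindent\textbf{Step 2 (The action and its stabilizers).} The group $\MCG(f)$ acts on $\T(\Sigma,f)$ by precomposition, preserving the stratification by dimension. Establish the following dichotomy. If a cell is realized by some $\sigma\in\matchr^{*}(\wl;\Sigma,f)$, then by the construction of $\Sigma_\sigma$ in Definition~\ref{def:surface from matchings} the defining arcs cut $\Sigma$ into discs; any mapping class fixing all these arcs up to isotopy must restrict to an isotopy on each disc, hence is trivial in $\MCG(\Sigma)$, so $\Stab=\{1\}$. Conversely, if a cell contains a simple closed curve $\gamma$ in its transverse preimage data, then $f(\gamma)$ is trapped in a component of $\wedger\setminus\{o,(x,j)\}$, i.e.\ $\gamma$ is a null-curve, so the Dehn twist $T_\gamma$ lies in $\MCG(f)$ and stabilizes the cell. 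Promoting this to the statement that the full stabilizer splits off a free abelian subgroup generated by Dehn twists along the curves in the data, and that the complementary ``deep'' factor has well-defined $L^2$-Euler characteristic, one concludes via multiplicativity of $\chi^{(2)}$ under direct products (since $\chi^{(2)}(\Z)=0$) that $\chi^{(2)}(\Stab)=0$ for every cell containing a curve.

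\medskip

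\noindent\textbf{Step 3 (Bijection with $\matchr^{*}$ and summation).} Verify that the assignment $\sigma\mapsto$~(isotopy class of $f_\sigma$ on $\Sigma$, after fixing an equivalence $(\Sigma_\sigma,f_\sigma)\sim(\Sigma,f)$) descends to a bijection between $\matchr^{*}(\wl;\Sigma,f)$ and $\MCG(f)$-orbits of cells with trivial stabilizer; injectivity is the content of the equivalence relation in Definition~\ref{def:surfaces(w1,...,wl)}, and surjectivity follows from Proposition~\ref{prop:incompressible as transverse maps} applied level-by-level. The dimension of the cell attached to $\sigma$ is $|\kappa(\sigma)|$ by construction. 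Now invoke the general formula
\[
\chi^{(2)}(\Gamma)\;=\;\sum_{[c]\,\in\,\Gamma\backslash \T}(-1)^{\dim c}\,\chi^{(2)}\!\bigl(\Stab_{\Gamma}(c)\bigr),
\]
valid for proper actions of $\Gamma$ on a contractible CW--complex whose stabilizers have well-defined $L^{2}$-Euler characteristics and whose right-hand side is summable (as in L\"uck's treatment). Applied to $\Gamma=\MCG(f)$ acting on $\T(\Sigma,f)$, Step~2 annihilates all contributions except those from trivial-stabilizer orbits, and Step~3 identifies the surviving sum with $\sum_{\sigma\in\matchr^{*}(\wl;\Sigma,f)}(-1)^{|\kappa(\sigma)|}$. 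Since $\matchr^{*}(\wl;\Sigma,f)$ is finite by Claim~\ref{claim:new formula finite sum for every exponent}, the sum is a well-defined integer, which simultaneously yields the well-definedness of $\chi^{(2)}(\MCG(f))$ and formula~(\ref{eq:EC of a single (Sigma,f)}).

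\medskip

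\noindent\textbf{Main obstacles.} The hardest step is Step~1, specifically the contractibility of $\T(\Sigma,f)$, which is a genuinely topological claim about isotopy classes of transverse maps that must be reduced to a combinatorial/poset argument. The second most delicate point is the vanishing $\chi^{(2)}(\Stab)=0$ for cells containing null-curves in Step~2: it is easy to exhibit a single Dehn twist in the stabilizer, but one must control the interaction of this $\Z$-factor with possibly non-trivial mapping classes of the complementary subsurfaces in order to invoke multiplicativity of the $L^{2}$-Euler characteristic.
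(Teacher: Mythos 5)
Your proposal has the right overall architecture and matches the paper's strategy in broad outline: build a contractible finite-dimensional complex of transverse maps, act on it by $\MCG(f)$, show that cells with curves have "negligible" stabilizers while filling cells (arcs only) have trivial stabilizers and biject with $\matchr^{*}(\wl;\Sigma,f)$, then apply an $L^{2}$-Euler-characteristic summation formula. However, two of your steps contain genuine gaps where the argument as written would fail.

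\textbf{Contractibility of $\T$.} You claim the loose complex "is the nerve of a filtered poset with an initial object and is plainly contractible." Neither clause is right. The poset of loose bi-transverse maps $\L$ does not have an initial (or terminal) element: there are in general several pairwise non-isotopic transverse maps with minimal parameters realizing $(\Sigma,f)$, so there is no cone point to collapse to. Moreover, the relationship between the loose and strict realizations is not a deformation retraction but a subdivision — the paper proves $|\L|\cong\tps$ as spaces (Proposition \ref{prop:|L| cong |X|}), so proving contractibility of one is exactly as hard as the other. The actual contractibility proof (Theorem \ref{thm:X contractible}) is the technical heart of the paper: one fixes a maximal system of null-arcs $\Omega$, filters $\L$ by the depth of the intersection words of transverse maps with $\Omega$, and constructs explicit order-preserving retractions $r_{n}$ by $H$-moves that strip depth-$n$ leaves. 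None of this is suggested by the phrase "plainly contractible," and there is no shortcut: you would have to invent this machinery to make Step 1 work.

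\textbf{Vanishing of $\chi^{(2)}(\Stab)$ for non-filling cells.} You argue that the stabilizer of a cell containing a curve "splits off a free abelian subgroup" and invoke multiplicativity of $\chi^{(2)}$ under direct products. This is not available: the paper establishes only that the Dehn twists along boundary curves of zones generate a \emph{normal} free abelian subgroup of the stabilizer, not a direct factor, and in general the stabilizer is not a product. There is also no guarantee that the complementary quotient has well-defined $\chi^{(2)}$, so the multiplicativity statement cannot even be formulated, let alone applied. The correct and substantially different route (Lemma \ref{lem:isoptropy-of-X-infinity}) is via the Cheeger–Gromov theorem: a discrete group with a normal infinite amenable subgroup has all $L^{2}$-Betti numbers vanishing, i.e.~lies in $\B_{\infty}$. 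One then uses Lück's formula (Theorem \ref{thm:ec-as-alt-sum}), which reads $\chi^{(2)}(X,\Gamma)=\sum_{[c]}(-1)^{\dim c}|\Gamma_{c}|^{-1}$ with $|\Gamma_{c}|^{-1}:=0$ for infinite $\Gamma_{c}$, together with Lemma \ref{lem:contractible-with-good-isotropy} to pass from $\chi^{(2)}(X,\Gamma)$ to $\chi^{(2)}(\Gamma)$. Your displayed formula with $\chi^{(2)}(\Stab_{\Gamma}(c))$ in place of $|\Gamma_{c}|^{-1}$ is not a theorem in the generality you invoke it; it happens to give the right numbers here only because the stabilizers are trivial or in $\B_{\infty}$, but that is exactly what must be proved, and the proof requires Cheeger–Gromov rather than a product decomposition.

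The bijection in Step 3 and the dimension count are correct and match Lemma \ref{lem:bijs:props} and Corollary \ref{cor:finitely many orbits of filling cells}.
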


In particular, if $\left[\left(\Sigma,f\right)\right]\in\surfaces\left(\wl\right)$
cannot be realized by any\linebreak{}
$\sigma\in\matchr^{*}\left(\wl\right)$, then $\chi^{(2)}\left(\MCG\left(f\right)\right)=0$.
Since there are only finitely many $\sigma$ with $\Sigma_{\sigma}$
of a given Euler characteristic (Claim \ref{claim:new formula finite sum for every exponent}),
it follows from Theorem \ref{thm:EC of a single (S,f)} that, indeed,
for any $\chi_{0}\in\mathbb{Z}$, there are only finitely many classes
$\left[\left(\Sigma,f\right)\right]\in\surfaces\left(\wl\right)$
with $\chi\left(\Sigma\right)=\chi_{0}$ and for which $\chi^{\left(2\right)}\left(\MCG\left(f\right)\right)\ne0$.

In the next two sections we describe the constructions and results
that lead to the proof of Theorem \ref{thm:EC of a single (S,f)}
and of Theorem \ref{thm:K(g,1) for incompressible} which strengthen
the result in the case $\left(\Sigma,f\right)$ is incompressible.
We hint that the special property of the incompressible case is that
the set $\matchr^{*}\left(\wl;\Sigma,f\right)$ gives rise to a natural
complex with one cell for every element $\sigma\in\matchr^{*}\left(\wl;\Sigma,f\right)$,
so that the Euler characteristic of this complex is exactly the right
hand side of (\ref{eq:EC of a single (Sigma,f)}). See Section \ref{subsec:Incompressible-maps}
for details.

\section{A complex of transverse maps\label{sec:A-Complex-of}}

The key ingredient in the proof of our main results is a complex of
transverse maps which we associate with a given pair $\left[\left(\Sigma,f\right)\right]\in\surfaces\left(\wl\right)$.
In the current section we define it, study important properties and
prove it is contractible. In the next section we study the action
of $\mcg\left(f\right)$ on this complex and prove our main results. 

\subsection{Transverse maps}

Recall that in this paper the term ``curve'' is short for a simple
closed curve.
\begin{defn}
\label{def:transverse map}Let $\Sigma$ be orientable. A map $f\colon\Sigma\to\wedger$
is said to be \emph{transverse} \marginpar{transverse}to a point
$p\in\wedger\setminus\left\{ o\right\} $ if the preimage of $p$
is a disjoint union of arcs and curves, and if in a small tubular
neighborhood $U$ of every curve or arc $\gamma$ in the preimage,
the two connected components of $U\setminus\gamma$ are mapped to
two different ``sides'' of $p$. 
\end{defn}

For example, the map $f_{\sigma}$ from Definition \ref{def:f_sigma}
is transverse to the points $\left\{ \left(x,j\right)\right\} _{x\in B,j\in\left[\kappa_{x}\right]}$
in $\wedger$. In this case, the preimage of each of these points
contains no curves but rather only arcs. We shall consider here different
realizations of the homotopy class $\left[f\right]$ of the same map
$f\colon\Sigma\to\wedger$, which are transverse to different collections
of points in $\wedger$. 

More formally, let $\Sigma$ be a surface and $f$ a map $f\colon\Sigma\to\wedger$
so that\linebreak{}
$\left[\left(\Sigma,f\right)\right]\in\surfaces\left(\wl\right)$.
By definition, $\Sigma$ has $\ell$ marked points: one point, labeled
$v_{i}$, in every boundary component $\partial_{i}\Sigma$, for $i=1,\ldots,\ell$,
and with $f\left(v_{i}\right)=o$. Note that $\wl$ are prescribed
from $\Sigma$ and $f$ by $w_{i}=f_{*}\left(\overrightarrow{\partial_{i}\Sigma},v_{i}\right)\in\pi_{1}\left(\wedger,o\right)$.
For every $i=1,\ldots,\ell$, we mark additional $\left|w_{i}\right|-1$
points on $\partial_{i}\Sigma$ inside $f^{-1}\left(o\right)$, so
that $f$ maps the intervals of $\partial_{i}\Sigma$ cut by these
points to the letters of $w_{i}$. We let \marginpar{$V_{o}$}$V_{o}\subset\Sigma$
denote the set of all marked points in $\Sigma$: a total of $\sum_{i=1}^{\ell}\left|w_{i}\right|$
marked points all at the boundary.
\begin{defn}
\label{def:transverse map for Sigma,f}Let $\kappa=\left\{ \kappa_{x}\right\} _{x\in B}\in\left(\mathbb{Z}_{\ge0}\right)^{B}$
be a set of non-negative integers. On the circle corresponding to
$x$ in $\wedger$ mark $\kappa_{x}+1$ disjoint points, $\left(x,0\right),\ldots,\left(x,\kappa_{x}\right)$,
arranged as in Definition \ref{def:f_sigma} and Figure \ref{fig:wedge-with-transverse-points}.
Let $\left[\left(\Sigma,f\right)\right]\in\surfaces\left(\wl\right)$
and $V_{o}\subset f^{-1}\left(o\right)\subseteq\Sigma$ be as above.
A map $g\colon\Sigma\to\wedger$ is \emph{a transverse map realizing
$\left(\Sigma,f\right)$ with parameters $\kappa$}, if it is homotopic
to $f$ relative to $V_{o}$ and transverse to the points $\left\{ \left(x,j\right)\right\} _{x\in B,j\in\left[\kappa_{x}\right]}\subset\wedger$.
Note that, in particular, $g\left(V_{o}\right)=\left\{ o\right\} $. 

An arc (curve, respectively) in the preimage of $\left(x,j\right)$
is called an $\left(x,j\right)$-arc ($\left(x,j\right)$-curve\marginpar{$\left(x,j\right)$-arc/curve}).
Let $U_{o}$ be the connected component of $o$ in $\wedger\setminus\left\{ \left(x,j\right)\right\} _{x\in B,j\in\left[\kappa_{x}\right]}$.
A connected component of $g^{-1}\left(U_{o}\right)$ is called an
\emph{$o$-zone}\marginpar{$o$-zone}. For $0\le j\le\kappa_{x}-1$,
let $I_{x,j}\subset\wedger$ be the interval on the $x$-circle cut
out by $\left(x,j\right)$ and $\left(x,j+1\right)$. We call a connected
component of $g^{-1}\left(I_{x,j}\right)$ an \emph{$\left(x,j\right)$-zone}\marginpar{$\left(x,j\right)$-zone},
or, if $x$ and $j$ are not relevant, also a $z$-zone\marginpar{$z$-zone}.
If all zones defined by $g$ are topological discs, we say that $g$
\emph{fills}\marginpar{fills} $\Sigma$, or that $g$ is \emph{filling}.

The isotopy\footnote{We call $\left[g\right]$ the isotopy class of $g$, rather than the
homotopy class, because if one thinks of $g$ as a collection of disjoint
colored arcs and curves embedded in $\Sigma$, then $\left[g\right]$
is indeed the isotopy class of this collection relative to $V_{o}$.} class of the transverse map $g$, denoted $\left[g\right]$\marginpar{$\left[g\right]$},
contains all transverse maps with the same parameters $\kappa$ which
are homotopic to $g$ relative to $V_{o}$ via a homotopy of transverse
maps with the same parameters. We stress that the marked points in
$\wedger$ are allowed to move inside $\wedger\setminus\left\{ o\right\} $
along the homotopy as long as they remain disjoint.
\end{defn}

Note that every $\left(x,j\right)$-arc/curve has a direction from
one side of the arc/curve to the other, induced by the orientation
of the circle in $\wedger$. Since we do not care about the location
of the $\left(x,j\right)$-points in $\wedger$, a transverse map
$g$ for $\left(\Sigma,f\right)$ can be identified with the collection
of disjoint ``directed'' and colored arcs and curves. The isotopy
class of $g$ can then be though of as the isotopy class of this collection
of arcs and curves relative to $V_{o}$. We illustrate such a collection
in Figure \ref{fig:A-transverse-map-a2-a^-2}.

Also note, by the definition above, that the boundary of an $\left(x,j\right)$-zone
of $g$ consists of pieces of $\partial\Sigma$, of $\left(x,j\right)$-arcs/curves
directed inward and of $\left(x,j+1\right)$-arcs/curves directed
outward. In contrast, the boundary of an $o$-zone of $g$ consists
of pieces of $\partial\Sigma$, of $\left(x,0\right)$-arcs/curves
directed outward and of $\left(x,\kappa_{x}\right)$-arcs/curves directed
inward, for various $x\in B$. Finally, every point in $V_{o}$ belongs
to some $o$-zone of $g$. 

\begin{figure}
\centering{}\includegraphics{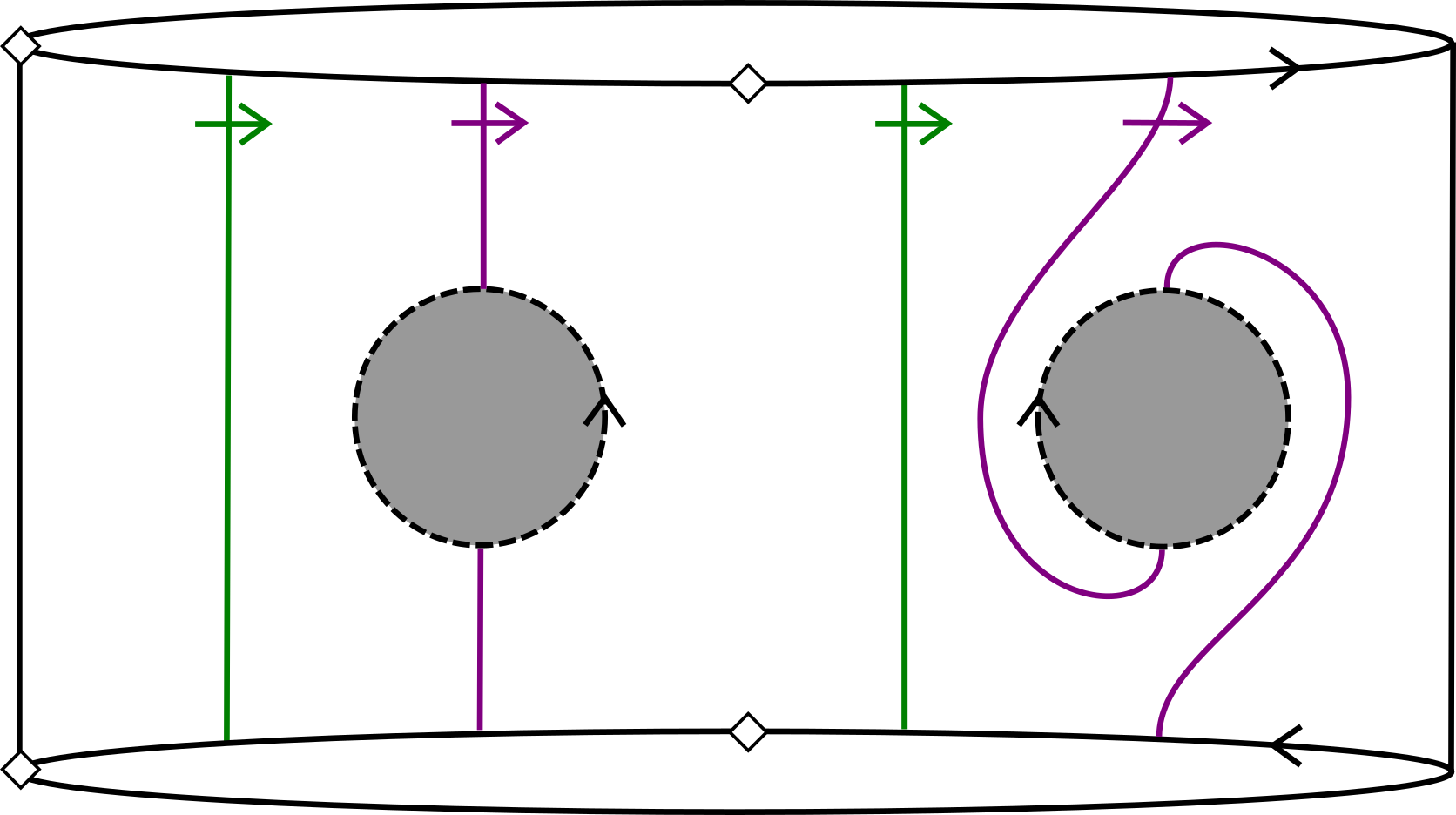}\caption{A collection of arcs corresponding to a transverse map realizing $\left(\Sigma,f\right)$,
where $\Sigma$ is a genus 1 surface with 2 boundary components, drawn
as an annulus with two discs cut out, with boundaries of those discs
identified. Here $r=1$ and there is one generator $x$. Green arcs
are $(x,0)$-arcs and purple are $(x,1)$-arcs. The marked points
$V_{o}$ are diamonds. There are no curves in this system and it is
filling. The words at the boundary are $x^{2},x^{-2}$.\label{fig:A-transverse-map-a2-a^-2} }
\end{figure}

Generally, we want to forbid certain trivial or redundant features
of transverse maps, as we elaborate in the following definition:
\begin{defn}
\label{def:restrictions}A transverse map for $\left(\Sigma,f\right)$
is called \emph{loose}\marginpar{loose transverse map} if it satisfies
\begin{itemize}
\item \textbf{Restriction$\:$1}. There are no $o$-zones nor $z$-zones
that contain no marked point from $V_{o}$ and whose boundary arcs
and curves have the same color $(x,j)$ and are all oriented pointing
inwards or all oriented outwards. Note this rules out the possibility
of a zone that is a disc bounded by a closed curve.
\item \textbf{Restriction$\:$2}. No segment of the boundary of $\Sigma$
that contains no marked point can be bounded by the end points of
two arcs that are equally-labeled and both directed inwards or both
outwards. Note that this is the boundary analog of \textbf{Restriction
1}.
\end{itemize}
A transverse map for $\left(\Sigma,f\right)$ is called \emph{strict}\marginpar{strict transverse map}\emph{
}if it satisfies, in addition, 
\begin{itemize}
\item \textbf{Restriction$\:$3}. For every $x\in B$ and $0\le j\le\kappa_{x}-1$,
the collection of $\left(x,j\right)$-arcs and curves is not isotopic
to the collection of $\left(x,j+1\right)$-arcs and curves. In other
words, there must be at least one $(x,j)$-zone which is neither a
rectangle nor an annulus\footnote{Here, a rectangle is a disc bounded by two arcs and two pieces of
$\partial\Sigma$, and an annulus is bounded by two curves. Restriction
3 should resonate the constraint on the set of matchings $\matchr^{\kappa}\left(\wl\right)$
from Section \ref{subsec:Laurent expansion of trwl}. In particular,
if $g\left(\Sigma\right)$ does not contain the circle in $\wedger$
associated with $x$, then necessarily $\kappa_{x}=0$.}. 
\end{itemize}
\end{defn}

\begin{rem}
\begin{itemize}
\item Note that if $g$ fills $\Sigma$ then there are no curves involved
in $g$, but only arcs. This is the case, for example, when $g=f_{\sigma}$
as in Definition \ref{def:f_sigma}.
\item Any transverse map for $\left(\Sigma,f\right)$ satisfying \textbf{Restriction
2} admits exactly $\kappa_{x}+1$ arcs touching every interval in
$\partial\Sigma$ corresponding to the letter $x$, one arc for every
$j\in\left[\kappa_{x}\right]$. Consequently, it admits exactly $L_{x}$
arcs labeled $\left(x,j\right)$ for every $x\in B$ and $j\in\left[\kappa_{x}\right]$.
In addition, if $O$ is an $o$-zone of such a map then every connected
component of $\overline{O}\cap\partial\Sigma$ contains exactly one
marked point from $V_{o}$.
\item If \textbf{Restriction 2} holds, then \textbf{Restriction 1} can only
fail at zones bounded by curves. 
\end{itemize}
\end{rem}

The following local surgery of a transverse map will be very useful
in the sequel:
\begin{defn}
\label{def:H-move}Let $g$ be a transverse map realizing $\left(\Sigma,f\right)$
with parameters $\kappa$. Let $\alpha_{1}$ and $\alpha_{2}$ be
each an $\left(x,j\right)$-arc or an $\left(x,j\right)$-curve in
the collection corresponding to $g$, where $\alpha_{1}$ and $\alpha_{2}$
have the same color and are not necessarily distinct. Assume further
there is an embedded arc-segment $\gamma$ inside the interior of
$\Sigma$, with one endpoint in $\alpha_{1}$ and the other in $\alpha_{2}$
such that the interior of $\gamma$ is disjoint from the arc-curve
collection of $g$, and such that both $\alpha_{1}$ and $\alpha_{2}$
are directed towards $\gamma$ or both directed away from $\gamma$.
We say that the transverse map $g'$ realizing $\left(\Sigma,f\right)$
with the same parameters is obtained from $g$ by an \emph{H-Move}\marginpar{H-Move}
along $\gamma$ if: 
\begin{itemize}
\item One takes a small collar neighborhood of $\gamma$ to obtain a rectangle
whose short sides are contained in $\alpha_{1}$ and in $\alpha_{2}$.
\item One deletes the short sides of the rectangle from the arc-curve collection
of $g$ and replaces them with the long sides to obtain a new collection
which defines $g'$. See Figure \ref{fig:h-move}.
\end{itemize}
\end{defn}

It is clear that $g'$ is homotopic to $g$ as a map but not isotopic
as a transverse map. 

\begin{figure}
\begin{centering}
\includegraphics[scale=0.8]{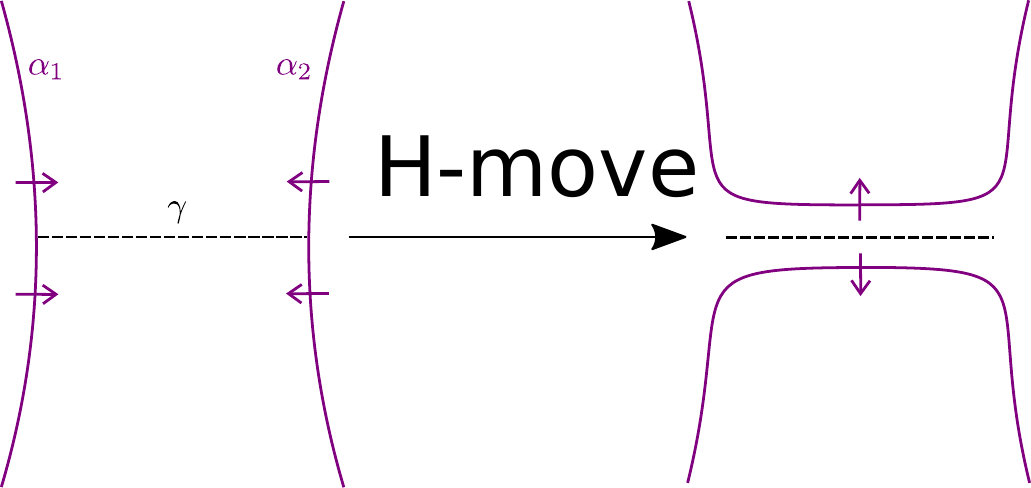}
\par\end{centering}
\caption{\label{fig:h-move}The dashed line is the arc-segment $\gamma$ along
which the H-move is performed. The two purple segments on the left
are parts of $\alpha_{1}$ and $\alpha_{2}$.}
\end{figure}

\subsection{The poset of strict transverse maps}

The complex of transverse maps will be defined as the geometric realization
of the \emph{poset }of transverse maps: 
\begin{defn}
\label{def:poset of transverse maps}Let $\Sigma$ and $f\colon\Sigma\to\wedger$
satisfy $\left[\left(\Sigma,f\right)\right]\in\surfaces\left(\wl\right)$.
Let $V_{o}\subset f^{-1}\left(o\right)\cap\partial\Sigma$ be defined
as above, so $V_{o}\cap\partial_{i}\Sigma$ cuts $\partial_{i}\Sigma$
into $\left|w_{i}\right|$ intervals, each of which is mapped to some
$x^{\pm1}$ with $x\in B$ by $f_{*}$. The \emph{poset of transverse
maps realizing $\left(\Sigma,f\right)$, }denoted $\T=\T\left(\Sigma,f\right)$\marginpar{${\scriptstyle \protect\T=\protect\T\left(\Sigma,f\right)}$}
or $\left(\T,\preceq\right)$, consists of the set of isotopy classes
relative to $V_{o}$ of \textbf{\emph{strict}} transverse maps realizing
$\left(\Sigma,f\right)$. The order is defined by ``forgetting points
of transversion''. Namely, whenever $g_{1}$ is a transverse map
realizing $\left(\Sigma,f\right)$ with parameters $\left\{ \kappa_{x}\right\} _{x\in B}$
and $g_{2}$ is identical to $g_{1}$ except we forget a proper (possibly
empty) subset of the transversion points $\left\{ \left(x,j\right)\right\} _{j\in\left[\kappa_{x}\right]}\subset\wedger$
for every $x\in B,$ then the isotopy classes $\left[g_{1}\right]$
and $\left[g_{2}\right]$ satisfy $\left[g_{2}\right]\preceq\left[g_{1}\right]$
in the poset $\T$. 
\end{defn}

Of course, the transversion points that remain in $g_{2}$ may need
relabeling. Note that we use here \emph{strict} transverse maps: maps
satisfying the three restrictions from Definition \ref{def:restrictions}.
The role of \emph{loose} transverse maps will be clarified in the
sequel of this section. Another important observation is that if the
transverse map $g$ is strict, then so are the maps obtained from
$g$ by forgetting transversion points:
\begin{lem}
\label{lem:X is downward closed}If $g_{1}$ is a strict transverse
map realizing $\left(\Sigma,f\right)$ and $g_{2}$ is obtained from
$g_{1}$ by forgetting transversion points, then $g_{2}$ is also
strict. In other words, if $\left[g_{1}\right]\in\T$ then $\left[g_{2}\right]\in\T$.
\end{lem}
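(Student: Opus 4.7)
The plan is to reduce, by induction on the number of forgotten transversion points, to the case where $g_2$ is obtained from $g_1$ by forgetting a single point $(x_0,j_0)$. In this case, the arc–curve collection of $g_2$ is exactly that of $g_1$ with the $(x_0,j_0)$-arcs and curves deleted (and the remaining $(x_0,\cdot)$-labels relabeled), and each pair of adjacent $(x_0,j_0-1)$- and $(x_0,j_0)$-zones of $g_1$ that share a forgotten arc or curve merges into a single new zone of $g_2$, while all other zones are unaffected. I would then verify each of the three restrictions in turn.

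For Restrictions 1 and 2, the plan is an argument by contradiction: assume $g_2$ has a zone $Z$ (for Restriction 1) or a boundary segment (for Restriction 2) violating the condition, so it carries no $V_o$ points and all of its boundary arcs/curves share a common color and orientation. Since the bad color still exists in $g_2$, these boundary arcs/curves are present in $g_1$ as well. The interior of $Z$ may now contain forgotten $(x_0,j_0)$-arcs or curves; I would show, using the orientation constraints forced by the map across $(x_0,j_0)$ (namely that one side of such an arc is mapped to $I_{x_0,j_0-1}$ and the other to $I_{x_0,j_0}$), that every such forgotten arc either runs parallel to the boundary of $Z$ or forms a ``U'' cutting off a bigon inside $Z$. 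In the parallel case, the innermost strip along $\partial Z$ is a sub-zone of $g_1$ that inherits from $Z$ the same single-color, same-orientation boundary structure, violating Restriction 1 of $g_1$; in the U case, the bigon is a sub-zone of $g_1$ whose boundary consists of a single $(x_0,j_0)$-arc (together with $V_o$-free boundary pieces), again violating Restriction 1 of $g_1$. Restriction 2 is handled by the analogous argument on boundary segments.

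For Restriction 3, suppose $g_2$ violates it at some consecutive pair of labels $(y,k),(y,k+1)$, meaning every $(y,k)$-zone of $g_2$ is a rectangle or an annulus. If $y \neq x_0$, or if $y = x_0$ but $(x_0,j_0)$ is not bracketed by the two kept labels, then the corresponding zones of $g_2$ coincide (up to relabeling) with zones of $g_1$, so Restriction 3 of $g_1$ already fails. Otherwise the $(y,k)$-zones of $g_2$ are exactly the mergers across $(x_0,j_0)$-arcs/curves of the paired $(x_0,j_0-1)$- and $(x_0,j_0)$-zones of $g_1$. The structural heart of the argument is to show that inside each such rectangle-or-annulus merger $Z$, every forgotten $(x_0,j_0)$-arc or curve must run parallel to the two boundary arcs/curves of $Z$; the U-shaped alternative is ruled out exactly as in the Restriction 1/2 analysis above, using the strictness of $g_1$. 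Granting this parallelism, each $(x_0,j_0-1)$- and each $(x_0,j_0)$-zone of $g_1$ that lies inside some merger $Z$ is itself a rectangle or annulus, and since every such zone of $g_1$ is accounted for this way, Restriction 3 of $g_1$ must fail at one of the consecutive pairs $((x_0,j_0-1),(x_0,j_0))$ or $((x_0,j_0),(x_0,j_0+1))$, a contradiction.

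The main obstacle will be establishing the structural ``parallelism-or-U'' dichotomy for forgotten arcs and curves inside a zone $Z$ of $g_2$, together with its handling of closed forgotten curves: a forgotten null-homotopic curve bounding a disc inside $Z$ must be excluded using Restriction 1 of $g_1$ applied to the bounded disc as a sub-zone of $g_1$, and non-nullhomotopic forgotten curves inside a rectangle $Z$ of $g_2$ must be shown impossible by a similar combinatorial argument. Throughout, the careful tracking of orientations of $(x_0,j_0)$-arcs (inherited from the orientation of the $x_0$-circle of the bouquet) is what drives both the parallelism analysis and the elimination of the U-shaped configurations.
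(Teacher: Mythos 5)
Your allocation of effort is inverted relative to the paper's. The paper treats Restriction~3 as immediate and proves Restriction~2 in one sentence (once Restriction~2 holds for $g_1$, each boundary interval carries exactly one arc of each label $(x,0),\dots,(x,\kappa_x)$, so after forgetting a point the arcs touching any interval still have pairwise distinct labels — no ``parallelism-or-U'' analysis is needed). All the real work goes into Restriction~1, and the paper's mechanism there is different from yours: writing $O$ for a bad zone of $g_2$ (in the case $j=\kappa_x$), it decomposes $O$ into $o$-zones and $(x,\kappa_x-1)$-zones of $g_1$, shows $\partial O$ must consist entirely of incoming $(x,\kappa_x-1)$-arcs (because any $(x,\kappa_x-1)$-zone of $g_1$ in $O$ contributes such an arc to $\partial O$), and then concludes that any $o$-zone of $g_1$ inside $O$ — whose boundary can never contain $(x,\kappa_x-1)$-arcs — is bounded solely by forgotten $(x,\kappa_x)$-arcs, all incoming, violating Restriction~1 of $g_1$.

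The concrete gap in your proposal is in the conclusion you draw from the ``parallel'' case. You claim the innermost strip of $Z$ ``inherits from $Z$ the same single-color, same-orientation boundary structure.'' It does not: the sub-zone of $g_1$ adjacent to $\partial Z$ has boundary arcs of two different colors (the color of $\partial Z$ on one side, and the forgotten $(x_0,j_0)$-color on the other), and so violates nothing; the sub-zone of $g_1$ that actually does violate Restriction~1 is the one whose boundary consists only of forgotten $(x_0,j_0)$-arcs, and its boundary color is the forgotten one, \emph{not} the color of $\partial Z$. This color change is precisely what drives the contradiction, and your framing hides it. The ``parallelism-or-U'' dichotomy is also not justified (nor needed): the cleaner observation is that $o$-zones of $g_1$ inside $O$ have \emph{no} arcs at $\partial O$ at all, because $o$-zone boundaries never carry $(x,\kappa_x-1)$-arcs. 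Your treatment of Restriction~3 is roughly viable (since the merged zones there are rectangles or annuli, the parallelism claim is harmless in that case), but the paper notes it as obvious and you spend your main argument on it.
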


\begin{proof}
It is enough to prove the statement of the lemma in the special case
where $g_{2}$ is obtained by forgetting a single point, say the point
$\left(x,j\right)$, for some $x\in B$ with $\kappa_{x}\ge1$ and
$j\in\left[\kappa_{x}\right]$. It is obvious that $g_{2}$ satisfies
\textbf{Restriction 3}. Let $I\subset\partial_{i}\Sigma\setminus V_{o}$
be an interval cut out by two adjacent marked points. As $g_{1}$
satisfies \textbf{Restriction 2}, all arcs touching $I$ are directed
in the same orientation w.r.t.~$I$ and this property remains true
for $g_{2}$, hence $g_{2}$ satisfies \textbf{Restriction 2}. 

As for \textbf{Restriction 1}, assume first that $j=\kappa_{x}$,
so every $o$-zone and $\left(x,\kappa_{x}-1\right)$-zone of $g_{1}$
which are neighbors belong to the same $o$-zone of $g_{2}$. Every
$z$-zone of $g_{2}$ is also a $z$-zone of $g_{1}$ with the same
boundary, so \textbf{Restriction 1 }is not violated there. Let $O\subset\Sigma$
be an $o$-zone of $g_{2}$ that violates \textbf{Restriction 1}.
Then $O$ is not an $o$-zone of $g_{1}$, and has to be a union of
$o$-zones and $\left(x,\kappa_{x}-1\right)$-zones of $g_{1}$, at
least one of each type. In addition, $O$ contains no marked points
and has only incoming $\left(x,\kappa_{x}-1\right)$-arcs/curves at
its boundary: this is because $g_{1}$ satisfies \textbf{Restriction
1}, every $\left(x,\kappa_{x}-1\right)$-zone has some incoming $\left(x,\kappa_{x}-1\right)$-arc/curve
at its boundary, and so $O$ also has some incoming $\left(x,\kappa_{x}-1\right)$-arc/curve
at its boundary. But then, every $o$-zone of $g_{1}$ contained in
$O$ has no marked points and only incoming $\left(x,\kappa_{x}\right)$-arcs/curves
at its boundary, a contradiction. 

The proof is analogous if $j=0$ and is similar but even simpler if
$1\le j\le\kappa_{x}-1$.
\end{proof}
Another important observation is that $\T=\T\left(\Sigma,f\right)$
is not empty.
\begin{lem}
\label{lem:X not empty}Let $\Sigma$, $f$ be as in Definition \ref{def:poset of transverse maps},
then the poset $\T=\T\left(\Sigma,f\right)$ is not empty.
\end{lem}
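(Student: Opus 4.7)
The plan is to construct explicitly one strict transverse map realizing $(\Sigma,f)$. I will work throughout with parameters $\kappa \equiv 0$, which has the pleasant effect of making Restriction 3 vacuous (there are no indices $j$ with $0\le j\le\kappa_x-1$), so only Restrictions 1 and 2 need to be enforced.

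First I homotope $f$ relative to $V_o$ so that $f|_{\partial_i \Sigma}$ traces the loop $w_i$ monotonically: every interval of $\partial_i \Sigma \setminus V_o$ is mapped, without backtracking, onto one of the two halves of the $x$-circle cut out by $o$ and the single transversion point $(x,0)$ placed on it. Next I perturb $f$ in the interior of $\Sigma$ to make it smooth and transverse to the collection $\{(x,0)\}_{x\in B}\subset\wedger$, taking care that this transversality is compatible with the boundary picture just set up. Call the result $g_0$; it is a loose transverse map realizing $(\Sigma,f)$ with parameters $\kappa \equiv 0$. By monotonicity of $g_0|_{\partial\Sigma}$, each interval of $\partial\Sigma \setminus V_o$ corresponding to a letter $x^{\pm 1}$ is met by exactly one endpoint of an $(x,0)$-arc, in a single direction, so Restriction 2 is automatic.

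It remains to promote $g_0$ to a map satisfying Restriction 1. A violation can only occur at an $o$-zone $Z$ (there are no $z$-zones when $\kappa\equiv 0$) that contains no point of $V_o$ and whose boundary consists solely of $(x,0)$-arcs and $(x,0)$-curves, for a single letter $x$, all directed the same way (say, all pointing into $Z$). Since all boundary arrows of $Z$ agree, $g_0$ maps a small collar of $\overline{Z}$ into the single half of the $x$-circle on one fixed side of $(x,0)$, and hence into a contractible region of $\wedger$. I then perform a local homotopy of $g_0$, supported in a slightly larger collar neighborhood of $\overline{Z}$ and fixing $V_o$, that pushes the image of this neighborhood entirely across $(x,0)$ into $U_o$; this deletes every boundary arc and curve of $Z$ from the preimage of $(x,0)$. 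Choosing $Z$ to be innermost (for example, minimizing the total number of arcs and curves in $\overline{Z}$) guarantees that the surgery strictly decreases $\sum_{x\in B}\#\,g^{-1}((x,0))$, and does not create new violations. After finitely many such surgeries I obtain a strict transverse map $g$ with $[g]\in\T(\Sigma,f)$, so $\T(\Sigma,f)\neq\emptyset$.

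The delicate step — and the only real obstacle — is the local homotopy used to kill a violating zone. One must verify that the nullhomotopy in $\wedger$ of $g_0|_{\overline{Z}}$ can be implemented by a genuine homotopy of maps $\Sigma\to\wedger$ rel $V_o$ (since $V_o\cap\overline{Z}=\emptyset$, this is just an extension-of-homotopy problem for a compact subsurface of $\Sigma$, solved by standard collar-neighborhood arguments), and that the surgery does not violate Restriction 2 on the boundary (which follows because the pieces of $\partial\Sigma$ lying in $\overline{Z}$ sit inside intervals whose endpoints are in $V_o$ and whose arc-structure away from $\overline{Z}$ is untouched).
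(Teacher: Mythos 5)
Your overall strategy is the same as the paper's: build a $\kappa\equiv 0$ transverse map, then kill the violations of Restriction~1. Your handling of Restriction~2 (first homotoping $f$ rel $V_o$ to be monotone on $\partial\Sigma$ before perturbing, so that each letter-interval meets exactly one $(x,0)$-arc) is a clean variant of what the paper does, which instead perturbs first and then removes Restriction-2 violations by $H$-moves.

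There are, however, two real issues in your Restriction-1 step. First, you allow the violating zone $Z$ to have $(x,0)$-\emph{arcs} on its boundary and you say the surgery ``deletes every boundary arc and curve of $Z$''. This cannot be right: deleting a boundary arc removes the unique $(x,0)$-preimage from the corresponding letter-interval of $\partial\Sigma$, so $g$ would no longer trace $w_i$ monotonically and would no longer realize $(\Sigma,f)$. What saves the day --- and what you need to observe explicitly, as the paper does in the remark after Definition~\ref{def:restrictions} --- is that once Restriction~2 holds, a Restriction-1 violation can only occur at a zone bounded \emph{entirely by curves}; no arcs are involved, so the surgery is interior and does not touch $\partial\Sigma$. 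Second, your justification of the nullhomotopy is incorrect as stated: $g_0(\overline{Z})\subset U_o\cup\{(x,0)\}$, which deformation retracts to the $x$-circle and is therefore \emph{not} a contractible region of $\wedger$. The correct reason the nullhomotopy exists is a lifting argument: because every boundary curve of $Z$ is co-directed, the composite of $g_0|_{\overline{Z}}$ with the retraction $U_o\cup\{(x,0)\}\to S^1_x$ lifts to a map into a closed half-arc $[(x,0),(x,0)+\pi]$ (in the universal cover picture, the image stays in a single fundamental half-domain touching the lift of $(x,0)$ only at one end), and that half-arc \emph{is} contractible, so the map is nullhomotopic rel $\partial\overline{Z}$. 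The paper avoids both issues by expressing the cleanup as a sequence of $H$-moves, each of which preserves the homotopy class rel $V_o$ by construction; if you want the direct homotopy argument you should supply the lift argument and the arcs-versus-curves observation.
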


\begin{proof}
For every $x\in B$, mark a single point $\left(x,0\right)$ in $\wedger$
on the circle corresponding to $x$. Perturb $f$ to obtain $g$ that
is transverse to the points $\left\{ \left(x,0\right)\right\} _{x\in B}$
(without changing the image at $V_{o}$). The resulting map is a transverse
map realizing $\left(\Sigma,f\right)$ with parameters $\kappa_{x}=0$
for all $x$. \textbf{Restriction 3} is automatically satisfied when
$\kappa_{x}=0$ for all $x$. 

If $g$ violates \textbf{Restriction 2}, then there is a segment $I$
of the boundary cut out by two endpoints of $\left(x,0\right)$-arcs
for some $x\in B$, both directed, say, inwards, and without any marked
point. Let $\gamma$ be an arc parallel to $I$ slightly away from
$\partial\Sigma$ with endpoint at the two arcs cutting $I$. Perform
an $H$-move along $\gamma$, and delete the resulting $\left(x,0\right)$-arc
parallel to $I$ and $\gamma$. In that manner one can get rid of
all violations of \textbf{Restriction 2}. 

So assume now that $g$ does not violate \textbf{Restrictions 2 and
3}. Any violation of \textbf{Restriction 1} is\textbf{ }at zones bounded
by curves. But any such zone can be simply deleted by removing all
its bounding curves. To see that this procedure does not change the
homotopy type of the function, note that it can be achieved by a series
of H-moves: first perform H-moves along arcs connecting a curve and
itself to decrease the genus of the zone to 0. Then, use H-moves between
different bounding curves to eventually reduce the number of bounding
curves to one. The resulting zone is a disk bounded by a curve which
can easily be homotoped away. We can repeat this process until no
violations of \textbf{Restriction 3} remain. The resulting map is
a\textbf{ }strict transverse map realizing $\left(\Sigma,f\right)$.
\end{proof}

\subsection{The complex of transverse maps\label{subsec:The-complex-of-transverse maps}}

The complex of transverse maps is defined as a ``polysimplicial complex'',
meaning that its cells are products of simplices\emph{, or }\marginpar{polysimplex}\emph{polysimplices},
as in $\Delta_{k_{1}}\times\Delta_{k_{2}}\times\ldots\times\Delta_{k_{r}}$,
where $\Delta_{k}$ is the standard simplex of dimension $k$. Note
that the polysimplex $\Delta_{k_{1}}\times\ldots\times\Delta_{k_{r}}$
has dimension $k_{1}+\ldots+k_{r}$.
\begin{defn}
\label{def:the complex of transverse maps}\emph{The complex of transverse
maps realizing $\left(\Sigma,f\right)$, denoted} \marginpar{$\protect\tps$}$\tps=$\linebreak{}
$\left|\T\left(\Sigma,f\right)\right|_{\ps}$, is a polysimplicial
complex with a polysimplex \marginpar{$\mathrm{polysim}\left(\left[g\right]\right)$}$\mathrm{polysim}\left(\left[g\right]\right)\stackrel{\mathrm{def}}{=}\prod_{x}\Delta_{\kappa_{x}}$
for every element $\left[g\right]\in\T$ with parameters $\left\{ \kappa_{x}\right\} _{x\in B}$.
The faces of $\mathrm{polysim\left(\left[g\right]\right)}$ are exactly\linebreak{}
$\left\{ \mathrm{polysim}\left(\left[g'\right]\right)\,\middle|\,\left[g'\right]\preceq\left[g\right]\right\} $.
Then $\tps$ is the union of closed cells or disjoint union of open
cells:
\[
\tps\stackrel{\mathrm{def}}{=}\bigcup_{\left[g\right]\in\T}\overline{\mathrm{polysim}}\left(\left[g\right]\right)=\bigsqcup_{\left[g\right]\in\T}\mathrm{polysim^{o}\left(\left[g\right]\right).}
\]
The topology on $\tps$, as the topology on every (poly-)simplicial
complex in this paper, is defined by taking the Euclidean topology
on every (poly-)simplex $s$, and by letting a general set $A\subseteq\tps$
to be closed if and only if $A\cap s$ is closed in $s$ for every
(poly-)simplex $s$.
\end{defn}

We remark that \textbf{Restriction 3} plays an important role in this
definition: it guarantees that different vertices of the closed polysimplex
$\overline{\mathrm{polysim}}\left(\left[g\right]\right)$ correspond
to different (minimal) elements of $\T$, hence the closed polysimplices
are embedded in $\tps$.

There is an equivalent way to construct the complex of transverse
map (up to homeomorphism), as an ordinary simplicial complex: the\label{order complex |T|}
\emph{order complex} $\left|\T\right|$\marginpar{$\left|\protect\T\right|$}
of $\T$. This is a standard simplicial complex, with simplices corresponding
to \emph{chains} in $\T$: every chain $\left[g_{0}\right]\prec\left[g_{1}\right]\prec\ldots\prec\left[g_{m}\right]$
corresponds to an $m$-simplex, with the obvious faces. 
\begin{claim}
\label{claim:both realizations of T are homeomorphic}$\left|\T\right|$
is the barycentric subdivision of $\tps$. In particular, $\left|\T\right|\cong\tps$.
\end{claim}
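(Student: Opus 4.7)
The plan is to observe that $\T$ is canonically identified with the face poset of the polysimplicial complex $\tps$, and then invoke the classical fact that the order complex of the face poset of a regular polyhedral complex is (homeomorphic to) its barycentric subdivision.

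First I will describe the face poset of a single closed polysimplex $\overline{\mathrm{polysim}}([g]) = \prod_{x \in B} \Delta_{\kappa_x}$. Its non-empty faces are products $\prod_x F_x$, where $F_x$ is the face of $\Delta_{\kappa_x}$ spanned by some non-empty subset $S_x \subseteq \{(x,0),\ldots,(x,\kappa_x)\}$. By Definition \ref{def:poset of transverse maps}, each such tuple $(S_x)_{x \in B}$ corresponds to the strict transverse map $g'$ obtained from $g$ by forgetting the transversion points whose labels do not appear in $\bigcup_x S_x$ (and relabeling), giving an element $[g'] \preceq [g]$. Lemma \ref{lem:X is downward closed} guarantees that $[g'] \in \T$, and \textbf{Restriction 3} of Definition \ref{def:restrictions} ensures that distinct subsets yield non-isotopic maps; hence the face poset of $\overline{\mathrm{polysim}}([g])$ is order-isomorphic to the principal downset $\T_{\preceq [g]}$.

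Next I will verify global compatibility: whenever two closed polysimplices $\overline{\mathrm{polysim}}([g_1])$ and $\overline{\mathrm{polysim}}([g_2])$ share a face, that face is $\overline{\mathrm{polysim}}([g'])$ for a uniquely determined $[g'] \in \T$, with the two local identifications restricting to the same sub-polysimplex. This makes $\tps$ a regular polysimplicial CW complex whose face poset is $\T$.

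Finally I will invoke the classical observation that for any regular CW complex whose closed cells are polysimplices ordered by inclusion, the order complex of the face poset is the canonical simplicial subdivision given by barycenters. Concretely, for each $[g] \in \T$ the barycentric subdivision of $\overline{\mathrm{polysim}}([g])$ places a vertex at the barycenter of every non-empty face and takes simplices to be chains of such faces; this is exactly the collection of simplices of $|\T|$ contained in $\overline{\mathrm{polysim}}([g])$. Gluing these subdivisions over all $[g] \in \T$ yields a canonical homeomorphism $|\T| \cong \tps$ exhibiting $|\T|$ as the barycentric subdivision of $\tps$. The one delicate point — and the main thing to confirm carefully — is that \textbf{Restriction 3} genuinely forces the closed polysimplices to be embedded (rather than merely immersed) in $\tps$; without it, two vertices of a polysimplex could coincide as elements of $\T$, and both the polysimplicial structure and the identification with the order complex would break.
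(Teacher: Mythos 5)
Your proof is correct and arrives at the same conclusion, but by a genuinely different route than the paper's. Both arguments hinge on the same key local identification: the principal downset $\T_{\preceq[g]}$ is order-isomorphic to the face poset of $\overline{\mathrm{polysim}}([g])$, with \textbf{Restriction 3} ensuring that the closed polysimplices embed (the paper flags exactly this in the remark following Definition \ref{def:the complex of transverse maps}). Where you diverge is in passing from that local identification to the homeomorphism. The paper decomposes $\T_{\preceq[g]}$ explicitly as a product of posets $\prod_{x\in B} P_x(g)$, verifies the claim in the single-letter case $|B|=1$ (where a polysimplex is an honest simplex and the order complex of its face poset is the barycentric subdivision by the elementary simplex fact), and then applies Fact \ref{fact:product of posets} to get $\left|\T_{\preceq[g]}\right| \cong \prod_{x}\left|P_x(g)\right| \cong \prod_x \Delta_{\kappa_x} = \overline{\mathrm{polysim}}([g])$. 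You instead invoke in one stroke the general fact that the order complex of the face poset of a regular polyhedral complex is its barycentric subdivision. Your route is more conceptual and shorter, at the cost of relying on a background theorem that is true but not always stated in the exact generality of polysimplicial complexes; the paper's route is more elementary and self-contained, and has the ancillary benefit that the product decomposition $\T_{\preceq[g]} \cong \prod_{x\in B} P_x(g)$ is precisely what gets re-used a moment later in the proof of Proposition \ref{prop:|L| cong |X|}, so developing it here is not wasted work.
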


To prove the claim we use the following well-known fact. Here, if
$\left(P,\le_{P}\right)$ and $\left(Q,\le_{Q}\right)$ are posets,
then $\left|P\right|$ is the order complex of $P$, and the direct
product $\left(P\times Q,\le_{P\times Q}\right)$ is defined by $\left(p_{1},q_{1}\right)\le_{P\times Q}\left(p_{2},q_{2}\right)$
if and only if $p_{1}\le_{P}p_{2}$ and $q_{1}\le_{Q}q_{2}$.
\begin{fact}[{e.g.~\cite[Theorem 3.2]{walker1988canonical}}]
\label{fact:product of posets} Let $P$ and $Q$ be posets. The
function $\left|P\times Q\right|\to\left|P\right|\times\left|Q\right|$
defined by 
\[
\sum\lambda_{i}\left(p_{i},q_{i}\right)\mapsto\left(\sum\lambda_{i}p_{i},\sum\lambda_{i}q_{i}\right)
\]
is an homeomorphism.
\end{fact}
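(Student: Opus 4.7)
My plan is to construct an explicit inverse $\Psi\colon |P|\times|Q|\to |P\times Q|$ via the classical \emph{shuffle} (or \emph{staircase}) subdivision of a product of simplices, and then verify that both $\Phi$ (the map in the statement) and $\Psi$ are continuous. First I would check that $\Phi$ is well-defined: a point of $|P\times Q|$ is a formal convex combination $\sum_{k=1}^N \lambda_k(p_k,q_k)$ with $\lambda_k>0$, $\sum\lambda_k=1$, and $(p_1,q_1)<(p_2,q_2)<\cdots<(p_N,q_N)$ a chain in the product poset. Projecting, the sequences $(p_k)$ and $(q_k)$ are each weakly increasing in $P$ and in $Q$ (since strict inequality in $P\times Q$ only requires strictness in at least one coordinate), so after collecting equal terms each projection becomes a convex combination supported on a strict chain and defines a bona fide point of $|P|$ (respectively $|Q|$). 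Continuity of $\Phi$ is immediate since it is linear on each closed simplex of $|P\times Q|$.

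For the inverse, given $(x,y)\in |P|\times|Q|$, write $x=\sum_{i=1}^m\alpha_i p_i$ and $y=\sum_{j=1}^n\beta_j q_j$ with strict chains $p_1<\cdots<p_m$, $q_1<\cdots<q_n$ and positive weights. Set cumulative sums $A_i=\sum_{i'\le i}\alpha_{i'}$ and $B_j=\sum_{j'\le j}\beta_{j'}$, and let $0=t_0<t_1<\cdots<t_N=1$ be the common refinement of these two partitions of $[0,1]$. On each open subinterval $(t_{k-1},t_k)$ there are unique indices $i(k),j(k)$ with $(t_{k-1},t_k)\subseteq (A_{i(k)-1},A_{i(k)})\cap(B_{j(k)-1},B_{j(k)})$; since $i(k)$ and $j(k)$ are non-decreasing in $k$, the sequence $(p_{i(k)},q_{j(k)})$ is weakly increasing in $P\times Q$, and after deleting repeated terms yields a strict chain. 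Define
\[
\Psi(x,y)\;\stackrel{\mathrm{def}}{=}\;\sum_{k=1}^N(t_k-t_{k-1})\cdot(p_{i(k)},q_{j(k)})\in |P\times Q|.
\]
A direct computation gives $\Phi\circ\Psi=\id$; and $\Psi\circ\Phi=\id$ holds because the strict chain condition on $(p_k,q_k)$ in $P\times Q$ forces the $t_k$ to be precisely the merged cumulative sums, so the shuffle reproduces the original data.

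The remaining work is continuity of $\Psi$. On each closed product cell $\Delta^m\times\Delta^n\subseteq |P|\times|Q|$ arising from chains $p_1<\cdots<p_m$ in $P$ and $q_1<\cdots<q_n$ in $Q$, the above recipe is exactly the classical triangulation of $\Delta^m\times\Delta^n$ by $(m,n)$-shuffles: its top-dimensional simplices correspond to monotone lattice paths in the $m\times n$ grid, equivalently to maximal chains in the subposet $\{p_i\}\times\{q_j\}\subseteq P\times Q$. Restricted to such a cell, $\Psi$ is a piecewise-linear homeomorphism onto the corresponding finite subcomplex of $|P\times Q|$. Since $|P|\times|Q|$ and $|P\times Q|$ both carry the weak topology determined by their simplicial cell structures, global continuity follows once one verifies that the shuffle formula is compatible with face inclusions -- this is a routine matching of cumulative sums on boundary faces. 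The main obstacle is the apparent discreteness of the combinatorial shuffle data at points where some $A_i$ coincides with some $B_j$; the key observation is that at such crossings the interval $(t_{k-1},t_k)$ shrinks to a point, so the coefficient of the affected polysimplex vertex vanishes linearly in the perturbation, and $\Psi$ remains continuous across the locus where the shuffle pattern changes.
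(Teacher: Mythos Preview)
The paper does not supply a proof of this fact; it is quoted as a known result with a citation to Walker and used as a black box. Your argument via the shuffle (Eilenberg--Zilber) triangulation of $\Delta^m\times\Delta^n$ is precisely the standard proof in the literature, and your construction of $\Psi$ together with the verification that $\Phi\circ\Psi=\id$ and $\Psi\circ\Phi=\id$ is correct.

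One point deserves a sharper statement. You write that ``$|P|\times|Q|$ \ldots\ carr[ies] the weak topology determined by [its] simplicial cell structure'' and deduce continuity of $\Psi$ from cell-by-cell continuity. For arbitrary infinite $P,Q$ this can fail in the ordinary product topology: the product of two CW complexes need not be CW. The statement as you use it is valid either in the compactly generated category or when at least one factor is locally finite. In the paper's applications this never bites --- the fact is invoked only when one factor is a finite poset (a finite chain in the proof of Claim~\ref{claim:both realizations of T are homeomorphic} and Proposition~\ref{prop:|L| cong |X|}, and the two-element poset $\{0\le1\}$ in the proof of Lemma~\ref{lem:defomration retract from poset map}) --- but it would be cleanest to flag this hypothesis explicitly.
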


\begin{proof}[Proof of Claim \ref{claim:both realizations of T are homeomorphic}]
 Let $\left[g\right]\in\T$ with parameters $\kappa\in\left(\mathbb{Z}_{\ge0}\right)^{B}$.
We show that the barycentric subdivision of $\overline{\mathrm{polysim}}\left(\left[g\right]\right)$
consists of the simplices corresponding to chains in $\T$ with top
element $\left[g'\right]$ satisfying $\left[g'\right]\preceq\left[g\right]$.
Indeed, this is certainly true in the single-letter case where $r=\left|B\right|=1$
and every polysimplex is merely a simplex. For the general case, let
$\left[\gamma^{x}\left(g\right)\right]$ denote the isotopy class
of the collection of arcs/curves corresponding to the letter $x$,
for $x\in B$. Let $P_{x}\left(g\right)$ denote the poset of all
isotopy classes of collections of arcs/curves obtained from $\left[\gamma^{x}\left(g\right)\right]$
by forgetting arcs/curves from a proper subset of the colors $\left[\kappa_{x}\right]$.
The single-letter case shows that $\left|P_{x}\left(g\right)\right|\cong\Delta_{\kappa_{x}}$.
Since the subposet of $\T$ given by $\T_{\preceq\left[g\right]}\stackrel{\mathrm{def}}{=}\left\{ \left[g'\right]\in\T\,\middle|\,\left[g'\right]\preceq\left[g\right]\right\} $
is exactly $\prod_{x\in B}P_{x}\left(g\right)$, Fact \ref{fact:product of posets}
yields that the order complex of $\T_{\preceq\left[g\right]}$ is
homeomorphic to $\overline{\mathrm{polysim}}\left(\left[g\right]\right)$.
\end{proof}
An important property of $\tps$ is that it is finite-dimensional.
This is an analog of Claim \ref{claim:new formula finite sum for every exponent}
and here, again, \textbf{Restriction 3} plays an important role:
\begin{lem}
\label{lem:X is finite dimensional}The complex $\tps$ is finite
dimensional with\footnote{Recall Remark \ref{rem:w_i ne 1} that we assume $w_{i}\ne1$ throughout
the proofs. If we do consider the case that some of the words are
trivial, then $\Sigma$ may contain components made of discs, and
the bound in Lemma \ref{lem:X is finite dimensional} needs to be
updated.} $\dim\left(\tps\right)\le\frac{\ell}{2}-\chi\left(\Sigma\right)$.
\end{lem}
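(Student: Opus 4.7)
Since the cell of $\tps$ associated to $[g] \in \T$ is a polysimplex $\mathrm{polysim}([g]) = \prod_{x \in B} \Delta_{\kappa_x}$ of dimension $|\kappa(g)| = \sum_{x \in B} \kappa_x$, the claim reduces to showing that every strict transverse map $g$ realizing $(\Sigma,f)$ satisfies $|\kappa(g)| \le \ell/2 - \chi(\Sigma)$. Fix such a $g$ with parameters $\kappa$, and let $\alpha$ denote the disjoint system of arcs and curves in $\Sigma$ that it determines.

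The backbone of the argument is a global Euler-characteristic identity. Form the $1$-complex $X = \alpha \cup \partial\Sigma$, and observe via inclusion-exclusion that $\chi(X) = -A$, where $A = \sum_{x \in B} L_x(\kappa_x+1)$ is the total number of arcs: each arc is an interval whose two endpoints already lie in $\partial\Sigma$ and so contributes $-1$, while curves and boundary circles contribute $0$. Since the open zones partition $\Sigma \setminus X$ and each $\partial \overline Z$ is a disjoint union of circles (so $\chi(\overline Z) = \chi(Z^\circ)$), additivity yields
\[
\sum_Z \chi(\overline Z) \;=\; \chi(\Sigma) + A \;=\; \chi(\Sigma) + \sum_{x \in B} L_x + \sum_{x \in B} L_x \kappa_x.
\]
Next I would analyze individual zones. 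Around any $z$-zone the boundary arcs strictly alternate between two consecutive colors (forced by Restriction~2 and the image structure near $\partial\Sigma$), so a rectangular $(x,j)$-zone is a disc with exactly one $(x,j)$-arc and one $(x,j+1)$-arc on its boundary ($\chi = 1$), an annular $(x,j)$-zone is bounded by two curves ($\chi = 0$), a non-rectangular disc $z$-zone carries $\ge 2$ arcs of each color (still $\chi = 1$), and a non-disc non-annular $z$-zone has $\chi(\overline Z) \le 0$. Since each $(x,j)$-arc lies on the boundary of exactly one $(x,j)$-zone, double-counting gives $\sum_{Z \text{ an }(x,j)\text{-zone}} m_Z = L_x$, where $m_Z$ is the number of $(x,j)$-arcs on $\partial Z$. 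By Restriction~3, for each of the $|\kappa|$ pairs $(x,j)$ with $j \in \{0,\ldots,\kappa_x-1\}$, at least one distinguished (non-rectangle, non-annulus) $(x,j)$-zone exists, so the $z$-zone contribution to $\sum_Z \chi(\overline Z)$ falls short of the ``all rectangles and annuli'' value by at least one unit per color pair, i.e.\ a total deficit of at least $|\kappa|$.

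Finally, to extract the bound one must control the $o$-zone contribution. Each $o$-zone either contains a point of $V_o$ (totalling $2\sum_x L_x$ marked points spread across $\ell$ boundary components) or, by Restriction~1, has a boundary whose arc/curve directions are mixed, forcing a compensating Euler-characteristic decrement; a careful accounting shows that the $o$-zone contribution to $\sum_Z \chi(\overline Z)$ is at most $\sum_x L_x + \ell/2$. Plugging both bounds into the Euler identity yields $\chi(\Sigma) + |\kappa| \le \ell/2$, which rearranges to the desired $|\kappa| \le \ell/2 - \chi(\Sigma)$. \textbf{The main obstacle} is that a distinguished $z$-zone may still be a (non-rectangular) disc, contributing the same $+1$ to $\sum_Z \chi(\overline Z)$ as a rectangle; the deficit per pair must therefore be extracted not from Euler characteristic alone but from the companion arc-count identity. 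Equally delicate is the $o$-zone bookkeeping, since $o$-zones have much less rigid boundary structure than $z$-zones and Restrictions~1--2 must be invoked in tandem to prevent them from absorbing the deficit produced by Restriction~3.
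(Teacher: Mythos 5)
Your global setup is equivalent to the paper's: the identity $\sum_Z \chi(\overline Z) = \chi(\Sigma) + A$ is a rearrangement of the paper's equation (3.1), $\chi(\Sigma) = \sum_{\Sigma'}\bigl(\chi(\Sigma') - \tfrac{1}{2}\#\{\text{arcs at }\partial\Sigma'\}\bigr)$, since each arc is incident (with multiplicity) to exactly two zones. But the paper works zone-by-zone with the \emph{weighted} quantity $\chi(Z) - \tfrac{1}{2}\#\{\text{arcs at }\partial Z\}$, and this is precisely what dissolves both of the problems you flag at the end as ``the main obstacle'' and ``equally delicate,'' yet leave unresolved. With the weight, a rectangle and an annulus both contribute $0$, a non-rectangular disc $z$-zone contributes $1 - m \le -1$ (it has $2m$ arcs with $m \ge 2$, since arcs and boundary-segments alternate and directions alternate), and any non-disc non-annulus $z$-zone also contributes $\le -1$; Restriction 3 then gives a clean deficit of at least $1$ per color pair with no separate arc-count bookkeeping. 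Likewise for $o$-zones: positive weight forces a disc with at most one boundary arc, which Restrictions 1 and 2 pin down as a $\tfrac12$-contribution hanging off a marked point $v_i$ of a non-cyclically-reduced $w_i$, hence at most $\ell/2$ in total.

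As written, your argument has two genuine gaps. First, you correctly observe that a distinguished $(x,j)$-zone can still be a disc with $\chi = 1$, so counting raw Euler characteristic gives no deficit, and you defer this to a ``companion arc-count identity'' without stating or proving it; one would need to argue that such a disc absorbs at least $2$ of the $L_x$ arcs of each of the two colors, so the remaining $(x,j)$-discs number at most $L_x - 1$. Second, the bound $\sum_{o\text{-zones}}\chi \le \sum_x L_x + \ell/2$ is asserted, not proved; it requires counting disc $o$-zones by the arcs they absorb (at least $2$ each, except for the $\le \ell$ special discs with a single arc tied to the base points $v_i$), which is exactly the accounting you say is ``equally delicate.'' Neither step is routine, and both are exactly what the weight reformulation was designed to make automatic. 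So: right framework, same underlying ideas as the paper, but the two closing estimates are missing rather than merely sketched.
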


\begin{proof}
We need to show that $\sum_{x}\kappa_{x}$ is bounded across $\left[g\right]\in\T$.
It is easy to see that 
\begin{equation}
\chi\left(\Sigma\right)=\sum_{\Sigma'}\left(\chi\left(\Sigma'\right)-\frac{1}{2}\#\left\{ \mathrm{arcs~at~}\partial\Sigma'\right\} \right),\label{eq:chi(Sigma) as sum over zones}
\end{equation}
where the sum is over all $o$-zones and $z$-zones of $g$ in $\Sigma$,
and an arc that bounds $\Sigma'$ from both its sides is counted twice
for $\Sigma'$. The contribution of $\Sigma'$ in (\ref{eq:chi(Sigma) as sum over zones})
is positive only if $\Sigma'$ is a topological disc with at most
one arc at its boundary. By \textbf{Restrictions 1} and \textbf{2}
this means that $\Sigma'$ is bounded by one arc and one interval
from $\partial\Sigma$ containing a marked point, and that its contribution
is $\frac{1}{2}$. Notice that in this case, the marked point must
be the special point $v_{i}\in\partial_{i}\Sigma$ marking ``the
beginning'' of $w_{i}$, and $w_{i}$ must be not cyclically reduced.
Hence the positive contributions on the right hand side of (\ref{eq:chi(Sigma) as sum over zones})
sum up to at most $\frac{\ell}{2}$, and come from $o$-zones only.

On the other hand, the only zones contributing zero to (\ref{eq:chi(Sigma) as sum over zones})
are discs with two arcs at their boundary, namely, rectangles, or
annuli bounded by two curves. Every other $z$-zone contributes at
most -1: this follows from the fact that every boundary component
of such a zone is either a curve or contains an even number of arcs.
Thus, \textbf{Restriction 3} guarantees that for every $x\in B$ and
$0\le j\le\kappa_{x}-1$, the total contribution of the $\left(x,j\right)$-zones
is at most $-1$. We obtain
\[
\chi\left(\Sigma\right)\le\frac{\ell}{2}-\sum_{x}\kappa_{x},
\]
so
\[
\sum_{x}\kappa_{x}\le\frac{\ell}{2}-\chi\left(\Sigma\right).
\]
\end{proof}
\begin{rem}
\label{rem:bound on dimension when words are cylically reduced}When
$\wl$ are cyclically reduced and none equal to $1$, the proof gives\linebreak{}
$\dim\left(\tps\right)\le-\chi\left(\Sigma\right)$.
\end{rem}

The following theorem is the main result of the current section. It
is established in Section \ref{subsec:Contractibility-of-|T|}.
\begin{thm}
\label{thm:X contractible}The complex of transverse maps $\tps$
is contractible.
\end{thm}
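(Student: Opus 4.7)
The plan is to factor the problem through a larger auxiliary complex $\tips$ of \emph{loose} transverse maps --- the one alluded to at the end of Section \ref{subsec:Laurent expansion of trwl} and promised in the forthcoming Definition \ref{def:poset and complex of loose transverse maps} --- whose cells correspond to transverse maps satisfying Restrictions 1 and 2 but not necessarily Restriction 3, and whose order is again ``forget transversion points''. The inclusion of posets $\T\hookrightarrow\T_\infty$ induces an inclusion $\tps\hookrightarrow\tips$. I would prove contractibility in two independent steps: first that $\tips$ is contractible, and then that $\tps$ is a deformation retract of $\tips$.

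For the contractibility of $\tips$, I would fix once and for all a filling strict transverse map $g_0$ realizing $(\Sigma,f)$ --- such a $g_0$ exists by taking $f_\sigma$ of Definition \ref{def:f_sigma} for any $\sigma\in\match^{\kappa\equiv 0}(\wl)$ with $\Sigma_\sigma\sim\Sigma$, or else by subdividing the map produced in Lemma \ref{lem:X not empty} until all zones are discs. For an arbitrary $[g]\in\T_\infty$, isotope the underlying arc/curve collections of $g$ and $g_0$ into minimal position (no bigons), and define $g\vee g_0$ to be the loose transverse map whose arcs and curves are their disjoint union, with the $x$-circle transversion points of $g$ listed before those of $g_0$ for each $x\in B$. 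One then checks that $[g\vee g_0]\in\T_\infty$, that $[g]\mapsto[g\vee g_0]$ is a well-defined poset map, and that it satisfies $[g]\preceq[g\vee g_0]\succeq[g_0]$ uniformly. Passing to order complexes (and using Fact \ref{fact:product of posets} with the $\T_\infty$-analog of Claim \ref{claim:both realizations of T are homeomorphic}), this zig-zag of poset maps yields a straight-line homotopy in $\tips$ from the identity to the constant map at $[g_0]$, so $\tips$ is contractible.

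For the deformation retraction of $\tips$ onto $\tps$, observe that each cell of $\tips\setminus\tps$ corresponds to a loose transverse map in which some consecutive pair of arc/curve collections $(x,j)$ and $(x,j+1)$ are isotopic, i.e., Restriction 3 fails. I would fix a total order on the set of pairs $(x,j)$, and pair each such violating cell with the cell obtained by forgetting the transversion point $(x,j+1)$ for the lexicographically largest violating pair. This yields an acyclic matching on the face poset of $\tips$ whose critical cells are precisely the cells of $\tps$, and discrete Morse theory then provides the deformation retraction.

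The main obstacle will be making the ``join'' $[g]\vee[g_0]$ genuinely canonical on isotopy classes: the minimal-position representative of two families of arcs and curves on a surface is only unique up to ambient isotopy (Epstein-type bigon elimination), and care is needed to verify that Restrictions 1 and 2 survive the superposition --- in particular, that no new empty rectangle or annular zones whose boundary has a single direction appear between $g$-arcs and $g_0$-arcs. A secondary subtlety is that the Step-3 matching must avoid cycles among Restriction-3 violations, which may cascade as transversion points are forgotten; this is why one resolves the largest violation first.
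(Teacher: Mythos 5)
Your argument breaks down at the join: there is no transverse map $g\vee g_0$ in general. The arcs and curves of a single transverse map are pairwise disjoint, being preimages of distinct points. But the arc/curve systems of two \emph{different} transverse maps $g$ and $g_0$ realizing the same $[f]$ will in general intersect on $\Sigma$, and putting them in minimal position minimizes but does not eliminate the intersections -- so ``their disjoint union'' is not a collection of disjoint arcs/curves and cannot be the system of any transverse map. Concretely, if $g$ contains an essential simple closed curve $\gamma$ in the preimage of some transversion point (exactly the non-filling scenario the compressible case forces you to handle) and $g_0$ is filling, then $g_0$'s arcs cut $\Sigma$ into discs, so $\gamma$ must cross them; even two filling maps with different underlying matchings have arcs with positive geometric intersection number (e.g.\ the two matchings on an annulus with boundary words $x^2,x^{-2}$). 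The worry you flagged -- canonicity of the minimal-position representative -- is real, but the deeper obstruction is that the two systems are not disjoint at all. Separately, a filling strict $g_0$ need not even exist for a compressible $(\Sigma,f)$: the paper notes after Theorem~\ref{thm:EC of a single (S,f)} that some $[(\Sigma,f)]\in\surfaces(\wl)$ are realized by no $\sigma\in\matchr^*(\wl)$, and your fallback of subdividing the map from Lemma~\ref{lem:X not empty} cannot reduce a zone of positive genus to discs, since adding transversion points only adds parallel copies of arcs/curves within existing zones.

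Even granting Step 1, Step 2 is not yet a proof: the rule ``match each Restriction-3-violating cell with the face forgetting $(x,j+1)$ for the lex-largest violating pair'' is not checked to be a matching (the target face can be the chosen face of several violating cells, and can itself be a violator), and acyclicity is named as a concern but not addressed. The loose poset also does not define a polysimplicial complex, since a violation at $(x,j),(x,j+1)$ makes the faces obtained by forgetting either point coincide. By contrast, the paper's proof (Section~\ref{subsec:Contractibility-of-|T|}) sidesteps superposition entirely: the fixed auxiliary object is a maximal system $\Omega$ of \emph{null-arcs}, which are embedded arcs on $\Sigma$ but are not arcs of any transverse map, so they can coexist with every $g$. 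One then replaces $\tps$ by the subdivision $|\L|$ given by bi-transverse loose maps (Proposition~\ref{prop:|L| cong |X|}), filters $\L$ by the depth of the words $u_\omega(h)$ recording intersections with $\Omega$, and deformation-retracts down the filtration by $H$-move surgeries $r_n\colon\P_n\to\P_{n-1}$, ending at the single vertex $\L_\Omega$. Your ``cone to a fixed object'' intuition is exactly the right target -- the paper's single terminal vertex plays that role -- but it is reached by surgering $g$ against a fixed scaffolding $\Omega$ rather than by superposing $g$ with a competing transverse map.
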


\subsection{A poset of loose transverse maps}

In order to show the contractibility of $\tps$ we introduce a poset
$\L=\L\left(\Sigma,f\right)$ of \emph{loose} transverse maps (see
Definition \ref{def:restrictions}) with exactly two transversion
points on every cycle of $\wedger$. This poset gives rise to a subdivision
of the polysimplicial complex $\tps$, which is well-adapted to the
surgeries we perform to prove contractibility. We are not able to
prove contractibility directly with the constructions $\tps$ or $\left|\T\right|$
from Section \ref{subsec:The-complex-of-transverse maps}. The relation
between $\L$ and $\T$ is analogous to the relation between the set
of matchings $\match^{\kappa\equiv1}$ appearing in Theorem \ref{thm:trwl as finite sum}
and the set of matchings $\matchr^{*}$ appearing in Theorem \ref{thm:combinatorial Laurent expansion of trwl}.
\begin{defn}
\label{def:poset and complex of loose transverse maps}Let $\Sigma$,
$f$ and $V_{o}$ be as in Definition \ref{def:poset of transverse maps}.
The \emph{poset of loose bi-transverse maps realizing $\left(\Sigma,f\right)$,
}denoted $\L=\L\left(\Sigma,f\right)$\marginpar{${\scriptstyle \protect\L=\protect\L\left(\Sigma,f\right)}$}
or $\left(\L,\preceq_{\L}\right)$, consists of the set of isotopy
classes relative to $V_{o}$ of \textbf{\emph{loose}} transverse maps
realizing $\left(\Sigma,f\right)$ with parameters $\kappa_{x}=1$
for all $x\in B$. The order is defined as follows: assume that $g$
is a transverse map realizing $\left(\Sigma,f\right)$ with $\kappa_{x}=\mathbf{3}$
for all $x\in B$, let $h_{1}$ be the transverse map obtained from
$g$ by forgetting the two exterior transversion points for every
$x\in B$, and let $h_{2}$ be the one obtained from $g$ by forgetting
the two interior points for every $x\in B$. If $h_{1}$ and $h_{2}$
are loose, then $\left[h_{1}\right]\preceq_{\L}\left[h_{2}\right]$
in $\L$. 

The geometric realization of $\L$, denoted $\left|\L\right|$\marginpar{$\left|\protect\L\right|$},
is the order complex of $\L$: the simplicial complex with vertices
corresponding to the elements of $\L$ and an $m$-simplex for every
chain $\left[h_{0}\right]\prec\ldots\prec\left[h_{m}\right]$ of length
$m+1$.
\end{defn}

In other words, $\left[h_{1}\right]\preceq_{\L}\left[h_{2}\right]$
whenever the $x$-arcs and curves of $\left[h_{1}\right]$ can be
arranged to be ``nested'' inside those of $h_{2}$, i.e.~to lie
inside the $\left(x,0\right)$-zones of $h_{2}$, for every letter
$x\in B$, so that the resulting map is a legal transverse map with
four transversion points for every $x$. Another way to put it is
that $\left[h_{1}\right]\preceq_{\L}\left[h_{2}\right]$ if and only
if there are representatives $h_{1}'$ and $h_{2}'$, respectively,
which are identical as maps, and are transverse to four points $\left(x,0\right),\ldots,\left(x,3\right)$
in every cycle of $\wedger$, such that the ``official'' transversion
points of $h'_{1}$ are $\left(x,1\right)$ and $\left(x,2\right)$,
while the ``official'' transversion points of $h'_{2}$ are $\left(x,0\right)$
and $\left(x,3\right)$.

Note that the relation $\preceq_{\L}$ is indeed a partial order:
as the transverse map $h$ with $\kappa_{x}=3$ for every $x\in B$
is allowed to be loose, i.e., to violate \textbf{Restriction 3}, we
get the desired reflexivity: $\left[h\right]\preceq_{\L}\left[h\right]$
for every $\left[h\right]\in\L$. For transitivity, assume $\left[h_{1}\right]\preceq_{\L}\left[h_{2}\right]\preceq_{\L}\left[h_{3}\right]$.
By definition, this means one can draw the $x$-arcs/curves of some
$h'_{2}\in\left[h_{2}\right]$ inside the $\left(x,0\right)$-zones
of $h_{3}$ to obtain a legal loose transverse map, and likewise to
draw the $x$-arcs/curves of some $h'_{1}\in\left[h_{1}\right]$ inside
the $\left(x,0\right)$-zones of $h_{2}'$ to obtain a legal loose
transverse map. The union of all three collections of arcs and curves
gives a loose transverse map $g$ with $\kappa_{x}=5$ for all $x$.
By forgetting $\left(x,1\right)$ and $\left(x,4\right)$ for every
$x$, we get a map that shows $\left[h_{1}\right]\preceq_{\L}\left[h_{3}\right]$.
Finally, if $\left[h_{1}\right]\preceq_{\L}\left[h_{2}\right]\preceq_{\L}\left[h_{1}\right]$,
we obtain in a similar fashion a map $g$ with $\kappa_{x}=5$ in
which the $\left(x,0\right)$-arcs/curves are isotopic to the $\left(x,2\right)$-arcs/curves.
This forces the $\left(x,1\right)$-arcs/curves to be isotopic to
$\left(x,0\right)$ and to $\left(x,2\right)$. Analogously, the $\left(x,4\right)$-collection
is isotopic to the $\left(x,3\right)$-collection and to the $\left(x,5\right)$-collection.
Thus $\left[h_{2}\right]=\left[h_{1}\right]$ and we have established
antisymmetry.
\begin{prop}
\label{prop:|L| cong |X|}The spaces $\tps$ and $\left|\L\right|$
are homeomorphic. Moreover, there exists an homeomorphism $\alpha:\left|\L\right|\stackrel{\cong}{\to}\tps$,
through which the simplices of $\left|\L\right|$ subdivide the polysimplices
of $\tps$.
\end{prop}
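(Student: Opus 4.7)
The plan is to realize $|\L|$ as an explicit simplicial subdivision of $\tps$ via a map $\alpha$ defined simplex-by-simplex, sending each simplex of $|\L|$ homeomorphically onto a simplex lying inside a single closed polysimplex of $\tps$ and tiling each such polysimplex. Intuitively, a point of a polysimplex in $\tps$ records the relative placement of the transversion points $(x,j)$ on each $x$-circle of $\wedger$, and this same data is what a chain in $\L$ encodes via nesting.

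First I define $\alpha$ on vertices. Given $[h] \in \L$, for each letter $x$ whose two collections $(h,0)$ and $(h,1)$ are isotopic, forget the corresponding transversion points in $\wedger$; what remains is a strict transverse map $[g_h] \in \T$ with $\kappa_x(g_h) \in \{0,1\}$ per letter. Set $\alpha([h])$ to be the barycenter of $\overline{\mathrm{polysim}}([g_h]) \subset \tps$. Next, for an $m$-simplex of $|\L|$ corresponding to a chain $[h_0] \prec_\L \cdots \prec_\L [h_m]$, iterated application of the definition of $\preceq_\L$ realizes $h_0, \ldots, h_m$ simultaneously as nested layers within a single loose transverse map $G$ of parameters $\kappa_x = 2m+1$. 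Discarding the transversion points of $G$ at which consecutive collections coincide produces a strict $[g^*] \in \T$, and each $[g_{h_i}]$ appears as a subface of $\overline{\mathrm{polysim}}([g^*])$. Define $\alpha$ on the whole $m$-simplex by affine extension from $\alpha([h_0]), \ldots, \alpha([h_m])$.

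Well-definedness on common faces of simplices in $|\L|$ holds because dropping a vertex $[h_i]$ from the chain corresponds to omitting the $i$-th nested layer in $G$, resulting in a compatible strict transverse map whose polysimplex lies as a subface of $\overline{\mathrm{polysim}}([g^*])$; the restriction of $\alpha$ then agrees with the definition on the smaller chain. In particular, the image of each simplex of $|\L|$ lies inside a single closed polysimplex of $\tps$.

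The main obstacle is bijectivity of $\alpha$. Working inside a fixed open polysimplex $\mathrm{polysim}^o([g]) \subset \tps$, I interpret a barycentric coordinate tuple $(t_{x,j})$ with $\sum_j t_{x,j} = 1$ and $t_{x,j} > 0$ as the cyclic arc-lengths cut out by the $\kappa_x + 1$ transversion points on the $x$-circle of $\wedger$. Comparing the combined arc-lengths of nested sub-intervals of these transversion points in a canonical way, using \textbf{Restriction 3} to rule out degenerate coincidences, singles out a unique chain $[h_0] \prec_\L \cdots \prec_\L [h_m]$ in $\L$ whose nesting simplifies back to $[g]$ and whose open simplex in $|\L|$ contains the prescribed point. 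This yields the inverse of $\alpha$ on each open polysimplex. Continuity of $\alpha$ and its inverse is automatic from the piecewise-affine definitions, and the compatibility across different polysimplices (from the previous paragraph) then assembles these local inverses into a global homeomorphism. Once $\alpha$ is established as a homeomorphism, the statement that simplices of $|\L|$ subdivide polysimplices of $\tps$ is immediate from the construction.
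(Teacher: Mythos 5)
Your overall strategy matches the paper's: send each chain in $\L$ to a simplex lying inside the polysimplex of the strict transverse map obtained by stacking the chain's layers and deleting redundant transversion points, then argue that these images tile $\tps$. The vertex formula (barycenter of $\overline{\mathrm{polysim}}([g_h])$) and the affine extension agree with what the paper builds from Fact~\ref{fact:product of posets} and the explicit map $f_C$ in Lemma~\ref{lem:subdivision of a simplex by the poset of pairs}.

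The gap is in the bijectivity argument. You assert that ``comparing the combined arc-lengths of nested sub-intervals of these transversion points in a canonical way\ldots singles out a unique chain,'' but you never say what the canonical comparison is, nor verify that the chain it produces actually exists in $\L$ and is unique, nor that its open simplex contains the given point. This is precisely the hard content of the proposition. The paper isolates it as Lemma~\ref{lem:subdivision of a simplex by the poset of pairs}: it shows the order complex of the ``poset of nested pairs'' $P_C$ in a chain $C$ is a subdivision of $\overline{\Delta_C}$ by exhibiting an explicit affine forward map $f_C$ and a \emph{recursive} inverse $\phi_C$ (peel off the outermost pair with weight $2\min(t_0,t_m)$, subtract, and recurse). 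Your sentence gestures at that recursion but does not carry it out, and without it there is no argument that $\alpha$ is injective or surjective on each open polysimplex. Relatedly, you do not establish the local product structure $\L_{\le[g]}\cong\prod_{x\in B}P^x(g)$, which is what lets one reduce the multi-letter polysimplex $\prod_x\Delta_{\kappa_x}$ to independent single-letter problems via Fact~\ref{fact:product of posets}. Also, your invocation of \textbf{Restriction 3} ``to rule out degenerate coincidences'' is misplaced: in the open polysimplex all barycentric coordinates are positive by definition, so Restriction~3 does no work here; it is only needed earlier, to ensure closed polysimplices embed in $\tps$. To complete the proof you must (i) state and prove the single-letter subdivision result with an explicit inverse, and (ii) establish the product decomposition of $\L_{\le[g]}$ so the single-letter case applies coordinatewise.
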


The proof relies on the following general lemma:
\begin{lem}
\label{lem:subdivision of a simplex by the poset of pairs}For a finite
chain (totally ordered set) $C$, let $\left(P_{C},\preceq\right)$
be the poset consisting of $\left\{ \left(i,j\right)\in C\times C\,\middle|\,i\le_{C}j\right\} $
with partial order given by $\left(i_{1},j_{1}\right)\preceq\left(i_{2},j_{2}\right)$
if and only if $i_{2}\le_{C}i_{1}\le_{C}j_{1}\le_{C}j_{2}$. Then
there is a canonical homeomorphism $f_{C}\colon\left|P_{C}\right|\to\overline{\Delta_{C}}$
from the order complex of $P_{C}$ to the closed $\left(\left|C\right|-1\right)$-simplex
with vertices the elements of $C$. Moreover, the family $\left\{ f_{C}\right\} _{C}$
of homeomorphisms respects subsets: for every subset $C'\subseteq C$,
$f_{C}\Big|_{C'}=f_{C'}$ and in particular $f_{C}\left(\left|P_{C'}\right|\right)=\overline{\Delta_{C'}}$.
\end{lem}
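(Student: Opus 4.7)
The plan is to define $f_C$ on the vertices of the order complex $|P_C|$ by the formula $f_C((i,j)) = \frac{1}{2}i + \frac{1}{2}j \in \overline{\Delta_C}$ (so that singletons $(i,i)$ map to the vertex $i$ of $\overline{\Delta_C}$, and pairs $(i,j)$ with $i <_{C} j$ map to the midpoint of the edge between $i$ and $j$), and to extend this map affinely on each simplex of $|P_C|$. Continuity is immediate, the image of every closed simplex lies in $\overline{\Delta_C}$ since the vertices do, and the ``moreover'' compatibility with subsets $C' \subseteq C$ is automatic: any chain in $P_{C'}$ is also a chain in $P_C$, so $|P_{C'}| \hookrightarrow |P_C|$ as a subcomplex, and both $f_C$ and $f_{C'}$ are given by the same formula on vertices.

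The main task is to show $f_C$ is a homeomorphism, which I would prove by induction on $n = |C|$. The base case $n = 1$ is trivial. For the inductive step, let $\alpha = \min C$ and $\omega = \max C$, set $C' = C \setminus \{\omega\}$, $C'' = C \setminus \{\alpha\}$, and let $m^\ast = (\alpha,\omega)$ be the top element of $P_C$. Since $m^\ast$ is a maximum of the poset, $|P_C|$ is the cone with apex $m^\ast$ over $|P_C \setminus \{m^\ast\}|$. Every $(i,j) \ne m^\ast$ in $P_C$ has either $i >_{C} \alpha$ (hence $(i,j) \in P_{C''}$) or $j <_{C} \omega$ (hence $(i,j) \in P_{C'}$), so $P_C \setminus \{m^\ast\} = P_{C'} \cup P_{C''}$ with intersection $P_{C' \cap C''}$. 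Moreover, any element of $P_{C'} \setminus P_{C''}$ has the form $(\alpha, j)$ with $j <_{C} \omega$, any element of $P_{C''} \setminus P_{C'}$ has the form $(i, \omega)$ with $i >_{C} \alpha$, and these two types are incomparable in $P_C$; hence every chain in $P_C \setminus \{m^\ast\}$ lies entirely in $P_{C'}$ or in $P_{C''}$, giving the decomposition $|P_C \setminus \{m^\ast\}| = |P_{C'}| \cup |P_{C''}|$ glued along $|P_{C' \cap C''}|$.

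I claim $\overline{\Delta_C}$ admits a parallel cone decomposition with apex $m = f_C(m^\ast) = \frac{1}{2}(\alpha+\omega)$ over $\overline{\Delta_{C'}} \cup \overline{\Delta_{C''}}$. Concretely, for $\lambda = (\lambda_c)_{c \in C} \in \overline{\Delta_C}$, set $s = \min(\lambda_\alpha,\lambda_\omega)$ and $t = 2s \in [0,1]$; for $t < 1$ define $\mu \in \overline{\Delta_C}$ by $\mu_\alpha = (\lambda_\alpha - s)/(1-t)$, $\mu_\omega = (\lambda_\omega - s)/(1-t)$, and $\mu_c = \lambda_c/(1-t)$ for $\alpha <_{C} c <_{C} \omega$; at least one of $\mu_\alpha,\mu_\omega$ vanishes, so $\mu \in \overline{\Delta_{C'}} \cup \overline{\Delta_{C''}}$, and $\lambda = tm + (1-t)\mu$. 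Uniqueness of the pair $(t,\mu)$ is seen immediately from the identity $\min(\lambda_\alpha,\lambda_\omega) = t/2 + (1-t)\min(\mu_\alpha,\mu_\omega) = t/2$, which forces $t = 2\min(\lambda_\alpha,\lambda_\omega)$ and then $\mu = (\lambda - tm)/(1-t)$. By the inductive hypothesis, $f_{C'}$ and $f_{C''}$ are homeomorphisms, and by the compatibility property they agree on $|P_{C' \cap C''}|$ (both equal $f_{C' \cap C''}$), so they glue to a homeomorphism $|P_C \setminus \{m^\ast\}| \to \overline{\Delta_{C'}} \cup \overline{\Delta_{C''}}$. Since $f_C$ is the affine extension sending $m^\ast$ to $m$, it respects the two cone structures and is therefore the desired homeomorphism $|P_C| \to \overline{\Delta_C}$. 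The main technical obstacle is the verification of the explicit cone decomposition of $\overline{\Delta_C}$, i.e., the continuity and bijectivity of $\lambda \mapsto (t,\mu)$; once this coordinate-level calculation is in hand, the inductive step is essentially formal.
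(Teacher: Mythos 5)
Your proof is correct, and the map $f_C$ you define (affine extension of $(i,j)\mapsto \tfrac{1}{2}i+\tfrac{1}{2}j$) is exactly the one in the paper. Where the two proofs differ is in how the homeomorphism property is established. The paper writes down an explicit recursive inverse $\phi_C$, which peels off the coefficient of the top pair $(i_0,i_m)$ as $2\min(t_0,t_m)$, rescales the rest, and recurses; it then states that verifying $\phi_C$ is a continuous two-sided inverse is easy. Your argument makes this rigorous by organizing it as an induction on $|C|$: $|P_C|$ is the cone with apex $m^\ast=(\min C,\max C)$ over $|P_{C'}|\cup_{|P_{C'\cap C''}|}|P_{C''}|$, and $\overline{\Delta_C}$ has the matching cone structure with apex $\tfrac{1}{2}(\alpha+\omega)$ over $\overline{\Delta_{C'}}\cup\overline{\Delta_{C''}}$, with cone parameter $t=2\min(\lambda_\alpha,\lambda_\omega)$ --- this is precisely the quantity appearing in the paper's $\phi_C$, so the two proofs are built on the same underlying observation. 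What your version buys is a cleaner inductive packaging that localizes all the nontrivial verification in the cone-coordinate change (which you carry out), plus an automatic proof of compatibility with subsets via the simplicial inclusion $|P_{C'}|\hookrightarrow |P_C|$. One small point worth making explicit when passing from the set-level bijection $\lambda\leftrightarrow(t,\mu)$ to a homeomorphism: since both spaces are compact Hausdorff and the cone map $(\mu,t)\mapsto tm+(1-t)\mu$ is continuous and bijective, it is automatically a homeomorphism; that dispenses with checking continuity of the inverse at the apex $t=1$.
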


\begin{proof}
We write points in $\overline{\Delta_{C}}$ as $\sum_{c\in C}t_{c}\cdot c$
with $t_{c}\ge0$ and $\sum_{c}t_{c}=1$. If $\left(i_{0},j_{0}\right)\prec\ldots\prec\left(i_{m},j_{m}\right)$
is a chain in $P_{C}$, we write a point in the corresponding $m$-simplex
of $\left|P_{C}\right|$ as $t_{0}\cdot\left(i_{0},j_{0}\right)+\ldots+t_{m}\cdot\left(i_{m},j_{m}\right)$
with $t_{\ell}\ge0$ for all $\ell\in\left[m\right]$ and $\sum t_{\ell}=1$.
However, we recursively define $f_{C}$ on any linear combination
of $\left(i_{0},j_{0}\right),\ldots,\left(i_{m},j_{m}\right)$ with
image some linear combination of $\left\{ c\in C\right\} $. The definition
is the following:
\[
f_{C}\left(\sum_{s=0}^{m}t_{s}\cdot\left(i_{s},j_{s}\right)\right)\stackrel{\mathrm{def}}{=}\sum_{s=0}^{m}\left(\frac{t_{s}}{2}\cdot i_{s}+\frac{t_{s}}{2}\cdot j_{s}\right).
\]
We recursively define the converse map, $\phi_{C}\colon\overline{\Delta_{C}}\to\left|P_{C}\right|$,
again on any linear combination. For $c\in C$:
\[
\phi_{C}\left(t\cdot c\right)=t\cdot\left(c,c\right).
\]
If $i_{0}\le_{C}i_{1}$ then 
\[
\phi_{C}\left(t_{0}\cdot i_{0}+t_{1}\cdot i_{1}\right)=\begin{cases}
\left(t_{0}-t_{1}\right)\cdot\left(i_{0},i_{0}\right)+2t_{1}\cdot\left(i_{0},i_{1}\right) & t_{0}>t_{1}\\
2t_{0}\cdot\left(i_{0},i_{1}\right) & t_{0}=t_{1}\\
2t_{0}\cdot\left(i_{0},i_{1}\right)+\left(t_{1}-t_{0}\right)\cdot\left(i_{1},i_{1}\right) & t_{0}<t_{1}
\end{cases}.
\]
Finally, if $i_{0}\le_{C}\le i_{1}\le_{C}\ldots\le_{C}i_{m}$, then
\begin{multline*}
\phi_{C}\left(t_{0}\cdot i_{0}+\ldots+t_{m}\cdot i_{m}\right)=\\
=\begin{cases}
\phi_{C}\left(\left(t_{0}-t_{m}\right)\cdot i_{0}+t_{1}\cdot i_{1}+\ldots+t_{m-1}\cdot i_{m-1}\right)+2t_{m}\cdot\left(i_{0},i_{m}\right) & t_{0}>t_{m}\\
2t_{0}\cdot\left(i_{0},i_{m}\right)+\phi_{C}\left(t_{1}\cdot i_{1}+\ldots+t_{m-1}\cdot i_{m-1}\right) & t_{0}=t_{m}\\
2t_{0}\cdot\left(i_{0},i_{m}\right)+\phi_{C}\left(t_{1}\cdot i_{1}+\ldots+t_{m-1}\cdot i_{m-1}+\left(t_{m}-t_{0}\right)\cdot i_{m}\right) & t_{0}<t_{m}
\end{cases}.
\end{multline*}
It is easy to verify that $f_{C}$ and $\phi_{C}$ are inverse to
each other, that they are continuous and that, indeed, $f_{C}\Big|_{C'}=f_{C'}$
for every subset $C'\subseteq C$. In Figure \ref{fig:pair-poset}
we illustrate the resulting subdivision of $\overline{\Delta_{C}}$
when $C=\left\{ 0\prec1\prec2\right\} $.
\end{proof}
\begin{figure}
\centering{}\includegraphics[clip,scale=0.5]{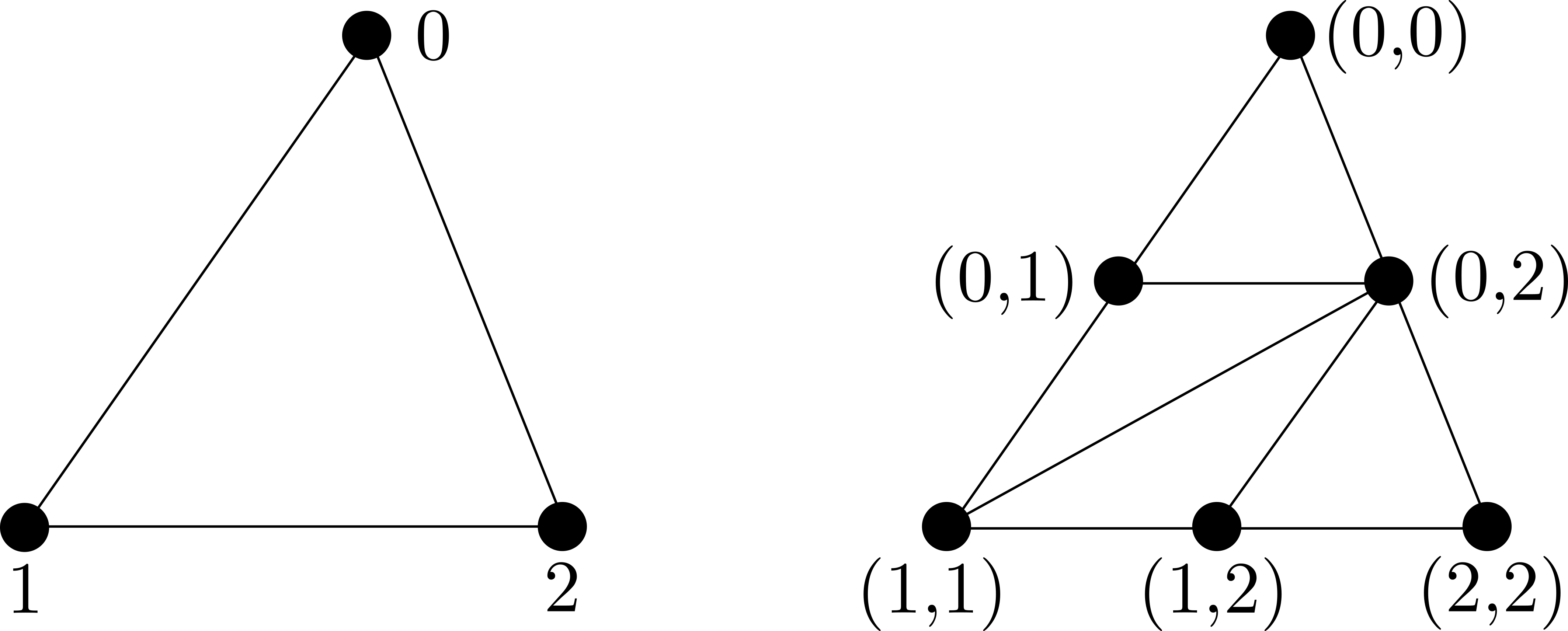}\caption{The subdivision (on the right) of the $2$-simplex $\overline{\Delta_{C}}$
(on the left) given by the order complex of the poset of pairs $P_{C}$,
where $C=\left\{ 0\prec1\prec2\right\} $ is the chain with three
elements.\label{fig:pair-poset} }
\end{figure}

\begin{proof}[Proof of Proposition \ref{prop:|L| cong |X|}]
 Let $c=\left\{ \left[h_{0}\right]\prec_{\L}\ldots\prec_{\L}\left[h_{m}\right]\right\} $
be a chain in $\L$. As above, find $h'_{0},\ldots,h'_{m}$ so that
$h'_{j}\in\left[h_{j}\right]$ and so that for every $0\le j\le m-1$,
the $x$-arcs/curves of $h'_{j}$ are located inside the $\left(x,0\right)$-zones
of $h'_{j+1}$ and together they yield a legal (loose) transverse
map with four transversion points for every letter. Let $g^{\mathrm{loose}}\left(c\right)$
be the loose transverse map with $2\left(m+1\right)$ transversion
points for all $x\in B$, obtained as the union of the collections
of arcs and curves of $h'_{0},\ldots,h'_{m}$. Let \marginpar{$g^{\mathrm{strict}}$}$g^{\mathrm{strict}}\left(c\right)$
be the strict transverse maps obtained from $g^{\mathrm{loose}}\left(c\right)$
by forgetting every transverse point $\left(x,j\right)$ such that
the collection of $\left(x,j\right)$-zones of $g^{\mathrm{loose}}\left(c\right)$
violates \textbf{Restriction 3}, namely, so that the collection of
$\left(x,j\right)$-arcs/curves is isotopic to the $\left(x,j+1\right)$-collection.
Note that $\left[g^{\mathrm{strict}}\left(c\right)\right]$ is a well-defined
element of $\T$, and we denote by $\left\{ \kappa_{x}\left(c\right)\right\} _{x\in B}$
its parameters. For a singleton $\left[h\right]\in\L$, we denote
also $g^{\mathrm{strict}}\left(h\right)$ the strict transverse map
corresponding to the single-element chain $\left\{ \left[h\right]\right\} $.

The sought-after homeomorphism $\alpha\colon\left|\L\right|\to\tps$
is defined per simplex, where the simplex corresponding to the chain
$c\subseteq\L$ is mapped into the polysimplex of $\tps$ corresponding
to $\left[g^{\mathrm{strict}}\left(c\right)\right]$. The exact definition
goes through the single-letter case, using Fact \ref{fact:product of posets}.
More concretely, for $\left[g\right]\in\T$, let $\L_{\le\left[g\right]}\stackrel{\mathrm{def}}{=}\left\{ \left[h\right]\in\L\,\middle|\,\left[g^{\mathrm{strict}}\left(h\right)\right]\preceq_{\T}\left[g\right]\right\} $.
While $\L$ is certainly \emph{not} a product of its projections on
the different letters $x\in B$, it is such a product locally inside
$\L_{\le\left[g\right]}$: for $x\in B$, let $\gamma^{x}\left(g\right)$
denote the collection of $x$-arcs and curves of $g$ (namely, the
union over $j\in\left[\kappa_{x}\right]$ of $\left(x,j\right)$-arcs/curves).
Let $P^{x}\left(g\right)$ denote the poset consisting of $\left\{ \gamma_{i,j}^{x}\left(g\right)\right\} _{0\le i\le j\le\kappa_{x}\left(g\right)}$,
with $\gamma_{i_{1},j_{1}}^{x}\left(g\right)\le_{P^{x}\left(g\right)}\gamma_{i_{2},j_{2}}^{x}\left(g\right)$
if and only if $i_{2}\le i_{1}\le j_{1}\le j_{2}$. Here, $\gamma_{i,j}^{x}\left(g\right)$
can be thought of as the union of $\left(x,i\right)$- and $\left(x,j\right)$-arcs/curves
inside $\gamma^{x}\left(g\right)$. It is easy to see that $\L_{\le\left[g\right]}$
is isomorphic as a poset to a direct product of posets given by 
\[
\L_{\le\left[g\right]}\cong\prod_{x\in B}P^{x}\left(g\right),
\]
where $\left[h\right]\in\L_{\le\left[g\right]}$ corresponds to $\prod_{x\in B}\gamma^{x}\left(g^{\mathrm{strict}}\left(h\right)\right)$.
Hence, 
\[
\left|\L_{\le\left[g\right]}\right|=\left|\prod_{x\in B}P^{x}\left(g\right)\right|\stackrel{\mathrm{Fact}~\ref{fact:product of posets}}{\cong}\prod_{x\in B}\left|P^{x}\left(g\right)\right|\stackrel{\mathrm{Lemma}~\ref{lem:subdivision of a simplex by the poset of pairs}}{\cong}\prod_{x\in B}\overline{\Delta_{\kappa_{x}\left(g\right)}}~=~\overline{\mathrm{polysim}}\left(\left[g\right]\right),
\]
and this homeomorphism defines $\alpha_{\left|\L_{\le\left[g\right]}\right|}$.
This definition expands to a well defined homeomorphism $\alpha\colon\left|\L\right|\to\tps$
because for $\left[g'\right]\preceq_{\T}\left[g\right]$, the restriction
of $\alpha_{_{\left|\L_{\le\left[g\right]}\right|}}$ to $\left|\L_{\le\left[g'\right]}\right|$
is exactly $\alpha_{\left|\L_{\le\left[g'\right]}\right|}$. This
shows that the image of the open simplices in $\left|\L\right|$ corresponding
to the chains $\left\{ c\,\middle|\,\left[g^{\mathrm{strict}}\left(c\right)\right]=\left[g\right]\right\} $
subdivides the open polysimplex $\mathrm{polysim}^{\circ}\left(\left[g\right]\right)$,
and the image of $\left|\L_{\le\left[g\right]}\right|$ subdivides
the closed simplex $\overline{\mathrm{polysim}}\left(\left[g\right]\right)$. 
\end{proof}

\subsection{Contractibility of the transverse map complex\label{subsec:Contractibility-of-|T|}}

To prove the contractibility of $\tps$, we use \emph{null-arcs}:
\begin{defn}
\begin{itemize}
\item A \emph{null-arc}\marginpar{null-arc} for $\left(\Sigma,f\right)$
is an arc $\omega$ in $\Sigma$ with endpoints in $V_{o}\subset\partial\Sigma$
and interior disjoint from $\partial\Sigma$, so that if $\omega$
is closed, it is not nullhomotopic, and such that $f_{*}\left(\omega\right)=1$.
The latter condition means, in other words, that the image of $\omega$
under $f$ is nullhomotopic in $\wedger$ relative the endpoints.
\item A \emph{system of null-arcs} for $\left(\Sigma,f\right)$ is a collection
of null-arcs that are disjoint away from their endpoints and such
that no two are isotopic relative to $V_{o}$.
\item If $\Omega$ is a system of null-arcs for $\left(\Sigma,f\right)$,
then $\T_{\Omega}=\T_{\Omega}\left(\Sigma,f\right)$ and $\L_{\Omega}=\L_{\Omega}\left(\Sigma,f\right)$\marginpar{$\protect\T_{\Omega},\protect\L_{\Omega}$}
are the subposets of $\T$ and $\L$, respectively, of isotopy classes
of transverse maps which map $\bigcup_{\omega\in\Omega}\omega$ to
$o\in\wedger$.
\end{itemize}
\end{defn}

Put differently, $\T_{\Omega}$ and $\L_{\Omega}$ consist of isotopy
classes of transverse maps with arcs/curves collections that can be
drawn away from $\Omega$, meaning that every $\omega\in\Omega$ is
entirely contained in some $o$-zone of the transverse map. 

Note that $\T_{\Omega}$ and $\L_{\Omega}$ are downward closed: if
$g'\preceq_{\T}g\in\T_{\Omega}$ then $g'\in\T_{\Omega}$ and likewise
for $\L_{\Omega}$. Hence $\left|\T_{\Omega}\right|_{\ps}$ and $\left|\L_{\Omega}\right|$
are subcomplexes of $\tps$ and $\left|\L\right|$, respectively.
Moreover:
\begin{claim}
\label{claim:X_Omega =00003D L_Omega}For any system of null-arcs
for $\left(\Sigma,f\right)$, the homeomorphism $\alpha\colon\left|\L\right|\to\tps$
from Proposition \ref{prop:|L| cong |X|} satisfies $\alpha\left(\left|\L_{\Omega}\right|\right)=\left|\T_{\Omega}\right|_{\ps}$.
\end{claim}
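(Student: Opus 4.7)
The plan is to reduce the set-theoretic equality $\alpha\bigl(|\L_{\Omega}|\bigr)=|\T_{\Omega}|_{\ps}$ to the combinatorial equivalence
\[
c\subseteq\L_{\Omega}\ \Longleftrightarrow\ \bigl[g^{\mathrm{strict}}(c)\bigr]\in\T_{\Omega}
\]
for every chain $c=\bigl\{[h_{0}]\prec_{\L}\cdots\prec_{\L}[h_{m}]\bigr\}$ in $\L$. Both $\L_{\Omega}$ and $\T_{\Omega}$ are downward closed in their ambient posets (forgetting transversion points only shrinks the arc-and-curve collection, preserving disjointness from $\Omega$), so the corresponding subcomplexes are unions of closed cells indexed by their elements. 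Combined with the description in the proof of Proposition \ref{prop:|L| cong |X|}, where $\alpha$ carries the open simplex of $c$ into the interior $\mathrm{polysim}^{\circ}\bigl([g^{\mathrm{strict}}(c)]\bigr)$, the equivalence displayed above is exactly what is needed.

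For $(\Rightarrow)$, I fix a representative $h'_{m}$ of the top element $[h_{m}]\in\L_{\Omega}$ whose arcs and curves are disjoint from $\Omega$. Since each null-arc $\omega\in\Omega$ has endpoints in $V_{o}$ and never meets $h'_{m}$, the whole of $\omega$ lies in a single $o$-zone of $h'_{m}$; consequently the $(x,0)$-zones of $h'_{m}$ (for every $x\in B$) are disjoint from $\Omega$. Using the relations $[h_{j-1}]\prec_{\L}[h_{j}]$ inductively, I transport any witnessing nested pair of representatives onto the already fixed $h'_{j}$ by an ambient isotopy of $\Sigma$ rel $V_{o}\cup\partial\Sigma$, producing nested representatives $h'_{m-1},\ldots,h'_{0}$ with each $h'_{j-1}$ drawn inside the $(x,0)$-zones of $h'_{j}$. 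By induction these are all contained in the $(x,0)$-zones of $h'_{m}$ and hence disjoint from $\Omega$, so the union $g^{\mathrm{loose}}(c)=h'_{0}\cup\cdots\cup h'_{m}$, together with the $g^{\mathrm{strict}}(c)$ obtained from it by forgetting redundant colors, is disjoint from $\Omega$. For $(\Leftarrow)$, I fix a representative of $[g^{\mathrm{strict}}(c)]$ disjoint from $\Omega$; for each color $(x,j)$ that was forgotten in passing from $g^{\mathrm{loose}}(c)$ to $g^{\mathrm{strict}}(c)$ the corresponding $(x,j)$-collection is isotopic to an adjacent retained one, so I can insert a parallel copy of the retained collection in a small tubular neighborhood (still disjoint from $\Omega$) to recover a representative of $g^{\mathrm{loose}}(c)$ disjoint from $\Omega$. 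Each $h_{j}$ is then realized by the labeled sub-collection of this representative, which is itself disjoint from $\Omega$, giving $[h_{j}]\in\L_{\Omega}$ for every $j$ and hence $c\subseteq\L_{\Omega}$.

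The main obstacle is making the inductive isotopy transport in $(\Rightarrow)$ rigorous: the nesting witnessing $[h_{j-1}]\prec_{\L}[h_{j}]$ is only given existentially, and one must upgrade it to produce a representative of $[h_{j-1}]$ literally inside the already-fixed $h'_{j}$, without disturbing the previously constructed $h'_{j+1},\ldots,h'_{m}$ or violating Restrictions~1--2 for $g^{\mathrm{loose}}(c)$. This is a standard but delicate application of isotopy extension on surfaces rel boundary and marked points, arranged so that the isotopy is supported away from the outer layers already fixed in the induction.
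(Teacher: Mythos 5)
Your reduction to the chain-level criterion $c\subseteq\L_{\Omega}\Leftrightarrow\bigl[g^{\mathrm{strict}}(c)\bigr]\in\T_{\Omega}$ is the right starting point and matches the paper's setup, but the way you then prove it is substantially heavier than the paper's and leaves a real loose end that you yourself flag. The inductive isotopy transport — moving a witnessing nested pair $(h''_{j-1},h''_j)$ onto the already-fixed $h'_j$ — needs the isotopy not to wreck the previously assembled outer layers, and to keep it supported inside the $(x,0)$-zones of $h'_{j+1}$ one would want $h''_j$ to already sit there, which is not given (trying to isotope it there first is circular). There is a workaround: build the nested system $h'_0,\dots,h'_m$ once, exactly as in the proof of Proposition~\ref{prop:|L| cong |X|}, then apply a \emph{single} ambient isotopy rel $V_o$ to the whole assembly to carry $h'_m$ off $\Omega$; the inner layers ride along and remain inside the $(x,0)$-zones of the moved $h'_m$, hence stay off $\Omega$. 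But you do not supply that, and the $(\Leftarrow)$ ``insert parallel copies'' reconstruction of $g^{\mathrm{loose}}(c)$ from $g^{\mathrm{strict}}(c)$ is similarly only sketched.

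The paper's proof avoids all of this via an observation your proposal does not use: membership in $\T_\Omega$ or $\L_\Omega$ depends \emph{only on the $o$-zones} of the transverse map, and the $o$-zones of $g^{\mathrm{strict}}(c)$ are identical to those of the top element $[h_m]$ of $c$ — only the outermost arcs/curves bound $o$-zones, and passing from $g^{\mathrm{loose}}(c)$ to $g^{\mathrm{strict}}(c)$ only forgets interior transversion points, which never touch $o$-zones. Combined with the downward closure of $\L_\Omega$ (already noted in the paper just before the claim), the criterion becomes $c\subseteq\L_\Omega\Leftrightarrow[h_m]\in\L_\Omega\Leftrightarrow\bigl[g^{\mathrm{strict}}(c)\bigr]\in\T_\Omega$, with no isotopy bookkeeping whatsoever. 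That $o$-zone observation is the shortcut you should look for.
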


\begin{proof}
The homeomorphism $\alpha$ maps the simplex corresponding to the
chain $c$ in $\L$ into the polysimplex corresponding to $\left[g^{\mathrm{strict}}\left(c\right)\right]$.
But belonging to $\T_{\Omega}$ or to $\L_{\Omega}$ depends only
on the $o$-zones of the transverse map, and the $o$-zones of the
top element of $c$ are identical to those of $g^{\mathrm{strict}}\left(c\right)$.
Hence $c$ is contained in $\L_{\Omega}$ if and only if its top element
is in $\L_{\Omega}$, if and only if $\left[g^{\mathrm{strict}}\left(c\right)\right]\in\T_{\Omega}$.
\end{proof}
The following proposition is the main component of the proof of Theorem
\ref{thm:X contractible} concerning the contractibility of $\tps$.
\begin{prop}
\label{prop:deformation retract for null-arc system}Let $\Omega$
be a system of null-arcs for $\left(\Sigma,f\right)$, then there
is a deformation retract of $\tps$ to $\left|\T_{\Omega}\right|_{\ps}$.
In particular, $\T_{\Omega}$ is non-empty.
\end{prop}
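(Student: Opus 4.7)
My plan is to use the homeomorphism $\alpha\colon|\L|\cong\tps$ from Proposition \ref{prop:|L| cong |X|} together with Claim \ref{claim:X_Omega =00003D L_Omega} to reduce the problem to constructing a deformation retract from $|\L|$ onto $|\L_\Omega|$. The central input will be a surgery lemma: for every class $[h]\in\L\setminus\L_\Omega$ there exists $[h']\in\L$ with strictly smaller geometric intersection number $c_\Omega([h'])<c_\Omega([h])$ against $\bigcup_{\omega\in\Omega}\omega$, and such that $[h]$ and $[h']$ lie in a common closed cell of $|\L|$.

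The surgery exploits the null-arc condition. Pick $\omega\in\Omega$ that meets the arc/curve collection of $h$ and read off the labels of the crossings in cyclic order along $\omega$; each crossing contributes a signed letter $x^{\pm 1}$ with $x\in B$, and the resulting word equals $h_*(\omega)=f_*(\omega)=1\in\F_r$ because $\omega$ is a null-arc. This word is not freely reduced, so there is an innermost cancelling pair: two adjacent crossings with labels $x^{+1}$ and $x^{-1}$ with no intermediate $x^{\pm 1}$-crossing. An H-move (Definition \ref{def:H-move}) along the sub-segment of $\omega$ between them, preceded if necessary by an isotopy to bring the two arcs/curves to a common $(x,j)$-label and followed by the clean-up procedure of Lemma \ref{lem:X not empty} to restore \textbf{Restrictions 1--3}, removes both crossings with $\omega$ without creating new ones, producing $[h']\in\L$ with $c_\Omega([h'])\le c_\Omega([h])-2$. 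Iterating this step in particular establishes $\L_\Omega\neq\varnothing$, which is the second assertion of the proposition.

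To promote the surgery to a deformation retract, I fix a total order on $\Omega$ and a canonical innermost-cancellation rule, producing a map $\rho\colon\L\setminus\L_\Omega\to\L$ with $c_\Omega\circ\rho<c_\Omega$. The key geometric observation is that the arc/curve systems of $h$ and $\rho(h)$ can be simultaneously realized inside a single strict transverse map $[g]\in\T$, obtained by retaining the old and new arcs/curves near the H-move site with distinct $(x,j)$-labels; consequently $[h]$ and $[\rho(h)]$ both lie in $\overline{\mathrm{polysim}}([g])\subset\tps$, and under the subdivision $|\L|$ they share a common closed simplex along which one straight-line homotopes $[h]$ to $[\rho(h)]$. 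Since $c_\Omega$ is bounded on each polysimplex of $\tps$ (which is finite-dimensional by Lemma \ref{lem:X is finite dimensional}), finitely many iterations suffice; concatenating the homotopies over complexity levels, and taking the identity on $|\L_\Omega|$ (which is downward closed in $\L$), yields the required deformation retract. The main obstacle is cell-compatibility of the canonical surgery: when $[h_1]\preceq_\L[h_2]$, the canonical H-moves must agree on shared data. Choosing innermost cancellations with respect to a fixed order makes the rule depend only on isotopy data visible on every face, reconciling the local homotopies into a globally continuous deformation retract of $|\L|$ onto $|\L_\Omega|$, and hence via $\alpha$ of $\tps$ onto $|\T_\Omega|_\ps$.
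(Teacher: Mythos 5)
Your high-level plan — passing to $\L$ via Proposition \ref{prop:|L| cong |X|} and Claim \ref{claim:X_Omega =00003D L_Omega}, decreasing a complexity by H-moves along null-arcs, and homotoping within common cells — matches the paper's strategy, but the specific surgery and complexity measure have genuine gaps. First, your crossing word is not naturally a word in $\F_r$: elements of $\L$ have $\kappa_x=1$, so $\omega$ crosses both $(x,0)$- and $(x,1)$-arcs/curves, and the condition $h_*(\omega)=1$ translates into reducibility of a word in the alphabet $\{P_x,Q_x\}_{x\in B}$ under $P_x^2=Q_x^2=1$, not free reduction in $\F_r$. More seriously, an ``innermost cancelling pair with no intermediate $x^{\pm1}$-crossing'' can still have intermediate crossings with $(y,\cdot)$-arcs/curves for $y\neq x$, so the sub-segment of $\omega$ between the two crossings need not be disjoint from the arc/curve system — and then Definition \ref{def:H-move} does not apply, since the interior of $\gamma$ must be disjoint from the collection. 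The paper's depth-$n$ leaf is designed precisely to fix this: it is the sub-segment where the reduced prefix of $u_\omega(h)$ attains its maximal length, which forces it to lie entirely inside a single zone. Your aside about ``an isotopy to bring the two arcs/curves to a common $(x,j)$-label'' is also not meaningful, since the labels are fixed combinatorial data of the transverse map.

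The deeper gap is the direction of the H-move in $\preceq_\L$. If the segment between the cancelling pair lies in an $o$-zone, the H-move pushes $[h]$ up in $\preceq_\L$; if in a $z$-zone, it pushes down. Because raw intersection number is insensitive to depth parity, one application of your $\rho$ can mix these directions (across different $\omega$, or different loci along one $\omega$), so $[h]$ and $[\rho(h)]$ need not be comparable, need not share a closed cell, and the Quillen-type retraction lemma (Lemma \ref{lem:defomration retract from poset map}), which requires the retraction to satisfy $\varphi(q)\preceq q$ for all $q$ or $\varphi(q)\succeq q$ for all $q$, does not apply. The paper's depth filtration $\L_\Omega=\P_0\subseteq\P_1\subseteq\cdots$ exists exactly to enforce this uniformity: all depth-$n$ leaves lie in zones of a single type, determined by the parity of $n$, so each $r_n$ is monotone in one fixed direction. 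Finally, order-preservation of $\rho$ — the technical heart of the argument, which for the paper's $r_n$ requires a careful nesting analysis of depth-$n$ leaves of comparable maps (see the proof surrounding Figure \ref{fig:r_n order preserving}) — is merely asserted in your last sentence; ``innermost relative to a fixed order'' gives no argument that the surgeries applied to $[h_1]\preceq_\L[h_2]$ are compatible.
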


\begin{proof}[Proof of Theorem \ref{thm:X contractible} assuming Proposition \ref{prop:deformation retract for null-arc system}]
Since the number of non-isotopic null-arcs that coexist for $\left(\Sigma,f\right)$
is bounded by Euler characteristic considerations, it is obvious there
exist \emph{maximal} systems of null-arcs: systems so that no further
null-arcs can be added to. Let $\Omega$ be a maximal system of null-arcs.
We claim that $\left|\T_{\Omega}\right|_{\ps}$ is a single vertex
of $\tps$. This is enough by Proposition \ref{prop:deformation retract for null-arc system}.

By Proposition \ref{prop:deformation retract for null-arc system},
$\T_{\Omega}$ is non-empty. Since $\T_{\Omega}$ is downward closed,
we can choose $g\in\T_{\Omega}$ with parameters $\kappa_{x}=0$ for
all $x$. Showing that $\left|\T_{\Omega}\right|_{\ps}$ is a single
vertex is equivalent to showing that $g$ is the only point in $\T_{\Omega}$.

To proceed, we claim that every connected component of $\Sigma\setminus\Omega$
has one of the following forms (and see Figure \ref{fig:maximal-system}):
\begin{figure}
\begin{centering}
\includegraphics[viewport=0bp 15bp 282bp 109bp,scale=1.6]{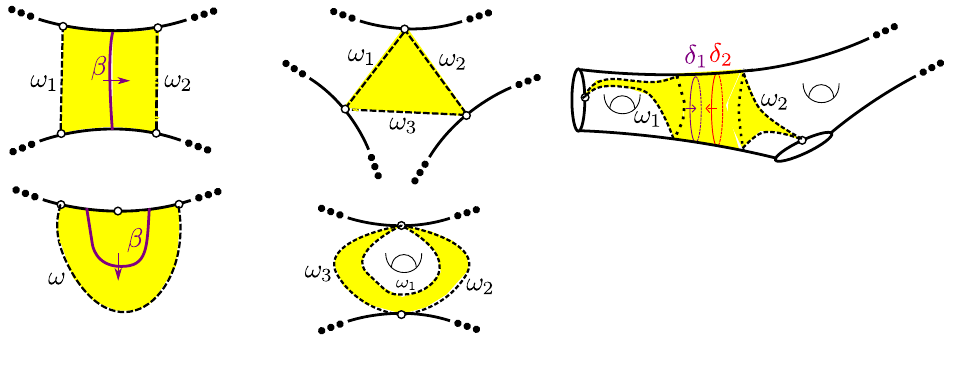}
\par\end{centering}
\caption{\label{fig:maximal-system}This figure shows the different types of
connected components of $\Sigma\setminus\Omega$, where $\Omega$
is a maximal system of null arcs. In the terminology of the proof
of Theorem \ref{thm:X contractible} on Page \pageref{component types for maximal system},
the two drawings on the left are pieces of type $\left(i\right)$,
namely, pieces containing a single arc $\beta$ of the unique transverse
map $g\in\protect\T_{\Omega}$. The two drawings in the middle are
pieces of type $\left(ii\right)$: triangles bounded by three null-arcs.
The drawing on the right is a piece of type $\left(iii\right)$: an
annulus cut out by two closed null-arcs and containing at least one
curve of $g$ (in the drawing: two curves, $\delta_{1}$ and $\delta_{2}$,
corresponding to two different basis elements).}
\end{figure}

\paragraph*{$\left(i\right)$ A rectangle around some arc $\beta$ of $g$\label{component types for maximal system}}

This usually means a rectangle cut out by two null-arcs which are
parallel to $\beta$ with endpoints at the points of $V_{o}$ neighboring
the endpoints of $\beta$. But we also refer here to a bigon cut out
by a single null-arc if $\beta$ connects two adjacent components
of $\partial_{i}\Sigma\setminus V_{o}$, which is possible when the
word $w_{i}$ is not cyclically reduced. 

\paragraph*{$\left(ii\right)$ A triangle bounded by three null-arcs}

\paragraph*{$\left(iii\right)$ An annulus cut out by two closed null-arcs }

\noindent In this case the annulus must contain at least one curve
of $g$ (non-nullhomotopic, evidently).

\medskip{}

Indeed, it is clear that for any arc $\beta$ of $g$, the arc that
is parallel to $\beta$ on either side with endpoints at the points
of $V_{o}$ neighboring the endpoints of $\beta$ is a null-arc and
therefore in $\Omega$ by maximality. So every connected component
of $\Sigma\setminus\Omega$ that contains an arc of $g$, contains
a single arc of $g$ and is of type $\left(i\right)$. This also shows
that components of type $\left(i\right)$ touch all of $\partial\Sigma$.
Any other component $\Sigma'$ of $\Sigma\setminus\Omega$ does not
contain any arc from $g$ and does not touch $\partial\Sigma\setminus V_{o}$.
If $\Sigma'$ contains no curves of $g$ neither, it can be triangulated
by null-arcs and therefore has to be a triangle as in $\left(iii\right)$
by the maximality of $\Omega$. 

Finally, assume that $\Sigma'$ contains a curve $\delta$ of $g$.
First, any component of $\partial\Sigma'$ is a chain of null-arcs,
and by maximality has to consist of a single closed null-arc. Recall
that $\Sigma'$ contains no arcs of $g$, and that any non-nullhomotopic
simple closed curve $c\subset\Sigma'$ disjoint from the curves of
$g$ is a null-curve (see Definition \ref{def:null-curve-incompressible}).
If $\Sigma'$ is not as described in item $\left(iii\right)$, then
one can add a null-arc to $\Omega$ inside $\Sigma'$ in one of the
following ways: if $\Sigma'$ has at least two boundary components,
draw a curve which leaves the marked point at one boundary component
$\omega_{1}$, takes some path to a different boundary component $\omega_{2}$,
goes around the $\omega_{2}$ and returns to $\omega_{1}$ along the
same way; If $\Sigma'$ has only one boundary component $\omega_{1}$,
there must be a pair of pants contained in $\Sigma'$ which is free
from curves of $g$, and one can draw a new null-curve by going from
the marked point of $\omega_{1}$, entering the pair of pants through
one sleeve, circling another sleeve and going back. This is a contradiction
to maximality. Hence $\Sigma'$ is necessarily of type $\left(iii\right)$.

We can now finish the argument showing that $\left[g\right]$ is the
only element in $\T_{\Omega}$. Let $g'$ be a transverse map for
$\left(\Sigma,f\right)$ with $\left[g'\right]\in\T_{\Omega}$. Obviously,
there are no arcs/curves of $g'$ in components of $\Sigma\setminus\Omega$
of type $\left(ii\right)$. Any $z$-zone of $g'$ is contained in
some component $\Sigma'$ of type $\left(i\right)$ or $\left(iii\right)$.
But the structure of these components guarantees that any such $z$-zone
is either a rectangle or an annulus. Thus $\kappa_{x}\left(g\right)=0$
for all $x\in B$, for otherwise $g'$ violates \textbf{Restriction
3}. We can now see that $\left[g'\right]=\left[g\right]$: its clear
that their arcs are isotopic by the structure of type-$\left(i\right)$
components. Their curves are also isotopic because for every $\Sigma'$
of type $\left(iii\right)$, consider an arc $\alpha$ connecting
the two distinct marked points from $V_{o}$ touching $\Sigma'$.
The image of $\alpha$ under $f$ completely prescribes the curves
of $g'$ inside $\Sigma'$ (here we use also \textbf{Restriction 1}).
\renewcommand{\labelenumi}{\arabic{enumi}.} 
\end{proof}

\subsubsection{Proof of Proposition \ref{prop:deformation retract for null-arc system}}

Now we come to prove Proposition \ref{prop:deformation retract for null-arc system}
and show that $\tps$ deformation retracts to $\left|\T_{\Omega}\right|_{\ps}$.
Using Proposition \ref{prop:|L| cong |X|} and Claim \ref{claim:X_Omega =00003D L_Omega},
we actually prove the equivalent statement that $\left|\L\right|$
deformation retracts to $\left|\L_{\Omega}\right|$. The general strategy
to prove Proposition \ref{prop:deformation retract for null-arc system}
is to perform local surgeries to gradually simplify transverse maps
by removing intersections of their arcs and curves with the null-arcs
in $\Omega$. The complexity of a given transverse map in $\L$ is
measured in terms of ``depth of words along null-arcs'':

\subsubsection*{Depth of words along null-arcs}

Fix an arbitrary orientation along every null-arc in $\Omega$. For
every element $\left[h'\right]\in\L$, pick a loose transverse map
$h\in\left[h'\right]$ so that the arcs and curves are in minimal
position with respect to $\Omega$, meaning there are no bigons cut
out by $\Omega$ and the arcs/curves of $h$. Every null-arc $\omega\in\Omega$
may cross arcs and curves of $h$, and we record these crossings as
a word $u_{\omega}\left(h\right)$\marginpar{$u_{\omega}\left(h\right)$},
writing 
\begin{align*}
P_{x} & \text{ if the arc/curve has color \ensuremath{(x,0)}},\\
Q_{x} & \text{ if the arc/curve has color \ensuremath{(x,1)}.}
\end{align*}
Put differently, we consider the path $h\left(\omega\right)$ in $\wedger$,
and write $P_{x}$ whenever it crosses $\left(x,0\right)$ and $Q_{x}$
whenever it crosses $\left(x,1\right)$. Note that $h\left(\omega\right)$
begins and ends at $o$, and as $\omega$ is a null-arc and $h$ homotopic
to $f$, we get that $h\left(w\right)$ is nullhomotopic relative
to its endpoints. This means that the word $u_{\omega}\left(h\right)$
can be reduced to the empty word by repeatedly deleting consecutive
pairs of the form $P_{x}P_{x}$ or $Q_{x}Q_{x}$. 

For a general word in the alphabet $\left\{ P_{x},Q_{x}\right\} _{x\in B}$,
we define its length as the length of its reduced form (it is standard
the the reduced form does not depend on the choice of series of reduction
steps). We define \emph{the depth} of a word as the maximal length
of a prefix. For example, in the word below, which reduces to the
empty word, the superscripts denote the length of each prefix:

\[
^{\,0\,}P_{x}{}^{\,1\,}Q_{x}{}^{\,2\,}P_{y}{}^{\,3\,}P_{y}{}^{\,2\,}Q_{z}{}^{\,3\,}Q_{z}{}^{\,2\,}Q_{t}{}^{\,3\,}P_{t}{}^{\,4\,}Q_{t}{}^{\,5\,}Q_{t}{}^{\,4\,}P_{t}{}^{\,3\,}Q_{t}{}^{\,2\,}Q_{x}{}^{\,1\,}P_{x}{}^{\,0\,}.
\]
Hence the depth of this word is $5$. We denote the depth of the word
$u_{\omega}\left(h\right)$ by $\mathrm{depth}\left(u_{\omega}\left(h\right)\right)$\marginpar{$\mathrm{depth}\left(u_{\omega}\left(h\right)\right)$}.

Notice that $\mathrm{depth}\left(u_{\omega}\left(h\right)\right)=0$
if and only if $\omega$ does not intersect any arcs or curves of
$h$, namely, if and only if $\omega$ is contained inside some $o$-zone
of $h$. Thus $\left[h\right]\in\L_{\Omega}$ if and only if $\mathrm{depth}\left(u_{\omega}\left(h\right)\right)=0$
for all $\omega\in\Omega$.

We use the depth to filter $\L$: for $n\in\mathbb{Z}_{\ge0}$ we
let
\[
{\cal P}_{n}\stackrel{\mathrm{def}}{=}\left\{ \left[h\right]\in\L\,\middle|\,\mathrm{depth}\left(u_{\omega}\left(h\right)\right)\le n~\mathrm{for~all}~\omega\in\Omega\right\} .
\]
Then 
\[
\L_{\Omega}={\cal P}_{0}\subseteq\P_{1}\subseteq\ldots\subseteq\P_{n}\subseteq\ldots\subseteq\L
\]
is a countable filtration of $\L$ and 
\[
\bigcup_{n=0}^{\infty}\P_{n}=\L.
\]

\subsubsection*{A deformation retract $\left|\protect\P_{n}\right|\to\left|\protect\P_{n-1}\right|$}

Let $h$ with $\left[h\right]\in\L$ and $\omega\in\Omega$ satisfy
that $\mathrm{depth}\left(u_{\omega}\left(h\right)\right)=n$, and
consider the prefixes of length $n$ in $u_{\omega}\left(h\right)$.
If the last letter of such a prefix is, say, $P_{x}$, then so is
the following letter. Each of these two letters correspond to a point
where $\omega$ crosses an $\left(x,0\right)$-arc/curve of $h$.
We call the segment of $\omega$ cut out by these two crossing points
a \emph{depth-$n$ leaf}\marginpar{depth-$n$ leaf} of $h$ in $\Omega$.
The deformation retract we shall construct ``prunes'' all depth-$n$
leaves of the elements of $\P_{n}$. 

\paragraph*{Parity assumption}

A crucial observation here is that for every null arc $\omega$ and
every $h$, if we cut $\omega$ to segments using the crossing points
with the arcs and curves of $h$, then the segments alternate between
belonging to $o$-zones of $h$ and belonging to $z$-zones of $h$,
with the first segment always in an $o$-zone. So if $n$ is even,
every depth-$n$ leaf is contained in some $o$-zone, while if $n$
is odd, every depth-$n$ leaf is contained in some $z$-zone. In what
follows we assume that $n$ is even and so all depth-$n$ leaves are
contained in $o$-zones. The other case is very similar, and we shall
point out steps of the proof where there is an important difference
between the two cases.\\

The deformation retract $\left|\P_{n}\right|\to\left|\P_{n-1}\right|$
is based on a map $r_{n}\colon\P_{n}\to\P_{n-1}$ between the underlying
posets.
\begin{defn}
\label{def:r_n}For $\left[h\right]\in\P_{n}$ assume that $h$ is
in minimal position with respect to $\Omega$. Define $r_{n}\left(\left[h\right]\right)$\marginpar{$r_{n}$}
by the following two steps:\\
(i) ~Perform an $H$-move (see Definition \ref{def:H-move}) along
every depth-$n$ leaf of $h$ in $\Omega$ to obtain $h'$, a transverse
map for $\left(\Sigma,f\right)$.\\
(ii) If $n$ is even (respectively, odd) consider all $o$-zones (respectively,
$z$-zones) in $h'$ which violate \textbf{Restriction 1} and remove
them\footnote{As we explained in the proof of Lemma \ref{lem:X not empty}, in the
current scenario, a zone violating \textbf{Restriction 1} is necessarily
a zone bounded by curves all of which are of the same color. By removing
the zone we mean removing all bounding curves to obtain a new transverse
map, and this procedure does not change the homotopy type of the map
relative to $V_{o}$.} to obtain $h''$, a transverse map for $\left(\Sigma,f\right)$.
Then set $r_{n}\left(\left[h\right]\right)\stackrel{\mathrm{def}}{=}\left[h''\right]$.
\end{defn}

Recall that all null-arcs in $\Omega$ are disjoint away from their
endpoints, so all depth-$n$ leaves of $h$ are disjoint, and so the
different $H$-moves in step $\left(i\right)$ do not interact with
each other and can be performed simultaneously. Also note that $r_{n}\left(\left[h\right]\right)$
does not depend on the representative $h$ of $\left[h\right]$. See
Figure \ref{fig:This-figure-shows-r_n} for an illustration of how
the $r_{n}$ act on transverse maps. 

\begin{figure}

\begin{centering}
\includegraphics{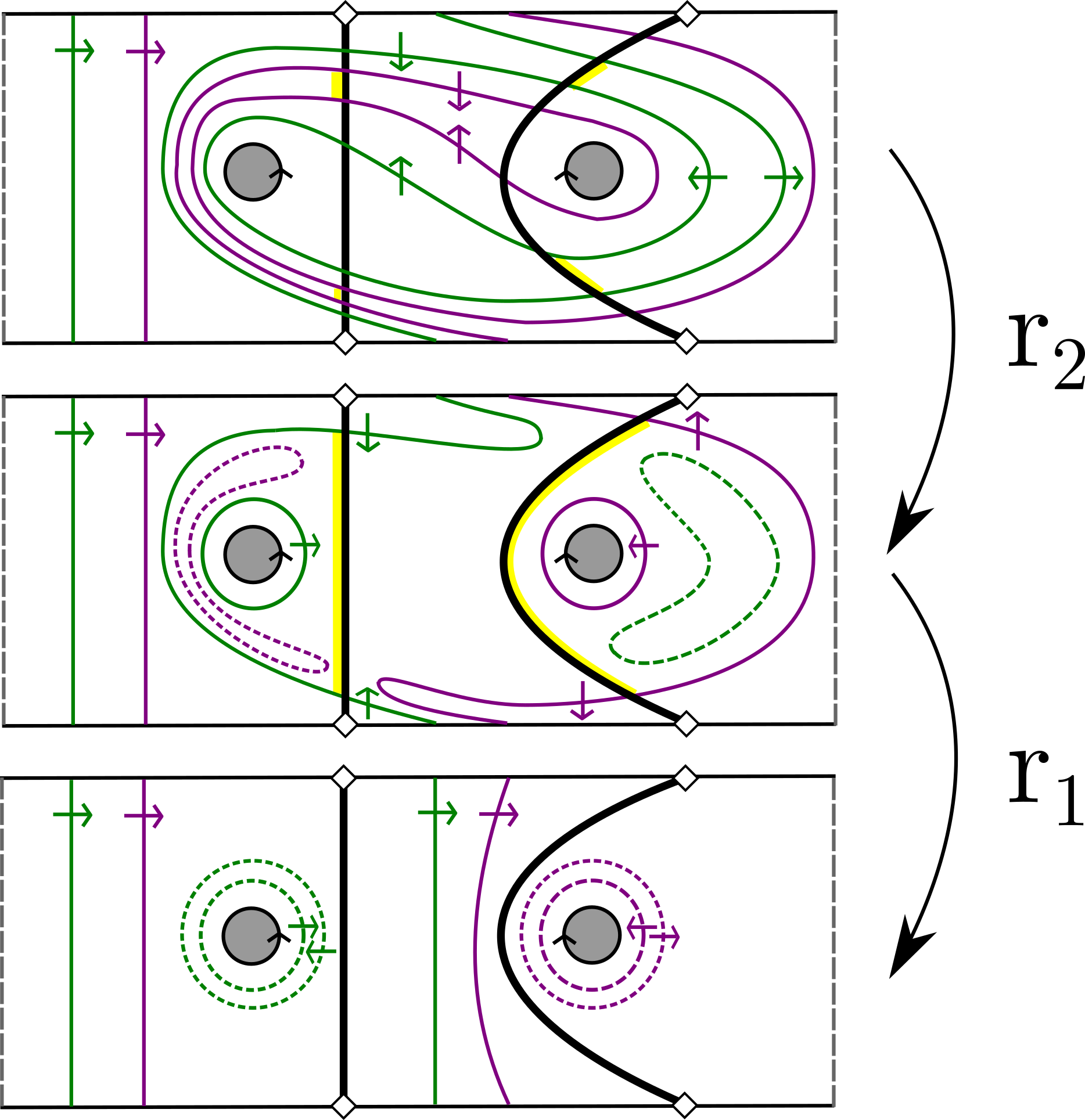}\caption{\label{fig:This-figure-shows-r_n}This figure shows the effects of
$r_{2}$ and $r_{1}$ on a transverse map on a genus $1$ surface
with 2 boundary components. The surface is depicted as a rectangle
with 2 holes (shaded) whose boundaries are identified according to
the labeled orientations, and with the two dashed vertical sides of
the rectangle also identified. Green corresponds to $(x,0)$ and purple
corresponds to $(x,1)$. The two null-arcs in the system are the thick
black arcs. Yellow shading indicates depth-$n$ leaves of the guide
arcs. Dashed curves are those to be removed by Step $\left(ii\right)$
of $r_{i}$ -- see Definition \ref{def:r_n}.}
\par\end{centering}
\end{figure}

We still need to explain why $r_{n}\left(\left[h\right]\right)\in\P_{n-1}$.
We do this through the following series of claims:
\begin{claim}
\label{claim:k_x(h'')=00003D1}$\kappa_{x}\left(h''\right)=1$ for
all $x\in B$ .
\end{claim}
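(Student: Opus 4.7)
The plan is to trace through the two-step definition of $r_n$ and observe that neither step modifies the set of transversion points marked on $\wedger$. Since $[h]\in\L=\L(\Sigma,f)$, Definition \ref{def:poset and complex of loose transverse maps} gives $\kappa_x(h)=1$ for every $x\in B$: there are exactly two marked points $(x,0)$ and $(x,1)$ on the $x$-circle, and $h$ is transverse to both. The parameters $\kappa_x$ are part of the labelling data of a transverse map (cf.~Definition \ref{def:transverse map for Sigma,f}), so the task reduces to verifying that no transversion point is lost or introduced when we pass from $h$ to $h''$.

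First I would handle step (i). Each $H$-move is performed along a depth-$n$ leaf of $h$ in $\Omega$; by the very definition of a depth-$n$ leaf, such a leaf connects two crossings of $\omega\in\Omega$ with arcs or curves of a \emph{common} color $(x,j)$. Consulting Definition \ref{def:H-move}, an $H$-move between two arcs/curves of color $(x,j)$ deletes local segments of those arcs/curves and replaces them with new segments carrying the same color $(x,j)$. Thus the set of colors appearing in the preimage is unchanged, and in particular the collection of transversion points in $\wedger$ is not altered. Since distinct depth-$n$ leaves are pairwise disjoint (as all null-arcs in $\Omega$ are disjoint away from their endpoints), the simultaneous $H$-moves are well-defined and the conclusion gives $\kappa_y(h')=\kappa_y(h)=1$ for every $y\in B$.

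Next I would handle step (ii), which deletes $o$-zones or $z$-zones that violate \textbf{Restriction 1}. As noted in the proof of Lemma \ref{lem:X not empty}, such a zone is necessarily bounded by curves all carrying a single color $(y,j)$, and removing it amounts to erasing those bounding curves from the preimage. This operation only subtracts arcs/curves of an already-present color; it does not introduce any new color and it certainly does not remove any transversion point from $\wedger$. Hence $\kappa_y(h'')=\kappa_y(h')=1$ for every $y\in B$, as required.

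The only point that might look delicate is the worry that, after step (ii) eliminates every arc/curve of some color $(y,j)$, one should relabel the map as having fewer transversion points. The plan is to dispense with this concern by emphasizing that $\kappa$ is specified externally as part of the transverse-map data, not inferred from the preimage; consequently no relabelling is forced and the claim holds.
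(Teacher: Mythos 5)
Your proposal handles step (i) correctly, but it goes wrong exactly at the point you flag in your final paragraph and then dismiss. The worry you raise — that step (ii) might eliminate \emph{every} arc/curve of some color $(x,j)$ — is not a formality to be waved away by declaring $\kappa$ external; it is the actual content of the claim, and your dismissal is what invalidates the proof.

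Here is why. Suppose after step (ii) all $(x,0)$-curves have been removed but some $(x,1)$-curve remains (this can only occur when $L_x=0$, since for $L_x>0$ Restriction~2 guarantees $(x,j)$-arcs near $\partial\Sigma$, and neither step touches $h$ near $\partial\Sigma$). Then keeping the formal label $\kappa_x=1$ does not help: the $(x,0)$-zones of $h''$ would be bounded solely by $(x,1)$-curves, all oriented outward, and would contain no marked points — a violation of \textbf{Restriction 1}. So $h''$ would not be loose and $r_n([h])$ would not lie in $\L$, which is precisely what the surrounding argument needs. Thus the claim has to be read substantively, as the assertion that for every $x\in B$ and $j\in[1]$, the preimage of $(x,j)$ in $h''$ is non-empty. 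The paper proves this by a genuine argument in the only non-obvious case ($L_x=0$, curves only): it fixes an $(x,j)$-curve $\beta$ of $h$ touching an $o$-zone $O_1$, observes that not all of the smaller $o$-zones into which step (i) cuts $O_1$ can be $(x,j)$-redundant (otherwise $O_1$ would already be redundant in the loose map $h$), locates a removed $o$-zone bordering a surviving one across a depth-$n$ leaf, and concludes that a bounding $(x,j)$-arc/curve of the surviving zone persists in $h''$. Your proposal supplies none of this; in particular it does not rule out the scenario in which every $(x,j)$-curve is deleted, so the claim is not established.
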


\begin{proof}
It is clear that step $\left(i\right)$ of Definition \ref{def:r_n}
does not alter $\kappa_{x}$, so $\kappa_{x}\left(h'\right)=1$. It
remains to show that for every $x\in B$ and $j\in\left[1\right]$,
some $\left(x,j\right)$-arc/curve survives step $\left(ii\right)$.
We remark that this is clear if there is some $\left(x,j\right)$-arc
in $h$, because $r_{n}$ does not modify $h$ near $\partial\Sigma$.
It is less clear, however, when there are only $\left(x,j\right)$-curves. 

Let $\beta$ be some $\left(x,j\right)$-arc or $\left(x,j\right)$-curve
of $h$, and consider $O_{1}$, the $o$-zone of $h$ touching $\beta$.
All depth-$n$ leaves of $h$ are contained inside $o$-zones (recall
our ongoing assumption in the proofs that $n$ is even), and the leaves
inside $O_{1}$ cut it in step $\left(i\right)$ to smaller $o$-zones
of $h'$, separated by ``$z$-tunnels'' along the leaves of depth
n. Let ${\cal O}_{\left(x,j\right)}$ denote the collection of $o$-zones
of $h'$ which are contained in $O_{1}$ and which are removed in
step $\left(ii\right)$ because they contain no marked points and
have only $\left(x,j\right)$-arcs/curves along their boundary. If
${\cal O}_{\left(x,j\right)}$ is empty, we are done, as the $\left(x,j\right)$-arcs/curves
which are the traces of $\beta$ survive in $h''$. So assume ${\cal O}_{\left(x,j\right)}$
is non-empty. It cannot include all the $o$-zones of $h'$ contained
in $O_{1}$, because this would mean that $O_{1}$ itself is redundant.
Thus, there must be some $o$-zone $O_{2}\in{\cal O}_{\left(x,j\right)}$
which borders, through a depth-$n$ leaf, some $o$-zone $O_{3}\notin{\cal O}_{\left(x,j\right)}$
of $h'$ which is contained in $O_{1}$. Since the leaf separating
$O_{2}$ and $O_{3}$ has $\left(x,j\right)$-arcs/curves on both
sides (in $h'$), $O_{3}$ has some bounding $\left(x,j\right)$-arc/curve,
which survives step $\left(ii\right)$.
\end{proof}
We have not shown yet that $r_{n}\left(\left[h\right]\right)\in\L$:
it remains to prove that $h''$ is loose, but the following claim
is the analog of saying that $\left[h\right]\preceq_{\L}\left[h''\right]$:
\begin{claim}
\label{claim:h le h''}There is a transverse map $g$ for $\left(\Sigma,f\right)$
with $\kappa_{x}=3$ for all $x$, so that forgetting $\left(x,0\right)$
and $\left(x,3\right)$ for all $x$ yields a map in $\left[h\right]$
and forgetting $\left(x,1\right)$ and $\left(x,2\right)$ for all
$x$ yields a map in\footnote{If $n$ is odd, the parallel claim is the analog of $\left[h''\right]\preceq_{\L}\left[h\right]$.}
$\left[h''\right]$.
\end{claim}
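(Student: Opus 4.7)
The plan is to construct the desired transverse map $g$ explicitly by merging the arc data of $h$ and $h''$ into a single map with four transversion points $(x,0), (x,1), (x,2), (x,3)$ placed cyclically on each $x$-circle of $\wedger$. The outer pair $(x,0), (x,3)$ (nearest $o$) will carry the structure of $h''$, while the inner pair $(x,1), (x,2)$ will carry the structure of $h$. The definition of $g$ will have a global part (away from the depth-$n$ leaves of $h$ in $\Omega$) and a local part (inside a small tubular neighborhood of each such leaf). The forgetting properties will then follow by inspection.

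For the global part, note that outside small tubular neighborhoods of the depth-$n$ leaves of $h$, the transverse map $h'$ coincides with $h$, and $h''$ differs from $h'$ only at finitely many closed curves deleted in step (ii) of Definition \ref{def:r_n}. Away from these curves and leaves, I would take $g$ to equal $h$ as a map, and use a small isotopy of the bouquet splitting each of the two transversion points of $h$ into a pair of nearby transversions. Then each arc or curve of $h$ doubles into two parallel arcs or curves of $g$, colored so that forgetting either the outer pair or the inner pair of transversions of $g$ recovers the original arc of $h$ with the correct color.

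For the local part near each depth-$n$ leaf $\omega$, I would slightly thicken the H-move rectangle $R$ to a rectangle $R'$ and define $g\big|_{R'}$ by an explicit local model. The model is arranged so that the short sides of the H-move (segments of $\alpha_1$ and $\alpha_2$ lying in $R$) form the $(x,1)$- and $(x,2)$-preimages of $g$, while the long sides parallel to $\omega$ (the H-moved arcs used by $h''$) form the $(x,0)$- and $(x,3)$-preimages. Concretely, $g\big|_{R'}$ can be prescribed to wind across the segment of the $x$-circle between $(x,0)$ and $(x,3)$ in a controlled way, so that the four transversion points appear as the desired four arcs in $R'$ and $g\big|_{R'}$ agrees with the global definition on $\partial R'$. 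The case of odd $n$ is symmetric with the roles of $o$-zones and $z$-zones reversed, and the resulting statement becomes the analog of $[h''] \preceq_{\L} [h]$.

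The main obstacle I anticipate is the treatment of the closed curves deleted in step (ii). Such a curve bounds a zone of $h'$ with no marked points and only same-colored boundary, and naively it would appear in $g$ as an $(x,0)$- or $(x,3)$-arc, which is incompatible with forgetting to $h''$. I would remedy this by a further local homotopy inside each offending zone, pushing the image off the outer pair of transversion points (and simultaneously off the inner pair, since such curves do not originate from $h$). It then remains to verify that the resulting $g$ is a legitimate transverse map with $\kappa_x = 3$ for all $x$, and that the two forgetting operations yield maps in $[h]$ and $[h'']$ respectively --- both of which follow from the local model at each leaf together with the doubled-arc picture in the global region.
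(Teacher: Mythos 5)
Your overall strategy — constructing a single transverse map $g$ with $\kappa_x = 3$ whose inner transversion pair records $h$ and whose outer pair records $h''$ — is the same as the paper's, but two pieces of the execution do not hold up.

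The minor issue is the local model. Both endpoints of a given depth-$n$ leaf lie on arcs/curves of $h$ of the \emph{same} color (both $(x,0)$ or both $(x,1)$, according to whether the word on $\omega$ reads $P_xP_x$ or $Q_xQ_x$ at depth $n$). After doubling, the only colors of $g$ appearing in a neighborhood of that leaf are therefore the inner and outer copies of that single $h$-color, e.g.\ $(x,0)$ and $(x,1)$ of $g$, and the H-move's long sides carry the same color as the short side segments they replace. So only two colors, not four, live in $R'$; the assignment of "$(x,1)$- and $(x,2)$-preimages" to the short sides and "$(x,0)$- and $(x,3)$-preimages" to the long sides is inconsistent with the global doubling you set up.

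The substantive gap is in your treatment of the curves deleted in step (ii). The justification "since such curves do not originate from $h$" is false in general: a curve bounding a redundant $o$-zone of $h'$ is typically a composite of original pieces of curves of $h$ together with H-move long sides, and the original pieces do have inner counterparts in $g$ (the $(x,1)$- or $(x,2)$-copies produced by the doubling). Those inner copies \emph{must} survive in $g$, otherwise forgetting the outer pair no longer recovers $[h]$. Your prescription is also geometrically imprecise: a homotopy truly confined to the interior of the offending $o$-zone meets no inner arcs (so "pushing off the inner pair" there is a vacuous instruction), but neither can it delete the bounding outer curves — deleting a curve requires a homotopy on both sides of it, i.e.\ extending into the adjacent $(x,0)$- or $(x,\kappa_x)$-zones, where the inner counterparts do sit. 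Once extended there, the additional instruction to push off the inner transversions would destroy exactly the arcs that are needed to recover $h$. The paper avoids both difficulties by first doubling $h$ into a loose $g$ with $\kappa_x = 3$ and then performing the two steps of $r_n$ \emph{only} on the outer colors $(x,0),(x,3)$: step (i)'s H-moves occur in $o$-zones and touch only outer arcs, and step (ii) deletes only the outer curves bounding redundant $o$-zones of $g'$ (with Claim \ref{claim:k_x(h'')=00003D1} guaranteeing $\kappa_x = 3$ is preserved), so the inner pair is left untouched and continues to encode $h$ throughout.
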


\begin{proof}
First, construct a transverse map $g$ with $\kappa_{x}=3$ by duplicating
the arcs and curves of $h$, so that the $\left(x,0\right)$-arcs/curves
are isotopic to the $\left(x,1\right)$-arcs/curves, and likewise
with $\left(x,2\right)$ isotopic to $\left(x,3\right)$. Since the
$H$-moves in step $\left(i\right)$ of Definition \ref{def:r_n}
are performed in $o$-zones (recall our assumption that $n$ is even),
we can perform them for $g$, in which they involve only arcs/curves
with color from $\bigcup_{x}\left\{ \left(x,0\right),\left(x,3\right)\right\} $
and occur inside $o$-zones. The resulting map, call it $g'$, shows
the analog of $h\preceq_{\L}h'$. Finally, the $o$-zones of $g'$
are identical (up to homotopy) to those of $h'$, so step $\left(ii\right)$
can be performed in $g'$ by removing all redundant $o$-zones of
$g'$. The resulting map, $g''$, is still transverse with parameters
$\kappa_{x}=3$ for all $x$ by Claim \ref{claim:k_x(h'')=00003D1},
and is the map we need to establish the claim.
\end{proof}
\begin{lem}
\label{lem:h'' loose}$r_{n}\left(\left[h\right]\right)\in\L$.
\end{lem}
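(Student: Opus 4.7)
Since Claim \ref{claim:k_x(h'')=00003D1} already gives $\kappa_x(h'') = 1$ for every $x \in B$, the remaining content of the lemma is to show that $h''$ is loose, i.e.\ that both Restrictions 1 and 2 of Definition \ref{def:restrictions} hold for $h''$. I will handle the two restrictions separately; throughout, I assume $n$ is even, with the odd case following by an entirely symmetric argument.

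Restriction 2 is straightforward. Both steps of $r_n$ take place in the interior of $\Sigma$: the depth-$n$ leaves along which the $H$-moves of step (i) are performed are subsegments of null-arcs cut out by interior transverse crossings with arcs/curves of $h$, hence lie in $\mathrm{int}(\Sigma)$; and the curves deleted in step (ii) are likewise interior. Consequently, the arrangement of arc endpoints along each interval of $\partial\Sigma \setminus V_o$ is identical in $h$ and in $h''$. Since $h$ is loose, Restriction 2 transfers to $h''$.

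Restriction 1 splits naturally into two cases. For $o$-zones: step (ii) explicitly deletes every $o$-zone of $h'$ that violates Restriction 1 by erasing its bounding curves. I will argue that this procedure does not introduce new violating $o$-zones, because across each erased $(x,j)$-curve the former $o$-zone merges with the neighbouring $z$-zone, and the merger produces a zone of the neighbour's type, never a new $o$-zone. Hence step (ii) strictly decreases the count of violating $o$-zones, and terminates with none remaining.

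The main obstacle is showing that $h''$ contains no violating $z$-zone, since step (ii) does not address these directly when $n$ is even. My strategy is to invoke Claim \ref{claim:h le h''}: it furnishes a common refinement $g$ with $\kappa_x = 3$ in which the inner colours $(x,1), (x,2)$ recover $h$ and the outer colours $(x,0), (x,3)$ recover $h''$. A hypothetical violating $z$-zone $Z$ of $h''$---necessarily bounded uniformly in colour and direction by $g$-$(x,0)$- or $g$-$(x,3)$-arcs/curves and containing no marked point---pulls back to a subregion of $\Sigma$ internally subdivided by $h$'s arcs/curves (the $g$-$(x,1)$- and $g$-$(x,2)$-arcs/curves). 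Because $h$ is loose, each $z$-zone of $h$ sitting inside $Z$ has mixed-colour/direction boundary; by tracing such a mixture outward to the boundary of $Z$, and using that the $H$-moves of step (i) and the curve removals of step (ii) occur only inside $o$-zones of $h$---so that they cannot delete outward $h$-$(x,1)$-arcs/curves adjacent to $h$-$(x,0)$-zones interior to $Z$---one should exhibit a boundary feature of $Z$ that contradicts the assumed uniformity. Making this cross-comparison fully rigorous, in particular carefully tracking how boundary segments of $z$-zones propagate through successive mergers during step (ii) and how the $g$-$(x,0)$/$g$-$(x,3)$-boundary of $Z$ relates to the $h$-coloured structure inside, is the technical heart of the proof.
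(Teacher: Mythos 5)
Your treatment of Restriction 2 and of the $o$-zone part of Restriction 1 matches the paper's argument: both operations of $r_n$ occur in the interior, so the boundary pattern is unchanged, and removing a redundant $o$-zone only enlarges adjacent $z$-zones without creating new $o$-zones or altering the remaining ones. That much is correct.

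However, the $z$-zone part of Restriction 1 — which you yourself flag as ``the technical heart'' — is left as an unfinished sketch, and this is a genuine gap. Your plan (pull $Z$ back via the common refinement $g$ of Claim \ref{claim:h le h''}, look at the interior subdivision by $h$'s arcs/curves, and trace a ``mixed'' feature outward) is pointed in the right direction but does not reach the key intermediate fact that makes the argument go through. The paper first proves that $g''$ itself has \emph{no} redundant $z$-zone, in two steps: the $(x,1)$-zones of $g''$ are precisely the $(x,0)$-zones of $h$, hence satisfy Restriction 1; and every $(x,0)$-arc/curve of $g''$ is, in some segment, parallel to an $(x,1)$-arc/curve (a feature inherited from the duplication step and preserved under $H$-moves), so a redundant $(x,0)$-zone of $g''$ would be bounded solely by $(x,1)$-arcs/curves, making it a redundant $o$-zone of $h$ — contradicting that $h$ is loose. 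Only with this in hand can one run the chain $Z_0 \to Z_1 \to Z_2$ inside a $z$-zone $Z$ of $h''$ to produce an $(x,3)$-arc/curve of $g''$ on $\partial Z$, i.e.\ an $(x,1)$-arc/curve of $h''$, which kills the uniformity. Without the parallelism observation and the resulting lemma on $g''$, ``tracing a mixture outward'' has no mechanism to conclude, since a priori the boundary of $Z$ could absorb all the mixed features during the mergers in step (ii). So the proposal does not constitute a proof of the lemma; it is an outline that stops before the essential step.
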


\begin{proof}
We need to show that $h''$ is loose, namely that it abides to \textbf{Restrictions
1} and \textbf{2}. Neither step $\left(i\right)$ nor step $\left(ii\right)$
from Definition \ref{def:r_n} change $h$ near $\partial\Sigma$,
so $h''$ abides to \textbf{Restriction 2} because so does $h$. It
remains to show there are no ``redundant'' zones in $h''$, namely,
no zones which violate \textbf{Restriction 1}. Note that the removal
of redundant $o$-zones of $h'$ in step $\left(ii\right)$ enlarges
$z$-zones and possibly merges several $z$-zones into one, but it
does not create new $o$-zones nor does it affect other existing $o$-zones.
So the remaining $o$-zones are not redundant. 

As for $z$-zones, we use the map $g''$ from Claim \ref{claim:h le h''}.
We claim that $g''$ has no redundant $z$-zones. Clearly, $g''$
has no $\left(x,1\right)$-redundant zone, because these are exactly
the $\left(x,0\right)$-zones of $h$, which abide to \textbf{Restriction
1}. Note that every $\left(x,0\right)$-arc/curve of $g''$ is parallel,
at least in some segments, to $\left(x,1\right)$-arcs/curves (by
the nature of $H$-moves). Hence, every $\left(x,0\right)$-zone of
$g''$ must have some bounding $\left(x,1\right)$-arc/curve. Therefore,
a redundant $\left(x,0\right)$-zone in $g''$ has only $\left(x,1\right)$-arcs/curves
at its boundary, and is thus a redundant $o$-zone of $h$, a contradiction.
That there are no redundant $\left(x,2\right)$-zones in $g''$ is
analogous to the $\left(x,0\right)$ case. 

Now, let $Z$ be an arbitrary $z$-zone of $h''$. Without loss of
generality, there is some $\left(x,0\right)$-arc/curve of $h''$
at $\partial Z$. This $\left(x,0\right)$-arc/curve is at $\partial Z_{0}$
for some $\left(x,0\right)$-zone $Z_{0}$ of $g$ contained in $Z$.
By the claim on $g$, this $Z_{0}$ borders some $\left(x,1\right)$-zone
$Z_{1}\subset Z$ of $g$, which borders some $\left(x,2\right)$-zone
$Z_{2}\subset Z$ of $g$. But $Z_{2}$ has some $\left(x,3\right)$-arc/curve
of $g$ at its boundary, which is necessarily a $\left(x,1\right)$-arc/curve
of $h''$ at the boundary of $Z$. Hence $Z$ does not violate \textbf{Restriction
1}.
\end{proof}
\begin{cor}
\label{cor:r_n in P_n-1  and increasing}$r_{n}\left(\left[h\right]\right)\in\P_{n-1}$
and\footnote{For $n$ odd, $r_{n}\left(\left[h\right]\right)\preceq_{\L}\left[h\right]$.}
$\left[h\right]\preceq_{\L}r_{n}\left(\left[h\right]\right)$.
\end{cor}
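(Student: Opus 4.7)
The proof splits naturally into the two assertions. The order relation $[h] \preceq_\L r_n([h])$ follows at once from Claim \ref{claim:h le h''} combined with Lemma \ref{lem:h'' loose}: the former produces a transverse map $g$ with $\kappa_x = 3$ for every $x$, from which forgetting the two exterior transversion points $(x, 0), (x, 3)$ yields a representative of $[h]$ and forgetting the two interior points $(x, 1), (x, 2)$ yields a representative of $[h''] = r_n([h])$, which is exactly the data demanded by Definition \ref{def:poset and complex of loose transverse maps} once one knows that both $h$ and $h''$ are loose — the former by hypothesis, the latter by Lemma \ref{lem:h'' loose}.

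For the first assertion $r_n([h]) \in \P_{n-1}$, fix an arbitrary $\omega \in \Omega$ and show $\mathrm{depth}(u_\omega(h'')) \le n-1$ by tracking the effect of steps $(i)$ and $(ii)$ on $u_\omega$ separately. The analysis of step $(i)$ rests on the characterization of depth-$n$ leaves recorded in the paragraph that introduces them: each such leaf is a segment of $\omega$ between two consecutive crossings carrying identical letters, sitting at positions of $u_\omega(h)$ where the reduced prefix length attains its maximum value $n$. Taking each collar in step $(i)$ to be sufficiently thin, and using that $\omega$ is simple, one ensures that the two new long sides produced by the $H$-move run parallel to the leaf (a segment of $\omega$) and are disjoint from $\omega$, so no new intersections with $\omega$ are created; the two short sides — which accounted for the offending pair of crossings — are deleted. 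The collars around different depth-$n$ leaves of $h$ in $\omega$ are pairwise disjoint, so all $H$-moves along leaves of $\omega$ can be performed simultaneously and independently. The net effect on the word $u_\omega$ is the removal of a collection of adjacent cancellable pairs situated exactly at the positions of maximal reduced prefix length. Such deletions preserve the reduced form of every surviving prefix while eliminating every position where the reduced length equals $n$, yielding $\mathrm{depth}(u_\omega(h')) \le n-1$.

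Step $(ii)$ deletes those zones of $h'$ that violate Restriction~1. Each such zone $Z$ contains no marked points and is therefore disjoint from $\partial \Sigma$, is bounded entirely by closed curves all of a single color $(x, j)$, and has interior devoid of any arc or curve of $h'$. Consequently every connected sub-path of $\omega$ inside $Z$ has no intermediate crossings, so its entry and exit contribute two adjacent occurrences of the same letter in $u_\omega(h')$. Hence deleting the bounding curves of $Z$ amounts to removing a collection of adjacent cancellable pairs from $u_\omega(h')$, which — as in step $(i)$ — preserves the reduced form of every surviving prefix and can only decrease the depth. Combining the two bounds gives $\mathrm{depth}(u_\omega(h'')) \le n-1$ for every $\omega \in \Omega$, establishing $r_n([h]) \in \P_{n-1}$.

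The main obstacle I anticipate lies in the step $(ii)$ analysis: deleting letters from an arbitrary word can in general raise its depth (for example, deleting the first letter of $P_x P_x P_y$ sends the depth from $1$ to $2$), so the argument is forced to exploit the specific geometry of the removed zones — that their interiors are arc/curve-free and their boundaries monochromatic — to conclude that the letters being removed from $u_\omega(h')$ come in adjacent cancellable pairs, rather than as an arbitrary subset.
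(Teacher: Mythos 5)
Your proof is correct and follows essentially the same route as the paper's: the order relation $\left[h\right]\preceq_{\L}r_{n}\left(\left[h\right]\right)$ is read off from Claim \ref{claim:h le h''} together with Lemma \ref{lem:h'' loose}, and the depth bound is obtained by observing that step $\left(i\right)$ deletes exactly the crossings forming the depth-$n$ peaks while step $\left(ii\right)$ amounts to reduction moves (deletion of adjacent pairs $P_{x}P_{x}$ or $Q_{x}Q_{x}$), which cannot increase depth. Your additional justifications -- that the $H$-moves create no new crossings with $\omega$, and that the monochromatic, arc/curve-free interior of a redundant zone forces the deleted letters to be adjacent cancellable pairs rather than an arbitrary subset -- make explicit points the paper treats as clear, but do not alter the argument.
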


\begin{proof}
It remains to show that $\mathrm{depth}\left(u_{\omega}\left(h''\right)\right)\le n-1$
for all $\omega\in\Omega$. The $H$-moves of step $\left(i\right)$
in the definition of $r_{n}$ remove all the crossings between $\omega$
and arcs/curves of $h$ which cut out depth-$n$ leaves. It is thus
clear that $\mathrm{depth}\left(u_{\omega}\left(h'\right)\right)\le n-1$.
But whenever $\omega$ enters a redundant zone of $h'$, it has to
leave it through an arc/curve of the same color. So the effect of
removing a redundant zone on the words $u_{w}\left(h'\right)$ is
performing reduction steps (omitting consecutive pairs of the type
$P_{x}P_{x}$ or $Q_{x}Q_{x}$). Reduction moves cannot increase the
depth of the word.
\end{proof}
After establishing that $r_{n}\colon\P_{n}\to\P_{n-1}$, our next
goal is to use $r_{n}$ to obtain the sought-after deformation retract.
We do this using the following general technique concerning posets:

A map $\varphi\colon P\to Q$ between posets which is order-preserving,
in the sense that $p_{1}\le_{P}p_{2}\Longrightarrow\varphi\left(p_{1}\right)\le_{Q}\varphi\left(p_{2}\right)$,
maps a chain $p_{0}<_{P}\ldots<_{P}p_{m}$ in $P$ to a, possibly
``stuttering'', chain $\varphi\left(p_{0}\right)\le_{Q}\ldots\le_{Q}\varphi\left(p_{m}\right)$
in $Q$, so the set $\left\{ \varphi\left(p_{0}\right),\ldots,\varphi\left(p_{m}\right)\right\} $
defines a simplex in the order complex $\left|Q\right|$. This allows
the following natural induced map $\left|\varphi\right|\colon\left|P\right|\to\left|Q\right|$
between the order complexes:
\begin{equation}
\left|\varphi\right|\left(\sum\lambda_{i}p_{i}\right)=\sum\lambda_{i}\varphi\left(p_{i}\right).\label{eq:induced-map-on-order-complexes}
\end{equation}

\begin{lem}
\label{lem:defomration retract from poset map}Let $P$ be a subposet
of the poset $Q$. Assume that $\varphi\colon Q\to P$ satisfies the
following three conditions:
\begin{itemize}
\item $\varphi$ is order-preserving
\item $\varphi$ is a retract, i.e.~$f\Big|_{P}\equiv\mathrm{id}$
\item $\varphi\left(q\right)\le q$ for all $q\in Q$, or $\varphi\left(q\right)\ge q$
for all $q\in Q$
\end{itemize}
Then $\left|\varphi\right|\colon\left|Q\right|\to\left|P\right|$
is a strong deformation retract.
\end{lem}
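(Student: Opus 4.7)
The plan is to build a poset-level ``mapping cylinder'' for $\varphi$, take geometric realizations, and then convert the result into a topological homotopy on $|Q|\times[0,1]$ by invoking Fact~\ref{fact:product of posets}. The retraction will be $|\varphi|$ itself, which is well-defined by~(\ref{eq:induced-map-on-order-complexes}), lands in $|P|$ since $\varphi$ does, and restricts to the identity on $|P|$ because $\varphi|_P=\mathrm{id}$.

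To set up the mapping cylinder, let $\mathbf{1}$ denote the two-element poset $\{0,1\}$, oriented by $0<1$ in the case $\varphi(q)\ge q$ for all $q\in Q$, and by $1<0$ in the case $\varphi(q)\le q$ for all $q\in Q$. Define
\[
\Phi\colon Q\times\mathbf{1}\longrightarrow Q,\qquad \Phi(q,0)=q,\qquad \Phi(q,1)=\varphi(q).
\]
The key point is that $\Phi$ is order-preserving. Comparisons of the form $(q_1,t)\le (q_2,t)$ are handled by the identity and by $\varphi$ being order-preserving. The only mixed comparison is between $(q_1,t_1)$ and $(q_2,t_2)$ with $t_1\ne t_2$ and $q_1\le q_2$; here we need $\varphi(q_1)\le q_2$ (respectively $q_1\le \varphi(q_2)$), which follows by combining $\varphi(q_1)\le q_1$ (respectively $q_2\le\varphi(q_2)$) with $q_1\le q_2$.

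With $\Phi$ in hand, pass to geometric realizations and use Fact~\ref{fact:product of posets} to produce the canonical homeomorphism $\alpha\colon |Q\times\mathbf{1}|\stackrel{\cong}{\longrightarrow}|Q|\times|\mathbf{1}|=|Q|\times[0,1]$. Set
\[
H\stackrel{\mathrm{def}}{=}|\Phi|\circ\alpha^{-1}\colon |Q|\times[0,1]\longrightarrow |Q|,
\]
where $|\Phi|$ is the piecewise-linear extension defined in~(\ref{eq:induced-map-on-order-complexes}). Writing $x=\sum_i\lambda_i q_i$ for a point in $|Q|$ supported on a chain $q_0<\cdots<q_m$, the explicit formula in Fact~\ref{fact:product of posets} immediately gives $H(x,0)=\sum_i\lambda_i\Phi(q_i,0)=x$ and $H(x,1)=\sum_i\lambda_i\Phi(q_i,1)=|\varphi|(x)$. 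For a point $p\in|P|$ expanded as $p=\sum_i\lambda_i p_i$ with $p_0<\cdots<p_m$ a chain in $P\subseteq Q$, the preimage $\alpha^{-1}(p,t)$ is supported on the vertices $\{(p_i,0),(p_i,1)\}_{i=0}^m$, and on each of these vertices $\Phi$ equals $p_i$ since $\varphi(p_i)=p_i$; applying the product formula once more yields $H(p,t)=p$ for every $t\in[0,1]$. Thus $H$ is a strong deformation retract of $|Q|$ onto $|P|$ with retraction $|\varphi|$. The only place that requires real care is the last bookkeeping step with the product homeomorphism, ensuring that no weight can leak out of $|P|$ during the retraction; everything else is essentially formal once $\Phi$ has been identified as the correct poset-level cylinder.
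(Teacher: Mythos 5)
Your proposal is correct and takes essentially the same route as the paper: the paper proves that if $\psi_0,\psi_1\colon Q\to Q$ are poset morphisms with $\psi_0\le\psi_1$ pointwise then $|\psi_0|\simeq|\psi_1|$ by building the cylinder map $(p,t)\mapsto\psi_t(p)$ on $Q\times\{0\le 1\}$ and applying Fact~\ref{fact:product of posets}, then specializes to $\{\psi_0,\psi_1\}=\{\varphi,\mathrm{id}\}$ and observes the homotopy is constant where the two maps agree. Your $\Phi$ is exactly that cylinder map with the two cases handled by reorienting $\mathbf{1}$ rather than swapping $\psi_0$ and $\psi_1$, which is a cosmetic difference; the substance (order-preservation of the cylinder map, the product realization fact, and the fixed-point argument on $|P|$) is identical.
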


By a strong deformation retract we mean that there is a homotopy of
$\left|\varphi\right|$ with the identity on $\left|Q\right|$ which
fixes $\left|P\right|$ pointwise throughout the homotopy.
\begin{proof}
Recall that a map $\psi$ between posets is called a poset-morphism
if it is order preserving. If $\psi\colon P\to Q$ is a poset morphism,
we let $\left|\psi\right|$ denote the induced map $\left|\psi\right|\colon\left|P\right|\to\left|Q\right|$
defined as in (\ref{eq:induced-map-on-order-complexes}). If $P$
and $Q$ are posets, $\psi_{0},\psi_{1}\colon P\to Q$ are poset morphisms,
and $\psi_{0}\left(p\right)\le\psi_{1}\left(p\right)$ for every $p\in P$,
then $\left|\psi_{0}\right|$ and $\left|\psi_{1}\right|$ are homotopic.
Indeed, let $\left\{ 0\le1\right\} $ denote the poset with two comparable
elements $0$ and 1. Define a map $\left(\psi_{0},\psi_{1}\right)\colon P\times\left\{ 0\le1\right\} \to Q$
by $\left(p,0\right)\mapsto\psi_{0}\left(p\right)$ and $\left(p,1\right)\mapsto\psi_{1}\left(p\right)$.
This is clearly a poset-morphism by the assumptions, so it induces
a continuous map 
\[
\left|\left(\psi_{0},\psi_{1}\right)\right|\colon\left|P\times\left\{ 0\le1\right\} \right|\to\left|Q\right|.
\]
By Fact \ref{fact:product of posets}, there is an homeomorphism 
\[
\left|P\times\left\{ 0\le1\right\} \right|\overset{\cong}{\to}\left|P\right|\times\left|\left\{ 0\le1\right\} \right|=\left|P\right|\times\left[0,1\right],
\]
so we get that $\left|\left(\psi_{0},\psi_{1}\right)\right|$ is a
continuous map $\left|P\right|\times\left[0,1\right]\to\left|Q\right|$.
Because $\left|\left(\psi_{0},\psi_{1}\right)\right|\Big|_{\left|P\times\left\{ 0\right\} \right|}\equiv\left|\psi_{0}\right|$
and $\left|\left(\psi_{0},\psi_{1}\right)\right|\Big|_{\left|P\times\left\{ 1\right\} \right|}\equiv\left|\psi_{1}\right|$,
the map $\left|\left(\psi_{0},\psi_{1}\right)\right|$ is the sought-after
homotopy. (This result appears in \cite[Section 1.3]{QUILLEN}.)

Note that the map $\varphi\colon Q\to Q$ in the statement of the
lemma and the identity $\id\colon Q\to Q$ satisfy the conditions
regarding $\psi_{0}$ and $\psi_{1}$ above. Hence $\left|\varphi\right|$
is homotopic to the identity. The fact that the homotopy fixes $\left|P\right|$
pointwise follows from the fact that the homotopy above does not move
the points where $\psi_{0}$ and $\psi_{1}$ agree. Namely, if $P_{0}\subseteq P$
is the subposet where $\psi_{0}\left(p\right)=\psi_{1}\left(p\right)$,
then $\left|\left(\psi_{0},\psi_{1}\right)\right|\left(x,t\right)=\psi_{0}\left(x\right)=\psi_{1}\left(x\right)$
for every $x\in\left|P_{0}\right|$ and $t\in\left[0,1\right]$.
\end{proof}
\begin{prop}
The map $r_{n}\colon\P_{n}\to\P_{n-1}$ satisfies the conditions of
Lemma \ref{lem:defomration retract from poset map} and so defines
a strong deformation retract 
\[
\left|r_{n}\right|\colon\left|\P_{n}\right|\to\left|\P_{n-1}\right|.
\]
\end{prop}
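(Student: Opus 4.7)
The plan is to verify the three hypotheses of Lemma~\ref{lem:defomration retract from poset map} for the map $r_n\colon\P_n\to\P_{n-1}$, where $\P_{n-1}$ sits inside $\P_n$ as a subposet. Two of the three conditions are nearly immediate from work already done, and the real content lies in verifying that $r_n$ is order-preserving.

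First, condition (3) of the lemma (that $r_n$ moves every element in one direction only) is exactly the content of Corollary~\ref{cor:r_n in P_n-1  and increasing}: for $n$ even we have $[h]\preceq_{\L}r_n([h])$ for every $[h]\in\P_n$, and for $n$ odd we have the reverse inequality. Second, to verify the retract condition $r_n|_{\P_{n-1}}\equiv\id$, suppose $[h]\in\P_{n-1}$, so $\mathrm{depth}(u_\omega(h))\le n-1$ for every $\omega\in\Omega$. Then $h$ has no depth-$n$ leaves, so step (i) of Definition~\ref{def:r_n} does nothing. Moreover, since $[h]\in\L$ already, $h$ is loose, so no zone of $h$ violates \textbf{Restriction 1} and step (ii) also does nothing. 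Hence $r_n([h])=[h]$.

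The main obstacle is condition (1): showing that if $[h_1]\preceq_{\L}[h_2]$ in $\P_n$, then $r_n([h_1])\preceq_{\L}r_n([h_2])$ in $\P_{n-1}$. The strategy is to work in a joint refinement. By the definition of $\preceq_{\L}$ we can choose representatives $h_1,h_2$ that fit together into a loose transverse map $g$ with $\kappa_x=3$ for all $x\in B$, where forgetting $(x,0),(x,3)$ recovers $h_1$ and forgetting $(x,1),(x,2)$ recovers $h_2$. Place $g$ in minimal position with respect to $\Omega$. The depth-$n$ leaves of $h_1$ and of $h_2$ may now be simultaneously read off from the word $u_\omega(g)$, and because the arcs/curves of $h_1$ lie inside the $(x,0)$-zones of $h_2$, any depth-$n$ leaf of $h_1$ lies in an $o$-zone of $h_2$ (and vice versa for $h_2$'s leaves relative to $h_1$). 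One then performs the $H$-move of step~(i) of Definition~\ref{def:r_n} for each depth-$n$ leaf of $h_1$ and of $h_2$, simultaneously on the corresponding arcs/curves of $g$, obtaining a (possibly not yet loose) transverse map $g'$ with $\kappa_x=3$. Finally, one deletes inside $g'$ all zones that are redundant for either the inner or the outer sub-map, producing $g''$.

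It then remains to check, in parallel with Claim~\ref{claim:k_x(h'')=00003D1}, Claim~\ref{claim:h le h''} and Lemma~\ref{lem:h'' loose}, that $g''$ is a loose transverse map with $\kappa_x=3$ whose inner sub-map (obtained by forgetting $(x,0),(x,3)$) represents $r_n([h_1])$ and whose outer sub-map (obtained by forgetting $(x,1),(x,2)$) represents $r_n([h_2])$. This yields $r_n([h_1])\preceq_{\L}r_n([h_2])$. The technical point to be careful about is that removing a redundant zone for one sub-map does not destroy the representing property for the other, which is ensured by the nesting of $h_1$ inside $h_2$ and the local nature of the surgeries. Having checked all three conditions, Lemma~\ref{lem:defomration retract from poset map} produces the strong deformation retract $|r_n|\colon|\P_n|\to|\P_{n-1}|$.
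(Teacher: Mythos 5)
Your overall strategy matches the paper's: verify conditions (2) and (3) quickly, then establish order-preservation of $r_n$ by working in a joint refinement $g$ with $\kappa_x=3$, performing the $H$-moves of step~(i) for both $h_1$ and $h_2$, and cleaning up redundant zones. Conditions (2) and (3) are handled correctly, and the reliance on Corollary~\ref{cor:r_n in P_n-1  and increasing} is exactly right.

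However, there is a genuine gap in the order-preservation argument, concentrated in the sentence ``because the arcs/curves of $h_1$ lie inside the $(x,0)$-zones of $h_2$, any depth-$n$ leaf of $h_1$ lies in an $o$-zone of $h_2$.'' This implication is false: the nesting means $o$-zones of $h_2$ are contained in $o$-zones of $h_1$, not the other way around. An $o$-zone of $h_1$ decomposes into $o$-zones and $(x,0)$-zones of $h_2$, so a depth-$n$ leaf of $h_1$ can and in general does cross $(x,0)$-arcs/curves of $h_2$. This is precisely the issue your proposed simultaneous $H$-move has to confront: without a control on which arcs/curves of $h_2$ a leaf of $h_1$ may cross, it is not clear that the $H$-moves for the two sub-maps can be carried out compatibly inside a single $g'$, nor that the inner and outer sub-maps of the result correspond to $h_1'$ and $h_2'$ respectively.

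The missing ingredient is a constraint that the paper derives from the hypothesis $[h_2]\in\P_n$. Tracking the prefix of $u_\omega(h_2)$ along a depth-$n$ leaf $\gamma$ of $h_1$ that starts at a $(y,1)$-crossing, one finds it has length $n-1$ at the start of $\gamma$, so it can cross $(y,0)$ (reaching length $n$) but cannot then cross any $(x,3)$, since that would push the depth of $u_\omega(h_2)$ to $n+1$, contradicting $[h_2]\in\P_n$. Hence inside the leaf $\gamma$ of $h_1$, $g(\gamma)$ only crosses $(y,0)$ back and forth, and each consecutive pair of such crossings is exactly a depth-$n$ leaf of $h_2$ nested inside $\gamma$. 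With this structure in hand, the $H$-moves can be performed in two phases (first for $h_2$'s leaves, then for $h_1$'s) without collision, which is the content you need before the analogues of Claim~\ref{claim:k_x(h'')=00003D1}, Claim~\ref{claim:h le h''} and Lemma~\ref{lem:h'' loose} can be invoked. A similar, slightly more delicate argument is then needed in step~(ii) to show that removing redundant $o$-zones of $h_1'$ never kills curves of $h_2''$; your closing remark correctly flags this but does not supply the argument. Without the depth-control observation on $u_\omega(h_2)$, the proposal does not yet establish condition (1).
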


\begin{proof}
We already proved above that for $n$ even, $\left[h\right]\preceq_{\L}r_{n}\left(\left[h\right]\right)$
which is the third assumption of Lemma \ref{lem:defomration retract from poset map}.
The second assumption is also clear: if $\left[h\right]\in\P_{n-1}$,
then $h$ admits no depth-$n$ leaves in $\Omega$, and therefore
in Definition \ref{def:r_n}, $h=h'=h''$. It remains to show that
$r_{n}$ is order-preserving.

Let $h_{1}$ and $h_{2}$ be transverse maps so that $\left[h_{1}\right],\left[h_{2}\right]\in\P_{n}$,
with $\left[h_{1}\right]\preceq_{\L}\left[h_{2}\right]$, and assume
that $g$ is a transverse map $g$ with $\kappa_{x}=3$ for all $x$,
so that $h_{1}$ and $h_{2}$ are obtained by forgetting the exterior
and interior, respectively, two transversion points for every letter
$x$. We also assume $g$ is in minimal position with respect to $\Omega$.
For $\omega\in\Omega$, the words $u_{\omega}\left(h_{1}\right)$
and $u_{\omega}\left(h_{2}\right)$ are very much dependent: they
can be constructed simultaneously by following the path $g\left(\omega\right)$
in $\wedger$, and adding a letter to $u_{\omega}\left(h_{1}\right)$
whenever $g\left(\omega\right)$ crosses some $\left(x,1\right)$
or $\left(x,2\right)$ point, and a letter to $u_{\omega}\left(h_{2}\right)$
whenever $g\left(\omega\right)$ crosses some $\left(x,0\right)$
or $\left(x,3\right)$ point. This description shows that whenever
$\omega$ visits an $o$-zone or an $\left(x,1\right)$-zone of $g$,
the prefix of the two words until that point has the same reduced
form and, in particular, the same length. 

Consider a depth-$n$ leaf $\gamma$ of $h_{1}$ in $\omega$. The
beginning of $\gamma$ is at a crossing point of $\omega$ with some
$\left(x,j\right)$-arc/curve of $g$ with $x\in B$ and $j\in\left\{ 1,2\right\} $,
in which $\omega$ leaves an $\left(x,1\right)$-zone of $g$ and
enters some $\left(x,0\right)$- or $\left(x,2\right)$-zone. Without
loss of generality, assume that $\omega$ crosses some $\left(y,1\right)$-arc/curve
with $y\in B$. The image $g\left(\gamma\right)$ is a closed path
in $\wedger$, based at $\left(y,1\right)$, which avoids the segments
$\left[\left(x,1\right),\left(x,2\right)\right]$ for every $x\in B$
-- see Figure \ref{fig:wedge-with-4-points-on-every-letter}. 

\begin{figure}
\centering{}\includegraphics[viewport=230bp 300bp 300bp 460bp,scale=0.5]{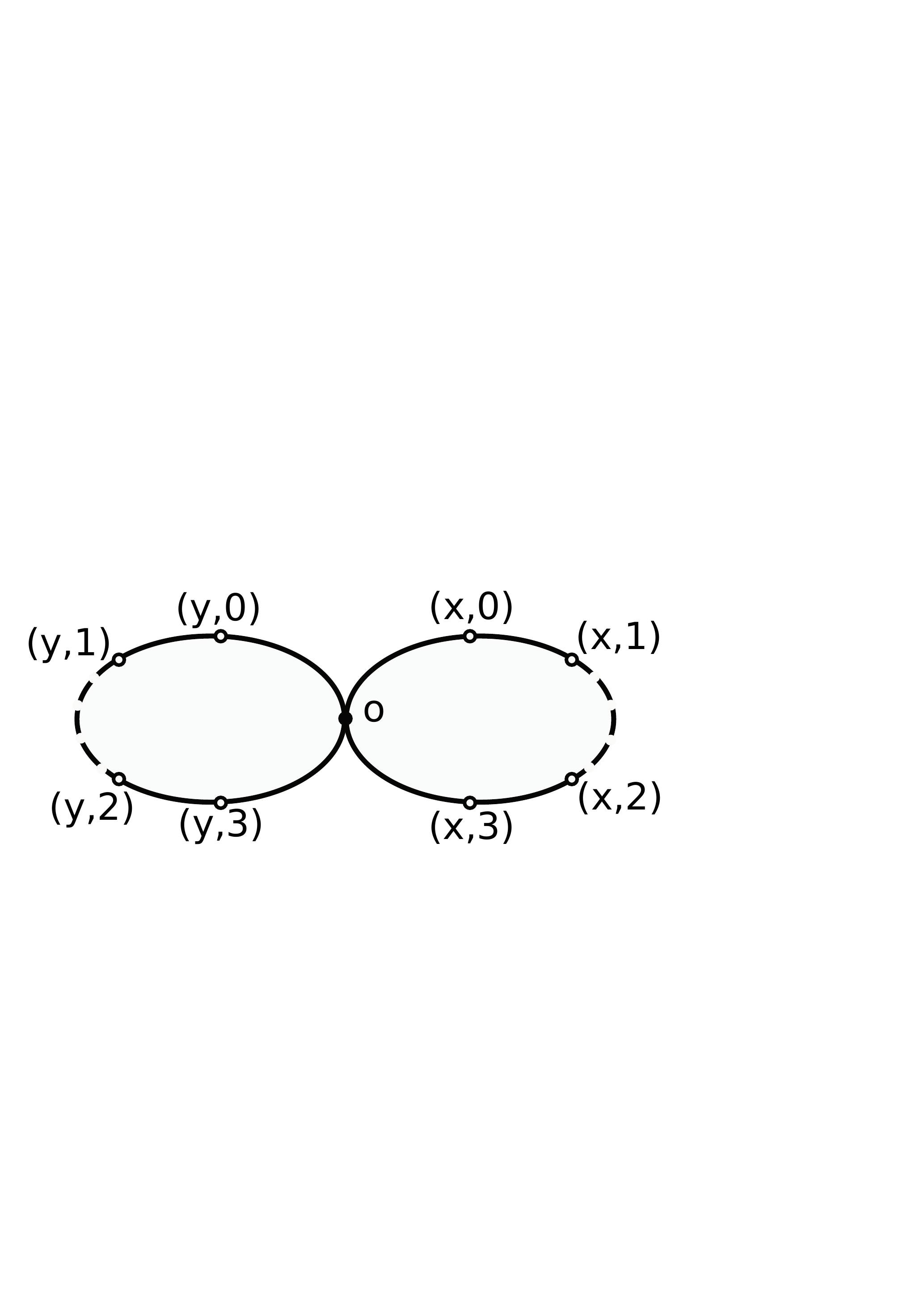}\caption{The wedge $\protect\wedger$ when $r=2$, $B=\left\{ x,y\right\} $.
The marked points are the transversion points of a map $g$, with
$\kappa_{x}\left(g\right)=\kappa_{y}\left(g\right)=3$. If $h_{1}$
is the transverse map obtained from $g$ by forgetting $\left(x,0\right),\left(x,3\right),\left(y,0\right),\left(y,3\right)$
and if $n$ is even, then a depth-$n$ leaf $\gamma$ of $h_{1}$
at some null-arc $\omega$ is contained in some $o$-zone of $h_{1}$,
meaning that $g\left(\gamma\right)$ lives outside the broken segments
in the figure.\label{fig:wedge-with-4-points-on-every-letter} }
\end{figure}

When one follows the prefix of the word $u_{\omega}\left(h_{2}\right)$
along $\gamma$, it is clear, therefore, that at the beginning of
$\gamma$ it has length $n-1$. If it then crosses $\left(y,0\right)$,
it has the same length as the prefix of $u_{\omega}\left(h_{1}\right)$
which is $n$. Then, it could seemingly cross, e.g., $\left(x,3\right)$
for some $x\in B$, but this would increase the length of the prefix
of $u_{\omega}\left(h_{2}\right)$ to $n+1$, which is impossible
as $\left[h_{2}\right]\in\P_{n}$. Hence $g\left(\gamma\right)$ can
only cross the point $\left(y,0\right)$ back and forth. Every two
consecutive such crossings define a depth-$n$ leaf of $h_{2}$ at
$\omega$. 

Therefore, in the notation of Definition \ref{def:r_n}, step $\left(i\right)$
can be performed in two phases: first, perform step $\left(i\right)$
for $h_{2}$, where the depth-$n$ leaves never cross any arcs/curves
of $h_{1}$. Second, perform step $\left(i\right)$ for $h_{1}$:
although a depth-$n$ leaf of $h_{1}$ may cross arcs/curves of $h_{2}$,
the previous paragraph explains why it never crosses arcs/curves of
$h_{2}'$. The resulting $h_{1}'$ and $h_{2}'$ are compatible together
in the sense there is $g'$ with $\kappa_{x}=3$ as in the definition
of the order on $\L$ (although, of course, $h_{1}'$ and $h_{2}'$
may not be in $\L$). See Figure \ref{fig:r_n order preserving}.

\begin{figure}
\centering{}\includegraphics[clip]{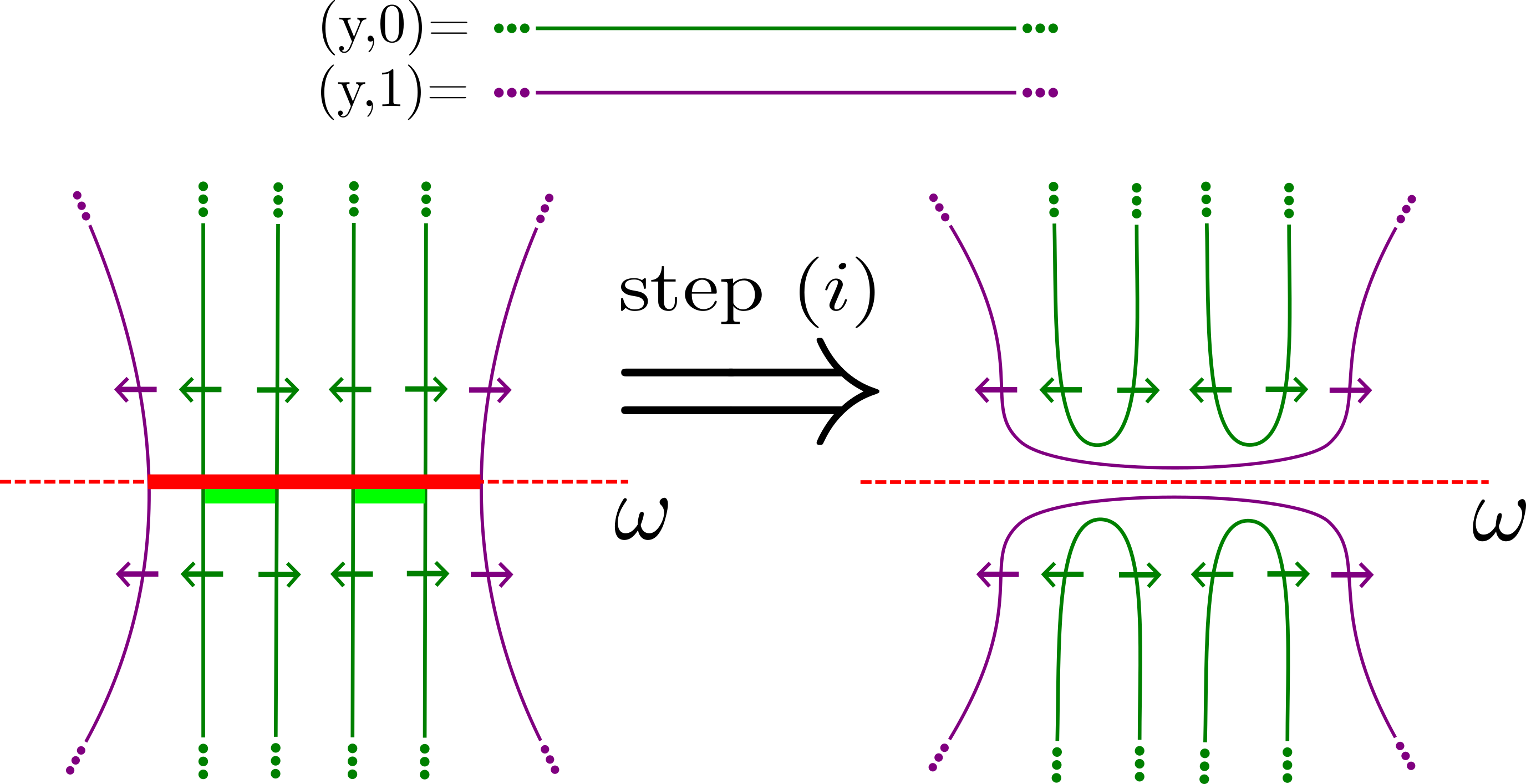}\caption{On the left, a piece of a null-arc $\omega$ crosses some arcs/curves
of $g$, the transverse map with $\kappa_{x}=3$ for all $x$ showing
that $h_{1}\preceq_{\protect\L}h_{2}$, both inside $\protect\P_{n}$.
The thick red part of $\omega$ is a depth-$n$ leaf $\gamma$ of
$h_{1}$, which in $g$ is a segment of $\omega$ between two crossing-points
with $\left(y,1\right)$-arcs/curves. Inside $\gamma$ there are two
depth-$n$ leaves of $h_{2}$, which, in $g$, are two segments cut
out by $\left(y,0\right)$-arcs/curves. The right hand side shows
the result of step $\left(i\right)$ of Definition \ref{def:r_n}
on this local picture, where performing the single $H$-move for $h_{1}$
after the two $H$-moves of $h_{2}$ causes no collisions.\label{fig:r_n order preserving} }
\end{figure}

In step $\left(ii\right)$ of Definition \ref{def:r_n} we now remove
redundant $o$-zones of $h_{1}'$ and of $h_{2}'$. Since the $o$-zones
of $h_{2}'$ and those of $g'$ coincide, removing redundant $o$-zones
of $h_{2}'$ is equivalent to removing redundant $o$-zones of $g'$
and keeps the structure of $g'$ as a legal transverse map with $\kappa_{x}=3$
for all $x$. Denote the resulting map by $\overline{g'}$. However,
we still need to show that removing redundant $o$-zones of $h_{1}'$
does not cause a problem, namely, that any redundant $o$-zone of
$h_{1}'$ does not contain any curves of $h_{2}''$, which are the
same as $\left(x,0\right)$- or $\left(x,3\right)$-curves of $\overline{g'}$
for any $x\in B$.

Indeed, let $O$ be a redundant $o$-zone of $h_{1}'$. Without loss
of generality it is bounded by outgoing $\left(y,1\right)$-curves
of $\overline{g'}$. Assume there is some curve of $h_{2}''$ inside
$O$ which is not a $\left(y,0\right)$-curve of $\overline{g'}$,
say, an $\left(x,3\right)$-curve of $\overline{g'}$. The negative
side of this $\left(x,3\right)$-curve cannot be a redundant $\left(x,2\right)$-zone
of $\overline{g'}$, because then it would be a redundant $z$-zone
of $h_{2}''$ which is impossible by Lemma \ref{lem:h'' loose}. Thus,
this $\left(x,2\right)$-zone of $\overline{g'}$ must have some $\left(x,2\right)$-arc/curve
at its boundary, a contradiction to the assumption that $O$ is redundant.
We conclude that $O$ may only contain $\left(y,0\right)$-curves
of $\overline{g'}$. But then, on their negative side, these curves
must bound a redundant zone (there cannot be marked points from $V_{o}$
inside as $O$ is redundant), and thus should have been removed in
step $\left(ii\right)$ for $h_{2}$. Therefore, step $\left(ii\right)$
for $h_{1}'$ can be performed on $\overline{g'}$ without violating
any rule, and the resulting map, $g''$, shows that $r_{n}\left(h_{1}\right)=\left[h_{1}''\right]\preceq_{\L}\left[h_{2}''\right]=r_{n}\left(\left[h_{2}\right]\right)$.
\end{proof}
\begin{proof}[Proof of Proposition \ref{prop:deformation retract for null-arc system}]
To get a deformation retract of $\left|\L\right|$ to $\left|\L_{\Omega}\right|$
we perform $\left|r_{n}\right|$ at time $\left[\frac{1}{2^{n}},\frac{1}{2^{n-1}}\right]$.
We remark that the fact that $\left|r_{n}\right|$ is a \emph{strong}
deformation retract, namely, keeps $\left|\P_{n-1}\right|$ fixed
pointwise, guarantees that the total deformation retract on $\left|\L\right|$
is well defined.
\end{proof}

\section{The action of $\protect\MCG(f)$ on the complex of transverse maps\label{sec:The-Action-of MCG(f) on T}}

In this section we prove our main results: Theorems \ref{thm:stabilizers have L2-EC},
\ref{thm:K(g,1) for incompressible} and \ref{thm:main}. We begin
with some background on $L^{2}$-Euler characteristics. 

\subsection{$L^{2}$-Betti numbers and $L^{2}$-Euler characteristics\label{subsec:L2-Betti-numbers-and-L2-EC}}

We now define the $L^{2}$-invariants of groups that appear in our
main theorem, although, for the sake of the proofs, one can use Theorem
\ref{thm:ec-as-alt-sum}, Lemma \ref{lem:contractible-with-good-isotropy}
and Theorem \ref{thm:infinite-normal-amenable-subgroup-in-B_infty}
as black boxes.

The following definitions and properties are all found in the book
of Lück \cite{L}; many of the ideas we discuss originate from the
paper of Cheeger and Gromov \cite{CG}. Throughout this subsection,
$G$ is a discrete group. 
\begin{defn}[{\cite[Def. 1.25]{L}}]
\label{def:G-CW-complex}A $G$-\emph{CW}-complex is a \emph{CW-}complex
with a cellular action of $G$ such that if an element of $G$ fixes
an open cell, it acts as the identity on that open cell.
\end{defn}

Following \cite[Def. 1.1]{L}, the \emph{group von Neumann algebra
$\N(G)$ }is defined to be the space of $G$-equivariant bounded operators
from $\ell^{2}(G)$ to itself. Here $\ell^{2}(G)$ is given the standard
Hermitian inner product making it a Hilbert space. Now suppose $X$
is a $G$-\emph{CW-}complex. Denote by $C_{*}^{\sing}\left(X\right)$
the singular chain complex of $X$. This is a complex of left $\mathbb{Z}G$-modules.
Giving $\N(G)$ the structure of an $\left(\N(G),\mathbb{Z}G\right)$-bimodule,
we can form a chain complex
\[
\ldots\xrightarrow{d_{p+1}}\N(G)\otimes_{\mathbb{Z}G}C_{p}^{\sing}(X)\xrightarrow{d_{p}}\N(G)\otimes_{\mathbb{\mathbb{Z}}G}C_{p-1}^{\sing}(X)\xrightarrow{d_{p-1}}\ldots
\]
of $\N(G)$-modules. This is a Hilbert chain complex in the terminology
of \cite[Def. 1.15]{L}. In particular, each piece $\N(G)\otimes_{\mathbb{Z}G}C_{p}^{\sing}(X)$
is a Hilbert module for $\N(G)$ as defined in \cite[Def. 1.5]{L},
$\N(G)\otimes_{\mathbb{Z}G}C_{p}^{\sing}(X)$ is a Hilbert space,
and the boundary maps are bounded $G$-equivariant operators. The
$L^{2}$-homology of the pair $\left(X,G\right)$ we denote by $H_{*}^{(2)}\left(X;G\right)$
and define by 
\[
H_{p}^{(2)}\left(X;G\right)\overset{\mathrm{def}}{=}\frac{\ker(d_{p})}{\mathrm{closure}(\image(d_{p+1}))},
\]
cf. \cite[Def. 6.50, Def. 1.16]{L}. Each of these homology groups
are themselves Hilbert $\N(G)$-modules. Any $\N(G)$-module $M$
has an associated dimension in $[0,\infty]$ called the \emph{von
Neumann dimension} and denoted by $\dim_{\N(G)}(M)$ \cite[Def 6.20]{L}.
The \emph{$L^{2}$-Betti numbers} of the pair $\left(X,G\right)$
are defined by
\[
b_{p}^{(2)}\left(X,G\right)\overset{\mathrm{def}}{=}\dim_{\N(G)}H_{p}^{(2)}\left(X;G\right)\in[0,\infty].
\]
If 
\begin{equation}
\sum_{p\in\mathbb{Z}_{\ge0}}b_{p}^{(2)}\left(X,G\right)<\infty\label{eq:finite-sum-of-betti-numbers}
\end{equation}
then we can also define the \emph{$L^{2}$-Euler characteristic} of
the pair $\left(X,G\right)$ to be
\[
\chi^{(2)}\left(X,G\right)=\sum_{p\in\mathbb{Z}_{\ge0}}(-1)^{p}\cdot b_{p}^{(2)}\left(X,G\right)\in\mathbb{R}.
\]
If $EG$ is a contractible $G$-\emph{CW-}complex with a free action
of $G$ then we define
\[
b_{p}^{(2)}\left(G\right)\overset{\mathrm{def}}{=}b_{p}^{(2)}\left(EG,G\right)
\]
and if moreover (\ref{eq:finite-sum-of-betti-numbers}) holds for
$X=EG$, then we also define as in \cite[Def. 6.79]{L} the \emph{$L^{2}$-Euler
characteristic of $G$ }to be
\[
\chi^{(2)}\left(G\right)\overset{\mathrm{def}}{=}\chi^{(2)}\left(EG,G\right).
\]
Since $EG$ is unique up to $G$-equivariant homotopy equivalence,
it follows for example from \cite[Theorem 6.54]{L} that the quantities
$b_{p}^{(2)}\left(G\right),\chi^{(2)}\left(G\right)$ only depend
on $G$. The existence and $G$-homotopy uniqueness of $EG$ is discussed
in \cite[pg. 33]{L} with references therein to \cite{tD1,tD2}.

Assume $X$ is an arbitrary $G$-\emph{CW-}complex. If $c$ is a cell
of $X$ write $G_{c}$ for the isotropy group (stabilizer) of $c$
in $G$. As in \cite[\S 6.6.1]{L}, we consider the quantities
\[
\left|G_{c}\right|^{-1}
\]
\textbf{where we set $\left|G_{c}\right|^{-1}=0$ if $G_{c}$ is infinite.
}We define following \cite[Def 6.79]{L}
\[
m\left(X,G\right):=\sum_{[c]\in G\backslash X}\left|G_{c}\right|^{-1}\in[0,\infty].
\]

\begin{thm}[{\cite[Thm. 6.80(1)]{L}}]
\label{thm:ec-as-alt-sum}If $m\left(X,G\right)$ is finite then
the sum of $b_{p}^{(2)}\left(X,G\right)$ is finite and, moreover,
\begin{equation}
\chi^{(2)}\left(X,G\right)=\sum_{[c]\in G\backslash X}\left(-1\right)^{\dim c}\left|G_{c}\right|^{-1}.\label{eq:chil2}
\end{equation}
\end{thm}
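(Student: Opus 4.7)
My plan is to reduce the computation of $\chi^{(2)}(X,G)$ to the Euler--Poincar\'e principle applied to the cellular chain complex, where the cellular decomposition makes the stabilizer contributions manifest.

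First I would replace the singular chain complex $C_*^{\sing}(X)$ with the cellular chain complex $C_*^{\mathrm{cell}}(X)$. One knows that for a $G$-CW-complex there is a $G$-equivariant chain homotopy equivalence $C_*^{\sing}(X) \simeq C_*^{\mathrm{cell}}(X)$, hence
\[
H_p^{(2)}(X;G) = H_p\bigl(\N(G)\otimes_{\mathbb{Z}G} C_*^{\mathrm{cell}}(X)\bigr)
\]
and the $L^2$-Betti numbers are unchanged (see e.g.~\cite[Theorem 1.35]{L} for such invariance). The advantage is that, by the $G$-CW hypothesis (Definition \ref{def:G-CW-complex}, which ensures that isotropy groups of open cells act trivially), we get the clean decomposition
\[
C_p^{\mathrm{cell}}(X) \;\cong\; \bigoplus_{[c]\in G\backslash X^{(p)}} \mathbb{Z}[G/G_c]
\]
as left $\mathbb{Z}G$-modules.

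Next I would compute von Neumann dimensions cell by cell. The key identity is that for any subgroup $H\le G$,
\[
\dim_{\N(G)}\!\bigl(\N(G)\otimes_{\mathbb{Z}G}\mathbb{Z}[G/H]\bigr) \;=\; |H|^{-1},
\]
with the convention that this equals $0$ when $H$ is infinite. This follows since $\N(G)\otimes_{\mathbb{Z}G}\mathbb{Z}[G/H]\cong \N(G)\otimes_{\mathbb{Z}H}\mathbb{Z}$, which by the restriction/induction compatibility of von Neumann dimension \cite[Theorem 6.29]{L} has $\N(H)$-dimension equal to $\dim_{\N(H)}(\mathbb{Z}) = |H|^{-1}$ (reading the trivial $\mathbb{Z}H$-module against the unique normalized trace on $\N(H)$; infinite $H$ gives trace $0$). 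Summing over $p$-cell orbits,
\[
\dim_{\N(G)}\!\bigl(\N(G)\otimes_{\mathbb{Z}G} C_p^{\mathrm{cell}}(X)\bigr) \;=\; \sum_{[c]\in G\backslash X^{(p)}} |G_c|^{-1},
\]
and the assumption $m(X,G)<\infty$ guarantees that each of these is finite and their alternating sum is absolutely convergent.

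Finally I would invoke the Euler--Poincar\'e principle for Hilbert $\N(G)$-chain complexes of finite total dimension: if $(D_*,d_*)$ is such a complex, then $\sum (-1)^p \dim_{\N(G)} H_p(D_*) = \sum (-1)^p \dim_{\N(G)} D_p$. This is the standard consequence of additivity of von Neumann dimension in short exact sequences \cite[Theorem 6.7, Lemma 6.53]{L}, applied inductively to the kernel/image decompositions (where one must use that the closure of the image has the same dimension as the image, by the cofinality/continuity properties of $\dim_{\N(G)}$). Combining with the previous step finishes the argument:
\[
\chi^{(2)}(X,G) \;=\; \sum_p (-1)^p \sum_{[c]\in G\backslash X^{(p)}} |G_c|^{-1} \;=\; \sum_{[c]\in G\backslash X} (-1)^{\dim c} |G_c|^{-1}.
\]
The main technical obstacle is the Euler--Poincar\'e step: one must be careful that taking closures of images does not change von Neumann dimensions, and that the interchange of alternating sums with the splitting into kernels and images is legitimate. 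This is precisely where the assumption $m(X,G)<\infty$ (hence the finiteness of all $\dim_{\N(G)}(\N(G)\otimes C_p^{\mathrm{cell}}(X))$ and, a posteriori, of each $b_p^{(2)}(X,G)$) is essential.
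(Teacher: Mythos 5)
The paper offers no proof of this statement; it is quoted directly as \cite[Thm.~6.80(1)]{L}, and your sketch is a correct reconstruction of the argument given there. You pass to the cellular chain complex, use the orbit decomposition $C_p^{\mathrm{cell}}(X)\cong\bigoplus_{[c]\in G\backslash X^{(p)}}\mathbb{Z}[G/G_c]$ together with the identity $\dim_{\N(G)}\bigl(\N(G)\otimes_{\mathbb{Z}G}\mathbb{Z}[G/H]\bigr)=|H|^{-1}$, and invoke the Euler--Poincar\'e principle (additivity of von Neumann dimension in weakly exact sequences); the hypothesis $m(X,G)<\infty$ both justifies the rearrangement of the alternating sum and, via $b_p^{(2)}(X,G)\le\dim_{\N(G)}\bigl(\N(G)\otimes_{\mathbb{Z}G}C_p^{\mathrm{cell}}(X)\bigr)$, yields the claimed finiteness of $\sum_p b_p^{(2)}(X,G)$ --- all exactly as in L\"uck's Section 6.6.
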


Following \cite[Def. 7.1]{L} let $\B_{\infty}$\marginpar{$\protect\B_{\infty}$}
denote the class of groups $G$ for which $b_{p}\left(G\right)=0$
for all $p\in\mathbb{Z}_{\ge0}$. 
\begin{lem}
\label{lem:contractible-with-good-isotropy}If $X$ is a contractible
$G$-\emph{CW-}complex, and for all cells $c$ of $X$ the isotropy
group $G_{c}$ is either finite or in $\B_{\infty}$, then 
\[
b_{p}^{(2)}\left(X,G\right)=b_{p}^{(2)}\left(G\right),\quad p\in\mathbb{Z}_{\ge0}.
\]
Hence if also $m\left(X,G\right)$ is finite then $\chi^{(2)}\left(X,G\right)=\chi^{(2)}\left(G\right)$.
\end{lem}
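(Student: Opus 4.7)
The plan is to reduce the statement to Theorem~\ref{thm:ec-as-alt-sum} by comparing $(X,G)$ against a free $G$-model for $EG$ via a diagonal product construction. Set $Y \stackrel{\mathrm{def}}{=} X \times EG$ with the diagonal $G$-action. Since $EG$ is a free $G$-CW-complex and both $X$ and $EG$ are contractible, $Y$ is a contractible free $G$-CW-complex, i.e.\ a model for $EG$. By $G$-homotopy uniqueness of $EG$ (used in the very definition of $b_p^{(2)}(G)$), one therefore has $b_p^{(2)}(Y,G) = b_p^{(2)}(G)$ for every $p$.

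The main task is to prove $b_p^{(2)}(X,G) = b_p^{(2)}(Y,G)$. First I would consider the $G$-equivariant projection $\pi \colon Y \to X$ together with the skeletal filtration of $X$. This gives an equivariant spectral sequence whose $E^2$-terms are built out of the $L^2$-Betti numbers of the isotropy groups $G_c$ of cells $c$ of $X$: for a single $G$-orbit of a cell $c$ with stabilizer $G_c$, the preimage under $\pi$ is $G$-equivariantly $(G/G_c) \times EG$, and by a Shapiro-type induction principle for $L^2$-Betti numbers (L\"uck, Thm.~6.54), its $q$-th $L^2$-Betti number equals $b_q^{(2)}(G_c)$. Now the hypothesis kicks in: each $G_c$ is either finite (so $b_q^{(2)}(G_c) = |G_c|^{-1} \cdot \delta_{q,0}$) or lies in $\B_\infty$ (so every $b_q^{(2)}(G_c)$ vanishes). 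In either case only the $q=0$ row survives, and a routine identification shows that this $q=0$ row is precisely the cellular $L^2$-chain complex of $(X,G)$. The spectral sequence therefore collapses to the $q=0$ row and yields $b_p^{(2)}(Y,G) = b_p^{(2)}(X,G)$. Combined with the first paragraph, this gives $b_p^{(2)}(X,G) = b_p^{(2)}(G)$ for all $p$.

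For the final sentence, assume further that $m(X,G) < \infty$. By Theorem~\ref{thm:ec-as-alt-sum} this forces $\sum_p b_p^{(2)}(X,G) < \infty$, so $\chi^{(2)}(X,G)$ is defined; by the equality just proved, the same finiteness holds for the $b_p^{(2)}(G)$, hence $\chi^{(2)}(G)$ is also defined, and summing the equalities with signs gives $\chi^{(2)}(X,G) = \chi^{(2)}(G)$.

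The main obstacle is the bookkeeping in the spectral sequence, and in particular the verification of the induction formula $b_q^{(2)}((G/H) \times EG,\, G) = b_q^{(2)}(H)$ when $H$ is infinite: this relies on the behaviour of von Neumann dimension under induction between $\N(H)$ and $\N(G)$, and the hypothesis $H \in \B_\infty$ is exactly what guarantees that all $q > 0$ contributions from such orbits disappear, allowing the collapse. Everything else is a standard packaging of known material from \cite{L}, so there is no genuine novelty in the argument beyond arranging these ingredients in the present context.
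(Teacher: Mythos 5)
Your proof is correct and is essentially the standard argument underlying L\"uck's Theorem 6.54, which the paper's proof simply cites. Replacing $X$ by $Y=X\times EG$ and then collapsing the skeletal spectral sequence using that each isotropy group is either finite or lies in $\B_{\infty}$ (so that all $q>0$ rows vanish and the $q=0$ row is the $L^{2}$-cellular chain complex of $X$, with a finite stabilizer contributing dimension $|G_{c}|^{-1}$ at $q=0$) is exactly how that cited result is established, and the reduction of the Euler characteristic statement to Theorem \ref{thm:ec-as-alt-sum} matches the paper's as well.
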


\begin{proof}
This is \cite[Exercise 6.20]{L}. It can be proved by combining \cite[Thm 6.54 (2) and (3)]{L},
and referring to Theorem \ref{thm:ec-as-alt-sum} for the statement
about Euler characteristics.
\end{proof}
To use Lemma \ref{lem:contractible-with-good-isotropy} we need to
have a source of groups lying in $\B_{\infty}$. The following theorem
is essentially due to Cheeger and Gromov (cf. \cite[Corollary 0.6]{CG}).
The precise statement we need can be deduced from \cite[Theorem 7.2, items (1) and (2)]{L}.
Recall that a discrete group is called \emph{amenable} if it has a
finitely additive left invariant probability measure.
\begin{thm}[Cheeger-Gromov]
\label{thm:infinite-normal-amenable-subgroup-in-B_infty}If $G$
is a discrete group containing a normal infinite amenable subgroup
then $G\in\B_{\infty}$.
\end{thm}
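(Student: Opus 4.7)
The plan is to split the proof into two independent steps: first, to show that every infinite amenable group itself lies in $\B_{\infty}$; second, to show that if $H \triangleleft G$ is normal and $H \in \B_{\infty}$, then $G \in \B_{\infty}$. Both are classical results of Cheeger--Gromov, so my sketch will mirror the standard route through \cite{L}, using only the properties of von Neumann dimension and the F\o{}lner condition.

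For the first step, let $H$ be an infinite amenable discrete group and fix a F\o{}lner sequence $\{F_n\}_{n\ge 1}$ in $H$. I would first reduce to the case where $H$ admits a free contractible $H$-CW-complex of finite type (cf.\ \cite[Thm. 6.54]{L}), arguing by dimension-approximation via a colimit so that the general case follows. With $H$ acting freely on a finite-type model $EH$, the $p$-th chain module in $\N(H)\otimes_{\mathbb{Z}H} C_p^{\sing}(EH)$ is a finite direct sum of copies of $\ell^2(H)$, and I would estimate $b_p^{(2)}(H)$ by testing against the finite-dimensional subspaces spanned by the characteristic functions $\chi_{F_n}$. The F\o{}lner property $|hF_n \triangle F_n|/|F_n| \to 0$ forces every bounded $H$-equivariant projection onto a Hilbert $\N(H)$-submodule to have von Neumann dimension zero unless its image has infinite dimension, and a careful book-keeping of the boundary maps $d_p$ (after compressing by $\chi_{F_n}$) then gives $b_p^{(2)}(H) = 0$ for every $p$, i.e.\ $H \in \B_{\infty}$.

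For the second step, choose a model for $EG$ that is a free contractible $G$-CW-complex $X$. Since $H$ is a subgroup acting freely, $X$ is simultaneously a model for $EH$. The key tool is the Hochschild--Serre-type spectral sequence for $L^2$-homology associated to $1 \to H \to G \to G/H \to 1$, whose $E^2$ page involves the Hilbert $\N(G/H)$-modules built from $H_q^{(2)}(H)$. Alternatively, one can use directly the induction/restriction formula $\dim_{\N(G)}\bigl(\ell^2(G)\otimes_{\mathbb{Z}H} M\bigr) = \dim_{\N(H)}(M)$ for Hilbert $\N(H)$-modules $M$ (see \cite[Thm. 6.54]{L}), applied to each piece of the chain complex $\ell^2(G)\otimes_{\mathbb{Z}G} C_*^{\sing}(X)$ rewritten through $H$. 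The vanishing of all $b_q^{(2)}(H)$ then forces the $E^2$ page (or the corresponding induced modules) to be identically zero, hence $b_p^{(2)}(G)=0$ for all $p$ and $G\in\B_{\infty}$.

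The hard part, in my view, is the second step, and specifically the bookkeeping needed when $G/H$ is infinite or non-amenable: one must control the induced $\N(G)$-dimension of objects arising from $H$-level $L^2$-chain complexes, and the Hochschild--Serre spectral sequence in the $L^2$-setting does not converge as tamely as its algebraic counterpart. One passes through the machinery of weak exactness and cofinal/dimension-flat modules over $\N(G)$ developed in \cite[Ch.\ 6]{L}, which is where the core technical content lies. Once that machinery is in place, the chain of implications is formal.
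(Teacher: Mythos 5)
The paper does not prove this statement; it cites it directly to L\"{u}ck \cite[Theorem 7.2, items (1) and (2)]{L} (and to Cheeger--Gromov \cite{CG}). Your two-step decomposition, (a) infinite amenable implies $\B_{\infty}$ and (b) normal $\B_{\infty}$-subgroup implies $\B_{\infty}$, is exactly the content of those two cited items, so you are following the same route the paper points to.

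One caution about your ``alternative'' argument for step~(b): the naive induction statement you write, that one can simply apply a dimension-preserving induction functor termwise to the chain complex $\N(G)\otimes_{\mathbb{Z}G}C_*(X)$ after viewing it over $H$, cannot be correct as phrased, since it would yield $b_p^{(2)}(G)=b_p^{(2)}(H)$ for an \emph{arbitrary} subgroup $H\le G$, which is false (e.g.\ $\mathbb{Z}\le F_2$). The issue is that $X=EG$ is not of finite type as an $H$-CW-complex once $[G:H]=\infty$, and the boundary maps are $\N(G)$-linear rather than being induced from $\N(H)$-linear maps. Normality of $H$ is essential: the actual proof (as in L\"{u}ck's book) runs through the fibration $BH\to BG\to B(G/H)$ and the associated spectral sequence of $\N(G)$-modules, together with dimension-flatness/cofinality arguments, rather than through a termwise induction identity. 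Your spectral-sequence route and your closing paragraph acknowledging this machinery are the correct formulation; the ``direct'' variant should be dropped or made precise in that language.
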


\subsection{The complex of transverse maps as a $\protect\mcg\left(f\right)$-\emph{CW}-complex}

The stabilizer $\mcg\left(f\right)$ of $f$ in $\mcg\left(\Sigma\right)$
acts on the poset $\T=\T\left(\Sigma,f\right)$ by precomposition:
if $\left[\rho\right]\in\mcg\left(f\right)$ and $\left[g\right]\in\T$
with parameters $\kappa$, then $\left[\rho\right].\left[g\right]=\left[g\circ\rho^{-1}\right]$
is an element of $\T$ with the same $\kappa$: indeed, $g\circ\rho^{-1}$
is a transverse map realizing $f$ with the exact same transversion
points as $g$. This action is obviously an order preserving action:
if $\left[g_{1}\right]\preceq\left[g_{2}\right]$ then $\left[\rho\right].\left[g_{1}\right]\preceq\left[\rho\right].\left[g_{2}\right]$. 

We now show that this action on $\T$ turns its geometric realization
into a $\mcg\left(f\right)$-$CW$-complex, as in Definition \ref{def:G-CW-complex}.
The properties mentioned above of the action of $\mcg\left(f\right)$
on the poset $\T$ guarantee that this is the case for $\left|\T\right|$,
the order complex of $\T$ (see Page \pageref{order complex |T|}
for the definition of $\left|\T\right|$). We claim this is also the
case for the polysimplicial complex $\tps$:
\begin{lem}
\label{lem:|T|-is-a-G-CW-complex}Let $[(\Sigma,f)]\in\surfaces(\wl)$.
Let $\Gamma=\MCG(f)$. The action of $\Gamma$ on $\T=\T(\Sigma,f)$
makes $\tps$ into a $\Gamma$-CW\emph{-}complex.
\end{lem}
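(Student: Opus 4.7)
The plan is to verify the two conditions needed for Definition \ref{def:G-CW-complex}: (a) the action of $\Gamma=\MCG(f)$ on $\tps$ is cellular, and (b) whenever an element $[\rho]\in\Gamma$ stabilizes an open polysimplex of $\tps$ setwise, it fixes that polysimplex pointwise.

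For (a), I would first check that the action $[\rho].[g]\stackrel{\mathrm{def}}{=}[g\circ\rho^{-1}]$ on $\T$ preserves both the parameter tuple $\kappa=(\kappa_{x})_{x\in B}$ and the partial order $\preceq$. The parameters are preserved because $\rho^{-1}$ is a homeomorphism of $\Sigma$ and does not move the transversion points $\{(x,j)\}\subset\wedger$. The order is preserved because ``forgetting points of transversion'' commutes with precomposition by $\rho^{-1}$. Consequently $[\rho]$ sends the closed polysimplex $\overline{\mathrm{polysim}}([g])$ to $\overline{\mathrm{polysim}}([\rho].[g])$, a polysimplex of identical dimensions, and face inclusions are preserved. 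Moreover, the canonical identification $\overline{\mathrm{polysim}}([g])\cong\prod_{x\in B}\Delta_{\kappa_{x}}$ is defined via the labels $(x,j)$ of transversion points: the vertex of $\Delta_{\kappa_{x}}$ labeled $j$ corresponds to the class obtained from $[g]$ by forgetting every $(x,k)$-transversion point with $k\ne j$. Because $\rho^{-1}$ does not relabel transversion points, the action of $[\rho]$ between the two polysimplices is the label-preserving affine homeomorphism. This gives a cellular action on $\tps$.

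For (b), suppose $[\rho]$ stabilizes the open polysimplex $\mathrm{polysim}^{\circ}([g])$ setwise. Then $[\rho].[g]=[g]$ in $\T$, i.e., $g\circ\rho^{-1}$ is isotopic to $g$ relative to $V_{o}$ as a strict transverse map. For each tuple of labels $(j_{x})_{x\in B}\in\prod_{x}[\kappa_{x}]$, let $g_{(j_{x})}$ denote the map obtained from $g$ by forgetting all transversion points except the selected $(x,j_{x})$ for each $x$. Since forgetting transversion points depends only on the values of $g$ on arc/curve preimages in $\wedger$, an isotopy of $g$ rel $V_{o}$ descends to an isotopy of $g_{(j_{x})}$ rel $V_{o}$; hence $[\rho].[g_{(j_{x})}]=[g_{(j_{x})}]$ for every label tuple. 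Thus every vertex of $\overline{\mathrm{polysim}}([g])$ is fixed by $[\rho]$. Since the action on the closed polysimplex is affine (the label-preserving map above) and fixes all vertices, it is the identity on $\overline{\mathrm{polysim}}([g])$, establishing condition (b).

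I expect the main conceptual hurdle to be condition (b): one must be careful that the isomorphism $\overline{\mathrm{polysim}}([g])\cong\prod_{x}\Delta_{\kappa_{x}}$ really is canonical, depending only on the combinatorial labels $(x,j)$ and not on auxiliary choices, so that ``same labels'' on source and target means ``identity map.'' This uses crucially that \textbf{Restriction 3} forces distinct vertices of a polysimplex to represent distinct elements of $\T$ (so the vertices cannot ``collapse'' and the affine map is genuinely the identity rather than some nontrivial simplicial automorphism). As a sanity check, one can alternatively invoke the homeomorphism $|\T|\cong\tps$ of Claim \ref{claim:both realizations of T are homeomorphic}: the order complex $|\T|$ is manifestly a $\Gamma$-CW-complex because $\Gamma$ acts on $\T$ by order-preserving bijections, and any setwise-stabilized chain is pointwise fixed — transferring this via the barycentric-subdivision identification gives (b) again.
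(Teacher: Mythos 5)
Your main argument parallels the paper's: both reduce to showing that an element of $\Gamma$ fixing $[g]\in\T$ must fix the polysimplex $\overline{\mathrm{polysim}}([g])$ pointwise, and both get there by showing the $(x,j)$-labeling of transversion points is preserved. But the step where you assert that ``an isotopy of $g$ rel $V_o$ descends to an isotopy of $g_{(j_x)}$ rel $V_o$'' is exactly where the content lies, and your stated justification does not supply it. The isotopy from $g$ to $g\circ\rho^{-1}$ moves the transversion points around $\wedger$; to ``forget all except the $(x,j_x)$-point at each time $t$'' you must know that ``the $(x,j_x)$-point at time $t$'' is a well-defined continuous choice, which requires the labeling to be preserved throughout the isotopy. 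This holds because, by the definition of the isotopy class of a transverse map, the transversion points remain pairwise disjoint and never cross $o$, so their linear order on each circle of $\wedger$ --- and hence the labels $(x,j)$ --- is preserved. That is the observation the paper makes explicitly, and you should make it too rather than leaning on the vaguer ``depends only on the values of $g$ on arc/curve preimages.''

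Your fallback via $|\T|$ does not work as stated. It is true that $|\T|$ is a $\Gamma$-CW-complex: $\Gamma$ acts by order-preserving bijections, and an order-preserving self-bijection of a finite chain is the identity. But the implication ``the barycentric subdivision of $\tps$ is a $\Gamma$-CW-complex, hence so is $\tps$'' is false in general. Take $\Z/2$ acting by reflection on the unit interval with its minimal CW-structure (two $0$-cells, one $1$-cell): the open $1$-cell is stabilized setwise but acted on by the nontrivial reflection, so this is not a $\Z/2$-CW-complex, even though its barycentric subdivision (adding the midpoint vertex) is one. In the present setting, the scenario one must rule out is precisely that the stabilizer of $[g]$ permutes $\T_{\preceq[g]}$ nontrivially --- e.g.\ swaps two vertices of a $1$-dimensional polysimplex --- and the $|\T|$ picture alone cannot exclude this; ruling it out is the same content as the label-preservation argument above.
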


\begin{proof}
If $\left[\rho\right]\in\Gamma$ fixes $[g]\in\T$ we need to show
$\left[\rho\right]$ cannot permute the faces of $\mathrm{polysim\left(\left[g\right]\right)}$.
But $\left[g\circ\rho^{-1}\right]=\left[g\right]$ means there is
an isotopy of transverse maps between $g$ and $g\circ\rho^{-1}$.
In such an isotopy, the $\sum_{x\in B}\left(\kappa_{x}\left(g\right)+1\right)$
points of transversion in $\wedger$ may move around, but away from
the wedge point $o$, and without collisions. This means that their
order on each circle of $\wedger$ is preserved. In particular, for
every $x\in B$ and $j\in\left[\kappa_{x}\left(g\right)\right]$,
the isotopy takes the $\left(x,j\right)$ point of $g$ to the $\left(x,j\right)$
point of $g\circ\rho^{-1}$, and the collection of $\left(x,j\right)$-arcs/curves
of $g$ to the collection of $\left(x,j\right)$-arcs/curves of $g\circ\rho^{-1}$.
Thus, $\left[\rho\right]$ necessarily preserves every face of $\mathrm{polysim}\left(\left[g\right]\right)$.
\end{proof}
\begin{defn}
\label{def:T_infty}We define $\T_{\infty}=\T_{\infty}(\Sigma,f)$
\marginpar{$\protect\T_{\infty}(\Sigma,f)$} to be the subposet of
$\T=\T\left(\Sigma,f\right)$ consisting of classes of transverse
maps $\left[g\right]$ in $\T$ that do \emph{not} fill $\Sigma$.
\end{defn}

Recall that $\left[g\right]$ fills $\Sigma$ if its $o$-zones and
$z$-zones are all topological discs. This means, in particular, that
the preimage of every transversion point contains only arcs (and no
curves). 

Our notation $\T_{\infty}$ is in analogy to Harer's use of $A_{\infty}$
in \cite{HARERVCD,HARERSTABILITY} for the subcomplex of the arc complex
consisting of arc systems that do not cut the surface into discs;
Harer used this complex in \cite{HARERVCD} to construct a Borel-Serre
type bordification of Teichmüller space. This had previously been
done by Harvey \cite{HARV} using the complex of curves. These bordifications
are closely related to the Deligne-Mumford compactification \cite{DM}
of the moduli space of curves (see \cite[Remark 2.5]{MOND}).

We define $\tips$ to be the polysimplicial subcomplex of $\tps$
consisting of polysimplices in $\T_{\infty}$. It is clear that $\tips$
is indeed a subcomplex of $\tps$ since if the arcs of $\left[g\right]$
do not cut $\Sigma$ into discs then neither do the arcs of $\left[g'\right]$
obtained from $g$ by forgetting points of transversion. 
\begin{lem}
\label{lem:-acts-freely}$\MCG(f)$ acts freely on $\T\setminus\T_{\infty}$.
\end{lem}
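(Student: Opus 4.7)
Suppose $[\rho]\in\MCG(f)$ fixes some $[g]\in\T\setminus\T_{\infty}$, and let me argue $[\rho]=1$. Being in $\T\setminus\T_{\infty}$ means $g$ is filling, so the preimage $A\stackrel{\mathrm{def}}{=}\bigcup_{x\in B,\,j\in[\kappa_x(g)]}g^{-1}((x,j))$ consists only of arcs (no curves), and the complement $\Sigma\setminus A$ is a disjoint union of open discs, each containing at most one point of $V_o$. By the very argument used in the proof of Lemma \ref{lem:|T|-is-a-G-CW-complex}, the isotopy of transverse maps between $g$ and $g\circ\rho^{-1}$ (relative to $V_o$) cannot permute transversion points, so for every $x\in B$ and $j\in[\kappa_x(g)]$ the collections $\rho(g^{-1}((x,j)))$ and $g^{-1}((x,j))$ are isotopic rel $V_o$ through arcs with the induced directions preserved.

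The next step is to upgrade this ``setwise up to isotopy'' statement to the fact that $\rho$ fixes each individual arc of $A$ up to isotopy rel $V_o$. Choose a representative in $[\rho]$ (still called $\rho$) that fixes $\partial\Sigma$ pointwise, so in particular fixes $V_o$; then every arc of $\rho(A)$ shares its endpoints with some arc of $A$. The main obstacle is to exclude a non-trivial permutation of arcs of the same color $(x,j)$ sharing endpoints. If two such arcs $\alpha_1,\alpha_2$ were isotopic rel $V_o$, they would co-bound a bigon $B$. If $B$ contains no marked points of $V_o$, then $B$ is a single zone of $g$ bounded by two $(x,j)$-arcs directed either both into or both out of $B$, contradicting \textbf{Restriction 1}. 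If $B$ contains marked points, I would use that $g$ is filling: $B$ is then tiled by a subfamily of the complementary discs of $A$, and I would trace the alternation of colors prescribed by $g$ across the discs of $B$ to force a contradiction with \textbf{Restrictions 1--2} (for instance, the boundary arc of $B$ adjacent to the innermost marked point of $V_o$ would have to co-bound a monogon or a same-color bigon with another boundary piece of $B$). Consequently $\rho$ permutes neither colors nor individual arcs, and each arc is fixed up to isotopy rel $V_o$.

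Having adjusted $\rho$ by an ambient isotopy rel $\partial\Sigma$ so that it preserves each arc of $A$ setwise and preserves its direction (automatic from orientation-preservation together with the fixed direction induced by the orientation of the $x$-circle in $\wedger$), a further isotopy rel $\partial\Sigma$ makes $\rho$ restrict to the identity on $A$. The restriction of $\rho$ to each complementary disc $D$ of $\Sigma\setminus A$ is then a homeomorphism of $D$ mapping $D$ to some disc $D'$; since $\partial D$ contains a nonempty subsegment of $\partial\Sigma$ (every boundary component of $\Sigma$ meets some complementary disc because $V_o\subset\partial\Sigma$) fixed pointwise by $\rho$, one has $D=D'$ and the Alexander lemma gives an isotopy of $\rho|_D$ to $\id_D$ rel $\partial D$. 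Assembling these disc-by-disc isotopies yields an isotopy of $\rho$ to $\id_\Sigma$ rel $\partial\Sigma$, so $[\rho]=1$ in $\MCG(\Sigma)$, as required.
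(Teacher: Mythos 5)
Your proof is correct and follows the same strategy as the paper: first show $\rho$ fixes every arc of $g$ up to isotopy rel $\partial\Sigma$, then restrict to the complementary discs and apply the Alexander lemma. However, the entire bigon detour in your second paragraph addresses a situation that never arises, and can be deleted. For a filling transverse map obeying \textbf{Restriction 2}, each interval of $\partial\Sigma\setminus V_{o}$ labelled by $x^{\pm1}$ carries \emph{exactly one} endpoint of an $(x,j)$-arc for each $j\in[\kappa_{x}]$. Consequently two distinct $(x,j)$-arcs end on four pairwise distinct intervals, and arcs of different colors end at distinct points of their common intervals; so no two arcs of $A$ share an endpoint at all. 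Since $\rho$ fixes $\partial\Sigma$ pointwise, $g\circ\rho^{-1}$ and $g$ agree on $\partial\Sigma$, so after the ambient isotopy taking $\rho(A)$ to $A$, the permutation of arcs induced by $\rho$ sends each arc to the unique arc of $A$ with the same endpoints, namely itself. The upgrade from ``setwise up to isotopy'' to ``arc by arc'' is thus immediate, and the sketchy ``if $B$ contains marked points \ldots'' case you flag as incomplete is vacuous. One further small quibble: your parenthetical ``every boundary component of $\Sigma$ meets some complementary disc because $V_{o}\subset\partial\Sigma$'' is pointing the wrong way; what you actually need is that \emph{every} complementary disc $D$ meets $\partial\Sigma$ along a segment, which holds because each such disc is a zone of $g$, every zone has at least one bounding arc (as we assume $w_{i}\neq 1$), and a half-disc neighborhood of any arc endpoint on $\partial\Sigma$ exhibits a segment of $\partial\Sigma$ in each of the two adjacent zones.
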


\begin{proof}
If $\left[\rho\right]$ in $\MCG(f)$ fixes an isotopy class $\left[g\right]$
of filling transverse maps then we can assume $\rho$ fixes all the
arcs of $g$, so restricts to mapping classes on each of the zones
of $g$, which are all discs. The Alexander Lemma \cite[Lemma 2.1]{FM}
implies these mapping classes must be trivial, so $\rho$ is homotopic
to the identity on each zone of $g$, hence overall.
\end{proof}
So the isotropy groups $\mcg\left(f\right)_{\left[g\right]}$ are
trivial for $\left[g\right]\in\T\setminus\T_{\infty}$. The following
lemma shows that for any other element of $\T$, the isotropy groups
are not only infinite, but also have vanishing $L^{2}$-Betti numbers:
\begin{lem}
\label{lem:isoptropy-of-X-infinity}Let $\Gamma=\MCG(f)$. If $\left[g\right]\in\T_{\infty}$
then the isotropy group $\Gamma_{\left[g\right]}$ of $\left[g\right]$
are in $\B_{\infty}$.
\end{lem}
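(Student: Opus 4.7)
The plan is to verify the hypothesis of Theorem~\ref{thm:infinite-normal-amenable-subgroup-in-B_infty} by exhibiting an infinite, normal, amenable subgroup of $\Gamma_{[g]}$.

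First, since $[g]\in\T_{\infty}$ does not fill $\Sigma$, a standard argument shows there is an essential simple closed curve $\gamma\subset\Sigma$ disjoint from the arcs and curves of $g$: if no such curve existed, then every essential simple closed curve in $\Sigma$ would meet the $1$-skeleton of $g$, and a curve contained in the interior of a non-disc zone would contradict this. For any such $\gamma$ I claim $T_\gamma\in\Gamma_{[g]}$: the twist $T_\gamma$ can be supported in an annular collar $A$ of $\gamma$ contained in a single zone $Z$ of $g$, and $g|_Z$ takes values in a contractible subset of $\wedger$ (either the star neighborhood $U_o$ of the wedge point, or one of the open intervals $I_{x,j}$). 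Consequently $g$ and $g\circ T_\gamma^{-1}$ agree off $A$ and are joined by a straight-line homotopy rel $\partial A$ in the contractible target, so they define the same class in $\T$.

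To build the required subgroup, let $Z_1,\ldots,Z_k$ be the (finitely many) non-disc zones of $g$. These are permuted by $\Gamma_{[g]}$, since being non-disc is a topological invariant preserved by homeomorphisms. For each $Z_i$ and each boundary circle of $Z_i$, take a slight push-off into the interior of $Z_i$; this yields a collection of pairwise disjoint simple closed curves in $\Sigma$ disjoint from the arcs and curves of $g$. Let $\mathcal{M}$ consist of those push-offs that are essential in $\Sigma$. Because $\Gamma_{[g]}$ permutes non-disc zones and permutes boundary components within each zone, $\mathcal{M}$ is $\Gamma_{[g]}$-invariant as a set of isotopy classes. Setting $N=\langle T_c : c\in\mathcal{M}\rangle$, the pairwise disjointness of $\mathcal{M}$ makes $N$ free abelian (hence amenable); each twist has infinite order; and $\phi T_c\phi^{-1}=T_{\phi(c)}$ with $\phi(c)\in\mathcal{M}$, so $N\trianglelefteq\Gamma_{[g]}$. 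Theorem~\ref{thm:infinite-normal-amenable-subgroup-in-B_infty} then gives $\Gamma_{[g]}\in\B_\infty$.

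The main obstacle is the nonemptiness of $\mathcal{M}$. The plan is to show that some non-disc $Z_i$ possesses an essential push-off by contradiction: if every boundary circle of $Z_i$ were null-homotopic in $\Sigma$, each would bound a disc on the side opposite $Z_i$, and gluing these discs to $Z_i$ would produce a closed subsurface of $\Sigma$ with empty boundary, forcing it to be a connected component of $\Sigma$. This contradicts the hypothesis that $\Sigma$ has no closed connected components. The case analysis here is the one delicate point: the boundary circles of a zone are built from arcs of $g$, $g$-curves, and pieces of $\partial\Sigma$, so the discs hypothetically glued in may straddle $\partial\Sigma$; one must check that the union $Z_i\cup(\text{discs})$ really does form a closed subsurface, which uses that the boundary circles of $Z_i$ are genuine simple closed curves in $\Sigma$ rather than arcs. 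Once this is established, the rest of the argument is routine.
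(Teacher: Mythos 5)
Your proof is essentially the same as the paper's: in both, one takes the zones of $g$, notes that Dehn twists in curves parallel to zone boundary components lie in $\Gamma_{[g]}$ because $g$ maps each zone into a contractible piece of $\wedger$, observes that $\Gamma_{[g]}$ permutes zones and hence permutes these curves, obtains a normal free abelian $N\trianglelefteq\Gamma_{[g]}$, notes $N$ is infinite because some non-disc zone contributes an essential curve, and invokes the Cheeger--Gromov theorem. The structure, key lemma, and citation all coincide; the paper simply states the non-triviality of $N$ more tersely than you do.

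The one place where your write-up goes astray is in the fleshing-out of the non-emptiness of $\mathcal{M}$. Your claim that each boundary push-off, if null-homotopic, ``would bound a disc on the side opposite $Z_i$'' is not justified and is in fact false in general. Note that a disc $D$ in $\Sigma$ bounded by a curve in the interior cannot meet $\partial\Sigma$ at all (since $\partial D$ is a single circle and $D$ has no further boundary); so if the corresponding boundary component $\delta$ of $Z_i$ contains pieces of $\partial\Sigma$, the side of the push-off towards $\delta$ cannot be a disc, and the disc must lie on the \emph{core} side of $Z_i$. Thus the ``delicate point'' you identify — discs ``straddling'' $\partial\Sigma$ — is a non-issue (it simply cannot occur); the real issue, which you do not resolve, is that some of the hypothetical discs lie on the core side rather than the outward side, so one cannot simply glue them onto $Z_i$. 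The argument can be completed with an innermost-disc argument among the pairwise-disjoint push-offs, but as written there is a gap, and it is not ``routine'' in the way you describe. To be fair, the paper's own proof asserts this non-triviality in a single clause without proof; but your attempt to prove it contains a wrong intermediate step, not merely an omitted one.
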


\begin{proof}
Fix a representative transverse map $g$ for $\left[g\right]$. Let
$\C$ denote a set of disjoint simple closed curves, where for every
zone of $g$ we add a simple closed curve parallel to every boundary
component of that zone to $\C$, and we think of the curves as drawn
inside the zones of $g$ they come from. If $g$ contains curves in
the preimages of points of transversion, then this process can add
to $\C$ multiple copies of isotopy classes of simple closed curves,
but this does not matter. 

Because $\C$ is drawn in the zones of $g$, then a Dehn twist in
any element of $\C$ belongs to $\Gamma$. Let $N$ be the subgroup
of $\Gamma$ generated by Dehn twists in elements of $\C$. This group
is isomorphic to $\Z^{r}$ for some $r\geq0$ because the curves in
${\cal C}$ are disjoint. In fact, $r\ge1$ since by assumption, some
zone of $g$ is not a topological disc, and hence has a boundary component
which does not bound a disc, so gives rise to a non-trivial Dehn twist.
To see that $N$ is normal in $\Gamma_{\left[g\right]}$, note that
any mapping class in $\Gamma_{\left[g\right]}$ can be taken to permute
the zones of $g$. Hence $\Gamma_{\left[g\right]}$ permutes the isotopy
classes of curves in $\C$. Therefore the conjugation by $\left[\rho\right]\in\Gamma_{\left[g\right]}$
of any Dehn twist in an element of $\C$ is another Dehn twist in
an element of $\C$. 

It was proved by von Neumann \cite{VN} that $\mathbb{Z}^{r}$ is
amenable, hence $\Gamma_{\left[g\right]}$ contains a normal infinite
amenable subgroup. The statement of the lemma now follows from Theorem
\ref{thm:infinite-normal-amenable-subgroup-in-B_infty}.
\end{proof}

\subsection{Proof of Theorem \ref{thm:EC of a single (S,f)}\label{subsec:Proof-of-Theorem which yields the main one}}

Fix $[(\Sigma,f)]\in\surfaces(\wl)$ and let $\Gamma=\mcg\left(f\right)$.
Recall that Theorem \ref{thm:EC of a single (S,f)} states that $\chi^{\left(2\right)}\left(\Gamma\right)$
is well-defined and is given by a finite alternating sum over the
set\linebreak{}
$\matchr^{*}\left(\wl;\Sigma,f\right)$ of matchings of the letters
of $\wl$. There is a natural map from elements of $\T\setminus\T_{\infty}$
to $\matchr^{*}\left(\wl;\Sigma,f\right)$:
\begin{defn}
\label{def:match map}Define a map \marginpar{$\widetilde{\protect\matchmap}$}
\[
\widetilde{\matchmap}\colon\T\setminus\T_{\infty}\to\matchr^{*}\left(\wl\right)
\]
as follows. The $\left(x,j\right)$-arcs of $\left[g\right]\in\T\setminus\T_{\infty}$
define a matching $\sigma_{x,j}$ between the instances of $x^{+1}$
in $\wl$ and the instances of $x^{-1}$. Define $\widetilde{\matchmap}\left(\left[g\right]\right)$
to be the element\linebreak{}
$\sigma\in\matchr^{\kappa\left(g\right)}\left(\wl\right)$ consisting
of the matchings $\left\{ \sigma_{x,j}\right\} _{x\in B,j\in\left[\kappa_{x}\right]}$. 
\end{defn}

We remark that if $\sigma=\widetilde{\matchmap}\left(\left[g\right]\right)$
then indeed $\sigma_{x,j}\ne\sigma_{x,j+1}$ for $x\in B,j<\kappa_{x}$:
this is guaranteed by \textbf{Restriction 3} and the fact that the
arcs of $g$ cut $\Sigma$ into discs.
\begin{lem}
\label{lem:bijs:props}The map $\widetilde{\matchmap}$ descends to
a bijection
\begin{equation}
\matchmap\colon~~\MCG(f)\backslash\left(\T\setminus\T_{\infty}\right)\stackrel{\cong}{\longrightarrow}\matchr^{*}\left(\wl;\Sigma,f\right).\label{eq:matchmap on quotient}
\end{equation}
\end{lem}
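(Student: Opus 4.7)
The plan is to establish the bijection in three stages: well-definedness of $\widetilde{\matchmap}$ on $\MCG(f)$-orbits, surjectivity onto $\matchr^*(\wl;\Sigma,f)$, and injectivity. First I would check that $\widetilde{\matchmap}$ is constant on orbits: if $[\rho]\in\MCG(f)$ and $[g]\in\T\setminus\T_\infty$, then the $(x,j)$-arcs of $g\circ\rho^{-1}$ are precisely the $\rho$-images of the $(x,j)$-arcs of $g$. Because representatives of $\MCG(\Sigma)$ fix $\partial\Sigma$ pointwise, each arc endpoint sits on the same interval of $\partial\Sigma\setminus V_o$, hence in the same labeled letter of $\wl$, so the matching data is unchanged. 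In particular $\widetilde{\matchmap}([g\circ\rho^{-1}])=\widetilde{\matchmap}([g])$, and $\matchmap$ is well defined on the quotient.

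Next I would show that the image of $\widetilde{\matchmap}$ lands in $\matchr^*(\wl;\Sigma,f)$, by extracting from any $[g]\in\T\setminus\T_\infty$ an equivalence $(\Sigma,f)\sim(\Sigma_\sigma,f_\sigma)$ with $\sigma=\widetilde{\matchmap}([g])$. Since $g$ is filling and strict, the 1-skeleton consisting of $\partial\Sigma$ (with marked points $V_o$ plus arc endpoints) together with the colored arcs cuts $\Sigma$ into topological discs whose combinatorial boundary-labeling matches, cell-by-cell, the 1-skeleton of $\Sigma_\sigma$ constructed in Definition \ref{def:surface from matchings}. I would build an orientation-preserving homeomorphism $\Phi\colon\Sigma_\sigma\to\Sigma$ that carries this canonical CW-structure to the one prescribed by $g$ on $\Sigma$, sending boundary marked points to boundary marked points and color-$(x,j)$ matching-edges to color-$(x,j)$ arcs. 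Then $g\circ\Phi$ and $f_\sigma$ are two transverse maps on $\Sigma_\sigma$ with identical arc systems and the same boundary behavior; on each closed disc-shaped zone they agree up to homotopy rel boundary (since any nullhomotopic loop into $\wedger$ extends uniquely up to homotopy over a disc). Thus $f\circ\Phi\simeq g\circ\Phi\simeq f_\sigma$ rel marked points, exhibiting $\sigma\in\matchr^*(\wl;\Sigma,f)$.

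For surjectivity I would invert the construction: given $\sigma\in\matchr^*(\wl;\Sigma,f)$, choose a homeomorphism $\Phi_\sigma\colon\Sigma_\sigma\to\Sigma$ realizing $(\Sigma_\sigma,f_\sigma)\sim(\Sigma,f)$, and define $g_\sigma\stackrel{\mathrm{def}}{=}f_\sigma\circ\Phi_\sigma^{-1}$. Since $f_\sigma$ is strict, filling, and transverse, so is $g_\sigma$; since $f_\sigma\simeq f\circ\Phi_\sigma$ rel marked points we get $g_\sigma\simeq f$, so $[g_\sigma]\in\T\setminus\T_\infty$, and tracking how $\Phi_\sigma$ identifies the boundary shows $\widetilde{\matchmap}([g_\sigma])=\sigma$. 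For injectivity, suppose $\widetilde{\matchmap}([g_1])=\widetilde{\matchmap}([g_2])=\sigma$. Applying the construction from the previous step to both, I obtain homeomorphisms $\Phi_1,\Phi_2\colon\Sigma_\sigma\to\Sigma$ with $g_i\circ\Phi_i\simeq f_\sigma$, arranged (after a boundary-preserving isotopy) to agree on $\partial\Sigma_\sigma$. Then $\rho\stackrel{\mathrm{def}}{=}\Phi_1\circ\Phi_2^{-1}$ fixes $\partial\Sigma$ pointwise, is orientation-preserving, and satisfies $f\circ\rho\simeq g_1\circ\rho\simeq g_2\simeq f$, so $[\rho]\in\MCG(f)$; moreover $g_2\simeq f_\sigma\circ\Phi_2^{-1}\simeq g_1\circ\rho$, so $[g_2]=[\rho^{-1}]\cdot[g_1]$.

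The main technical obstacle is the construction of the cellular homeomorphism $\Phi\colon\Sigma_\sigma\to\Sigma$ in the surjectivity onto $\matchr^*(\wl;\Sigma,f)$ step, together with verifying that this $\Phi$ can be chosen to respect the marked points on both boundaries and to realize $f\circ\Phi\simeq f_\sigma$ rel $V_o$. The existence of a cellular homeomorphism between two surfaces with the same combinatorial CW-structure is classical, but promoting it to an orientation-preserving homeomorphism that intertwines the two transverse maps up to homotopy rel boundary requires combining the Alexander-type uniqueness of disc-extensions (as already invoked in Lemma \ref{lem:-acts-freely}) with careful bookkeeping of the boundary identifications that encode the matching data. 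Everything else in the argument reduces to transporting this single equivalence under the $\MCG(f)$-action.
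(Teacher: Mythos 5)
Your proposal is correct and follows essentially the same route as the paper: both proofs hinge on extracting from any filling $[g]$ a homeomorphism $\Sigma\to\Sigma_\sigma$ (you call it $\Phi^{-1}$, the paper calls it $\rho$) carrying the transverse map $g$ to $f_\sigma$, then using this single construction to read off well-definedness, surjectivity, and injectivity. The paper compresses the construction of this homeomorphism into ``it is clear ... recall Definitions \ref{def:surface from matchings} and \ref{def:f_sigma},'' whereas you unpack the cell-by-cell matching of CW-structures and invoke the uniqueness of disc-extensions rel boundary; this is the right level of detail, and you have correctly identified that this cellular-homeomorphism step is the only place real work is done. Your injectivity verification ($\rho=\Phi_1\circ\Phi_2^{-1}$, checking $f\circ\rho\simeq f$ and $g_1\circ\rho\simeq g_2$) is the same argument the paper gestures at, spelled out. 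No gap.
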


\begin{proof}
It is obvious that $\widetilde{\matchmap}$ is invariant under the
action of $\mcg\left(f\right)$, hence $\matchmap$ is well defined.
Since every element $\left[g\right]\in\T\setminus\T_{\infty}$ fills
$\Sigma$ (its arcs cut $\Sigma$ into discs), it is clear that $\left(\Sigma,f\right)\sim\left(\Sigma_{\sigma},f_{\sigma}\right)$
where $\sigma=\widetilde{\matchmap}\left(\left[g\right]\right)$,
using a homeomorphism $\rho\colon\Sigma\to\Sigma_{\sigma}$ taking
$g$ to the transverse map $f_{\sigma}$ (so $g\circ\rho^{-1}$ and
$f_{\sigma}$ are isotopic as transverse maps - recall Definitions
\ref{def:surface from matchings} and \ref{def:f_sigma}). This shows
$\left(i\right)$ that $\widetilde{\matchmap}\left(\left[g\right]\right)$
indeed belongs to $\matchr^{*}\left(\wl;\Sigma,f\right)$ (and not
only to $\matchr^{*}\left(\wl\right)$), and $\left(ii\right)$ that
if $\widetilde{\matchmap}\left(\left[g_{1}\right]\right)=\widetilde{\matchmap}\left(\left[g_{2}\right]\right)$
then $\left[g_{1}\right]$ and $\left[g_{2}\right]$ are in the same
$\mcg\left(f\right)$-orbit of $\T\setminus\T_{\infty}$, hence (\ref{eq:matchmap on quotient})
is injective. 

Finally, to see (\ref{eq:matchmap on quotient}) is surjective, notice
that for every $\sigma\in\matchr^{*}\left(\wl;\Sigma,f\right)$, if
$\rho$ is the homeomorphism showing the equivalence of $\left(\Sigma,f\right)\sim\left(\Sigma_{\sigma},f_{\sigma}\right)$
as above, then $\left[f_{\sigma}\circ\rho\right]\in\T\setminus\T_{\infty}$,
and its image through $\widetilde{\matchmap}$ is $\sigma$.
\end{proof}
It follows from Claim \ref{claim:new formula finite sum for every exponent}
that $\matchr^{*}\left(\wl;\Sigma,f\right)$ is finite, hence:
\begin{cor}
\label{cor:finitely many orbits of filling cells}There are finitely
many $\MCG(f)$-orbits in $\T\setminus\T_{\infty}$.
\end{cor}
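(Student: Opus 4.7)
The plan is to combine Lemma \ref{lem:bijs:props} with Claim \ref{claim:new formula finite sum for every exponent} to obtain the corollary essentially for free. Lemma \ref{lem:bijs:props} already gives a bijection
\[
\matchmap\colon\MCG(f)\backslash(\T\setminus\T_\infty)\xrightarrow{\cong}\matchr^*(\wl;\Sigma,f),
\]
so finiteness of the orbit set reduces to finiteness of the target set of matchings.

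Next I would observe that the target $\matchr^*(\wl;\Sigma,f)$ is confined to a single value of the Euler characteristic. Indeed, by definition of $\matchr^*(\wl;\Sigma,f)$, every $\sigma$ in this set satisfies $(\Sigma_\sigma,f_\sigma)\sim(\Sigma,f)$; in particular, $\Sigma_\sigma$ is homeomorphic to $\Sigma$, so $\chi(\sigma)=\chi(\Sigma_\sigma)=\chi(\Sigma)$ is a fixed integer determined by $(\Sigma,f)$.

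Setting $\chi_0=\chi(\Sigma)$, Claim \ref{claim:new formula finite sum for every exponent} then yields that there are only finitely many $\sigma\in\matchr^*(\wl)$ with $\chi(\sigma)=\chi_0$. A fortiori, $\matchr^*(\wl;\Sigma,f)\subseteq\matchr^*(\wl)$ is finite, and transporting this across the bijection $\matchmap$ finishes the argument.

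There is essentially no obstacle here, as all the substantive work has been done in the construction of $\widetilde{\matchmap}$ and the $\chi$-stratification of $\matchr^*(\wl)$; the corollary is a direct corollary in the literal sense. The only thing to keep in mind is that the equivalence relation defining $\surfaces(\wl)$ is exactly strong enough to guarantee the Euler characteristic is constant on $\matchr^*(\wl;\Sigma,f)$, which is what lets Claim \ref{claim:new formula finite sum for every exponent} apply.
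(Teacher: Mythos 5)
Your argument is correct and is essentially the same as the paper's: the paper too derives the corollary by combining the bijection from Lemma \ref{lem:bijs:props} with the finiteness of $\matchr^{*}\left(\wl;\Sigma,f\right)$, which is recorded right before the corollary and which rests (as you note) on the fact that $\chi\left(\sigma\right)=\chi\left(\Sigma\right)$ for all $\sigma$ in this set together with Claim \ref{claim:new formula finite sum for every exponent}. Your explicit justification of why the Euler characteristic is constant on $\matchr^{*}\left(\wl;\Sigma,f\right)$ is a small but welcome elaboration of a step the paper leaves implicit.
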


We can now prove Theorem \ref{thm:EC of a single (S,f)}.
\begin{proof}[Proof of Theorem \ref{thm:EC of a single (S,f)}]
The polysimplicial complex $\tps$ is a $\Gamma$-$CW$-complex for\linebreak{}
$\Gamma=\mcg\left(f\right)$ by Lemma \ref{lem:|T|-is-a-G-CW-complex}.
The isotropy groups of $\Gamma$ in its action on $\tps$ are either
trivial if $\left[g\right]\in\T\setminus\T_{\infty}$ (Lemma \ref{lem:-acts-freely})
or infinite if $\left[g\right]\in\T_{\infty}$ (Lemma \ref{lem:isoptropy-of-X-infinity}).
Since $\Gamma\backslash\left(\T\setminus\T_{\infty}\right)$ is finite
(Corollary \ref{cor:finitely many orbits of filling cells}), we have
that 
\[
m\left(\tps,\Gamma\right)=\sum_{\left[g\right]\in\Gamma\backslash\T}\left|\Gamma_{[g]}\right|^{-1}=\sum_{\left[g\right]\in\Gamma\backslash\left(\T\setminus\T_{\infty}\right)}\left|\Gamma_{[g]}\right|^{-1}
\]
is finite. From Theorem \ref{thm:ec-as-alt-sum} we deduce that $\chi^{\left(2\right)}\left(\tps,\Gamma\right)$
is well defined and given by
\begin{eqnarray*}
\chi^{\left(2\right)}\left(\tps,\Gamma\right) & = & \sum_{\left[g\right]\in\Gamma\backslash\T}\left(-1\right)^{\dim\left(\mathrm{polysim}\left[g\right]\right)}\left|\Gamma_{[g]}\right|^{-1}=\sum_{\left[g\right]\in\Gamma\backslash\left(\T\setminus\T_{\infty}\right)}\left(-1\right)^{\left|\kappa\left(g\right)\right|}\left|\Gamma_{[g]}\right|^{-1}\\
 & = & \sum_{\left[g\right]\in\Gamma\backslash\left(\T\setminus\T_{\infty}\right)}\left(-1\right)^{\left|\kappa\left(g\right)\right|}=\sum_{\sigma\in\matchr^{*}\left(\wl;\Sigma,f\right)}\left(-1\right)^{\left|\kappa\left(\sigma\right)\right|},
\end{eqnarray*}
where the last equality follows from Lemma \ref{lem:bijs:props},
as the bijection maps the orbit of $\left[g\right]\in\T\setminus\T_{\infty}$
to a set of matchings $\sigma$ with $\kappa\left(\sigma\right)=\kappa\left(g\right)$.

Finally, Theorem \ref{thm:X contractible} and Lemmas \ref{lem:-acts-freely}
and \ref{lem:isoptropy-of-X-infinity} show that the assumptions of
Lemma \ref{lem:contractible-with-good-isotropy} hold for the action
of $\Gamma=\mcg\left(f\right)$ on $\tps$. As $m\left(\tps,\Gamma\right)$
is finite, we conclude that 
\[
\chi^{\left(2\right)}\left(\Gamma\right)=\chi^{\left(2\right)}\left(\tps,\Gamma\right)=\sum_{\sigma\in\matchr^{*}\left(\wl;\Sigma,f\right)}\left(-1\right)^{\left|\kappa\left(\sigma\right)\right|}.
\]
\end{proof}
This completes the proof of Theorem \ref{thm:EC of a single (S,f)},
and hence of our main Theorem \ref{thm:main} and of Theorem \ref{thm:stabilizers have L2-EC}.

\subsection{Incompressible maps and the proof of Theorem \ref{thm:K(g,1) for incompressible}\label{subsec:Incompressible-maps}}
\begin{defn}[{\cite[Page 247]{BROWN}}]
\label{def:chi(G)}If $G$ is a discrete group and $X$ is a $G$-\emph{CW}-complex
such that $G$ acts freely on $X$, $X$ is contractible, and $G\backslash X$
is a finite \emph{CW-}complex, then one defines the \emph{Euler characteristic
of $G$ }to be
\[
\chi(G)\stackrel{\mathrm{def}}{=}\chi(G\backslash X)
\]
where the right hand side is the topological Euler characteristic.
Since $G\backslash X$ is a $K(G,1)$-space for $G$, hence unique
up to weak homotopy equivalence, this definition does not depend on
$X$.
\end{defn}

Recall from Definition \ref{def:null-curve-incompressible} that $[(\Sigma,f)]\in\surfaces(\wl)$
is called incompressible if it admits no null-curves.
\begin{lem}
\label{lem:incompressible iff T_infty empty}$\left[\left(\Sigma,f\right)\right]$
is incompressible if and only if $\T_{\infty}\left(\Sigma,f\right)$
is empty.
\end{lem}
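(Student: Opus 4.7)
The plan is to prove each implication separately. For the forward direction (incompressible $\Rightarrow \T_\infty = \emptyset$), I will show the contrapositive: given $[g] \in \T_\infty$, I produce a null-curve of $(\Sigma,f)$. Since $g$ does not fill $\Sigma$, either (A) some preimage $g^{-1}((x,j))$ contains a simple closed curve, or (B) all such preimages consist only of arcs but some zone $Z$ of $g$ has $\overline{Z}$ not a topological disc. In Case (A), take a curve $c \subseteq g^{-1}((x,j))$; I claim $c$ is essential in $\Sigma$, for otherwise an innermost such disc $D$ (whose interior contains no other curves of $g$) must be disjoint from $\partial \Sigma$ --- any point of $\partial \Sigma$ interior to $D$ would have incompatible local models in $D$ versus $\Sigma$ --- so $D$ contains no arcs of $g$ either (their endpoints lie on $\partial \Sigma$), making $D^{\circ}$ a zone bounded by the single curve $\partial D$ with no marked points, in violation of Restriction~1. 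Hence $c$ is essential, and because $g(c)$ is a single point, $c$ is a null-curve contradicting incompressibility.

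In Case (B), I will find a simple closed curve $\gamma$ in the interior of $Z$ that is essential in $\Sigma$; since $g(\gamma)$ lies in a contractible subset of $\wedger$ (either $\{o\}$ or some interval $I_{x,j}$), such a $\gamma$ is automatically a null-curve. If $\overline{Z}$ has positive genus, I take $\gamma$ non-separating in $\overline{Z}$; cutting $\Sigma$ along $\gamma$ stays connected (the complement of $\overline{Z}$ attaches to $\overline{Z}$ along the arcs of $\partial Z \setminus \partial \Sigma$), so $\gamma$ is non-separating, hence essential, in $\Sigma$. If $\overline{Z}$ has genus zero with $k \geq 2$ boundary components $c_1, \ldots, c_k$, then because there are no curves in preimages of $g$ and every $w_i \neq 1$ (so every boundary component of $\Sigma$ is traversed by arc endpoints), each $c_i$ contains at least one segment of $\partial \Sigma$. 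I take $\gamma$ in the interior of $Z$ separating $c_1$ from the remaining $c_i$'s; if $\gamma$ bounded a disc $D \subseteq \Sigma$, then again $D \cap \partial \Sigma = \emptyset$, yet one of the two components of $\overline{Z} \setminus \gamma$ must lie inside $D^{\circ}$ (they lie on opposite sides of $\gamma$ in $\Sigma$), and whichever does contains some $c_i$ --- forcing $D$ to contain $\partial \Sigma$-segments, a contradiction.

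For the converse ($\T_\infty = \emptyset \Rightarrow$ incompressible, again by contrapositive), given a null-curve $\gamma$ I construct a simple closed null-arc $\omega$ based at some $v \in V_o$: join $v$ to a point of $\gamma$ by an arc $\alpha$ whose interior avoids $\gamma$ and $\partial \Sigma$, and take $\omega$ to be a simple closed curve through $v$ obtained by pushing off $\alpha \cup \gamma$ to one side. Then $\omega$ is freely homotopic to $\gamma$ and so non-nullhomotopic in $\Sigma$, while $f_*(\omega) = 1$ because the loop is conjugate to $f(\alpha) \cdot f(\gamma) \cdot f(\alpha)^{-1} = 1$ in $\pi_1(\wedger, o)$. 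Setting $\Omega = \{\omega\}$, Proposition~\ref{prop:deformation retract for null-arc system} yields $\T_\Omega \neq \emptyset$. But any $[g] \in \T_\Omega$ has $\omega$ contained in some $o$-zone of $g$; were $g$ filling, this $o$-zone would be a topological disc, forcing $\omega$ to be nullhomotopic in $\Sigma$ --- impossible. Hence $[g] \in \T_\infty$.

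The most delicate step will be the genus-zero sub-case of Case~(B) in the forward direction. The crucial ingredient is that, once Case~(A) has been handled, the absence of curves in preimages of $g$ forces every boundary component of $\overline{Z}$ to meet $\partial \Sigma$; this is precisely what prevents a separating curve in the interior of $Z$ from bounding a disc in $\Sigma$ and thereby delivers the essential null-curve needed to contradict incompressibility.
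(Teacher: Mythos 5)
Your proof is correct, and it follows the same overall strategy as the paper — both directions by contrapositive — but with two genuine differences in execution. For $\mathcal{T}_\infty\neq\emptyset \Rightarrow$ compressible, the paper's proof is a single terse assertion that the arcs and curves of a non-filling $g$ are ``disjoint from some essential simple closed curve''; you flesh this out with the curves/arcs-only dichotomy, and in Case~(A) you take a cleaner route: the $(x,j)$-curve $c$ of $g$ is itself the null-curve (since $g(c)$ is a point), with essentiality verified by an innermost-disc argument against Restriction~1, rather than producing a curve disjoint from $g$. For compressible $\Rightarrow\mathcal{T}_\infty\neq\emptyset$, the paper starts from an arbitrary element of $\T$ and surgers it along the null-curve ``similarly to the proof of Proposition~\ref{prop:deformation retract for null-arc system}''; you instead convert the null-curve $\gamma$ into a closed null-arc $\omega$ by the standard lollipop construction based at a point of $V_o$, apply Proposition~\ref{prop:deformation retract for null-arc system} directly to $\Omega=\{\omega\}$ to get $\T_\Omega\neq\emptyset$, and observe that any element of $\T_\Omega$ is non-filling because its $o$-zone containing $\omega$ cannot be a disc. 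This reuse of the already-proved proposition replaces the paper's informal appeal to a ``similar'' surgery argument with an actual citation, at the cost of invoking the heavier deformation-retract machinery where a local $H$-move argument would do. Both routes are sound; yours is more self-contained in both directions.
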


\begin{proof}
If $\left(\Sigma,f\right)$ admits a null-curve $\gamma$, one can
start with an arbitrary element $\left[g\right]$ of $\T$ and surger
$g$ using $H$-moves to remove its intersections with $\gamma$,
similarly to the proof of Proposition \ref{prop:deformation retract for null-arc system}
with $\gamma$ playing the role of the null-arc. It is easy to check
the resulting $\left[g'\right]$ is in $\T_{\infty}$. In the other
direction, the arcs and curves of any element $\left[g\right]\in\T_{\infty}$
are disjoint from some essential simple closed curve, which is thus
a null-curve of $\left(\Sigma,f\right)$.
\end{proof}
Recall that Theorem \ref{thm:K(g,1) for incompressible} says that
an incompressible $\left(\Sigma,f\right)$ admits a finite complex
as a $K\left(\Gamma,1\right)$-space for $\Gamma=\mcg\left(f\right)$,
and that $\chi\left(\Gamma\right)=\chi^{\left(2\right)}\left(\Gamma\right)$.
\begin{proof}[Proof of Theorem \ref{thm:K(g,1) for incompressible}]
By Lemmas \ref{lem:|T|-is-a-G-CW-complex}, \ref{lem:-acts-freely}
and \ref{lem:incompressible iff T_infty empty}, $\Gamma$ acts freely
on the $\Gamma$-$CW$-complex $\tps$, and by Corollary \ref{cor:finitely many orbits of filling cells}
the quotient $\Gamma\backslash\tps$ is finite. As $\tps$ is contractible
(Theorem \ref{thm:X contractible}) we obtain that $\Gamma\backslash\tps$
is the sought-after $K\left(\Gamma,1\right)$ -complex. Hence $\chi\left(\Gamma\right)=\chi\left(\Gamma\backslash\tps\right)$
is well defined. Moreover, the proof of the Theorem \ref{thm:EC of a single (S,f)}
in Section \ref{subsec:Proof-of-Theorem which yields the main one}
shows that $\chi\left(\Gamma\right)=\chi^{\left(2\right)}\left(\Gamma\right)$.
\end{proof}
\begin{rem}
Note that the $K\left(\Gamma,1\right)$-complex we obtained as a quotient
in the last proof can also be constructed directly as a cell complex
with a cell for every $\sigma\in\matchr^{*}\left(\wl,\Sigma,f\right)$,
in an analogous way to Definition \ref{def:the complex of transverse maps}
of the complex of transverse maps. The example of the single incompressible
map for $w=\left[x,y\right]\left[x,z\right]$, where there are two
vertices connected by two parallel edges, illustrates that this is
not always a polysimplicial complex. However, the set $\matchr^{*}\left(\wl,\Sigma,f\right)$
also has a natural partial order defined by forgetting proper subsets
of the matchings for every $x\in B$. We can thus realize this $K\left(\Gamma,1\right)$
also as the order complex $\left|\matchr^{*}\left(\wl,\Sigma,f\right)\right|$,
which is a genuine simplicial complex. 
\end{rem}

We end this subsection with a bound on the dimension of the $K\left(\Gamma,1\right)$-complex
we constructed:
\begin{cor}
If $\wl$ are all cyclically reduced and different than $1$, then
the $K\left(\Gamma,1\right)$-space we constructed has dimension at
most $-\chi\left(\Sigma\right)$.
\end{cor}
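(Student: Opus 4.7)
The plan is to deduce this bound directly from the construction of the $K(\Gamma,1)$-space and a sharpening of Lemma \ref{lem:X is finite dimensional}. Recall that in the proof of Theorem \ref{thm:K(g,1) for incompressible}, the $K(\Gamma,1)$-space was realized as the quotient $\Gamma\backslash \tps$, and quotienting by a free cellular action cannot increase dimension. So it suffices to show $\dim(\tps)\le -\chi(\Sigma)$ under the hypothesis that every $w_i$ is cyclically reduced and non-trivial. This is exactly the content of Remark \ref{rem:bound on dimension when words are cylically reduced}, so the task reduces to explaining why the proof of Lemma \ref{lem:X is finite dimensional} gives the improved bound in this case.

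Recall that in that proof, one uses the identity
\[
\chi(\Sigma)=\sum_{\Sigma'}\left(\chi(\Sigma')-\tfrac{1}{2}\#\{\text{arcs at }\partial\Sigma'\}\right),
\]
summed over the $o$-zones and $z$-zones of a strict transverse map $g$ realizing $(\Sigma,f)$. Zones contribute non-positively except for topological discs bounded by exactly one arc together with one interval of $\partial\Sigma$ containing a marked point, each such zone contributing $+\tfrac{1}{2}$. I would point out that such a zone can occur only at the special marked point $v_i\in\partial_i\Sigma$, and forces the word $w_i$ to fail to be cyclically reduced: the single boundary arc would match two consecutive letters that are mutual inverses with $v_i$ between them. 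Hence under the cyclically-reduced hypothesis no zone contributes positively, so the $\frac{\ell}{2}$ term vanishes and one obtains
\[
\chi(\Sigma)\le -\sum_{x\in B}\kappa_x(g),
\]
giving $\sum_x \kappa_x(g)\le -\chi(\Sigma)$, i.e.\ $\dim\bigl(\mathrm{polysim}([g])\bigr)\le -\chi(\Sigma)$ for every $[g]\in\T$.

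The only mild subtlety will be verifying carefully the claim that a disc zone with exactly one boundary arc and one boundary interval necessarily sits at a non-cyclically-reduced marked point; this amounts to checking that \textbf{Restriction 2} together with the single-arc condition forces the two letters of $w_i$ adjacent to the interval's endpoints to be cancelling. Once this is done, combining the improved bound with $\dim(\Gamma\backslash\tps)\le \dim(\tps)$ yields $\dim(\Gamma\backslash\tps)\le -\chi(\Sigma)$, proving the corollary. I do not foresee a serious obstacle: the work is really bookkeeping of boundary combinatorics, and the main content is already encoded in Lemma \ref{lem:X is finite dimensional}.
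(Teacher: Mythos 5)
Your proposal is correct and matches the paper's own argument: both deduce the corollary by observing that the $K(\Gamma,1)$-space is a quotient of $\tps$ by a free cellular action, hence of the same dimension, and then invoke Lemma \ref{lem:X is finite dimensional} together with Remark \ref{rem:bound on dimension when words are cylically reduced} (whose justification — that the positive $\tfrac{1}{2}$-contributions in (\ref{eq:chi(Sigma) as sum over zones}) can occur only at a marked point $v_i$ where $w_i$ fails to be cyclically reduced — is exactly the observation you spell out). The subtlety you flag is already handled verbatim in the proof of Lemma \ref{lem:X is finite dimensional}, so there is no gap.
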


\begin{proof}
The $K\left(\Gamma,1\right)$-space is a quotient of $\T\left(\Sigma,f\right)$,
and therefore has the same dimension, which is bounded by $-\chi\left(\Sigma\right)$
-- see Lemma \ref{lem:X is finite dimensional} and Remark \ref{rem:bound on dimension when words are cylically reduced}.
\end{proof}

\subsection{Non-finiteness of $\protect\MCG(f)\backslash\protect\T$: why (the
proof of) Theorem \ref{thm:K(g,1) for incompressible} fails for compressible
maps\label{subsec:Nonfiniteness}}

When $\left(\Sigma,f\right)$ is compressible, the subposet $\T_{\infty}$
is non-empty (Lemma \ref{lem:incompressible iff T_infty empty}),
hence the action of $\Gamma=\mcg\left(f\right)$ on $\tps$ is not
free, and the quotient is not a $K\left(\Gamma,1\right)$. Still,
the ordinary Euler characteristic of a group is defined in much more
general cases then the one based on a finite $K\left(\Gamma,1\right)$-space
as in Definition \ref{def:chi(G)} -- see \cite[Chapter IX]{BROWN}.
For example, one could hope to use the following:
\begin{thm*}[{\cite[Proposition IX.7.3(e')]{BROWN}}]
Let $G$ be a discrete group, and let $X$ be a contractible $G$-$CW$-complex
such that $G\backslash X$ has finitely many cells and such that the
isotropy group $G_{c}$ of every cell $c$ ``has finite homological
type'' (see \cite[Page 246]{BROWN}). Then $\chi\left(G\right)$
is defined and satisfies 
\[
\chi\left(G\right)=\sum_{\left[c\right]\in G\backslash X}\left(-1\right)^{\dim c}\chi\left(G_{c}\right).
\]
\end{thm*}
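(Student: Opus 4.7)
The plan is to use the cellular chain complex $C_{*}(X)$ of $X$ as a $\mathbb{Z}G$-resolution of $\mathbb{Z}$ and extract $\chi(G)$ from it. Since $X$ is contractible, $C_{*}(X)$ is exact in positive degrees with $H_{0}=\mathbb{Z}$. As a $\mathbb{Z}G$-module one has $C_{p}(X)=\bigoplus_{[c]:\dim c=p}\mathbb{Z}[G/G_{c}]$, the direct sum indexed by orbits of $p$-cells; the hypothesis that $G\backslash X$ has finitely many cells makes each of these direct sums finite.

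First I would verify that $G$ itself has finite homological type under the hypotheses. This is done by splicing the finite-length complex $C_{*}(X)$ with finite projective resolutions of each permutation module $\mathbb{Z}[G/G_{c}]$ over $\mathbb{Z}G$. Such resolutions exist because (by Shapiro's lemma) a $\mathbb{Z}G$-projective resolution of $\mathbb{Z}[G/G_{c}]$ is obtained by inducing up a $\mathbb{Z}G_{c}$-projective resolution of $\mathbb{Z}$, and the latter exists of the correct finiteness type by the assumption that $G_{c}$ has finite homological type. Finiteness of the number of orbits then assembles these into a finite projective resolution of $\mathbb{Z}$ over $\mathbb{Z}G$, so $\chi(G)$ is well defined in the sense of \cite[Chapter IX]{BROWN}.

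Next I would invoke the rank-type homomorphism (the Hattori--Stallings rank, or equivalently Brown's rational-valued $\chi$ on the Grothendieck group of finitely generated projective $\mathbb{Z}G$-modules) that assigns $\chi(G)$ to the augmented resolution of $\mathbb{Z}$. The key multiplicative identity needed is $\chi_{G}\bigl(\mathbb{Z}[G/H]\bigr)=\chi(H)$ for subgroups $H\leq G$ of finite homological type, which is Brown's \cite[Proposition IX.6.3]{BROWN}. Applying this orbit-by-orbit to $C_{p}(X)=\bigoplus_{[c]}\mathbb{Z}[G/G_{c}]$ and taking the alternating sum yields
\[
\chi(G)\;=\;\sum_{p\ge 0}(-1)^{p}\,\chi_{G}\bigl(C_{p}(X)\bigr)\;=\;\sum_{p\ge 0}(-1)^{p}\sum_{[c]:\dim c=p}\chi(G_{c})\;=\;\sum_{[c]\in G\backslash X}(-1)^{\dim c}\,\chi(G_{c}),
\]
which is the desired formula.

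The main obstacle is that $C_{*}(X)$ is only a resolution by \emph{permutation} modules, not by projective $\mathbb{Z}G$-modules (the module $\mathbb{Z}[G/G_{c}]$ fails to be $\mathbb{Z}G$-projective as soon as $G_{c}$ is non-trivial), so one cannot directly read $\chi(G)$ off $C_{*}(X)$ via a naive alternating trace. The cleanest way around this is to argue on the level of the rational-valued Euler characteristic as an invariant on the Grothendieck ring of \emph{all} finitely generated $\mathbb{Z}G$-modules of finite projective dimension, where the multiplicative identity of the previous paragraph holds; then the manipulation above is legal. An alternative route is to pass through the equivariant Euler characteristic $\chi_{G}(X)$, defined as the right-hand side of the displayed formula, and to prove its $G$-homotopy invariance together with $\chi_{G}(\mathrm{pt})=\chi(G)$, after which contractibility of $X$ gives $\chi_{G}(X)=\chi_{G}(\mathrm{pt})=\chi(G)$; here the technical subtlety migrates into the verification of homotopy invariance across different orbit types.
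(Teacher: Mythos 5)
The paper does not prove this statement itself---it is quoted verbatim as a black box from Brown's book, with a pointer to [Proposition IX.7.3(e')]. So there is no ``paper's proof'' to compare your approach against; what remains is to check whether your argument is correct on its own terms, and here there is a genuine gap.

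Your first (and main) route hinges on the claim that because $G_{c}$ has finite homological type, $\mathbb{Z}$ admits a finite resolution by finitely generated projective $\mathbb{Z}G_{c}$-modules, which you then induce up and splice into $C_{*}(X)$. This fails as soon as some $G_{c}$ has torsion: a group with torsion has $\mathrm{cd}=\infty$, so $\mathbb{Z}$ has \emph{no} finite projective resolution over $\mathbb{Z}G_{c}$, and consequently $\mathbb{Z}[G/G_{c}]$ has infinite projective dimension over $\mathbb{Z}G$. The hypothesis ``finite homological type'' only gives finite \emph{virtual} cohomological dimension together with finite generation of homology with finitely generated coefficients; it is strictly weaker than type $FP$, and the gap is not cosmetic---the whole Grothendieck-ring bookkeeping you propose takes place in $K_{0}$ of modules of finite projective dimension, a ring that contains neither $\mathbb{Z}[G/G_{c}]$ nor even the trivial module $\mathbb{Z}$ when torsion is present, so the alternating-sum manipulation is not ``legal'' there. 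The correct treatment passes to a torsion-free finite-index subgroup $G'\leq G$ (whose intersections $G'\cap G_{c}$ are torsion-free of finite homological type, hence $FP$), carries out your Grothendieck-group computation there, and then transports both sides back to $G$ via Serre's multiplicativity $\chi(H')=[H:H']\cdot\chi(H)$ applied to $G$ and to each $G_{c}$; this last step involves a non-trivial double-coset count and is where the real work lies. Your second proposed route has a separate gap: contractibility of $X$ does \emph{not} imply $X$ is $G$-homotopy equivalent to a point (that would require all fixed-point sets $X^{H}$ to be contractible), so $G$-homotopy invariance of $\chi_{G}$ alone does not yield $\chi_{G}(X)=\chi_{G}(\mathrm{pt})$.
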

It is not too difficult to show that when $G=\Gamma=\mcg\left(f\right)$
and $X=\tps$, all the assumptions in this theorem hold, except for
the assumption that $\Gamma\backslash\tps$ has finitely many cells.
It turns out, perhaps counter-intuitively, that indeed this latter
assumption often fails:

\subsubsection*{Abelian neck phenomenon}

\begin{figure}
\begin{centering}
\includegraphics{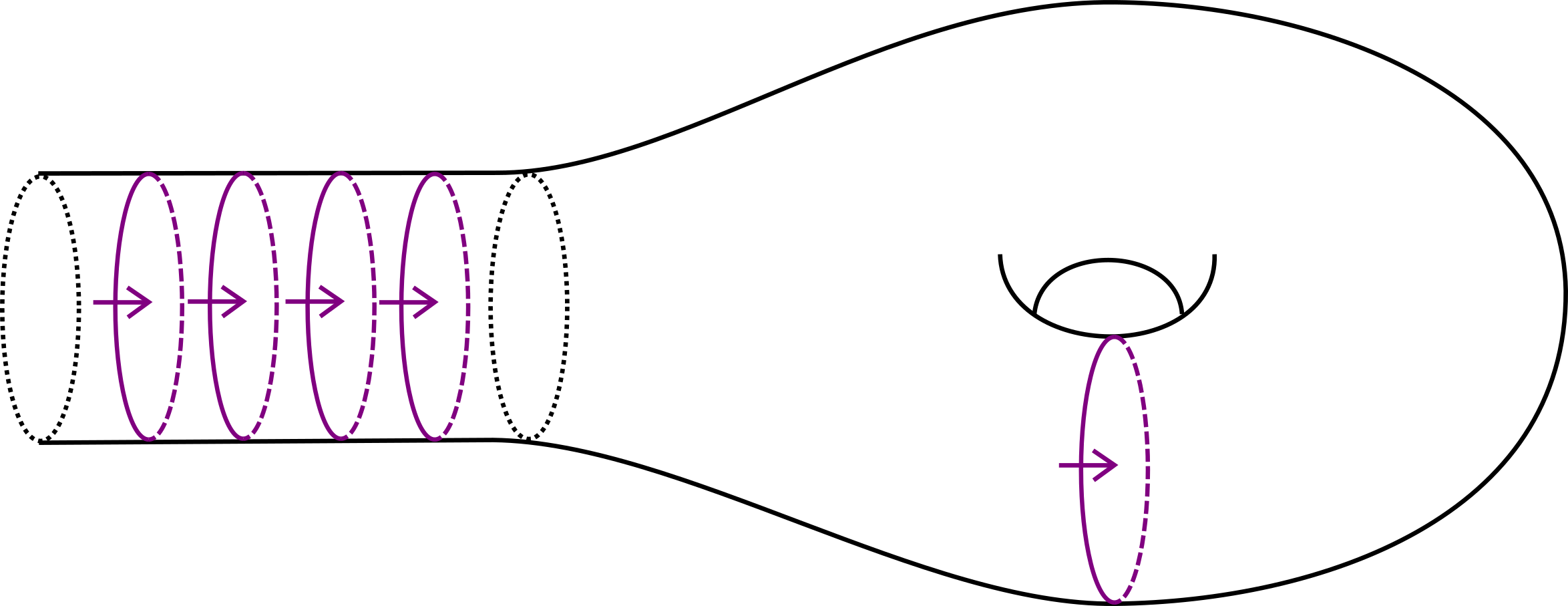}
\par\end{centering}
\caption{\label{fig:neck-figure}This figure shows part of a transverse map,
the surface extends to the left where there may be other arcs and
curves making up the map. Let $x\in B$. The transverse map shown
has $\kappa_{x}=0$ and purple curves are $\left(x,0\right)$-curves.}
\end{figure}

Let $g$ be the transverse map which is partially depicted in Figure
\ref{fig:neck-figure}, and $(\Sigma,f)$ be such that $\left[g\right]\in\T\left(\Sigma,f\right)$.
The key feature of $g$ is that there is a null-curve (e.g., one of
the dotted black lines) that \emph{separate}s a subsurface that is
mapped by $f$ at the level of $\pi_{1}$ to the cyclic group $\langle x\rangle$.
In terms of our picture, this can be seen as the curves that appear
in this subsurface are associated to only one generator $x$. Note
also in our picture we have illustrated a `neck' region bounded by
two black dotted curves that contains 4 parallel and codirected $(x,0)$-curves.
Assume for the sake of clarity that any curve that could be drawn
in the neck region is indeed drawn there. One could modify this transverse
map by changing the number of the repeated curves in the neck region. 
\begin{lem}
\label{lem:neck-extension-same-homotopy-type}No matter how many parallel
codirected $(x,0)$-curves are placed in the neck region of $g$,
the resulting transverse map still realizes $(\Sigma,f)$ and thus
represents an element of $\T\left(\Sigma,f\right)$.
\end{lem}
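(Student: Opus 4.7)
The plan is to show $g \simeq g'$ rel $V_o$ by working at the level of the induced homomorphism on $\pi_1$. Since $\wedger$ is a $K(\F_r, 1)$ and $\Sigma$, being a compact orientable surface with boundary, is aspherical, any two maps $\Sigma \to \wedger$ agreeing on a neighborhood of $V_o$ are homotopic rel $V_o$ if and only if they induce the same homomorphism on $\pi_1(\Sigma, v)$ for some (equivalently, every) $v \in V_o$. Since the modification from $g$ to $g'$ is supported entirely in the interior of the neck, away from $\partial\Sigma$, the two maps agree on a neighborhood of $V_o$; hence it suffices to check $g_* = g'_*$ on $\pi_1(\Sigma, v)$.

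For this I would fix a basepoint $v \in V_o$ and, for each generator of $\pi_1(\Sigma, v)$, choose a representative loop $\alpha$ in general position with respect to the null-curves bounding the neck. Being a loop, $\alpha$ crosses the separating null-curve an even number of times, and these crossings naturally pair up into consecutive ``enter $S$'' / ``exit $S$'' pairs, where $S$ denotes the subsurface with $f_*(\pi_1(S)) \subseteq \langle x \rangle$. Between such a pair, $\alpha$ traces a path in $S$ whose contribution to $g_*(\alpha)$, once combined with auxiliary basepoint paths inside a collar of $\partial S$, is some element $u \in \langle x \rangle$.

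Passing from $g$ to $g'$ inserts the extra intersections of $\alpha$ with the two newly added codirected $(x,0)$-curves. Because the two curves are parallel and codirected, any single traversal of the neck by $\alpha$ crosses both curves with the same local sign in $\wedger$, contributing $x^{+2}$ on ``enter $S$'' traversals and $x^{-2}$ on ``exit $S$'' traversals (or the reverse uniformly, depending on the global orientation). Hence $g'_*(\alpha)$ is obtained from $g_*(\alpha)$ by sandwiching each $S$-subword $u \in \langle x \rangle$ between $x^{+2}$ and $x^{-2}$. Since $\langle x \rangle$ is abelian, $x^{+2}\cdot u \cdot x^{-2} = u$, so every inserted pair collapses and $g'_*(\alpha) = g_*(\alpha)$; applied across a generating set this yields $g_* = g'_*$, and asphericity of $\wedger$ delivers the homotopy rel $V_o$.

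The main obstacle I anticipate is the local sign analysis --- verifying that each neck traversal by $\alpha$ contributes exactly $x^{\pm 2}$ with uniform signs from the two codirected curves, as opposed to the canceling $x^{\pm 1}\cdot(\cdots)\cdot x^{\mp 1}$ pattern one would get from oppositely-oriented curves. This requires a careful unwinding of the orientation conventions set up in Definition \ref{def:transverse map for Sigma,f} together with the precise meaning of ``codirected'' as matching transverse orientations. Once this local picture is in place, the cancellation via the abelianness of $\langle x \rangle$ and the conclusion via asphericity of $\wedger$ are both standard.
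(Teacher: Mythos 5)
Your argument is essentially correct but follows a genuinely different route from the paper's. The paper's proof stays inside the surgery calculus it has already built: it identifies the rightmost $(x,0)$-curve $\alpha$ in the neck and an $(x,0)$-curve $\beta$ in the subsurface $S$ to the right, performs a single $H$-move (Definition \ref{def:H-move}) along an arc joining them that approaches both from their positive sides, and observes that the result is the original strict transverse map with $\alpha$ deleted. Since $H$-moves preserve the homotopy class rel $V_o$ by construction, iterating adds or removes neck curves at will. You instead work directly at the level of $\pi_1$: since $g$ and $g'$ agree on $\partial\Sigma$ (the modification is interior), asphericity of $\Sigma$ and of $\wedger$ reduces the claim to $g_*=g'_*$ on $\pi_1(\Sigma,v)$, which you verify by pairing crossings and using commutativity of $\langle x\rangle$ to collapse the inserted conjugations. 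Your route is more self-contained and lays bare the conceptual reason the lemma holds --- the added curves only conjugate by $x$-powers inside a subgroup that already lies in $\langle x\rangle$ --- while the paper's is a one-move proof once the $H$-move machinery is in hand.

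One imprecision worth tightening: you pair crossings of $\alpha$ with the separating null-curve $\partial S$ and implicitly assume every crossing of a newly added $(x,0)$-curve falls between such a pair. But $\alpha$ may enter the neck annulus, cross a new curve, and retreat without ever reaching $\partial S$, so those crossings are not controlled by $\partial S$-pairs. The clean fix is to add one curve $c$ at a time and to pair crossings of $\alpha$ with $c$ itself: $c$ is itself a separating null-curve, so its intersections with $\alpha$ pair into enter/exit pairs, and between each pair $\alpha$ lies on the $S$-side $S_c$ of $c$, whose image under $f_*$ still lands in $\langle x\rangle$ (it is $S$ together with a piece of the neck annulus, which contributes nothing to $\pi_1$). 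Then $x^{\pm 1}\,u\,x^{\mp 1}=u$ gives $g'_*(\alpha)=g_*(\alpha)$ directly, with the only sign input being that the two crossings bounding a pair are necessarily opposite; this also removes the need to add curves two at a time and bypasses the careful ``codirected'' sign analysis you flagged as the main obstacle.
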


\begin{proof}
Call the rightmost $\left(x,0\right)$-arc in the neck $\alpha$ and
the $\left(x,0\right)$-curve in the right part of Figure \ref{fig:neck-figure}
$\beta$. Consider an $H$-move along a piece of arc connecting $\alpha$
to $\beta$ (and arrives to both from their positive side). This results
in a strict transverse map in $\T\left(\Sigma,f\right)$ which is
the same as $g$ except that $\alpha$ is omitted. This shows that
alternating the number of $\left(x,0\right)$-curve in the neck in
Figure \ref{fig:neck-figure} does not take us out of $\T\left(\Sigma,f\right)$.
\end{proof}
\begin{cor}
\label{cor:X doesn't have finite quotient}With $\T=\T(\Sigma,f)$
as in Lemma \ref{lem:neck-extension-same-homotopy-type}, obtained
from Figure \ref{fig:neck-figure}, $\MCG(f)\backslash\T$ is not
finite. 
\end{cor}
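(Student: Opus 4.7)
My plan is to exhibit an explicit infinite family $\{[g_n]\}_{n\ge 1}$ of distinct $\MCG(f)$-orbits in $\T$, where $g_n$ is obtained from the transverse map $g$ of Figure \ref{fig:neck-figure} by replacing the four parallel codirected $(x,0)$-curves in the neck region with exactly $n$ such parallel codirected $(x,0)$-curves. Lemma \ref{lem:neck-extension-same-homotopy-type} gives exactly what is needed to conclude that each $g_n$ still represents an element of $\T(\Sigma,f)$, so the $[g_n]$ live in $\T$. (The verification one may wish to double-check is that each $g_n$ satisfies the three Restrictions of Definition \ref{def:restrictions}: Restriction 2 is unaffected as the boundary behavior is untouched; Restriction 3 is vacuous for the letter $x$ since $\kappa_x=0$; Restriction 1 holds because any $o$-zone sandwiched between two consecutive parallel codirected $(x,0)$-curves in the neck has one $(x,0)$-boundary with positive side pointing in and one with negative side pointing in, so its bounding arcs/curves are not uniformly co-oriented.)

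The key invariant that will separate orbits is the cardinality
\[
N(g) \;\stackrel{\mathrm{def}}{=}\; \#\,\bigl(\text{connected components of }g^{-1}\bigl((x,0)\bigr)\bigr),
\]
that is, the total number of $(x,0)$-arcs and $(x,0)$-curves of $g$. I would first argue that $N$ descends to a well-defined function on $\MCG(f)\backslash\T$. Indeed, an isotopy of transverse maps in the sense of Definition \ref{def:transverse map for Sigma,f} preserves transversality at every time, so it preserves the isotopy type—and in particular the component count—of the embedded one-submanifold $g^{-1}((x,0))$. And for any orientation-preserving homeomorphism $\rho\colon \Sigma\to\Sigma$ representing a class in $\MCG(f)$, one has $(g\circ\rho^{-1})^{-1}((x,0)) = \rho\bigl(g^{-1}((x,0))\bigr)$, which has the same number of components as $g^{-1}((x,0))$. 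Hence $N([g])$ is a genuine invariant of the $\MCG(f)$-orbit of $[g]$.

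To finish, I would just observe that $N(g_n) = n + C$, where $C$ is the number of $(x,0)$-arcs/curves of $g$ lying outside the neck region (in the picture, the one extra $(x,0)$-curve on the right, plus any boundary arcs in the part of $\Sigma$ not shown). Since $N(g_n)$ is a strictly increasing function of $n$, the orbits $\MCG(f)\cdot[g_n]$ are pairwise distinct as $n$ varies, producing infinitely many orbits in $\T$ and hence in $\MCG(f)\backslash\T$.

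I do not foresee a serious obstacle: the substantive content is all in Lemma \ref{lem:neck-extension-same-homotopy-type}, which was already established via an explicit $H$-move argument. The only mildly delicate point—and the one I would write out most carefully—is checking that the count of components of $g^{-1}((x,0))$ really is invariant under the equivalence relation defining isotopy classes of transverse maps, as opposed to the coarser relation of ordinary homotopy of the underlying maps $\Sigma\to\wedger$. This distinction matters here precisely because an $H$-move (which is only a homotopy of $g$, not an isotopy of transverse maps) can change this count, which is exactly why Lemma \ref{lem:neck-extension-same-homotopy-type} produces many distinct classes $[g_n]$ in the first place.
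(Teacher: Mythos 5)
Your proposal is correct and takes essentially the same route as the paper: the paper's proof exhibits elements of $\T$ with an unbounded number of curves via Lemma \ref{lem:neck-extension-same-homotopy-type} and notes that the number of curves is an $\MCG(f)$-invariant. Your invariant $N(g)$ (total component count of $g^{-1}((x,0))$, arcs plus curves) differs from the paper's ``number of curves'' only by the constant $L_x$, so it records the same information; your write-up simply spells out the invariance check and the restriction-verification in more detail.
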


\begin{proof}
We can create elements in $\T$ with an unbounded number of curves,
and the number of curves is a $\MCG(f)$-invariant.
\end{proof}
This is not the most general version of this phenomenon: the neck
could for example be replaced by a collection of disjoint annuli that
cut from $\Sigma$ some subsurface with $\pi_{1}$ mapped by $f$
to a non-trivial cyclic subgroup of $\F_{r}$. Our aim here is to
give an illustrative example.

\section{Further applications and consequences\label{sec:Further-Applications}}

We specify here three interesting applications of our results and
techniques, regarding the stable commutator length of a word, the
complete classification of all incompressible solutions in $\surfaces\left(\wl\right)$,
and the cohomological dimension of $\mcg\left(f\right)$. Let us also
mention that our construction of a finite $K\left(\Gamma,1\right)$-space
for $\Gamma=\mcg\left(f\right)$ when $\left[\left(\Sigma,f\right)\right]$
is incompressible also enables one to write explicit finite presentations
for $\Gamma$: consult \cite[Pages 57-59]{MP}.

\subsection{Stable commutator length\label{subsec:Stable-commutator-length}}

Recall that Corollary \ref{cor:can hear scl} states that the $w$-measures
on $\left\{ \U\left(n\right)\right\} _{n\in\mathbb{N}}$ determine
$\mathrm{scl}\left(w\right)$, the stable commutator length of $w\in\F_{r}$,
defined in (\ref{eq:scl}). In this subsection we explain how this
result follows from Theorem \ref{thm:main} and from Calegari's rationality
theorem. 

Calegari's theorem, which is the main result of \cite{CALRATIONAL},
says that $\mathrm{scl}\left(w\right)$ is rational for every $w\in\left[\F_{r},\F_{r}\right]$.
First, it is shown that $\mathrm{scl\left(w\right)}$ is equal to
the infimum of $\frac{-\chi\left(\Sigma\right)}{2\left|j_{1}+\ldots+j_{\ell}\right|}$
over all possible $j_{1},\ldots,j_{\ell}\in\mathbb{Z}$ and $\left(\Sigma,f\right)$
admissible for $w^{j_{1}},\ldots,w^{j_{\ell}}$ \cite[Lemma 2.6]{CALRATIONAL}.
The proof goes through showing the existence of ``extremal surfaces''
for $w$: a surface attaining the infimum. Moreover, by \cite[Lemma 2.7]{CALRATIONAL},
this extremal surface can be taken to be admissible for $w^{j_{1}},\ldots,w^{j_{\ell}}$
with $j_{1},\ldots,j_{\ell}>0$. By definition of extremal surface,
$\Sigma$ has maximal Euler characteristic for $w^{j_{1}},\ldots,w^{j_{\ell}}$,
namely, $\chi\left(\Sigma\right)=\ch\left(w^{j_{1}},\ldots,w^{j_{\ell}}\right)$.
In fact, every surface which is admissible for $w^{j_{1}},\ldots,w^{j_{\ell}}$
with Euler characteristic $\ch\left(w^{j_{1}},\ldots,w^{j_{\ell}}\right)$
is extremal. By \cite[Lemma 2.9]{CALRATIONAL}, the maps associated
with extremal surfaces are $\pi_{1}$-injective, namely, if $\gamma\subset\Sigma$
is a non-nullhomotopic closed curve, then $f\left(\gamma\right)$
is not nullhomotopic. Note that this condition is stronger than incompressibility,
which only deals with \emph{simple} closed curves. The crux of the
matter is the following lemma:
\begin{lem}
\label{lem:pi1-injective means trivial stab}If $\left(\Sigma,f\right)$
is $\pi_{1}$-injective, then $\mcg\left(f\right)$ is trivial. 
\end{lem}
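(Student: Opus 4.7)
The plan is to reduce this to the classical Dehn-Nielsen-Baer injectivity theorem for surfaces with boundary, after showing that $\pi_1$-injectivity of $f$ forces any $[\rho]\in\MCG(f)$ to act trivially on fundamental groups.

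First I would reduce to the case $\Sigma$ connected. Because $\rho$ is required to fix the boundary pointwise, and every connected component of $\Sigma$ contains a marked point $v_i$ (since each $w_i\ne 1$, no component is closed), $\rho$ must preserve each component of $\Sigma$. Restriction then embeds $\MCG(f)$ into the product of the stabilizers on each component, and the restriction of $f$ to each component is again $\pi_1$-injective, so it suffices to treat the connected case.

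Next, choose a basepoint $v=v_i$ on some boundary component, which $\rho$ fixes. The hypothesis $[\rho]\in\MCG(f)$ gives $f\circ\rho\simeq f$ rel $v_1,\ldots,v_\ell$, so $f_*\circ\rho_*=f_*$ on $\pi_1(\Sigma,v)$. Since $f$ is $\pi_1$-injective, $f_*$ is injective, and therefore $\rho_*=\mathrm{id}$ on $\pi_1(\Sigma,v)$.

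Finally I would invoke the injectivity half of Dehn-Nielsen-Baer for compact orientable surfaces with nonempty boundary: the natural homomorphism $\MCG(\Sigma)\to\Aut(\pi_1(\Sigma,v))$ (where $\MCG$ is defined with boundary fixed pointwise) is injective; this is recorded as \cite[Thm 2.4]{MP} (and is the standard adaptation of \cite[Chapter 8]{FM}). Combined with the previous step, this forces $[\rho]$ to be trivial in $\MCG(\Sigma)$, hence $\MCG(f)=1$. The only nontrivial point worth stressing is the use of the pointwise-boundary-fixing convention: this is essential so that $\rho$ canonically acts on $\pi_1$ with a fixed basepoint (rather than only up to conjugation), and is what makes the implication $\rho_*=\mathrm{id}\Rightarrow[\rho]=1$ hold without needing to quotient by inner automorphisms.
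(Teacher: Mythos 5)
Your reduction to the connected case, and the derivation of $\rho_*=\mathrm{id}$ on $\pi_1(\Sigma,v)$ from $\pi_1$-injectivity of $f$ and the homotopy $f\circ\rho\simeq f$ rel the marked points, are both correct and match the paper. The gap is in your final step: with $\MCG(\Sigma)$ defined so that $\partial\Sigma$ is fixed pointwise, the natural homomorphism $\MCG(\Sigma)\to\Aut\bigl(\pi_1(\Sigma,v)\bigr)$ is \emph{not} injective once $\Sigma$ has more than one boundary component. The simplest counterexample is the annulus $A$: the Dehn twist about its core curve is a nontrivial mapping class acting as the identity on $\pi_1(A,v)\cong\mathbb{Z}$. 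More generally, a Dehn twist about a curve parallel to a boundary component \emph{not} containing the basepoint $v$ fixes every based loop up to homotopy rel $v$, so lies in the kernel. The result you cite covers the Dehn--Nielsen--Baer statement for $\Sigma_{g,1}$ (a single boundary component); it does not give injectivity of $\MCG\to\Aut(\pi_1)$ for $\ell\ge 2$. Thus $\rho_*=\mathrm{id}$ does not force $[\rho]=1$ except when $\ell=1$.

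This is exactly why the paper's proof splits into cases. For $\ell=1$ your argument is complete. For $\ell\ge 2$ the paper supplements the loop computation with an arc computation: pick an arc $\gamma$ from $v_1$ to $v_\ell$, note that $\rho(\gamma)$ is homotopic rel endpoints to $\beta*\gamma$ for some based loop $\beta$ at $v_1$, and apply $\pi_1$-injectivity a second time via $f_*[\gamma]=f_*[\rho(\gamma)]=f_*[\beta]\cdot f_*[\gamma]$, which forces $f_*[\beta]=1$ and hence $\beta$ nullhomotopic. After homotoping $\rho$ to fix $\gamma$, cutting $\Sigma$ along $\gamma$ reduces the number of boundary components by one, and induction finishes. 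Your proposal is missing this arc-cutting step; without it the argument fails whenever $\ell\ge 2$, because nothing so far rules out Dehn twists near the boundary components away from the basepoint.
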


\begin{proof}
The outline of the argument here is that if $\left[\rho\right]\in\mcg\left(f\right)$
then $\left[\rho\right]_{*}\in\mathrm{Aut}\left(\pi_{1}\left(\Sigma\right)\right)$
fixes $f_{*}$, and since $f_{*}$ is injective, this means that $\left[\rho\right]_{*}$
must be the identity. By a variation of the Dehn-Nielsen-Baer Theorem,
it follows that $\left[\rho\right]$ is the identity.

In more detail, assume that $\Sigma$ is connected (the general cases
easily follows). Recall that $v_{1}\in\partial_{1}\Sigma$ is one
of the $\ell$ marked points at $\partial\Sigma$, and let $G=\pi_{1}\left(\Sigma,v_{1}\right)$.
If $\ell=1$, the Dehn-Nielsen-Baer Theorem (see Page \pageref{Dehn-Nielsen-Baer}
and \cite[Thm 2.4]{MP}) yields what we need. If $\ell\ge2$, consider
an arc $\gamma\subset\Sigma$ connecting $v_{1}$ and $v_{\ell}$.
Because $\left[\rho\right]\in\mcg\left(\Sigma\right)$ fixes the marked
points, we must have that $\rho\left(\gamma\right)$ is homotopic
relative to $\left\{ v_{1},v_{\ell}\right\} $ to $\beta*\gamma$,
where $\beta$ is a closed, not necessarily simple, curve based at
$v_{1}$ and ``$*$'' stands for concatenation. Inside $\F_{r}$
we have 
\[
f_{*}\left[\gamma\right]=f_{*}\left[\rho\left(\gamma\right)\right]=f_{*}\left[\beta*\gamma\right]=f_{*}\left[\beta\right]\cdot f_{*}\left[\gamma\right]
\]
hence $f_{*}\left[\beta\right]=1$ which means that $\beta$ is nullhomotopic
by $\pi_{1}$-injectivity. Hence we can assume without loss of generality
that $\rho$ fixes $\gamma$, and we can analyze $\rho$ on $\Sigma'$,
the surface obtained from $\Sigma$ by cutting along $\gamma$. Since
$\Sigma'$ has only $\ell-1$ boundary components, we are done by
induction.
\end{proof}

\begin{proof}[Proof of Corollary \ref{cor:can hear scl}]
By Lemma \ref{lem:pi1-injective means trivial stab} and the discussion
preceding it, if one of the extremal surfaces of $w$ is admissible
for $w^{j_{1}},\ldots,w^{j_{\ell}}$ with $j_{1},\ldots,j_{\ell}>0$,
then Theorem \ref{thm:main} translates in this case to 
\begin{equation}
\tr_{w^{j_{1}},\ldots,w^{j_{\ell}}}\left(n\right)=n^{\ch\left(w^{j_{1}},\ldots,w^{j_{\ell}}\right)}\cdot K+O\left(n^{\ch\left(w^{j_{1}},\ldots,w^{j_{\ell}}\right)-2}\right),\label{eq:trwl for extremal}
\end{equation}
where $K$ is the number of highest-Euler-characteristic surfaces
in $\surfaces\left(w^{j_{1}},\ldots,w^{j_{\ell}}\right)$. Note that
(\ref{eq:trwl for extremal}) is strictly positive for large enough
$n$. Hence,
\[
\frac{-\lim_{n\to\infty}\log_{n}\left|\tr_{w^{j_{1}},\ldots,w^{j_{\ell}}}\left(n\right)\right|}{2\left(j_{1}+\ldots+j_{\ell}\right)}=\frac{-\ch\left(w^{j_{1}},\ldots,w^{j_{\ell}}\right)}{2\left(j_{1}+\ldots+j_{\ell}\right)}=\mathrm{scl}\left(w\right).
\]
On the other hand, for an arbitrary $\ell>0$ and $j_{1},\ldots,j_{\ell}>0$
we have 
\[
\frac{-\lim_{n\to\infty}\log_{n}\left|\tr_{w^{j_{1}},\ldots,w^{j_{\ell}}}\left(n\right)\right|}{2\left(j_{1}+\ldots+j_{\ell}\right)}\ge\frac{-\ch\left(w^{j_{1}},\ldots,w^{j_{\ell}}\right)}{2\left(j_{1}+\ldots+j_{\ell}\right)}\ge\mathrm{scl}\left(w\right).
\]
This proves (\ref{eq:read scl}).
\end{proof}
\medskip{}

\begin{cor}
If $\mathrm{scl}\left(w_{1}\right)\ne\mathrm{scl}\left(w_{2}\right)$
then for every large enough $n$, the $w_{1}$-measure on ${\cal U}\left(n\right)$
is different from the $w_{2}$-measure on ${\cal U}\left(n\right)$.
In particular, if $w_{1}\in\left[\F_{r},\F_{r}\right]$ and $w_{2}\notin\left[\F_{r},\F_{r}\right]$
then they induce different measures on ${\cal U}\left(n\right)$ for
almost all $n$.
\end{cor}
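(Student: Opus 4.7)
The plan is to deduce both statements from a single principle: if the $w_1$- and $w_2$-measures coincided on $\U(n)$ for infinitely many integers $n$, then for every $\ell\geq 1$ and $j_1,\ldots,j_\ell>0$ we would have
\[
\tr_{w_1^{j_1},\ldots,w_1^{j_\ell}}(n)=\tr_{w_2^{j_1},\ldots,w_2^{j_\ell}}(n)
\]
as elements of $\mathbb{Q}(n)$, since both sides are rational functions by Proposition \ref{prop:rational expression} and two rational functions agreeing on infinitely many integer inputs must be identical. For the first statement, once we know these rational functions coincide, Corollary \ref{cor:can hear scl} immediately gives $\mathrm{scl}(w_1)=\mathrm{scl}(w_2)$ (the two infima in (\ref{eq:read scl}) are computed from the same data), contradicting the hypothesis. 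Hence the measures can agree for only finitely many $n$, which is exactly the conclusion.

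For the second statement, I would instead produce an explicit joint moment that distinguishes the two measures. If $w_2\notin[\F_r,\F_r]$, then for every positive $\ell$ and $j_1,\ldots,j_\ell>0$ the product $w_2^{j_1}\cdots w_2^{j_\ell}=w_2^{j_1+\cdots+j_\ell}$ also lies outside $[\F_r,\F_r]$, so Claim \ref{claim: tr=00003D0 for non-balanced words} yields $\tr_{w_2^{j_1},\ldots,w_2^{j_\ell}}(n)\equiv 0$. It therefore suffices to locate one tuple $(\ell,j_1,\ldots,j_\ell)$ with $j_i>0$ for which $\tr_{w_1^{j_1},\ldots,w_1^{j_\ell}}(n)$ is non-zero for all large $n$. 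If $w_1=1$, take $\ell=1$, $j_1=1$, so that $\tr_{w_1}(n)=n$. If $w_1\ne 1$, the argument appearing inside the proof of Corollary \ref{cor:can hear scl}, using Calegari's extremal surfaces together with Lemma \ref{lem:pi1-injective means trivial stab}, delivers such a tuple for which Theorem \ref{thm:main} produces the strictly positive leading term in (\ref{eq:trwl for extremal}). Combined with the vanishing of the corresponding $w_2$-moment, this distinguishes the two measures on $\U(n)$ for every large $n$.

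The main potential obstacle is not conceptual but concerns the second part: one must ensure that the leading coefficient in (\ref{eq:trwl for extremal}) does not accidentally vanish through cancellations among surfaces of the extremal Euler characteristic. This is precisely why the argument relies on $\pi_1$-injective extremal surfaces supplied by Calegari's theorem: Lemma \ref{lem:pi1-injective means trivial stab} makes each such stabilizer trivial, so by Theorem \ref{thm:main} each extremal surface contributes $+1$ and the coefficient is the positive integer counting extremal surfaces, with no cancellation possible.
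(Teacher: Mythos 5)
Your proof is correct, though it is structured differently from the paper's. For the second ("in particular") statement you run essentially the paper's argument: you produce an explicit joint moment that vanishes identically for $w_2$ (Claim \ref{claim: tr=00003D0 for non-balanced words}) but has a strictly positive leading term for $w_1$ thanks to Calegari's $\pi_1$-injective extremal surfaces and Lemma \ref{lem:pi1-injective means trivial stab}, exactly as in (\ref{eq:trwl for extremal}); your observation that $\pi_1$-injectivity forces trivial stabilizers and hence rules out cancellation at the top order is precisely the point of the paper's argument for Corollary \ref{cor:can hear scl}. For the first statement, however, you take a genuinely different route. The paper proves it directly and constructively: assuming $\mathrm{scl}(w_1)<\mathrm{scl}(w_2)$, one picks an extremal tuple $j_1,\ldots,j_\ell>0$ for $w_1$ and shows $\tr_{w_1^{j_1},\ldots,w_1^{j_\ell}}(n)$ is strictly larger than $\tr_{w_2^{j_1},\ldots,w_2^{j_\ell}}(n)$ for all large $n$, because the former has order $n^{-2\mathrm{scl}(w_1)(j_1+\cdots+j_\ell)}$ with positive leading coefficient while the latter has strictly smaller order; the second statement then falls out as the special case where the $w_2$-moments vanish identically. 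You instead argue by contradiction, using Proposition \ref{prop:rational expression} and the fact that two rational functions agreeing on infinitely many integers are identical, and then invoking Corollary \ref{cor:can hear scl} as a black box to conclude $\mathrm{scl}(w_1)=\mathrm{scl}(w_2)$. This is clean but less economical and less informative (it identifies no distinguishing moment, only finiteness of the coincidence set). Two minor seams: first, your first-statement argument implicitly applies (\ref{eq:read scl}) to $w_2$, which is only stated for $w_2\in[\F_r,\F_r]$; when $w_2\notin[\F_r,\F_r]$ the $w_2$-moments are identically zero and the contradiction still goes through (indeed this is exactly your second-statement argument), but you should say so. Second, Corollary \ref{cor:can hear scl} implicitly assumes $w\ne 1$ (as in Remark \ref{rem:w_i ne 1}); you handle $w_1=1$ in the second-part discussion but not in the first, though the case $w_1=1$ (Dirac mass at the identity vs.\ any other word measure) is trivial and can be disposed of separately.
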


\begin{proof}
Assume without loss of generality that $\mathrm{scl}\left(w_{1}\right)<\mathrm{scl}\left(w_{2}\right)$,
and let $j_{1},\ldots,j_{\ell}>0$ be so that $w_{1}^{j_{1}},\ldots,w_{1}^{j_{\ell}}$
admit an extremal surface. Then by the above discussion, $\tr_{w_{1}^{j_{1}},\ldots,w_{1}^{j_{\ell}}}\left(n\right)$
is strictly larger than $\tr_{w_{2}^{j_{1}},\ldots,w_{2}^{j_{\ell}}}\left(n\right)$
for any large enough $n$. In particular, if $w_{2}$ is not balanced,
i.e.~$w_{2}\notin\left[\F_{r},\F_{r}\right]$ and $\mathrm{scl}\left(w_{2}\right)=\infty$,
then nor is the set $w_{2}^{j_{1}},\ldots,w_{2}^{j_{\ell}}$ balanced
as we assume $j_{1},\ldots,j_{\ell}>0$. By Claim \ref{claim: tr=00003D0 for non-balanced words},
$\tr_{w_{2}^{j_{1}},\ldots,w_{2}^{j_{\ell}}}\left(n\right)\equiv0$
for every $n$. 
\end{proof}

\subsection{Classifying all incompressible solutions to generalized commutator
equation\label{subsec:Classifying-all-incompressible}}

Since the late 1970's there are known algorithms to determine the
commutator length of a given word $w\in\left[\F_{r},\F_{r}\right]$
\cite{Edmunds1975,Goldstein1979,CULLER} and also to find at least
one representative from every equivalence class of solutions to $\left[u_{1},v_{1}\right]\cdots\left[u_{g},v_{g}\right]=w$
with $g=\cl\left(w\right)$ \cite[Section 4.2]{CULLER}. In fact,
the algorithm in \cite{CULLER} uses matchings of letters of $w$
as in Proposition \ref{prop:incompressible as transverse maps}. Our
analysis and techniques expand Culler's algorithm to yield a clear
description of the set of classes of solutions and, in particular,
a direct way to distinguish them from each other.

Consider the poset $P=\matchr^{\left|\kappa\right|\le1}\left(\wl\right)$
consisting of sets of matchings for $\wl$ as in Section \ref{sec:A-Formula-for trwl},
where $\left|\kappa\right|\stackrel{\mathrm{def}}{=}\sum_{x\in B}\kappa_{x}\le1$
and $\sigma_{x,0}\ne\sigma_{x,1}$ whenever $\kappa_{x}=1$, and with
partial order $\sigma_{0}\prec\sigma_{1}$ whenever $\left|\kappa\left(\sigma_{0}\right)\right|=0$,
$\left|\kappa\left(\sigma_{1}\right)\right|=1$ and $\sigma_{0}$
is obtained from $\sigma_{1}$ by deleting one of the two $x$-matchings
for the $x\in B$ with $\kappa_{x}\left(\sigma_{1}\right)=1$. Recall
the definition of $\chi\left(\sigma\right)$ from Definition \ref{def:surface from matchings}.
Construct a graph $G\left(\wl\right)$ with vertices the elements
of $P$ and an edge $\left(\sigma_{0},\sigma_{1}\right)$ whenever
$\sigma_{0}\prec\sigma_{1}$ \emph{and} $\chi\left(\sigma_{1}\right)=\chi\left(\sigma_{2}\right)$.
We say a component $C$ of $G\left(\wl\right)$ is \emph{downward-closed
}if every vertex $\sigma_{1}$ of $C$ with $\kappa\left(\sigma_{1}\right)=1$
has two neighbors: the two elements of $P$ that are strictly smaller.
Recall the notation $\Sigma_{\sigma}$ and $f_{\sigma}$ from Definitions
\ref{def:surface from matchings} and \ref{def:f_sigma}.
\begin{prop}
The map $\varphi\colon P=\matchr^{\left|\kappa\right|\le1}\to\surfaces\left(\wl\right)$
given by 
\[
\sigma\mapsto\left[\left(\Sigma_{\sigma},f_{\sigma}\right)\right]
\]
induces a bijection between the downward-closed components of $G\left(\wl\right)$
and the incompressible pairs in $\surfaces\left(\wl\right)$.
\end{prop}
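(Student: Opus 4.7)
The plan is to prove the proposition in three main steps, culminating in the desired bijection.

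\textbf{Step 1 (Well-definedness of $\varphi$ on components).} First I would show that $\varphi$ is constant on each connected component of $G(\wl)$, so it descends to a map $\Phi$ from components of $G(\wl)$ to $\sur(\wl)$. This reduces to the key claim: if $\sigma_0 \prec \sigma^*$ is an edge of $G(\wl)$, with $|\kappa(\sigma_0)|=0$, $\kappa_x(\sigma^*)=1$ for a unique $x\in B$, and (WLOG) the $x$-matching of $\sigma_0$ equal to $\sigma^*_{x,0}$, then $(\Sigma_{\sigma_0},f_{\sigma_0})\sim(\Sigma_{\sigma^*},f_{\sigma^*})$. The formula from the proof of Theorem~\ref{thm:combinatorial Laurent expansion of trwl} gives $\chi(\sigma^*) - \chi(\sigma_0) = [O(\sigma^*) - O(\sigma_0)] - \|{\sigma^*_{x,0}}^{-1}\sigma^*_{x,1}\|$, where $O(\cdot)$ counts $o$-discs. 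The edge condition translates to $O(\sigma^*) - O(\sigma_0) = \|{\sigma^*_{x,0}}^{-1}\sigma^*_{x,1}\|$. A combinatorial check (tracking how the $o$-discs of $\Sigma_{\sigma^*}$ merge with the $(x,0)$-discs across the forgotten $(x,1)$-arcs) shows this is precisely the condition for the transverse map obtained from $f_{\sigma^*}$ by forgetting the transversion point $(x,1)$ to fill $\Sigma_{\sigma^*}$, making $\Sigma_{\sigma^*}\cong\Sigma_{\sigma_0}$ as admissible pairs.

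\textbf{Step 2 (Incompressible $\Rightarrow$ downward-closed component).} For every incompressible $[(\Sigma,f)] \in \sur(\wl)$, I would show that $M := \matchr^{|\kappa|\le 1}(\wl;\Sigma,f)$ is a single component of $G(\wl)$ that is downward-closed. Every $\sigma \in M$ has $\chi(\sigma) = \chi(\Sigma)$, so every $\prec$-incidence within $M$ is an edge of $G(\wl)$. By Lemma~\ref{lem:bijs:props}, $M$ bijects with the dimension $\le 1$ cell-orbits of $\MCG(f)\backslash\tps$. By Lemma~\ref{lem:incompressible iff T_infty empty} we have $\T_\infty = \emptyset$, so $\MCG(f)$ acts freely on $\tps$ (Lemma~\ref{lem:-acts-freely}); combined with contractibility of $\tps$ (Theorem~\ref{thm:X contractible}) the quotient is a connected finite $K(\MCG(f),1)$-complex, whose $1$-skeleton is connected. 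This $1$-skeleton is exactly the subgraph of $G(\wl)$ on $M$, which is therefore connected. Downward-closedness follows because when $\T_\infty=\emptyset$ both predecessors of any $\sigma^* \in M$ with $|\kappa|=1$ come from forgetful operations on a filling transverse map, and these remain filling, hence lie in $M$ with the same $\chi$-value.

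\textbf{Step 3 (Downward-closed component $\Rightarrow$ incompressible).} For the converse, suppose a downward-closed component $C$ is mapped to a compressible $[(\Sigma,f)]$. Fix $\sigma_0 \in C$ with $|\kappa(\sigma_0)|=0$ (such a vertex exists because downward-closedness forces every $|\kappa|=1$ vertex of $C$ to have its $|\kappa|=0$ predecessors in $C$). Via $\matchmap^{-1}$, $\sigma_0$ corresponds to a filling $g_0 \in \T \setminus \T_\infty$. By Lemma~\ref{lem:incompressible iff T_infty empty} there is a null-curve $\gamma \subset \Sigma$. Using $\gamma$, I would construct a filling transverse map $g^* \in \T \setminus \T_\infty$ with $\kappa_x=1$ for a suitable letter $x$ essentially crossed by $\gamma$, such that forgetting $(x,1)$ recovers $g_0$ while forgetting $(x,0)$ yields a \emph{non-filling} transverse map $g_0'$ in which an isotopic copy of $\gamma$ bounds a non-disc zone. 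The matching $\sigma_0' \in \matchr^{\kappa\equiv 0}$ corresponding to $g_0'$ then gives $(\Sigma_{\sigma_0'},f_{\sigma_0'})$, which is obtained from $(\Sigma,f)$ by capping the non-disc zone of $g_0'$ with a disc, strictly increasing the Euler characteristic. Hence $\chi(\sigma_0') > \chi(\sigma^*)$, no edge of $G(\wl)$ connects $\sigma_0'$ to $\sigma^*$, and $C$ is not downward-closed --- a contradiction.

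\textbf{Conclusion and main obstacle.} Steps 1--3 combined with Proposition~\ref{prop:incompressible as transverse maps} (which ensures every incompressible pair is realized by some $\sigma \in \match^{\kappa\equiv 0}$) yield the bijection: distinct downward-closed components have distinct $\Phi$-values by Step 1, and the image consists exactly of the incompressible pairs by Steps 2 and 3. I expect the main obstacle to be the detailed verification in Step 1 that the combinatorial equality $O(\sigma^*) - O(\sigma_0) = \|{\sigma^*_{x,0}}^{-1}\sigma^*_{x,1}\|$ really is equivalent to the filling condition after the layer is forgotten, and the careful surgery in Step 3 producing $g^*$ from $(g_0,\gamma)$ whose two forgetful projections behave asymmetrically (one filling, one not).
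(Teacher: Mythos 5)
Your Steps 1 and 2 track the paper's argument closely and are correct: Step 1 is the paper's observation that the edge condition $\chi(\sigma_0)=\chi(\sigma_1)$ is equivalent to the forgotten-layer map still filling (you simply supply the explicit combinatorial check), and Step 2 is the paper's use of connectedness of $\tps$ together with the $K(\MCG(f),1)$-complex from Theorem \ref{thm:K(g,1) for incompressible} and Lemma \ref{lem:bijs:props}. Step 3, however, has a genuine gap. The one-shot construction of $g^*$ --- surgering the $x$-arcs of $g_0$ via $H$-moves along the null-curve $\gamma$ so that the resulting $(x,1)$-layer avoids $\gamma$ --- fails as soon as $\gamma$ crosses arcs of more than one color, which is generic. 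Two things go wrong. First, a segment of $\gamma$ between two consecutive $x$-crossings is not guaranteed to miss the $y$-arcs for $y\neq x$, so the $H$-moves along it are simply not legal under Definition \ref{def:H-move}, which requires the interior of the surgery segment to be disjoint from the whole arc/curve collection. Second, even if one could perform the surgery, the map $g_0'$ obtained by forgetting $(x,0)$ still has $\gamma$ crossing $y$-arcs; $\gamma$ therefore does not lie in a single zone of $g_0'$, and you cannot conclude that $g_0'$ is non-filling.

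The paper avoids both issues by iterating. Since $\kappa_x(\sigma)=0$ for all $x$, every segment of $\gamma$ between consecutive arc-crossings lies inside an $o$-zone; because $f(\gamma)$ is nullhomotopic, the cyclic word traced by $\gamma$ on the arc-colors reduces, so some segment joins two $x$-arcs of the same color (necessarily distinct, since the arcs cut $\Sigma_\sigma$ into discs) and lies entirely in one $o$-zone. That single $H$-move is legal and produces a filling $g_1$ with $\left|\kappa\right|=1$; downward-closedness then forces both of its $\left|\kappa\right|=0$ projections to lie in $C$ with the same Euler characteristic, hence filling; one passes to the projection with strictly fewer intersections with $\gamma$ and repeats until $\gamma$ meets no arcs at all, which is impossible for a filling map. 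In effect, your single-step version handles only the case where $\gamma$ crosses arcs of one color --- precisely the case where the gap does not appear.
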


\begin{proof}
First, $\varphi$ is constant on connected components of $G\left(\wl\right)$:
indeed, assume that $\sigma_{0}\prec\sigma_{1}$ \emph{with} $\chi\left(\sigma_{0}\right)=\chi\left(\sigma_{1}\right)$
and, say, $\sigma_{0}$ is obtained from $\sigma_{1}$ by forgetting
the matching $\left(\sigma_{1}\right)_{x,1}$. Then the condition
$\chi\left(\sigma_{0}\right)=\chi\left(\sigma_{1}\right)$ shows forgetting
the $\left(x,1\right)$ transversion point of the transverse map $f_{\sigma_{1}}$
results in a transverse map which is still filling, and thus equal
to $f_{\sigma_{0}}$. Hence we can define $\hat{\varphi}$ to be a
map from the downward-closed components of $G\left(\wl\right)$ to
$\surfaces\left(\wl\right)$.

Second, the image of $\hat{\varphi}$ consists of incompressible elements.
To see this, let $C$ be a downward-closed component of $P$. Let
$\sigma\in C$ have $\left|\kappa\left(\sigma\right)\right|=0$. Assume
to the contrary that $\varphi\left(\sigma\right)$ is compressible.
Then it admits a null-curve $\gamma$ which is not disjoint from the
matching-edges in $\Sigma_{\sigma}$ (recall that the matching-edges
cut $\Sigma_{\sigma}$ to discs). One can start performing $H$-moves
along this null-curve. In an $H$-move between two $x$-matching-edges
$e_{1}$ and $e_{2}$ along a piece of $\gamma$, one first creates
a transverse map $g_{1}$ with $\left|\kappa\left(g_{1}\right)\right|=1$
(with $f_{\sigma}\prec g_{1}$ in $\T=\T\left(\Sigma_{\sigma},f_{\sigma}\right)$)
and then obtains $g_{0}\prec_{\T}g_{1}$ with $\left|\kappa\left(g_{0}\right)\right|=0$
which has two fewer intersection points with $\gamma$. Because the
matching-edges cut $\Sigma_{\sigma}$ to disks, $e_{1}$ and $e_{2}$
must be distinct, and thus $g_{1}$ has only arcs and $\left[g_{1}\right]=\left[f_{\sigma_{1}}\right]$
for some $\sigma_{1}\in C$. As $C$ is downward-closed, there is
some $\sigma_{0}\in C$ with $\left[g_{0}\right]=\left[f_{\sigma_{0}}\right]$.
We can continue in the same manner until $\gamma$ intersects no matching-edges,
which is a contradiction. 

Third, $\hat{\varphi}$ is the sought-after bijection. Indeed, every
incompressible\linebreak{}
$\left[\left(\Sigma,f\right)\right]\in\surfaces\left(\wl\right)$
is the $\varphi$-image of some component of $G\left(\wl\right)$
by Proposition \ref{prop:incompressible as transverse maps}. If $\sigma\in P$
satisfies $\varphi\left(\sigma\right)=\left[\left(\Sigma,f\right)\right]$
and $C$ is the connected component of $\sigma$ then $C$ is a component
of the $1$-skeleton of the $K\left(\Gamma,1\right)$-complex we constructed
in the proof of Theorem \ref{thm:K(g,1) for incompressible} in Section
\ref{subsec:Incompressible-maps}. In particular, this complex is
connected (because its universal cover $\tps$ is connected), hence
so its $1$-skeleton is connected. This shows that $C$ is the only
component mapping to $\left[\left(\Sigma_{\sigma},f_{\sigma}\right)\right]$
and that it is downward-closed. 
\end{proof}
Alternatively, one could use here a direct argument imitating some
ingredients from the proof of Theorem \ref{thm:X contractible}, as
follows. For $\left[\left(\Sigma,f\right)\right]$ incompressible,
show that $\varphi^{-1}\left(\left[\left(\Sigma,f\right)\right]\right)$
is a downward-closed connected component of $G\left(\wl\right)$,
by taking a maximal system of null-arcs, showing there is a single
$\sigma_{0}\in\varphi^{-1}\left(\left[\left(\Sigma,f\right)\right]\right)$
with matching edges disjoint from these null-arcs, and showing every
other element in the preimage can be connected to $\sigma_{0}$ by
$H$-moves that never leave the same connected component of $G\left(\wl\right)$. 

\subsection{Finiteness of the cohomological dimension of the stabilizer $\protect\mcg\left(f\right)$\label{subsec:Cohomological-dimension-of}}

Recall that the cohomological dimension, $\cd(\Gamma)$, of a torsion-free
group $\Gamma$ is the minimal length of a projective resolution of
$\Z$ over $\Z\Gamma$ if one exists, and $\infty$ otherwise. If
a group $\Gamma$ is virtually torsion-free then the virtual cohomological
dimension, $\vcd(\Gamma)$, is defined to be $\cd(\Gamma')$ where
$\Gamma'$ is a finite index torsion-free subgroup of $\Gamma$; it
is a theorem of Serre \cite{SERRE71} that the resulting dimension
does not depend on the chosen finite index subgroup. As the the following
result is not needed for the main results of this paper, we only sketch
its proof.
\begin{prop}
\label{prop:finitevcd}Let $\Sigma$ be a compact orientable surface
with no closed connected components and let $f\colon\Sigma\to\wedger$
be a map. Then $\cd(\MCG(f))<\infty$.
\end{prop}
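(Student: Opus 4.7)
The plan is to exploit the action of $\Gamma := \MCG(f)$ on the complex $\tps$ of Theorem \ref{thm:X contractible}, which by Lemma \ref{lem:X is finite dimensional} is a contractible $\Gamma$-CW-complex of finite dimension $d \le \ell/2 - \chi(\Sigma)$, and invoke a classical bound going back to Brown \cite[Chapter VIII]{BROWN}: if a group acts cellularly on a contractible CW-complex of finite dimension $d$ with cell stabilizers of cohomological dimension at most $N$, then the group has cohomological dimension at most $d+N$. Since $\dim(\tps)$ is already finite, the task reduces to producing a uniform finite bound on $\cd(\Gamma_{[g]})$ for cells $[g] \in \T$.

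For $[g] \in \T \setminus \T_\infty$, Lemma \ref{lem:-acts-freely} gives $\Gamma_{[g]} = 1$. For $[g] \in \T_\infty$, I would fix a representative transverse map $g$ and let $\mathcal{C}$ denote its collection of arcs and curves (the preimages of all transversion points). A mapping class in $\Gamma_{[g]}$ preserves $\mathcal{C}$ up to isotopy; after passing to a finite-index subgroup $H \le \Gamma_{[g]}$ that fixes each component of $\mathcal{C}$ and each zone of $g$, standard mapping-class-group technology (cf.\ the analysis of curve-system stabilizers in \cite{FM}) exhibits $H$ as the middle term of an extension
\[
1 \longrightarrow A \longrightarrow H \longrightarrow \prod_i \MCG(Z_i, \partial Z_i) \longrightarrow 1,
\]
where the $Z_i$ are the zones of $g$ (finitely many compact surfaces with non-empty boundary, so each torsion-free) and $A$ is a finite-rank free abelian group generated by Dehn twists along the two-sided simple closed curves in $\mathcal{C}$.

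By Harer's theorem on the virtual cohomological dimension of mapping class groups of finite-type surfaces \cite{HARERVCD}, each factor $\MCG(Z_i, \partial Z_i)$ has finite cohomological dimension, so $H$, and hence $\Gamma_{[g]}$, has finite cohomological dimension. Moreover, since the zones of any transverse map are pieces obtained from cutting $\Sigma$ along disjoint arcs and curves, both the rank of $A$ and the cohomological dimensions of the $\MCG(Z_i, \partial Z_i)$ are bounded purely in terms of $\chi(\Sigma)$, giving the required uniform bound and completing the proof via Brown's inequality. The main subtlety to be addressed in writing this out fully is the precise identification of which curves in $\mathcal{C}$ contribute to $A$ (curves bounding discs or peripheral annuli yield trivial twists and should be discarded), but since only a finite upper bound is needed, a coarse accounting suffices.
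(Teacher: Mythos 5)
Your strategy matches the paper's: act with $\Gamma = \MCG(f)$ on the contractible, finite-dimensional complex $\tps$, bound the cohomological dimension of cell stabilizers uniformly, and close with Harer's bound on the $\vcd$ of mapping class groups together with the Quillen/Brown inequality for groups acting on contractible complexes. The one genuine flaw is in the exact sequence you write for the stabilizer of $[g]\in\T_\infty$: you take $H$ to map onto $\prod_i \MCG(Z_i,\partial Z_i)$, the mapping class groups of the zones \emph{fixing the whole boundary pointwise}, with the Dehn twists along curves of $g$ forming the kernel $A$. This does not work as stated: when one cuts $\Sigma$ along a curve $c$ of $g$, a Dehn twist $T_c$ does not restrict to the identity in the cut pieces with boundary fixed pointwise — after isotoping a representative of $T_c$ to fix the new boundary components pointwise, one obtains the (nontrivial) boundary Dehn twist in $\MCG(Z_i,\partial Z_i)$. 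So either there is no well-defined restriction homomorphism to $\prod_i \MCG(Z_i,\partial Z_i)$, or the Dehn twists are not in its kernel. The paper avoids this by first discarding the annular zones bounded by two curves of $g$ and then contracting each curve-boundary of a remaining zone $\Sigma_i$ to a marked point, producing $\MCG(\Sigma_i^*, W_i)$, in which boundary Dehn twists around punctures \emph{are} trivial; then the short exact sequence $1 \to N \to \Gamma_0 \to \prod_i \MCG(\Sigma_i^*,W_i) \to 1$ holds. This is the classical ``cutting homomorphism'' and is what you should use in place of $\MCG(Z_i,\partial Z_i)$. The rest of your argument, and the conclusion, are fine once this is corrected; the subtlety you flag (which curves of $\mathcal{C}$ contribute to $A$) is real but secondary, and, as you note, immaterial to finiteness. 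You should also make explicit why $\Gamma$ itself is torsion-free (it follows because $\Sigma$ has boundary in every component) so that $\cd(\Gamma)$ is the right invariant, and, as the paper does, pass carefully between $\vcd$ and $\cd$ via Serre's theorem since the punctured mapping class groups may contain torsion.
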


\begin{proof}[Sketch of proof]
Let $\T=\T(\Sigma,f)$ and $\Gamma=\MCG(f)$. Note that $\Gamma$
is torsion-free since $\Sigma$ has no closed components. We use a
result that is attributed to Quillen by Serre in \cite[Prop. 11(a)]{SERRE71}.
As $\tps$ is contractible (Theorem \ref{thm:X contractible}), Quillen's
result says that

\[
\cd(\Gamma)\leq\sup_{[g]\in\Gamma\backslash\T}\left(\dim(\mathrm{polysim}\left(\left[g\right]\right))+\cd(\Stab_{\Gamma}(g)\right).
\]
Therefore, as $\tps$ is finite dimensional (Lemma \ref{lem:X is finite dimensional}),
it suffices\textbf{ }to prove there is an upper bound depending only
on the pair $(\Sigma,f)$ for $\cd(\Stab_{\Gamma}(g))$ given an arbitrary
element $g$ in $\T$.

We now give a quick analysis of these stabilizers. Fix a transverse
map $g$ with $\left[g\right]\in\T$. Let $\left\{ \Sigma_{i}\right\} _{i\in I}$
denote the zones of $g$ which are not annuli bounded by two curves
of $g$. By Euler characteristic argument, $I$ is finite and bounded
independently of $g$. Form $\Sigma_{i}^{*}$ by contracting each
end of $\Sigma_{i}$ bounded by a curve of $g$ to a point, and mark
the new points $W_{i}\subset\Sigma_{i}^{*}$ on their respective surfaces.
We denote by $\MCG(\Sigma_{i}^{*},W_{i})$ the mapping class group
of $\Sigma_{i}^{*}$ that fixes each individual element of $W_{i}$. 

The subgroup $\Gamma_{0}\le\mathrm{Stab}_{\Gamma}\left(g\right)$
that fixes all the curves in $g$ and their orientations has finite
index in $\mathrm{Stab}_{\Gamma}\left(g\right)$, and there is a short
exact sequence obtained by restricting mapping classes in $\Gamma_{0}$
to the zones $\Sigma_{i}$:
\begin{equation}
1\to N\to\Gamma_{0}\to H\overset{\mathrm{def}}{=}\prod_{i\in I}\MCG(\Sigma_{i}^{*},W_{i})\to1,\label{eq:SES-vcd}
\end{equation}
where $N$ is a free abelian group generated by Dehn twists in the
curves of $g$. The reason one obtains the whole of each $\MCG(\Sigma_{i}^{*},W_{i})$
as a factor is because $g$ maps each $\Sigma_{i}$ to a contractible
piece of $\wedger$, and any lift of any element of $\MCG(\Sigma_{i}^{*},W_{i})$
to $\MCG(\Sigma)$ can be taken to be the identity outside $\Sigma_{i}$,
and therefore preserves the homotopy class of $f$. Although $\MCG(\Sigma_{i}^{*},W_{i})$
could contain torsion, it is virtually torsion-free (see either \cite[Theorem 6.8.A]{IVANOV}
or \cite[\S 4]{HARERVCD}).

Harer proved in \cite{HARERVCD} that for any surface $\Sigma$ and
collection of interior marked points $W$,
\[
\vcd(\MCG(\Sigma,W))\leq4g(\Sigma)+2|\pi_{0}(\partial\Sigma)|+|W|-3,
\]
where $g\left(\Sigma\right)$ is the genus of $\Sigma$. Therefore
using \cite[Prop. VIII.2.4.b]{BROWN} together with an argument as
in \cite[Proof of Prop. IX.7.3.d]{BROWN} to pass between $\vcd$
and $\cd$, one obtains
\[
\vcd(H)\leq\sum_{i\in I}\left(4g(\Sigma_{i}^{*})+2|\pi_{0}(\partial\Sigma_{i}^{*})|+|W_{i}|-3\right)\le F_{1}\left(\Sigma\right),
\]
where $F_{1}\left(\Sigma\right)$ is a bound in terms of $\Sigma$
which is independent of $g$. Since $H$ is virtually torsion-free,
and $\Gamma_{0}$ has no torsion, we can find torsion-free finite
index subgroups $\Gamma'_{0},H'$ in $\Gamma_{0}$ and $H$ respectively
that form a short exact sequence $1\to N\to\Gamma'_{0}\to H'\to1$.
Then Serre's Theorem \cite{SERRE71} gives $\cd(\Stab_{\Gamma}(g))=\cd(\Gamma_{0})=\cd(\Gamma'_{0})$
and $\cd(H')=\vcd(H)$. We also have $\cd\left(N\right)\leq F_{2}(\Sigma)$
where $F_{2}(\Sigma)$ is the maximal number of pairwise non-isotopic
disjoint simple closed curves on $\Sigma$. Now applying \cite[Prop. VIII.2.4.b]{BROWN}
to the short exact sequence for $N,\Gamma_{0}',H'$ we get 
\[
\cd(\Stab_{\Gamma}(g))=\cd(\Gamma_{0}')\leq\cd(N)+\cd(H')=\cd(N)+\vcd(H)<F_{1}\left(\Sigma\right)+F_{2}\left(\Sigma\right).
\]
\end{proof}

\section{Open problems\label{sec:Open-Questions}}

We mention some open problems that naturally arise from the discussion
in this paper.
\begin{enumerate}
\item \label{enu:primitivity conjecture}Recall that primitive words are
the orbit in $\F_{r}$ of the single-letter word $x$ under the action
of $\mathrm{Aut}\left(\F_{r}\right)$. As mentioned on Page \pageref{primitivity conjecture in S_n},
it was shown in \cite{PP15} that only primitive words induce uniform
measure on the symmetric group $S_{n}$ for all n. Is the same true
for unitary groups? Namely, if a word induces Haar measure on $\U\left(n\right)$
for all $n$, is the word necessarily primitive? In fact, the following
question raised by Tsachik Gelander a few years ago (by private communication)
is still open: if a word induces Haar measure on $\U\left(2\right)$,
is the word necessarily primitive?
\item Fix $j_{1},\ldots,j_{\ell}\in\mathbb{Z}$. Given $w\in\F_{r}$, is
there a nice criterion for determining whether the rational expression
for $\tr_{w^{j_{1}},\ldots,w^{j_{\ell}}}\left(n\right)$ has the same
value as for the primitive case when $w=x$? An illustrating example
is $\trw\left(n\right)$ -- we know it vanishes outside $\left[\F_{r},\F_{r}\right]$,
but it is not clear when it vanishes inside $\left[\F_{r},\F_{r}\right]$.
Another illustrating example is $\tr_{w,w^{-1}}\left(n\right)$: when
does it differ from $1$? Some examples for each are elaborated in
Table \ref{tab:examples} and on Page \pageref{subsec:Examples}.
\item Let $\Sigma$ be a connected, orientable surface with boundary, and
let $f\colon\Sigma\to\wedger$. We showed here that $\mcg\left(f\right)$
has a well-defined $L^{2}$-Euler-characteristic (Theorem \ref{thm:stabilizers have L2-EC})
and a finite cohomological dimension (Proposition \ref{prop:finitevcd}).
Does $\mcg\left(f\right)$ always have ``finite homological type''
as defined in \cite[Page 246]{BROWN}? And if so, does its ordinary
Euler characteristic coincide with the $L^{2}$-one?
\item \label{enu:rationality from Main}We deduced the rationality of $\trwl\left(n\right)$
in Theorem \ref{thm:trwl as finite sum} directly from Weingarten
calculus. The rationality means that the different $L^{2}$-Euler
characteristics appearing in Theorem \ref{thm:main} ``know'' about
each other. Is it possible to deduce the rationality of $\trwl\left(n\right)$
(i.e., Proposition \ref{prop:rational expression}) from our main
theorem, Theorem \ref{thm:main}? 
\item What can one say about the distribution of $\trw\left(n\right)$ when
$w$ is a long random word in $\left[\F_{r},\F_{r}\right]$? For example,
what is the distribution of the commutator length of $w$? Is it true
that for most words of a fixed length in $\left[\F_{r},\F_{r}\right]$,
the stabilizers $\mcg\left(f\right)$ of incompressible solutions
are trivial?
\item What can one systematically say about the $L^{2}$-Euler characteristic
of $\MCG(f)$? For which $f$ are they zero, negative, or positive?
The case when $f$ is incompressible is a natural starting point.
A sufficiently good understanding of this question would allow one
to make progress on Conjecture \ref{conj:aner}. The Euler characteristic
of the mapping class group of a closed surface was calculated by Harer-Zagier
\cite{HARERZAGIER} and the\emph{ sign} of the Euler characteristic
of the mapping class group was re-obtained by McMullen \cite{McMullen2000}
by different methods.
\end{enumerate}
\bibstyle{amsalpha} \bibliographystyle{amsalpha}
\bibliography{database_united}

\providecommand{\bysame}{\leavevmode\hbox to3em{\hrulefill}\thinspace}
\providecommand{\MR}{\relax\ifhmode\unskip\space\fi MR }
\providecommand{\MRhref}[2]{%
  \href{http://www.ams.org/mathscinet-getitem?mr=#1}{#2}
}
\providecommand{\href}[2]{#2}
\begin{thebibliography}{{von}29}

\bibitem[AB83]{AB}
M.~F. Atiyah and R.~Bott, \emph{{T}he {Y}ang-{M}ills {E}quations over {R}iemann
  {S}urfaces}, Philosophical Transactions of the Royal Society of London.
  Series A, Mathematical and Physical Sciences \textbf{308} (1983), no.~1505,
  523--615.

\bibitem[AV11]{Amit2011}
A.~Amit and U.~Vishne, \emph{Characters and solutions to equations in finite
  groups}, J. Algebra Appl. \textbf{10} (2011), no.~4, 675--686. \MR{2834108}

\bibitem[Bro82]{BROWN}
K.~S. Brown, \emph{Cohomology of groups}, Graduate Texts in Mathematics,
  vol.~87, Springer-Verlag, New York-Berlin, 1982. \MR{672956 (83k:20002)}

\bibitem[Cal08]{CALWHATIS}
D.~Calegari, \emph{What is{$\ldots$} stable commutator length?}, Notices Amer.
  Math. Soc. \textbf{55} (2008), no.~9, 1100--1101. \MR{2451345}

\bibitem[Cal09a]{calegari2009scl}
\bysame, \emph{scl}, MSJ Memoirs, vol.~20, Mathematical Society of Japan,
  Tokyo, 2009.

\bibitem[Cal09b]{CALRATIONAL}
\bysame, \emph{Stable commutator length is rational in free groups}, Journal of
  the American Mathematical Society \textbf{22} (2009), no.~4, 941--961.
  \MR{2525776 (2010k:57002)}

\bibitem[CG86]{CG}
J.~Cheeger and M.~Gromov, \emph{{$L_2$}-cohomology and group cohomology},
  Topology \textbf{25} (1986), no.~2, 189--215. \MR{837621}

\bibitem[Col03]{collins2003moments}
B.~Collins, \emph{Moments and cumulants of polynomial random variables on
  unitary groups, the {I}tzykson-{Z}uber integral, and free probability},
  International Mathematics Research Notices \textbf{2003} (2003), no.~17,
  953--982.

\bibitem[C{\'S}06]{CS}
B.~Collins and P.~{\'S}niady, \emph{Integration with respect to the {H}aar
  measure on unitary, orthogonal and symplectic group}, Comm. Math. Phys.
  \textbf{264} (2006), no.~3, 773--795. \MR{2217291 (2007c:60009)}

\bibitem[Cul81]{CULLER}
M.~Culler, \emph{Using surfaces to solve equations in free groups}, Topology
  \textbf{20} (1981), no.~2, 133--145. \MR{605653 (82c:20052)}

\bibitem[DM69]{DM}
P.~Deligne and D.~Mumford, \emph{The irreducibility of the space of curves of
  given genus}, Inst. Hautes \'Etudes Sci. Publ. Math. (1969), no.~36, 75--109.
  \MR{0262240}

\bibitem[DS94]{Diaconis1994}
P.~Diaconis and M.~Shahshahani, \emph{On the eigenvalues of random matrices},
  Journal of Applied Probability (1994), 49--62.

\bibitem[Edm75]{Edmunds1975}
C.~C. Edmunds, \emph{On the endomorphism problem for free groups},
  Communications in Algebra \textbf{3} (1975), no.~1, 1--20.

\bibitem[FM12]{FM}
B.~Farb and D.~Margalit, \emph{A primer on mapping class groups}, Princeton
  Mathematical Series, vol.~49, Princeton University Press, Princeton, NJ,
  2012. \MR{2850125 (2012h:57032)}

\bibitem[Ful97]{Fulton1997}
W.~Fulton, \emph{Young tableaux: with applications to representation theory and
  geometry}, London Mathematical Society Student Texts, vol.~35, Cambridge
  University Press, 1997.

\bibitem[Gol84]{GOLDMANSYMPLECTIC}
W.~M. Goldman, \emph{The symplectic nature of fundamental groups of surfaces},
  Advances in Mathematics \textbf{54} (1984), no.~2, 200 -- 225.

\bibitem[Gol97]{Goldman1997}
W.~M. Goldman, \emph{Ergodic theory on moduli spaces}, Ann. of Math. (2)
  \textbf{146} (1997), no.~3, 475--507. \MR{1491446}

\bibitem[GT79]{Goldstein1979}
R.~Z. Goldstein and E.~C. Turner, \emph{Applications of topological graph
  theory to group theory}, Mathematische Zeitschrift \textbf{165} (1979),
  no.~1, 1--10.

\bibitem[Har81]{HARV}
W.~J. Harvey, \emph{Boundary structure of the modular group}, Riemann surfaces
  and related topics: {P}roceedings of the 1978 {S}tony {B}rook {C}onference
  ({S}tate {U}niv. {N}ew {Y}ork, {S}tony {B}rook, {N}.{Y}., 1978), Ann. of
  Math. Stud., vol.~97, Princeton Univ. Press, Princeton, N.J., 1981,
  pp.~245--251. \MR{624817}

\bibitem[Har85]{HARERSTABILITY}
J.~L. Harer, \emph{Stability of the homology of the mapping class groups of
  orientable surfaces}, Ann. of Math. (2) \textbf{121} (1985), no.~2, 215--249.
  \MR{786348}

\bibitem[Har86]{HARERVCD}
\bysame, \emph{The virtual cohomological dimension of the mapping class group
  of an orientable surface}, Invent. Math. \textbf{84} (1986), no.~1, 157--176.
  \MR{830043}

\bibitem[HLS15]{hui2015waring}
C.~Y. Hui, M.~Larsen, and A.~Shalev, \emph{The {W}aring problem for {L}ie
  groups and {C}hevalley groups}, Israel Journal of Mathematics \textbf{210}
  (2015), no.~1, 81--100.

\bibitem[HZ86]{HARERZAGIER}
J.~L. Harer and D.~Zagier, \emph{The {E}uler characteristic of the moduli space
  of curves}, Invent. Math. \textbf{85} (1986), no.~3, 457--485. \MR{848681
  (87i:32031)}

\bibitem[Iva02]{IVANOV}
N.~V. Ivanov, \emph{Mapping class groups}, Handbook of geometric topology,
  North-Holland, Amsterdam, 2002, pp.~523--633. \MR{1886678}

\bibitem[Lab13]{Labourie2013}
F.~Labourie, \emph{Lectures on representations of surface groups}, Zurich
  Lectures in Advanced Mathematics, European Mathematical Society (EMS),
  Z\"urich, 2013. \MR{3155540}

\bibitem[L{\"u}c02]{L}
W.~L{\"u}ck, \emph{{$L^2$}-invariants: theory and applications to geometry and
  {$K$}-theory}, Ergebnisse der Mathematik und ihrer Grenzgebiete. 3. Folge. A
  Series of Modern Surveys in Mathematics [Results in Mathematics and Related
  Areas. 3rd Series. A Series of Modern Surveys in Mathematics], vol.~44,
  Springer-Verlag, Berlin, 2002. \MR{1926649}

\bibitem[LZ04]{Lando2004}
S.~K. Lando and A.~K. Zvonkin, \emph{Graphs on surfaces and their
  applications}, Encyclopaedia of Mathematical Sciences, vol. 141,
  Springer-Verlag, Berlin, 2004, With an appendix by Don B. Zagier,
  Low-Dimensional Topology, II. \MR{2036721 (2005b:14068)}

\bibitem[McM00]{McMullen2000}
C.~T. McMullen, \emph{The moduli space of {R}iemann surfaces is {K}\"{a}hler
  hyperbolic.}, Annals of Mathematics. Second Series \textbf{151} (2000),
  no.~1, 327--357 (eng).

\bibitem[Mon08]{MOND}
G.~Mondello, \emph{A remark on the homotopical dimension of some moduli spaces
  of stable {R}iemann surfaces}, J. Eur. Math. Soc. (JEMS) \textbf{10} (2008),
  no.~1, 231--241. \MR{2349902}

\bibitem[MP15]{MP}
M.~Magee and D.~Puder, \emph{Word measures on unitary groups}, arXiv preprint
  1509.07374 v2, 2015.

\bibitem[MP19a]{mp2019orthogonal}
\bysame, \emph{Matrix group integrals, surfaces, and mapping class groups {II}:
  ${O}(n)$ and ${S}p(n)$}, arXiv preprint arXiv:1904.13106, 2019.

\bibitem[MP19b]{mp2019surface}
\bysame, \emph{Surface words are determined by word measures on groups}, arXiv
  preprint arXiv:1902.04873, 2019.

\bibitem[M{\'S}S07]{MSS07}
J.~A. Mingo, P.~{\'S}niady, and R.~Speicher, \emph{Second order freeness and
  fluctuations of random matrices. {II}. {U}nitary random matrices}, Adv. Math.
  \textbf{209} (2007), no.~1, 212--240. \MR{2294222 (2009c:15027)}

\bibitem[Nov17]{Novaes}
M.~Novaes, \emph{Expansion of polynomial lie group integrals in terms of
  certain maps on surfaces, and factorizations of permutations}, Journal of
  Physics A: Mathematical and Theoretical \textbf{50} (2017), no.~7, 075201.

\bibitem[NS06]{Nica2006}
A.~Nica and R.~Speicher, \emph{Lectures on the combinatorics of free
  probability}, London Mathematical Society Lecture Note Series, vol. 335,
  Cambridge University Press, Cambridge, 2006. \MR{2266879 (2008k:46198)}

\bibitem[Pen88]{Penner1988}
R.~C. Penner, \emph{Perturbative series and the moduli space of {R}iemann
  surfaces}, J. Differential Geom. \textbf{27} (1988), no.~1, 35--53.
  \MR{918455 (89h:32045)}

\bibitem[PP15]{PP15}
D.~Puder and O.~Parzanchevski, \emph{Measure preserving words are primitive},
  Journal of the American Mathematical Society \textbf{28} (2015), no.~1,
  63--97. \MR{3264763}

\bibitem[Pud14]{Puder2014}
D.~Puder, \emph{Primitive words, free factors and measure preservation}, Israel
  J. Math. \textbf{201} (2014), no.~1, 25--73. \MR{3265279}

\bibitem[Qui73]{QUILLEN}
D.~Quillen, \emph{Higher algebraic {$K$}-theory. {I}}, Algebraic {$K$}-theory,
  {I}: {H}igher {$K$}-theories ({P}roc. {C}onf., {B}attelle {M}emorial {I}nst.,
  {S}eattle, {W}ash., 1972), Springer, Berlin, 1973, pp.~85--147. Lecture Notes
  in Math., Vol. 341. \MR{0338129}

\bibitem[R{\u a}d06]{Radulescu06}
F.~R{\u a}dulescu, \emph{Combinatorial aspects of {C}onnes's embedding
  conjecture and asymptotic distribution of traces of products of unitaries},
  Proceedings of the Operator Algebra Conference, Bucharest, Theta Foundation,
  2006.

\bibitem[Ser71]{SERRE71}
J.~P. Serre, \emph{Cohomologie des groupes discrets}, Prospects in mathematics
  ({P}roc. {S}ympos., {P}rinceton {U}niv., {P}rinceton, {N}.{J}., 1970),
  Princeton Univ. Press, Princeton, N.J., 1971, pp.~77--169. Ann. of Math.
  Studies, No. 70. \MR{0385006}

\bibitem[Sha13]{Shalev2013}
A.~Shalev, \emph{Some results and problems in the theory of word maps},
  Erd{\H{o}}s Centennial (Bolyai Society Mathematical Studies) (L.~Lov{\'a}sz,
  I.~Ruzsa, V.T. S{\'o}s, and D.~Palvolgyi, eds.), Springer, 2013,
  pp.~611--650.

\bibitem[tD72]{tD1}
T.~tom Dieck, \emph{Orbittypen und \"aquivariante {H}omologie. {I}}, Arch.
  Math. (Basel) \textbf{23} (1972), 307--317. \MR{0310919}

\bibitem[tD87]{tD2}
\bysame, \emph{Transformation groups}, De Gruyter Studies in Mathematics,
  vol.~8, Walter de Gruyter \& Co., Berlin, 1987. \MR{889050}

\bibitem[tH74]{Hooft1974}
G.~'t~Hooft, \emph{A planar diagram theory for strong interactions}, Nuclear
  Physics B \textbf{72} (1974), no.~3, 461 -- 473.

\bibitem[VDN92]{Voiculescu1992}
D.~V. Voiculescu, K.~J. Dykema, and A.~Nica, \emph{Free random variables}, CRM
  Monograph Series, vol.~1, American Mathematical Society, Providence, RI,
  1992, A noncommutative probability approach to free products with
  applications to random matrices, operator algebras and harmonic analysis on
  free groups. \MR{1217253 (94c:46133)}

\bibitem[Voi91]{Voiculescu1991}
D.~Voiculescu, \emph{Limit laws for random matrices and free products}, Invent.
  Math. \textbf{104} (1991), no.~1, 201--220. \MR{1094052 (92d:46163)}

\bibitem[{von}29]{VN}
J.~{von Neumann}, \emph{{Zur allgemeinen Theorie des Ma{\ss}es.}}, {Fundam.
  Math.} \textbf{13} (1929), 73--116 (German).

\bibitem[Wal88]{walker1988canonical}
J.~W. Walker, \emph{Canonical homeomorphisms of posets}, European Journal of
  Combinatorics \textbf{9} (1988), no.~2, 97--107.

\bibitem[Wei78]{weingarten1978asymptotic}
D.~Weingarten, \emph{Asymptotic behavior of group integrals in the limit of
  infinite rank}, Journal of Mathematical Physics \textbf{19} (1978), no.~5,
  999--1001.

\bibitem[Wit91]{Witten1991}
E.~Witten, \emph{On quantum gauge theories in two dimensions}, Comm. Math.
  Phys. \textbf{141} (1991), no.~1, 153--209. \MR{1133264 (93i:58164)}

\bibitem[Xu97]{xu1997random}
F.~Xu, \emph{A random matrix model from two dimensional {Y}ang-{M}ills theory},
  Communications in mathematical physics \textbf{190} (1997), no.~2, 287--307.

\end{thebibliography}

\noindent Michael Magee, \\
Department of Mathematical Sciences,\\
Durham University, \\
Lower Mountjoy, DH1 3LE Durham,\\
United Kingdom

\noindent \texttt{michael.r.magee@durham.ac.uk}\\

\noindent Doron Puder, \\
School of Mathematical Sciences, \\
Tel Aviv University, \\
Tel Aviv, 6997801, Israel\\
\texttt{doronpuder@gmail.com }
\end{document}